\providecommand{\U}[1]{\protect \rule{.1in}{.1in}}
\newtheorem{theorem}{Theorem}[section]
\newtheorem{corollary}[theorem]{Corollary}
\newtheorem{lemma}[theorem]{Lemma}
\newtheorem{Theorem}{Theorem}
\theoremstyle{remark}
\newtheorem{remark}[theorem]{Remark}
\numberwithin{equation}{section}
\begin{document}
\title[Green functions on tori II: Rectangular torus]{Green functions, Hitchin's formula and curvature equations on tori II: Rectangular torus}
\author{Zhijie Chen}
\address{Department of Mathematical Sciences, Yau Mathematical Sciences Center,
Tsinghua University, Beijing, 100084, China }
\email{zjchen2016@tsinghua.edu.cn}
\author{Erjuan Fu}
\address{Beijing Institute of Mathematical Sciences and Applications, Beijing, 101408, China}
\email{fej.2010@tsinghua.org.cn, ejfu@bimsa.cn}
\author{Chang-Shou Lin}
\address{Department of Mathematics, National Taiwan University, Taipei 10617, Taiwan }
\email{cslin@math.ntu.edu.tw}

\begin{abstract}
Let $G(z)$ be the Green function on the flat torus $E_{\tau}=\mathbb{C}/(\mathbb{Z}+\mathbb{Z}\tau)$ with the singularity at $0$.  Lin and Wang (Ann. Math. 2010) proved that $G(z)$ has either $3$ or $5$ critical points (depending on the choice of $\tau$). Here
we study the sum of two Green functions which can be reduced to $G_p(z):=\frac12(G(z+p)+G(z-p))$. In Part I \cite{CFL}, we proved that for any $p$ satisfying $p\neq -p$ in $E_{\tau}$, the number of critical points of $G_p(z)$ belongs to $\{4,6,8,10\}$ (depending on the choice of $(\tau, p)$) and each number really occurs. 

In the Part II of this series, we study the important case $\tau=ib$ with $b>0$, i.e. $E_{\tau}$ is a rectangular torus. By developing a completely different approach from Part I, we show the existence of $8$ real values $d_1<d_2<\cdots<d_7<d_8$ such that if $$\wp(p)\in (-\infty, d_1]\cup [d_2, d_3]\cup [d_4, d_5]\cup [d_6, d_7]\cup [d_8,+\infty),$$
then $G_p(z)$ has no nontrivial critical points; if 
$$\wp(p)\in (d_1, d_2)\cup (d_3, d_4)\cup (d_5, d_6)\cup (d_7, d_8),$$
then $G_p(z)$ has a unique pair of nontrivial critical points that are always non-degenerate saddle points. This allows us to study the possible distribution of the numbers of critical points of $G_p(z)$ for generic $p$. Applications to the Painlev\'{e} VI equation and the curvature equation are also given.
\end{abstract}


\maketitle

\section{Introduction}

Let $\tau \in \mathbb{H}=\left \{  \tau\in\mathbb C|\operatorname{Im}\tau>0\right \}$, $\Lambda_{\tau}=\mathbb{Z}+\mathbb{Z}\tau$, and denote
$$\omega_{0}=0,\quad\omega_{1}=1,\quad\omega_{2}=\tau,\quad\omega_{3}=1+\tau.$$Let $E_{\tau}:=\mathbb{C}/\Lambda_{\tau}$ be a flat torus in the
plane and $E_{\tau}[2]:=\{ \frac{\omega_{k}}{2}|k=0,1,2,3\}+\Lambda
_{\tau}$ be the set consisting of the lattice points and half periods
in $E_{\tau}$.  

The Green function $G(z,w)=G(z,w;\tau)$ of the flat torus $E_{\tau}$ is the unique function that satisfies
\[
-\Delta_z G(z, w)=\delta_{w}-\frac{1}{\left \vert E_{\tau}\right \vert }\text{
\ on }E_{\tau},\quad
\int_{E_{\tau}}G(z,w)dxdy=0,
\]
where we use the complex variable $z=x+iy$, $\Delta_z=\frac{\partial^2}{\partial x^2}+\frac{\partial^2}{\partial y^2}=4\partial^2_{\bar z z}$ is the Laplace operator, $\delta_{w}$ is the Dirac measure at $w$ and $\vert E_{\tau}\vert$ is
the area of the torus $E_{\tau}$. By the translation invariance, we have $G(z,w)=G(z-w,0)$ and it is enough to consider the Green function
$G(z;\tau)=G(z):=G(z,0)$.
Clearly $G(z)$ is an even function on $E_{\tau}$ with the only
singularity at $0$, so $\frac{\omega_k}{2}$, $k\in \{1,2,3\}$, are always critical points of $G(z)$. 

\medskip

\noindent{\bf Definition.} {\it A critical point $a\in E_{\tau}$ of $G$ (resp. $G_p$; see below) is called trivial if $a=-a$ in $E_{\tau}$, i.e. $a\in E_{\tau}[2]$.  A critical point $a\in E_{\tau}$ is called nontrivial if $a\neq-a$ in $E_{\tau}$, i.e. $a\notin E_{\tau}[2]$.}
\medskip

Therefore, nontrivial critical points, if exist, must appear in pairs.
 Lin and Wang \cite{LW} studied critical points of $G(z)$ and proved the following remarkable result.
 
\begin{Theorem} \cite{LW} \label{thm-LW} $G(z)$ has at most one pair of nontrivial critical points, or equivalently, $G(z)$ has either $3$ or $5$ critical points (depends on the choice of $\tau$).
 For example, 
\begin{itemize}
\item[(1)] When $\tau=ib$ with $b>0$, i.e. $E_{\tau}$ is a rectangular torus,  $G(z)$ has exactly $3$ critical points  $\frac{\omega_k}{2}$, $k\in \{1,2,3\}$. Furthermore, $\frac{\omega_1}{2}$ and $\frac{\omega_2}{2}$ are both non-degenerate saddle points, while $\frac{\omega_3}{2}$ is a non-degenerate minimal point.
\item[(2)] When $\tau=\frac{1}{2}+ib$ with $b>0$, i.e. $E_{\tau}$ is a rhombus torus, there are $0<b_0<\frac12<b_1<\sqrt{3}/2$ such that $G(z)$ has exactly $5$ critical points if and only if $b\in (0, b_0)\cup (b_1,+\infty)$. 
\end{itemize}
 \end{Theorem}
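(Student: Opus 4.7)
The plan is to reduce the problem to a real-analytic equation for critical points via Weierstrass functions, then separate an existence count (via the symmetries of the torus) from a uniqueness bound (the main technical step). Starting from
\[
G(z;\tau) = -\frac{1}{2\pi}\log|\sigma(z;\tau)| + \frac{(\operatorname{Im}z)^2}{2\operatorname{Im}\tau} + C(\tau),
\]
and using $\sigma'/\sigma = \zeta$, the condition $\nabla G(z) = 0$ becomes, writing $z = s + t\tau$ with $s, t \in \mathbb{R}$, the equation $\zeta(z) - s\eta_1 - t\eta_2 = 0$; equivalently, by the Legendre relation $\eta_1 \tau - \eta_2 = 2\pi i$,
\[
\zeta(z) - z\eta_1 + 2\pi i t = 0.
\]
The half periods $\omega_k/2$ always solve this; non-trivial critical points come in pairs $\{z_0, -z_0\}$ of the same Morse type, since $G(-z) = G(z)$ forces identical Hessians at $\pm z_0$.

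The key step --- and the genuinely hard one --- is to show $G$ has at most one non-trivial pair. The function $h(z) := \zeta(z) - z\eta_1$ is $\omega_1$-periodic and shifts by $-2\pi i$ under $z \mapsto z + \omega_2$, so for fixed $t$ the equation $h(z) = -2\pi i t$ is meromorphic with a single simple pole in the fundamental strip; the argument principle on a horizontal slab bounds the number of solutions at each height. To rule out two distinct non-trivial orbits globally one needs a monotonicity or degree estimate as the parameter $t$ varies, exploiting the quasi-periodic structure of $\zeta$, the Poincar\'{e}-Hopf constraint on $\nabla G$ (which forces $\#\{\text{saddles}\} - \#\{\text{minima}\} = 1$ on $E_\tau \setminus \{0\}$ and so restricts the admissible types of any extra pair), and the odd symmetry $z \mapsto -z$. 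I expect this to be the main obstacle in the whole proof.

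For part (1), when $\tau = ib$ one has the additional reflection $\overline{\Lambda_\tau} = \Lambda_\tau$, hence $G(\bar z) = G(z)$. The symmetry group acting on critical points is now of order $4$: $\{\operatorname{id}, \, z \mapsto -z, \, z \mapsto \bar z, \, z \mapsto -\bar z\}$. A non-trivial critical point not fixed by any non-identity element would generate four distinct non-trivial critical points, contradicting the at-most-one-pair bound. So every non-trivial critical point would lie on a reflection axis $\{\operatorname{Re} z = 0\}$, $\{\operatorname{Re} z = \tfrac12\}$, $\{\operatorname{Im} z = 0\}$, or $\{\operatorname{Im} z = \tfrac{b}{2}\}$; restricting $G$ to each axis yields a real-analytic single-variable function whose derivative, computed via $\zeta$-values, has no extra zeros beyond the half periods themselves. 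Hence the critical points are exactly $\omega_k/2$. Finally, the Hessian at each half period is diagonal (by symmetry) with eigenvalues explicitly expressible in terms of $e_k = \wp(\omega_k/2)$; using that the $e_k$ are real with $e_1 > e_3 > e_2$ for $\tau = ib$, and $\Delta G = 1/|E_\tau| > 0$ away from $0$, one obtains non-degenerate saddles at $\omega_1/2, \omega_2/2$ and a non-degenerate minimum at $\omega_3/2$.

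For part (2), for $\tau = \tfrac12 + ib$ the relevant extra symmetry is the reflection fixing the line $L = \{\operatorname{Re} z = 1/4\}$ modulo $\Lambda_\tau$; the same 4-orbit argument forces any non-trivial critical point onto $L$. Parametrize $L$ by $y \in \mathbb{R}$ and set $F(y, b) := \partial_y G(\tfrac14 + iy; \tfrac12 + ib)$; zeros of $F(\cdot, b)$ off the half-period intersection parametrize non-trivial critical points. Asymptotic analysis at the two boundary cases --- the equianharmonic point $b = \sqrt{3}/2$, where $\tau = e^{i\pi/3}$ has maximal $\mathbb{Z}/6$-symmetry and only the trivial critical points survive, and the degeneration $b \to 0^+$, where additional critical points must appear by an explicit near-limit expansion --- combined with continuity and a strict monotonicity of $F$ in the parameter $b$ (via an implicit function theorem argument along $L$), produces the thresholds $b_0 \in (0, \tfrac12)$ and $b_1 \in (\tfrac12, \sqrt{3}/2)$. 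The large-$b$ regime (also giving $5$ critical points) is handled analogously via the modular transformation $\tau \mapsto -1/\tau$, which converts it to a near-rectangular configuration.
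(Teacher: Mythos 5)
This statement is quoted from Lin--Wang \cite{LW} and the paper under review gives no proof of it, so there is nothing internal to compare against; what can be assessed is whether your proposal would stand on its own, and it does not. The decisive gap is the central claim that $G$ has \emph{at most one} pair of nontrivial critical points. Your reduction to $\zeta(z)-z\eta_1+2\pi i t=0$ with $t=\operatorname{Im}z/\operatorname{Im}\tau$ is correct (it is exactly \eqref{G_z} after the Legendre relation, although your displayed formula for $G$ is missing the $e^{-\eta_1 z^2/2}$ normalization of $\sigma$ needed to make it true), but the equation is not holomorphic in $z$: the term $2\pi i t$ involves $\bar z$. Freezing $t$ and applying the argument principle on a horizontal slab counts solutions of a \emph{different} equation at each height and gives no global bound, and the ``monotonicity or degree estimate as $t$ varies'' that you say is needed is precisely the theorem --- you name the obstacle but do not overcome it. The known proofs each require a genuinely new mechanism here: Lin--Wang pass through nonexistence results for the mean field equation $\Delta u+e^u=8\pi\delta_0$, and Bergweiler--Eremenko \cite{BE} recast the equation as a fixed-point problem for an anti-holomorphic map and invoke a Fatou-type count of attracting fixed points. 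Nothing in your outline substitutes for either.

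The remaining parts inherit this gap and add unproved assertions of their own. In (1), the four-element reflection group argument correctly confines any nontrivial critical point to a symmetry axis \emph{only after} the at-most-one-pair bound is available, and the claim that the restricted derivative ``has no extra zeros beyond the half periods'' requires a sign analysis of $\wp+\eta_1$ (and its analogues) on each segment, which you assert rather than carry out; note in particular that on the segment $[\tfrac{\tau}{2},\tfrac{1+\tau}{2}]$ the derivative $-(\wp+\eta_1)$ can change sign, so the conclusion is not automatic. In (2), the existence of the thresholds $b_0<\tfrac12<b_1<\tfrac{\sqrt3}{2}$ with an exact if-and-only-if rests entirely on the ``strict monotonicity of $F$ in $b$,'' which is the hard analytic content of \cite{LW} (monotonicity in $\tau$ of quantities such as $e_k(\tau)+\eta_1(\tau)$ along $\operatorname{Re}\tau=\tfrac12$) and is again asserted without proof. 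As written, the proposal is a plausible road map whose every difficult step is deferred.
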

 
See also Bergweiler and Eremenko \cite{BE} for a new proof of the first statement of Theorem \ref{thm-LW}. Later, Lin-Wang \cite{LW4} proved that nontrivial critical points of $G(z)$ must be minimal points.
Recently, we proved in \cite{CFL} that nontrivial critical points of $G(z)$ are always non-degenerate. We summarize these results together as follows.

\begin{Theorem}\cite{LW4,CFL}\label{thm-A0}
Suppose that the pair of nontrivial critical points $\pm q$ of $G(z)$ exists, then
\begin{itemize}
\item[(1)] all the three trivial critical points $\frac{\omega_k}{2}$ are non-degenerate saddle points of $G(z)$, that is, the Hessian $\det D^2G(\frac{\omega_k}{2})<0$ for $k=1,2,3$.
\item[(2)]  the $\pm q$ are non-degenerate minimal points of $G(z)$, i.e. $\det D^2G(\pm q)>0$.
 \end{itemize}
\end{Theorem}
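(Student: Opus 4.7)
The plan is to reduce both assertions to a single closed-form expression for $\det D^2 G(z)$ at an arbitrary critical point of $G$, and then evaluate it in turn at the trivial and nontrivial critical points. Starting from the standard representation $G(z)=-\frac{1}{2\pi}\log|\theta_1(z;\tau)|+\frac{(\operatorname{Im}z)^2}{2\operatorname{Im}\tau}+C(\tau)$ and applying Wirtinger calculus with the identity $\theta_1'(z)/\theta_1(z)=\zeta(z)-z\eta_1$, a direct computation gives
$$\partial_z^2 G(z)=\frac{1}{4\pi}\Bigl(\wp(z)+\eta_1-\frac{\pi}{\operatorname{Im}\tau}\Bigr).$$
Combined with $\Delta G=1/\operatorname{Im}\tau$ away from $0$ and the elementary identity $\det D^2 G=\tfrac{1}{4}(\Delta G)^2-4|\partial_z^2 G|^2$, valid for every real-valued $G$, this yields the compact Hessian formula
$$\det D^2 G(z)=\frac{1}{4(\operatorname{Im}\tau)^2}-\frac{1}{4\pi^2}\Bigl|\wp(z)+\eta_1-\frac{\pi}{\operatorname{Im}\tau}\Bigr|^2,$$
valid at every $z\notin\Lambda_\tau$.

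For part (1), I would substitute $z=\omega_k/2$, so that $\wp(z)=e_k$, reducing the sign of $\det D^2 G(\omega_k/2)$ to comparing $|e_k+\eta_1-\pi/\operatorname{Im}\tau|$ with $\pi/\operatorname{Im}\tau$. The existence criterion for $\pm q$ from Lin--Wang \cite{LW} (phrased geometrically as a condition on the real curve of solutions to the critical point equation) is equivalent, as I would verify, to the strict inequality $|e_k+\eta_1-\pi/\operatorname{Im}\tau|>\pi/\operatorname{Im}\tau$ holding simultaneously for all three $k\in\{1,2,3\}$. This gives $\det D^2 G(\omega_k/2)<0$ under the theorem's hypothesis.

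For part (2), \cite{LW4} already yields that $\pm q$ are local minima, so $\det D^2 G(\pm q)\ge 0$, and only strict positivity remains to be shown. Set $Z(z,\tau):=\zeta(z)-z\eta_1+2\pi i\,\operatorname{Im}z/\operatorname{Im}\tau$, so that critical points of $G$ are precisely the zeros of $Z(\cdot,\tau)$; the Hessian formula above translates the condition $\det D^2 G(q_0)=0$ into the vanishing of the real Jacobian of $z\mapsto(\operatorname{Re}Z,\operatorname{Im}Z)$ at $q_0$. Arguing by contradiction, suppose this occurs at some $\tau_0$ in the interior of the $5$-critical-point regime. I would combine an implicit-function-theorem analysis of $Z(z,\tau)=0$ with the upper bound of five critical points from Theorem~\ref{thm-LW} and the $z\mapsto-z$ symmetry (which makes $\pm q_0$ simultaneously degenerate): a suitable perturbation of $\tau$ must either create extra critical points, violating the bound, or force $q_0$ to coalesce with some $\omega_k/2$, placing $\tau_0$ on the boundary of the regime and contradicting the hypothesis.

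The main obstacle is the argument in part (2), specifically excluding degeneracy globally rather than merely on an open dense set. The Hessian formula localizes the degeneracy to a single explicit real equation, but ruling out its occurrence in the interior of the $5$-critical-point regime requires a careful bifurcation analysis of the joint dependence of critical points on $\tau$—in particular, tracking how the local index of a degenerate minimum of $G$ interacts with the critical-point count under one-parameter deformations. By comparison, the identification in part (1) of the Lin--Wang existence criterion with the explicit inequality on $e_k+\eta_1-\pi/\operatorname{Im}\tau$ is comparatively routine once the Hessian formula is in hand.
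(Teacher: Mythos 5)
This theorem is not proved in the present paper: it is a summary of results quoted from \cite{LW4} (the $\pm q$ are minima) and \cite{CFL} (they are non-degenerate), so there is no in-paper argument to compare against. Judged on its own terms, your proposal gets the correct reduction but leaves both decisive steps unproved.

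Your Hessian formula is right and is exactly the identity the paper itself relies on for $G_p$ (compare \eqref{eqfc-15} and the definition of $\mathcal{B}_0$ in \eqref{B00}): a critical point $z_0$ of $G$ is a non-degenerate saddle, degenerate, or a non-degenerate minimum according as $\bigl|\wp(z_0)+\eta_1-\tfrac{\pi}{\operatorname{Im}\tau}\bigr|$ is greater than, equal to, or less than $\tfrac{\pi}{\operatorname{Im}\tau}$. The problem is what you do with it. For part (1), the claim that the existence of $\pm q$ is equivalent to the three strict inequalities $|e_k+\eta_1-\tfrac{\pi}{\operatorname{Im}\tau}|>\tfrac{\pi}{\operatorname{Im}\tau}$ \emph{is} the theorem; calling its verification ``comparatively routine'' hides the entire content. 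The natural fallback, degree counting, does not close it: granting part (2), the three half-periods carry total index $-3$, and since $\Delta G=1/\operatorname{Im}\tau\neq 0$ forces the linearization of $\nabla G$ at each half-period to have rank at least one, each index is $\pm 1$ and hence all three are $-1$. But index $-1$ does not imply $\det D^2G(\omega_k/2)<0$: a degenerate critical point of the form $u^2-v^4$ (one positive Hessian eigenvalue, one zero) also has index $-1$. So you still must exclude $|e_k+\eta_1-\tfrac{\pi}{\operatorname{Im}\tau}|=\tfrac{\pi}{\operatorname{Im}\tau}$, and your proposal contains no argument for that.

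Part (2) has the same character. Since $\Delta G>0$, a degenerate minimum $q_0$ has Hessian eigenvalues $0$ and $1/\operatorname{Im}\tau$, hence local index $+1$; the degree count $(-1,-1,-1,+1,+1)$ is then perfectly consistent, so no contradiction is available from counting alone. Your dichotomy --- ``a suitable perturbation of $\tau$ must either create extra critical points or force $q_0$ to coalesce with some $\omega_k/2$'' --- is not established and is not obviously true: the degeneracy locus could be a codimension-one set in $\tau$, and a transverse perturbation could simply restore non-degeneracy of the minimum without changing the critical-point count, while a perturbation along that locus changes nothing. Making the bifurcation argument work would require showing that the family $\tau\mapsto G(\cdot;\tau)$ actually realizes a versal unfolding of the $v^4$-type singularity at $q_0$ (a nonvanishing transversality condition on a $\tau$-derivative), which you have not formulated, let alone verified. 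As written, both halves of the proof are reductions to statements as hard as the original.
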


It is well known that the non-degeneracy of critical points of Green functions has many important applications, such as in constructing bubbling solutions via the reduction method and proving the local uniqueness of bubbling solutions for elliptic PDEs; see e.g. \cite{BKLY, CLW, CL-2, LW4, LY} and references therein. Recently, we learned from Hao Chen that critical points of Green functions have also interesting applications in constructing minimal surfaces; see \cite{CW-TAMS,CT-SIAM}.

In view of Theorems \ref{thm-LW} and \ref{thm-A0}, naturally, we may ask the same question for the sum of two Green fuctions $G(z-p_1)+G(z-p_2)$. By changing variable $z\mapsto z+\frac{p_1+p_2}{2}$, we can always assume $p_2=-p_1$, so it is enough to study the Green function
 $G_p(z)=G_{-p}(z)$ defined by
\begin{equation}G_p(z;\tau)=G_p(z):=\frac12\big(G(z-p)+G(z+p)\big).\end{equation}
Note that if $p\in E_{\tau}[2]$, i.e. $p=\frac{\omega_k}{2}$ for some $k$, then $G_p(z)=G(z-p)$ and so Theorem \ref{thm-LW} implies that $G_p(z)$ has either $3$ or $5$ critical points. 
Thus, we only consider the case $p\in E_{\tau}\setminus E_{\tau}[2]$. Clearly $G_p(z)$ is also even, so $\frac{\omega_k}{2}$, $k=0,1,2,3$, are all trivial critical points of $G_p(z)$, i.e. the number of critical points of $G_p(z)$ is an even number of at least $4$. 
We generalized the first statement of Theorem \ref{thm-LW} to $G_p(z)$ in Part I \cite{CFL}.

\begin{Theorem}\cite{CFL}\label{main-thm-1} For any $p\in E_{\tau}\setminus E_{\tau}[2]$, $G_p(z)$ has at most $3$ pairs of nontrivial critical points, or equivalently, the number of critical points of $G_p(z)$ belongs to $\{4,6,8,10\}$, and each number in $\{4,6,8,10\}$ really occurs for different $(\tau, p)$'s. 

Moreover,  fix any $\tau$, then for almost all $p\in E_{\tau}\setminus E_{\tau}[2]$, all critical points of $G_p(z)$ are non-degenerate.
\end{Theorem}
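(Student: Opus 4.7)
My plan starts with translating the condition $\nabla G_p(a)=0$ into a single complex equation on $E_\tau$. Writing $G(z)=-\frac{1}{2\pi}\log|\vartheta_1(z)|+\frac{y^2}{2\,\mathrm{Im}\,\tau}+C$ and using the standard identity $\vartheta_1'(z)/\vartheta_1(z)=\zeta(z)-\eta_1 z$, one checks that $a$ is a critical point of $G_p$ if and only if
\begin{equation*}
\tfrac{1}{2}\bigl(\zeta(a+p)+\zeta(a-p)\bigr)=\eta_1 a+\frac{\pi(\bar a-a)}{\mathrm{Im}\,\tau}.
\end{equation*}
The $\zeta$-addition formula $\zeta(a-p)+\zeta(a+p)=2\zeta(a)+\wp'(a)/(\wp(a)-\wp(p))$ rewrites this as
\begin{equation*}
h(a)+\frac{\wp'(a)}{2(\wp(a)-\wp(p))}=0,\qquad h(a):=\zeta(a)-\eta_1 a-\frac{\pi(\bar a-a)}{\mathrm{Im}\,\tau},
\end{equation*}
whose $h$-part is exactly the critical-point operator for $G$ itself, so the critical points of $G_p$ are a perturbation of those of $G$ by a meromorphic correction with controlled poles at $\pm p$.

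For the upper bound of ten critical points, I would exploit the pole structure of this equation: as a function of $a$, the term $\wp'(a)/(2(\wp(a)-\wp(p)))$ has simple poles at $a=\pm p$, while $\zeta(a)$ contributes a simple pole at $a=0$, perturbed by the real-linear antiholomorphic term $\pi(\bar a-a)/\mathrm{Im}\,\tau$. Converting to the algebraic variable $w=\wp(a)$ and separating the two sheets according to the sign of $\wp'(a)$, the holomorphic part becomes a rational equation of bounded degree, while the antiholomorphic contribution is treated by degree theory on the real torus. Careful bookkeeping of pole orders together with the symmetry $a\mapsto -a$ and subtraction of the four trivial critical points from $E_\tau[2]$ caps the number of nontrivial critical points at six. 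To realize each cardinality in $\{4,6,8,10\}$, I would start from a parameter $(\tau,p)$ admitting ten critical points and deform $(\tau,p)$ continuously; at generic saddle-node bifurcations nontrivial pairs either merge and annihilate off $E_\tau[2]$ or collide on $E_\tau[2]$, each event dropping the count by two.

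For the generic non-degeneracy statement I would apply Sard's theorem to the real-analytic map
\begin{equation*}
F:(E_\tau\setminus E_\tau[2])\times(E_\tau\setminus E_\tau[2])\longrightarrow\mathbb{R}^3,\qquad F(a,p)=\bigl(\nabla G_p(a),\,\det D^2 G_p(a)\bigr).
\end{equation*}
The zero set $F^{-1}(0)$ records exactly those pairs $(a,p)$ for which $a$ is a degenerate critical point of $G_p$. Since the domain has real dimension $4$ and the target is $\mathbb{R}^3$, if $0$ is a regular value then $F^{-1}(0)$ is a real-analytic $1$-manifold whose projection to the $p$-factor has measure zero. The main obstacle is the transversality check: at a degenerate critical point one must verify that varying $p$ alone produces a surjective differential $\partial_p F$, i.e.\ that one can always break the degeneracy by a small perturbation of $p$. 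This reduces to the non-vanishing of a determinant built from $\wp'$ and $\wp''$ evaluated at $a\pm p$; since this quantity is real-analytic in $(a,p)$ and is not identically zero (as seen by letting $p\to 0$ and using the non-degeneracy part of Theorem \ref{thm-A0}), the failure locus lies in a proper real-analytic subvariety whose $p$-projection is of measure zero, giving the claim.
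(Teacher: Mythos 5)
Your reduction of $\nabla G_p(a)=0$ to $\zeta(a+p)+\zeta(a-p)-2(r\eta_1+s\eta_2)=0$ and then, via the addition formula, to Hitchin's formula is correct and is exactly the paper's starting point (equations \eqref{a+001p}--\eqref{513-1}). The genuine gap is in the upper bound, which is the heart of the theorem (note that the present paper only cites Part I \cite{CFL} for it, where the bound is obtained through the generalized Lam\'e equation and its monodromy, not by elementary counting). Your critical-point equation is not holomorphic in $a$: the term $\pi(\bar a-a)/\operatorname{Im}\tau$ is antiholomorphic, so passing to $w=\wp(a)$ does not produce ``a rational equation of bounded degree,'' and no bookkeeping of pole orders of the meromorphic part can bound the number of zeros of a genuinely real-analytic equation. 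Degree theory cannot rescue this: the only identity it yields is the signed count $\sum_q\deg_p(q)=-2$ of \eqref{deg-count}, which is compatible with arbitrarily many saddle/minimum pairs; an upper bound requires controlling the zeros of each sign separately, and already for $G$ itself (one extra pair at most) this required the full strength of Lin--Wang's argument or Bergweiler--Eremenko's antiholomorphic dynamics. As written, your sketch of this step does not contain a workable idea.

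Two secondary gaps. In the deformation argument for realizing each $N\in\{4,6,8,10\}$, the claim that ``each event drops the count by two'' fails when two nontrivial pairs $\pm a_1,\pm a_2$ merge and annihilate away from $E_\tau[2]$: that event drops the count by four, so attaining the intermediate value $8$ does not follow; the present paper needs the separate connectedness argument of Theorem \ref{thm-section5-5} to produce $8$. For generic non-degeneracy, the Sard/transversality scheme is reasonable, but your verification that the relevant determinant is not identically zero ``by letting $p\to 0$ and using Theorem \ref{thm-A0}'' is not available for every fixed $\tau$: Theorem \ref{thm-A0} only applies when the extra pair of $G$ exists, and for transitional $\tau$ the Green function $G$ itself has degenerate critical points, so this limit gives no information; the transversality check must be carried out for each fixed $\tau$ by a different argument.
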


Besides the motivation to generalize Theorems \ref{thm-LW} and \ref{thm-A0}, there are other two motivations from Painlev\'{e} VI equations and curvature equations for studying critical points of $G_p(z)$; see Section 1.2 below.

\subsection{New results}
In Part II of this series, we study the important case $\tau=ib$ with $b>0$, i.e. $E_{\tau}$ is a rectangular torus. By developing a completely different approach from Part I \cite{CFL}, we can give a complete answer for $\wp(p)\in\mathbb{R}$ as follows. Here $\wp(z)$ is the Weierstrass $\wp$-function that will be recalled soon.

\begin{theorem}\label{main-thm-b-2}
Let $\tau=ib$ with $b>0$. Then there are $8$ real values, denoted by
$$d_1<d_2<\cdots<d_7<d_8,$$
such that the following statements hold.
\begin{itemize}
\item[(1)] $G_p(z)$ has no nontrivial critical points for
$$\wp(p)\in (-\infty, d_1]\cup [d_2, d_3]\cup [d_4, d_5]\cup [d_6, d_7]\cup [d_8,+\infty).$$
\item[(2)] $G_p(z)$ has a unique pair of nontrivial critical points $\pm a_0$ that are always non-degenerate saddle points for
$$\wp(p)\in (d_1, d_2)\cup (d_3, d_4)\cup (d_5, d_6)\cup (d_7, d_8).$$
Furthermore, by writing $a_0=r_0+s_0\tau$ with $(r_0,s_0)\in\mathbb{R}^2\setminus\frac12\mathbb{Z}^2$, then
\begin{itemize}
\item[(2-1)] if $\wp(p)\in (d_1, d_2)$, then $a_0=\frac12+s_0\tau$.
\item[(2-2)] if $\wp(p)\in (d_3, d_4)$, then $a_0=r_0$.
\item[(2-3)] if $\wp(p)\in (d_5, d_6)$, then $a_0=s_0\tau$.
\item[(2-4)] if $\wp(p)\in (d_7, d_8)$, then $a_0=r_0+\frac12\tau$.
\end{itemize}
\end{itemize}
\end{theorem}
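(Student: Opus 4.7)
The plan combines three ingredients: the Klein-four symmetry that appears when $\wp(p)\in\mathbb{R}$, an explicit rational formula for $\det D^2 G_p(\frac{\omega_k}{2})$ as a function of $\wp(p)$, and a Morse-theoretic counting argument on $E_\tau$.

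First, I would observe that for $\tau=ib$ and $\wp(p)\in\mathbb{R}$ the point $p$ lies on one of the four axes $\mathbb{R}$, $i\mathbb{R}$, $\frac{1}{2}+i\mathbb{R}$, $\mathbb{R}+\frac{ib}{2}$ (modulo $\Lambda_\tau$), and $G_p$ is invariant under the Klein four-group generated by $z\mapsto -z$ and $z\mapsto\bar z$. Consequently, nontrivial critical points fall into either $\pm$-pairs on one of these four axes (orbits of size $2$) or off-axis quadruples (orbits of size $4$, each containing two $\pm$-pairs). From the critical point equation
$$\zeta(z-p)+\zeta(z+p)=2(r\eta_1+s\eta_2),\qquad z=r+s\tau,\ r,s\in\mathbb{R},$$
and $\partial_z^2 G_p(z)=\frac{1}{8\pi}(\wp(z-p)+\wp(z+p))+\text{const}$, together with the classical identity
$$\wp\bigl(\tfrac{\omega_k}{2}+p\bigr)-e_k=\frac{(e_i-e_k)(e_j-e_k)}{\wp(p)-e_k},\qquad \{i,j,k\}=\{1,2,3\},$$
where $e_k:=\wp(\frac{\omega_k}{2})$, the degeneracy condition $\det D^2 G_p(\frac{\omega_k}{2})=0$ becomes a rational equation in $X=\wp(p)$ with exactly two real roots per index $k\in\{0,1,2,3\}$; after sorting, the resulting $8$ values are the $d_1<\cdots<d_8$ of the statement.

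Second, on each of the four axes the Klein four-group reduces the 2D critical point equation to a real-analytic equation in a single real variable. I would show that this one-variable equation has at most one zero via a monotonicity argument on its derivative, exploiting that $\wp$ is monotone along each axis between consecutive half-periods. By the implicit function theorem, the pair $\pm a_0$ is born from a pitchfork bifurcation at some half-period $\frac{\omega_k}{2}$ whenever $\wp(p)$ crosses one of the $d_i$'s: the zero eigenvector of the degenerate Hessian is aligned with a coordinate axis by rectangular symmetry, forcing $\pm a_0$ onto the axis through $\frac{\omega_k}{2}$ in that direction. This establishes items (2-1)--(2-4) and delivers the existence and axial-uniqueness of $\pm a_0$ on each open interval $(d_{2j-1},d_{2j})$.

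The main obstacle will be excluding off-axis quadruples and verifying that $\pm a_0$ is a non-degenerate saddle throughout. For this I plan to use Morse theory on $E_\tau$: since $\chi(E_\tau)=0$ and $\Delta G_p=1/b>0$ at smooth points forces every local extremum of $G_p$ to be a minimum, one obtains the constraint $s_0+2s_1=2+n$ relating the number $s_0$ of trivial saddles, the number $s_1$ of nontrivial saddle pairs, and the total number $n$ of nontrivial pairs. Combining this with Theorem 1.3's bound $n\leq 3$, the Hessian-sign tracking from Step 1 (how the count of trivial saddles flips as $\wp(p)$ crosses each $d_i$), and the axial-uniqueness of Step 2, I expect to force $n\leq 1$ on the locus $\wp(p)\in\mathbb{R}$, rule out off-axis quadruples, and pin down $s_1=n$. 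The non-degeneracy of $\pm a_0$ on each open interval then follows from the implicit function theorem applied away from the $d_i$.
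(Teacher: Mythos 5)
Your proposal contains a genuine gap at its central step: excluding extra pairs of nontrivial critical points (in particular the off-axis quadruples) when $\wp(p)\in\mathbb{R}$. The Morse/degree count you invoke cannot do this. With all critical points non-degenerate, the identity $\sum_q \deg_p(q)=-2$ together with ``every extremum is a minimum'' gives $s_0+2s_1=3+n$ (note your $2+n$ is off by one), and this relation is perfectly consistent with adding an off-axis quadruple consisting of one saddle pair and one minimum pair: that changes $n$ by $2$ and $s_1$ by $1$, preserving the identity. So no combination of the degree count, the bound $n\le 3$, and axial uniqueness forces $n\le 1$. Likewise, your claim that the one-variable axial equation ``has at most one zero via a monotonicity argument'' is asserted without substance; the function $\zeta(a+p)+\zeta(a-p)-2(r\eta_1+s\eta_2)$ restricted to an axis is not obviously monotone, and this is exactly the hard part. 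The paper's route is entirely different here: it encodes nontrivial critical points as points of $\sigma_1\cap\sigma_2\setminus\{A_0,\dots,A_3\}$, where $\sigma_1,\sigma_2$ are conditional stability sets of the generalized Lam\'e equation, and then shows (Lemmas 4.5--4.8) that for $\tau=ib$ and $\wp(p)\in\mathbb{R}$ one of these sets lies in a line while the other meets that line in at most one extra point, giving ``at most one pair'' for \emph{all} nontrivial critical points at once, on-axis or not. Nothing in your proposal substitutes for this mechanism.

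Secondary issues: (i) you do not address why the eight roots of $\det D^2G_p(\frac{\omega_k}{2})=0$ are pairwise distinct and interlace as claimed --- the paper needs the cusp analysis of the stability sets plus a continuity argument from $b=1$ to prove $\partial\mathcal{B}_k\cap\partial\mathcal{B}_l\cap\mathbb{R}=\emptyset$; (ii) a pitchfork bifurcation at $d_j$ only yields existence of the pair \emph{near} the bifurcation value, not on the whole interval $(d_{2j-1},d_{2j})$ --- the paper instead shows the count is locally constant on each interval and evaluates it at the explicit points $p=\frac14,\frac{\tau}{4},\frac14+\frac{\tau}{2},\frac12+\frac{\tau}{4}$, where the nontrivial critical points are computed exactly via a sublattice/Green-function identity. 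The parts of your plan that do align with the paper are the identification of the $d_j$ with the degeneracy loci of the trivial critical points and the use of the degree formula to conclude that $\pm a_0$ are saddles once uniqueness and the Hessian signs at the half-periods are known.
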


Here $\wp(z)=\wp( z;\tau)$ is the Weierstrass $\wp$-function with periods
$\Lambda_{\tau}$, defined by%
\[\wp(z):=\frac{1}{z^{2}}+\sum_{\omega \in \Lambda_{\tau
}\backslash\{0\}  }\left(  \frac{1}{(z-\omega)^{2}}-\frac
{1}{\omega^{2}}\right) ,
\]
which satisfies the well-known cubic equation
\[\wp^{\prime}(z)^{2}=4\wp(z)^{3}-g_{2}\wp
(z)-g_{3}=4\prod_{k=1}^3(\wp(z)-e_k),
\]
where $g_2=g_2(\tau), g_3=g_3(\tau)$ are known as invariants of the elliptic curve, and $e_k(\tau)=e_k:=\wp(\frac{\omega_k}{2})$ for $k=1,2,3$. It is well known that $\wp(\cdot): E_{\tau}\to \mathbb{C}\cup\{\infty\}$ is a double cover with branch points at $\{\frac{\omega_k}{2}\}_{k=0}^3$, i.e. for any $c\in \mathbb{C}\cup\{\infty\}$, there is a unique pair $\pm z_c\in E_{\tau}$ such that $\wp(\pm z_{c})=c$. Thus $p\in E_{\tau}\setminus E_{\tau}[2]$ is equivalent to $\wp(p)\in\mathbb C\setminus\{e_1, e_2, e_3\}$. We will prove in Theorem \ref{main-thm-b-22} that $e_2\in (d_2, d_3)$, $e_3\in (d_4, d_5)$ and $e_1\in (d_6, d_7)$ for $\tau=ib$ with $b>0$.

\begin{remark}\label{remk1-2}
Theorem \ref{main-thm-b-2} shows that $G_p(z)$ has at most one pair of nontrivial critical points as long as $\wp(p)\in\mathbb R$. This result can not be obtained from the general result Theorem \ref{main-thm-1}. Notice that this result does not necessarily hold for $\wp(p)\notin\mathbb{R}$, see Theorem \ref{thm-section5-5} below for example.  We will also prove in Lemma \ref{lemma3-10} that for $\tau=ib$ with $b\in (0, 2b_0)\cup(2b_1,+\infty)$ (where $b_0, b_1$ are given by Theorem \ref{thm-LW}), there exists $\varepsilon>0$ small such that for any $|p-\frac{1+\tau}{4}|<\varepsilon$, $G_{p}(z)$ has exactly $3$ pairs of nontrivial critical points, or equivalently has exactly $10$ critical points. 

It is also interesting to compare Theorem \ref{main-thm-b-2}-(2) with Theorem \ref{thm-A0}. Theorem \ref{thm-A0} says that the unique pair of nontrivial critical points of $G(z)$ are non-degenerate minimal points, while  Theorem \ref{main-thm-b-2}-(2) says that the unique pair of nontrivial critical points of $G_p(z)$ are non-degenerate saddle points. 
\end{remark}

Theorem \ref{main-thm-b-2} can be applied to obtain the following result.

\begin{theorem}\label{main-thm-b-23-0}
Let $\tau=ib$ with $b>0$ and $p\in E_{\tau}\setminus E_{\tau}[2]$. Suppose $G_p(z)$ has a nontrivial critical point $a$. Then \begin{align*}
\operatorname{Im}\wp(a)=0\quad\text{if and only if}\quad \operatorname{Im}\wp(p)=0,\\
\operatorname{Im}\wp(a)<0\quad\text{if and only if}\quad \operatorname{Im}\wp(p)>0,\\
\operatorname{Im}\wp(a)>0\quad\text{if and only if}\quad \operatorname{Im}\wp(p)<0.
\end{align*}
\end{theorem}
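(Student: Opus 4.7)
The plan is to rewrite the critical point equation of $G_p(z)$ via the classical identity $\zeta(a+p) + \zeta(a-p) = 2\zeta(a) + \wp'(a)/(\wp(a)-\wp(p))$, giving at a nontrivial critical point $a \in E_\tau \setminus E_\tau[2]$
\begin{equation}
2Z(a)\bigl(\wp(a) - \wp(p)\bigr) + \wp'(a) = 0, \qquad Z(z) := \zeta(z) - r\eta_1 - s\eta_2\ \ (z = r+s\tau). \tag{$\ast$}
\end{equation}
For $\tau = ib$, all of $\wp,\wp',\zeta$ have real coefficients, and the Legendre relation gives $\eta_1 \in \mathbb{R}$ and $\eta_2 \in i\mathbb{R}$, so a direct check yields $\overline{Z(z)} = Z(\bar z)$. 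Conjugating $(\ast)$ then shows that $a$ is a nontrivial critical point of $G_p$ if and only if $\bar a$ is a nontrivial critical point of $G_{\bar p}$.

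I would first establish the equivalence $\operatorname{Im}\wp(a) = 0 \iff \operatorname{Im}\wp(p) = 0$. The implication ``$\operatorname{Im}\wp(p) = 0 \Rightarrow \operatorname{Im}\wp(a) = 0$'' is immediate from Theorem~\ref{main-thm-b-2}(2): in each of subcases (2-1)--(2-4), the critical point $a_0$ lies on one of the four real symmetric lines $\operatorname{Re}(z) \in \{0, \tfrac{1}{2}\}$ or $\operatorname{Im}(z) \in \{0, \tfrac{b}{2}\}$, on each of which $\bar z \equiv \pm z \pmod{\Lambda_\tau}$, and hence $\wp(a_0) \in \mathbb{R}$. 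Conversely, if $\wp(a) \in \mathbb{R}$ then $\bar a \equiv \pm a$; by the conjugation symmetry above, $\bar a$ is a nontrivial critical point of $G_{\bar p}$, and since $G_{\bar p}$ is even, so is $a$. Subtracting the critical point equations for $G_p$ and $G_{\bar p}$ at the common point $a$ yields
\[
\wp'(a)\left[\frac{1}{\wp(a) - \wp(p)} - \frac{1}{\wp(a) - \wp(\bar p)}\right] = 0,
\]
and since $a$ is nontrivial $\wp'(a) \neq 0$, so $\wp(p) = \wp(\bar p)$, i.e.\ $\operatorname{Im}\wp(p) = 0$.

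For the strict sign reversal, let $\mathcal{V}$ denote the real-analytic variety of admissible pairs $(p, a)$ and consider the continuous map $\phi : \mathcal{V} \to \mathbb{R}^2$, $\phi(p,a) = (\operatorname{Im}\wp(p), \operatorname{Im}\wp(a))$. The equivalence just proved says $\phi^{-1}(\{x=0\}) = \phi^{-1}(\{y=0\})$, so on the open set $\mathcal{V}^* := \{\operatorname{Im}\wp(p) \neq 0\}$ the product $\operatorname{Im}\wp(p)\cdot\operatorname{Im}\wp(a)$ has locally constant sign. To pin this sign down to $-1$ globally, I would compute the first variation at a base point $(p_0, a_0)$ with $\wp(p_0), \wp(a_0) \in \mathbb{R}$---say case (2-2), where $a_0 \in \mathbb{R}$. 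Writing $p = p_0 + \epsilon q$ and $a = a_0 + \epsilon v + o(\epsilon)$, the non-degeneracy of the saddle $a_0$ (Theorem~\ref{main-thm-b-2}(2)) makes the linearization of $(\ast)$ uniquely solvable for $v$ in terms of $q$, and comparing $\operatorname{Im}\wp'(a_0)v$ against $\operatorname{Im}\wp'(p_0)q$ should fix the opposite sign. Combined with continuous extension on each connected component of $\mathcal{V}^*$, this would complete the theorem.

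The main obstacle is this linearization: because $Z(a)$ depends non-holomorphically on $a$ through $\operatorname{Im}(a)$, the linearized equation is only $\mathbb{R}$-linear in $v$, and the sign of $\operatorname{Im}\wp(a)/\operatorname{Im}\wp(p)$ hinges on a delicate interplay between $Z(a_0)$, $\wp'(a_0)$, $\wp''(a_0)$, and $\eta_1$. A cleaner route I would attempt first is to manipulate $(\ast)$ together with its complex conjugate to derive a closed-form expression for $\operatorname{Im}\wp(p)/\operatorname{Im}\wp(a)$ exhibiting manifest negativity, which would handle all four boundary cases uniformly and sidestep the case-by-case first-variation computation.
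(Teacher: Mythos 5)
Your reduction of the critical point equation to $(\ast)$ and your treatment of the zero locus are sound. The forward implication via Theorem \ref{main-thm-b-2}(2) is exactly what the paper does, and your converse (conjugating to see that $a$ is simultaneously a critical point of $G_p$ and $G_{\bar p}$, then subtracting the two equations to force $\wp(p)=\wp(\bar p)$) is a valid, slightly different packaging of the paper's direct conjugation of Hitchin's formula. Your locally-constant-sign observation on the set where $\operatorname{Im}\wp(p)\neq 0$ is also correct, provided you parametrize the variety $\mathcal{V}$ by the critical point itself: since $G(z)$ has no nontrivial critical points on a rectangular torus, $\zeta(a)-r\eta_1-s\eta_2\neq 0$ for all $a=r+s\tau\notin E_\tau[2]$, so Hitchin's formula \eqref{513-1} determines $\wp(p)$ from $(r,s)$, and the relevant parameter domain is $I^\circ\cup II^\circ$, whose two pieces are connected.

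The genuine gap is the anchor for the sign: you acknowledge that the first-variation computation at a boundary base point is "the main obstacle" and do not carry it out, and indeed that linearization is only $\mathbb{R}$-linear and delicate (moreover a base point with $a_0$ real sits on $\partial I\cap\partial II$, so you would have to track which of $I^\circ$, $II^\circ$ each perturbation direction enters). The paper avoids this entirely by evaluating both signs at the corner $(r,s)\to(0,0)$ along rays $(r,s)=(Cs,s)$: by \cite[Theorem 4.5]{CKL-PAMQ} one has $\lim_{s\to 0+}\wp(p_{Cs,s}(\tau))=\frac{2\pi i}{C+\tau}-\eta_1$, whose imaginary part is $\frac{2\pi C}{|C+\tau|^2}$ (positive on $I^\circ$, negative on $II^\circ$), while the elementary Laurent expansion $\wp(a)=a^{-2}(1+O(|a|^4))$ gives $\operatorname{Im}\wp(a)\sim -2rsb/|a|^4$ near the same corner (negative on $I^\circ$, positive on $II^\circ$). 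Connectedness of $I^\circ$ and $II^\circ$ then propagates both signs globally. Without this (or a completed version of your linearization, or your hoped-for closed-form ratio, neither of which is supplied), the strict inequalities in the theorem are not established; so as written the proposal proves only the first equivalence.
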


The approach of proving Theorem \ref{main-thm-b-2} is completely different from Part I \cite{CFL}. We will establish a deep connection between conditional stability sets of a second order linear ODE (see \eqref{GLE1} below)
and nontrivial critical points of $G_p(z)$. The advantage of this approach is that, when $\tau=ib$ with $b>0$ and $\wp(p)\in\mathbb{R}$, the conditional stability sets are easy to study because they admit certain symmetries.
Moreover, this approach allows us to write down $d_j$'s explicitly in terms of classical special functions. 
Let $\zeta(z)=\zeta(z;\tau):=-\int^{z}\wp(\xi)d\xi$ be the Weierstrass
zeta function with two quasi-periods
\[
\eta_{k}(\tau)=\eta_k:=2\zeta\Big(\frac{\omega_{k}}{2}\Big)=\zeta(z+\omega_{k}%
)-\zeta(z),\quad k=1,2.
\]
This $\zeta(z)$ is an odd meromorphic function with simple poles at $\Lambda_{\tau}$.
As in Part I \cite{CFL}, we define
\begin{equation}\label{B00}
\mathcal{B}_0:=\Big\{z\in\mathbb{C}\; :\; \Big|z-\Big(\frac{\pi}{\operatorname{Im}\tau}-\eta_1\Big)\Big|<\frac{\pi}{\operatorname{Im}\tau}\Big\},
\end{equation}
and for $k\in\{1,2,3\}$,
\begin{align}\label{alphak0}
\alpha_k:=\frac{\frac{\pi}{\operatorname{Im}\tau}-(\eta_1+e_k)}{3e_k^2-\frac{g_2}{4}},\quad \beta_k:=\frac{\pi}{|3e_k^2-\frac{g_2}{4}|\operatorname{Im}\tau}>0,
\end{align}
\begin{equation}\label{alphak1}
\mathcal{B}_k:=\begin{cases}\bigg\{z\in\mathbb{C}\; :\; \bigg|z-e_k-\frac{\overline{\alpha_k}}{|\alpha_k|^2-\beta_k^2}\bigg|<\frac{\beta_k}{\left||\alpha_k|^2-\beta_k^2\right|}\bigg\}\quad\text{if }|\alpha_k|\neq \beta_k,\\
\Big\{z\in\mathbb{C}\; :\; \operatorname{Re}(\alpha_k (z-e_k))>\frac12\Big\}\quad\text{if }|\alpha_k|=\beta_k,\end{cases}
\end{equation}
that is, $\mathcal{B}_k$ is either an open disk or an open half plane. It was proved in Part I \cite{CFL} that $|\alpha_k|=\beta_k$ is equivalent to that $\frac{\omega_k}{2}$ is a degenerate critical point of $G(z)$. We recall the following result about the degeneracy of trivial critical points.
\begin{Theorem}\cite{CFL}\label{main-thm-01} Let $k\in\{0,1,2,3\}$ and $p\neq \frac{\omega_k}{2}$. Then $\frac{\omega_k}{2}$ is a degenerate critical point of $G_p(z)$ if and only if $\wp(p-\frac{\omega_k}{2})\in \partial\mathcal{B}_0$, if and only if $\wp(p)\in \partial\mathcal{B}_k$, where $\mathcal{B}_k$ is defined by \eqref{B00}-\eqref{alphak1}.
\end{Theorem}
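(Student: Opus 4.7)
The plan is to compute the Hessian of $G_p$ at $\frac{\omega_k}{2}$ directly via the Hitchin-type formula for $\partial_z G$, reduce the degeneracy condition $\det D^2 G_p(\frac{\omega_k}{2})=0$ to the circle/line equation in $\wp(p-\frac{\omega_k}{2})$ that defines $\partial\mathcal B_0$, and then translate this to a condition on $\wp(p)$ via the half-period addition formula for $\wp$, which is a M\"obius map sending $\partial\mathcal B_0$ onto $\partial\mathcal B_k$.

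Starting from the Hitchin-type formula $-4\pi\partial_zG(z)=\zeta(z)-r\eta_1-s\eta_2$ with $z=r+s\tau$ and $r,s\in\mathbb R$, averaging over $z\pm p$ yields
$$-4\pi\partial_zG_p(z)=\tfrac12\bigl(\zeta(z-p)+\zeta(z+p)\bigr)-r\eta_1-s\eta_2.$$
Differentiating once more in $z$, using $\zeta'=-\wp$ together with the identity $(\partial_zr)\eta_1+(\partial_zs)\eta_2=\eta_1-\tfrac{\pi}{\operatorname{Im}\tau}$ (a consequence of Legendre's relation $\tau\eta_1-\eta_2=2\pi i$ and $s=\operatorname{Im}z/\operatorname{Im}\tau$), one obtains
$$-4\pi\partial_{zz}G_p(z)=-\tfrac12\bigl(\wp(z-p)+\wp(z+p)\bigr)-\eta_1+\tfrac{\pi}{\operatorname{Im}\tau}.$$
At $z=\tfrac{\omega_k}{2}$ the two $\wp$ terms coincide by periodicity and evenness of $\wp$. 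Since $G_p$ is real, $\det D^2G_p=4\bigl[(\partial_{z\bar z}G_p)^2-|\partial_{zz}G_p|^2\bigr]$, and $\partial_{z\bar z}G_p\equiv\tfrac{1}{4\operatorname{Im}\tau}$ away from $\pm p$. Setting $\det D^2G_p(\tfrac{\omega_k}{2})=0$ therefore reduces to
$$\Bigl|\wp\bigl(p-\tfrac{\omega_k}{2}\bigr)-\bigl(\tfrac{\pi}{\operatorname{Im}\tau}-\eta_1\bigr)\Bigr|=\tfrac{\pi}{\operatorname{Im}\tau},$$
which is exactly $\wp(p-\tfrac{\omega_k}{2})\in\partial\mathcal B_0$ in view of \eqref{B00}. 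This establishes the first equivalence.

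For the second equivalence the case $k=0$ is immediate. For $k\in\{1,2,3\}$ I would invoke the half-period addition formula, obtained from the standard addition formula at $v=\omega_k/2$ using $\wp'(\omega_k/2)=0$:
$$\wp\bigl(p-\tfrac{\omega_k}{2}\bigr)=e_k+\frac{(e_k-e_j)(e_k-e_l)}{\wp(p)-e_k},\qquad \{j,l\}=\{1,2,3\}\setminus\{k\},$$
with $(e_k-e_j)(e_k-e_l)=3e_k^2-g_2/4$ after using $\sum_m e_m=0$ and $\sum_{m<n}e_me_n=-g_2/4$. Substituting this M\"obius relation into the equation for $\partial\mathcal B_0$ and clearing $|\wp(p)-e_k|$ produces $|1-\alpha_k(\wp(p)-e_k)|=\beta_k|\wp(p)-e_k|$ with the $\alpha_k,\beta_k$ of \eqref{alphak0}. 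Squaring and completing the square in $\wp(p)-e_k$ recovers the disk in \eqref{alphak1} when $|\alpha_k|\neq\beta_k$; when $|\alpha_k|=\beta_k$ the quadratic term drops out, leaving exactly the half-plane $\operatorname{Re}(\alpha_k(\wp(p)-e_k))=\tfrac12$.

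The main technical obstacle will be the opening step: tracking the Hitchin-type formula carefully and isolating the Legendre-relation contraction so that the specific center $\tfrac{\pi}{\operatorname{Im}\tau}-\eta_1$ of $\mathcal B_0$ emerges naturally from the Hessian computation. Once the expression for $\partial_{zz}G_p(\tfrac{\omega_k}{2})$ is in hand, the passage to $\wp(p)$ is routine M\"obius-map bookkeeping, the only mild subtlety being the bifurcation at $|\alpha_k|=\beta_k$, which (by the $p=0$ analogue) is precisely the condition for $\tfrac{\omega_k}{2}$ to be a degenerate critical point of the single Green function $G$ itself.
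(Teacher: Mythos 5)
Your proposal is correct and follows the same route as the paper (the result is imported from Part I, but the ingredients the present paper displays — the Hessian formula \eqref{eqfc-15}, whose specialization at $a_0=\frac{\omega_k}{2}$ is exactly your identity $4\pi^2\det D^2G_p(\frac{\omega_k}{2})=\frac{\pi^2}{(\operatorname{Im}\tau)^2}-|\wp(p-\frac{\omega_k}{2})+\eta_1-\frac{\pi}{\operatorname{Im}\tau}|^2$, and the definitions \eqref{B00}--\eqref{alphak1} — are precisely what your argument reconstructs). The Hitchin-formula differentiation with the Legendre-relation contraction, followed by the half-period addition formula viewed as a M\"obius map carrying $\partial\mathcal{B}_0$ to $\partial\mathcal{B}_k$ (with the $|\alpha_k|=\beta_k$ degeneration to a line), all check out.
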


Let $\tau=ib$ with $b>0$. Then Theorem \ref{thm-LW} says that $\frac{\omega_k}{2}$ is a non-degenerate critical point of $G(z)$ for any $k\in \{1,2,3\}$, so $\mathcal{B}_k$'s are all open disks.
For example, the figures of $\mathcal{B}_k$'s for $\tau=i$ are presented numerically in Figure 1.
\begin{figure}[ht]
\label{O5-1}\includegraphics[width=2in]{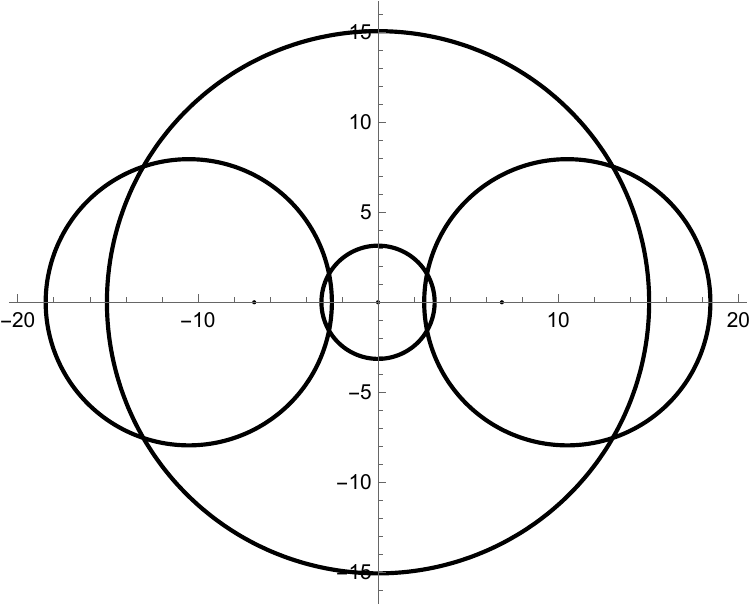}\caption{The four circles for $\tau=i$: the smallest circle for $\partial\mathcal{B}_0$, biggest for $\partial\mathcal{B}_3$, left for $\partial\mathcal{B}_1$ and right for $\partial\mathcal{B}_2$. We will prove in Theorem \ref{main-thm-b-22} that the relative positions of the four circles $\partial\mathcal{B}_k$'s are of this form for all $b>0$.}%
\end{figure}

Besides, since $\tau=ib$ with $b>0$, it is well known that $e_1, e_2, e_3, g_2, g_3,  \eta_1\in\mathbb{R}$, so the centers of $\mathcal{B}_k$'s all lie on $\mathbb{R}$, and then $\partial\mathcal{B}_k\cap\mathbb{R}$ contains two points for each $k$, which implies that $(\cup_k\partial\mathcal{B}_k)\cap\mathbb{R}$ contains at most $8$ points. 
Note from Figure 1 that $(\cup_k\partial\mathcal{B}_k)\cap\mathbb{R}$ contains exactly $8$ points for $b=1$. Our next result shows that $(\cup_k\partial\mathcal{B}_k)\cap\mathbb{R}$ contains exactly $8$ points that are precisely those $d_j$'s in Theorem \ref{main-thm-b-2} for all $b>0$.

\begin{theorem}\label{main-thm-b-22}  Let $\tau=ib$ with $b>0$. Let $d_1<\cdots<d_8$ be given by Theorem \ref{main-thm-b-2}.
Then $$(\cup_k\partial\mathcal{B}_k)\cap\mathbb{R}=\{d_1,\cdots,d_8\},$$and
\begin{align}\label{eqfc-21}
d_1&<\wp\Big(\frac\tau4\Big)<d_2<e_2<d_3<\wp\Big(\frac{1}{4}+\frac{\tau}2\Big)<d_4<e_3\nonumber\\
&<d_5<\wp\Big(\frac{1}{2}+\frac{\tau}{4}\Big)<d_6<e_1<d_7<\wp\Big(\frac14\Big)<d_8.
\end{align}
Furthermore,
{\allowdisplaybreaks
\begin{align}\label{fineq-1}
&d_1=e_1+\frac{3e_1^2-\frac{g_2}{4}}{\frac{2\pi}{b}-(e_1+\eta_1)},\quad
d_2=e_3+\frac{3e_3^2-\frac{g_2}{4}}{\frac{2\pi}{b}-(e_3+\eta_1)},\\
\label{fineq-2}&d_3=-\eta_1,\quad\quad\quad\quad\quad\quad\quad\;\;\;
d_4=e_1-\frac{3e_1^2-\frac{g_2}{4}}{e_1+\eta_1},\\
\label{fineq-3}&d_5=e_2+\frac{3e_2^2-\frac{g_2}{4}}{\frac{2\pi}{b}-(e_2+\eta_1)},\quad
d_6=\frac{2\pi}{b}-\eta_1,\\
\label{fineq-4}&d_7=e_3-\frac{3e_3^2-\frac{g_2}{4}}{e_3+\eta_1},\quad\quad\quad\;\;
d_8=e_2-\frac{3e_2^2-\frac{g_2}{4}}{e_2+\eta_1}.
\end{align}
}%
Consequently, the relative positions of the four circles $\partial\mathcal{B}_k$'s are the same as Figure 1 for all $b>0$.
\end{theorem}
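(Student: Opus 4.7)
My plan is to extract the real geometry of the four circles $\partial\mathcal{B}_k$ when $\tau=ib$, match their eight real intersection points with $d_1,\ldots,d_8$ via Theorems \ref{main-thm-b-2} and \ref{main-thm-01}, and then pin down the interleaving \eqref{eqfc-21} using a symmetry trick together with sign analysis.

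\textbf{Step 1 (real geometry of $\partial\mathcal{B}_k$).} For $\tau=ib$ with $b>0$, the quantities $e_k,\eta_1,g_2$ are all real, so $\alpha_k,\beta_k\in\mathbb{R}$ in \eqref{alphak0}. By Theorem \ref{thm-LW}(1) every half-period is a non-degenerate critical point of $G$, and the equivalence recalled just before Theorem \ref{main-thm-01} then forces $|\alpha_k|\ne\beta_k$ for $k\in\{1,2,3\}$. Hence each $\mathcal{B}_k$ is a genuine open disk with real center. A direct computation from \eqref{B00}--\eqref{alphak1} gives $\partial\mathcal{B}_0\cap\mathbb{R}=\{-\eta_1,\,2\pi/b-\eta_1\}$, and for $k\in\{1,2,3\}$,
\[
\partial\mathcal{B}_k\cap\mathbb{R}=\left\{e_k+\frac{3e_k^2-g_2/4}{2\pi/b-\eta_1-e_k},\ e_k-\frac{3e_k^2-g_2/4}{\eta_1+e_k}\right\},
\]
so that $|(\bigcup_k\partial\mathcal{B}_k)\cap\mathbb{R}|\le 8$.

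\textbf{Step 2 (identification of $d_j$'s with boundary points).} By Theorem \ref{main-thm-b-2}, as $\wp(p)$ crosses each $d_j$ the pair $\pm a_0$ of nontrivial critical points is created or destroyed. Continuity of $\pm a_0$ on the compact torus $E_\tau$, together with the fact that the two members of the pair are always related by $z\mapsto -z$, forces the bifurcation to occur only when $a_0\to -a_0$, i.e.\ $a_0\to\omega_k/2$ for some $k\in\{0,1,2,3\}$; at that limit $\omega_k/2$ is a degenerate critical point of $G_p$, so Theorem \ref{main-thm-01} gives $\wp(p)=d_j\in\partial\mathcal{B}_k$. Combined with Step 1 this yields $\{d_1,\ldots,d_8\}=(\bigcup_k\partial\mathcal{B}_k)\cap\mathbb{R}$. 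The explicit form of $a_0=r_0+s_0\tau$ specified in Theorem \ref{main-thm-b-2}(2-1)--(2-4) restricts which $\omega_k/2$ each $d_j$ can attach to: $\{d_1,d_2\}\subset\partial\mathcal{B}_1\cup\partial\mathcal{B}_3$, $\{d_3,d_4\}\subset\partial\mathcal{B}_0\cup\partial\mathcal{B}_1$, $\{d_5,d_6\}\subset\partial\mathcal{B}_0\cup\partial\mathcal{B}_2$, and $\{d_7,d_8\}\subset\partial\mathcal{B}_2\cup\partial\mathcal{B}_3$. Pigeonhole plus the explicit knowledge that $\partial\mathcal{B}_0\cap\mathbb{R}=\{-\eta_1,2\pi/b-\eta_1\}$ consists of one smaller and one larger real number then pins down $d_3=-\eta_1$ and $d_6=2\pi/b-\eta_1$; the remaining matches follow by comparing the two real endpoints $R_k^{\pm}:=e_k\pm\frac{3e_k^2-g_2/4}{\mp(\eta_1+e_k)\pm(2\pi/b-\eta_1-e_k)}$ of each $\partial\mathcal{B}_k$ using the signs $3e_1^2-g_2/4,\,3e_2^2-g_2/4>0$ and $3e_3^2-g_2/4<0$ (which follow from $3e_k^2-g_2/4=(e_k-e_l)(e_k-e_m)$ and the rectangular ordering $e_2<e_3<e_1$). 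This yields the explicit formulas \eqref{fineq-1}--\eqref{fineq-4}.

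\textbf{Step 3 (interleaving).} For the placements of the quarter-period $\wp$-values I use the following symmetry trick: at $p=\tau/4$, the point $z_0:=\tfrac12+\tfrac{\tau}{4}$ satisfies $z_0-p=\omega_1/2$ and $z_0+p=\omega_3/2$, both critical points of $G$ by Theorem \ref{thm-LW}(1), so $\nabla G_{\tau/4}(z_0)=0$; since $z_0\notin E_\tau[2]$ is a nontrivial critical point of the form $\tfrac12+s_0\tau$, Theorem \ref{main-thm-b-2}(2-1) forces $\wp(\tau/4)\in(d_1,d_2)$. The analogous choices $(p,z_0)=(\tfrac14+\tfrac{\tau}{2},\tfrac14),\,(\tfrac12+\tfrac{\tau}{4},\tfrac{\tau}{4}),\,(\tfrac14,\tfrac14+\tfrac{\tau}{2})$ each place $z_0\pm p$ at half-periods, yielding $\wp(\tfrac14+\tfrac{\tau}{2})\in(d_3,d_4)$, $\wp(\tfrac12+\tfrac{\tau}{4})\in(d_5,d_6)$, and $\wp(\tfrac14)\in(d_7,d_8)$ via parts (2-2)--(2-4). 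The placements of the $e_k$'s in the gaps $(d_2,d_3),(d_4,d_5),(d_6,d_7)$ reduce to direct sign comparisons between the explicit endpoints of $\partial\mathcal{B}_k$ and $e_k$, using the signs of $3e_k^2-g_2/4$ from Step 2 together with those of the denominators $\eta_1+e_k$ and $2\pi/b-\eta_1-e_k$. The \emph{main technical obstacle} will be precisely this last sign analysis, since the signs of $2\pi/b-\eta_1-e_k$ and $\eta_1+e_k$ depend on $b$ (e.g.\ Legendre's relation $\eta_1\tau-\eta_2=2\pi i$ specialized to $\tau=ib$ yields $\eta_2=i(b\eta_1-2\pi)$, so these quantities can change sign with $b$), so the argument must either split into cases or leverage monotonicity of $\wp$ along the real and imaginary axes to conclude uniformly in $b>0$. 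Once this is in place, \eqref{eqfc-21} is established and the qualitative picture of Figure 1 is immediate from the four matched circles.
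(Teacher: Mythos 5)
Your Steps 1--2 are essentially sound and in places take a genuinely different (and attractive) route from the paper: the bifurcation argument that each $d_j$ must lie on some $\partial\mathcal{B}_k$ (a limit of the pair $\pm a_0$ on the compact torus cannot be a singularity, cannot be a nontrivial critical point since none exist at $\wp(p)=d_j$, hence is a half-period which must then be degenerate) is a clean way to get $\{d_1,\ldots,d_8\}=(\cup_k\partial\mathcal{B}_k)\cap\mathbb{R}$ once Theorem \ref{main-thm-b-2} is granted, and your symmetry trick $\nabla G_p(z_0)=\tfrac12(\nabla G(z_0-p)+\nabla G(z_0+p))=0$ when $z_0\pm p$ are both half-periods is a slicker derivation of Lemma \ref{lemma3-10}(1)--(4) than the paper's sublattice argument. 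However, there are two genuine gaps. First, the matching of individual $d_j$'s to the explicit formulas is not closed: your pigeonhole only gives $\{d_3,d_4\}\subset\partial\mathcal{B}_0\cup\partial\mathcal{B}_1$ and $\{d_5,d_6\}\subset\partial\mathcal{B}_0\cup\partial\mathcal{B}_2$, which is also consistent with, e.g., \emph{both} points of $\partial\mathcal{B}_0\cap\mathbb{R}$ landing in $\{d_3,d_4\}$; nothing you have written forces exactly one point of $\partial\mathcal{B}_0$ into each pair, so $d_3=-\eta_1$, $d_6=\tfrac{2\pi}{b}-\eta_1$ and the remaining assignments in \eqref{fineq-1}--\eqref{fineq-4} are not yet established. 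Second, and more seriously, the placement of $e_2,e_3,e_1$ in \eqref{eqfc-21} is deferred to a direct sign comparison that you yourself flag as the main obstacle and do not carry out; since the signs of $\eta_1+e_k$ and $\tfrac{2\pi}{b}-\eta_1-e_k$ can vary with $b$, this is exactly where the uniform-in-$b$ difficulty lives, and the proposal stops short of resolving it.

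Both gaps are avoidable, and the paper's proof shows how. For the ordering of the eight boundary points, the paper proves they are pairwise distinct for \emph{every} $b>0$ (two coinciding points would force two half-periods to be simultaneously degenerate, hence two cusps of a conditional stability set $\sigma_1$ or $\sigma_2$, contradicting Corollary \ref{coro-s4-1}), verifies the ordering \eqref{ee11ee} at $b=1$ from the explicit values $\eta_1=\pi$, $e_3=0$, $e_1=-e_2$, and then transports it to all $b$ by continuity --- no sign analysis of the denominators is ever needed. For \eqref{eqfc-21}, the paper uses an indirect counting argument that your own Step 3 nearly contains: on each finite interval $(d_{j-1},d_j)$ the number of pairs of nontrivial critical points is a constant in $\{0,1\}$, the seven values $\wp(\tfrac{\tau}{4})<e_2<\wp(\tfrac14+\tfrac{\tau}{2})<e_3<\wp(\tfrac12+\tfrac{\tau}{4})<e_1<\wp(\tfrac14)$ alternate between ``one pair'' (your four symmetric critical points) and ``no pair'' ($G_{\omega_k/2}=G(\cdot-\tfrac{\omega_k}{2})$ has none by Theorem \ref{thm-LW}), and none of them lies on $\cup_k\partial\mathcal{B}_k$; hence they occupy seven consecutive distinct intervals and the interleaving follows with no computation. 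In particular each $e_k$ is trapped between two quarter-period values whose intervals you have already identified, so $e_2\in(d_2,d_3)$, $e_3\in(d_4,d_5)$, $e_1\in(d_6,d_7)$ come for free --- the ``main technical obstacle'' you anticipate dissolves if you push your symmetry trick one step further instead of reverting to explicit inequalities.
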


In particular, by inserting \eqref{fineq-1}-\eqref{fineq-4} into \eqref{eqfc-21}, we obtain various inequalities about the classical objects $e_k$'s, $g_2$ and $\eta_1$ that hold for all  $\tau=ib$ with $b>0$. We believe that these inequalities have potential applications in future.

Thanks to Theorem \ref{main-thm-b-22} that holds for all $b>0$, we can determine more accurately the possible numbers of critical points of $G_p(z)$ for almost all $p$ as follows.

\begin{theorem}\label{main-thm-8}
Let $\tau=ib$ with $b>0$. Then the following statements hold.
\begin{itemize}

\item[(1)] Denote $$\Xi_1:=\mathcal{B}_1\setminus\overline{\mathcal{B}_3}\neq \emptyset,\quad \Xi_2:=\mathcal{B}_0\cap\mathcal{B}_1\neq \emptyset,$$
$$\Xi_3:=\mathcal{B}_0\cap\mathcal{B}_2\neq \emptyset,\quad{\Xi}_4:=\mathcal{B}_2\setminus \overline{\mathcal{B}_3}\neq \emptyset,$$
then $G_p(z)$ has exactly $6$ critical points for any $\wp(p)\in \Xi_1\cup\Xi_2\cup\Xi_3\cup\Xi_4$.

\item[(2)] Denote$$\Xi_5:=\mathbb{C}\setminus\cup_{k=0}^3 \overline{\mathcal{B}_k}\neq \emptyset,\quad \Xi_6:=\mathcal{B}_0\setminus(\{e_3\}\cup\cup_{k=1,2} \overline{\mathcal{B}_k})\neq \emptyset,$$
$$\Xi_7:=\mathcal{B}_1\cap\mathcal{B}_3\setminus(\{e_2\}\cup\overline{\mathcal{B}_0})\neq \emptyset,\quad{\Xi}_8:=\mathcal{B}_2\cap\mathcal{B}_3\setminus(\{e_1\}\cup\overline{\mathcal{B}_0})\neq\emptyset,$$
then $G_p(z)$ has exactly either $4$ or $8$ critical points for almost all $\wp(p)\in \Xi_5\cup \Xi_6\cup \Xi_7\cup{\Xi}_8$.

\item[(3)]  Denote $$\Xi_9:=\mathcal{B}_3\setminus\cup_{k=0,1,2} \overline{\mathcal{B}_k}\neq \emptyset,$$ then $G_p(z)$ has at least $6$ critical points for any $\wp(p)\in \Xi_9$, and $G_p(z)$ has exactly either $6$ or $10$ critical points for almost all $\wp(p)\in \Xi_9$.  
\end{itemize}
\end{theorem}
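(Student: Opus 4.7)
The plan is to combine a Poincar\'e--Hopf index count with an explicit Hessian analysis at the four trivial critical points, using Theorem \ref{main-thm-b-2} on the real locus as the final calibration. First I would determine the Morse type of each $\frac{\omega_k}{2}$: using the formulas $G_{zz}(w)=\frac{1}{4\pi}(\wp(w)+\eta_1-\frac{\pi}{\operatorname{Im}\tau})$ and $G_{z\bar z}(w)=\frac{1}{4|E_\tau|}$ together with the lattice periodicity of $\wp$, a direct computation yields
\[
(G_p)_{zz}\Big(\tfrac{\omega_k}{2}\Big)=\tfrac{1}{4\pi}\Big(\wp\big(p-\tfrac{\omega_k}{2}\big)+\eta_1-\tfrac{\pi}{\operatorname{Im}\tau}\Big),\qquad (G_p)_{z\bar z}\Big(\tfrac{\omega_k}{2}\Big)=\tfrac{1}{4|E_\tau|},
\]
so $\frac{\omega_k}{2}$ is a non-degenerate extremum (resp.\ saddle) of $G_p$ according as $\wp(p-\frac{\omega_k}{2})$ lies inside or outside $\mathcal{B}_0$. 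Pushing this through the M\"obius identity $\wp(p-\frac{\omega_k}{2})=e_k+\frac{(e_k-e_j)(e_k-e_\ell)}{\wp(p)-e_k}$ and invoking Theorem \ref{main-thm-b-22} (in particular $e_3\in\mathcal{B}_0$ but $e_1,e_2\notin\mathcal{B}_0$, both read off from the interleaving \eqref{eqfc-21}), I would conclude: $\frac{\omega_k}{2}$ is a non-degenerate extremum of $G_p$ iff $\wp(p)\in\mathcal{B}_k$ for $k=0,1,2$, while $\frac{\omega_3}{2}$ is a non-degenerate extremum iff $\wp(p)\notin\overline{\mathcal{B}_3}$.

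Next, Poincar\'e--Hopf applied to $\nabla G_p$ on $E_\tau$, with its two inward singularities at $\pm p$ each contributing index $+1$, gives $n_1-n_0-n_2=2$; the even symmetry $z\mapsto -z$ forces each $n_i^{\mathrm{nt}}$ to be even. Reading off the trivial contribution in each region from the first step, the index equation collapses the total count $N=n_0+n_1+n_2$ to $\{4,8\}$ on $\Xi_5,\ldots,\Xi_8$ (three trivial saddles plus one trivial extremum), to $\{6,10\}$ on $\Xi_1,\ldots,\Xi_4$ (two of each), and to $\{6,10\}$ with $N\geq 6$ on $\Xi_9$ (four trivial saddles). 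Together with the genericity statement in Theorem \ref{main-thm-1}, this already delivers part (3) and the "for almost all" half of part (2). To upgrade part (1) to $N=6$ exactly, I would use Theorem \ref{main-thm-b-22} and \eqref{fineq-1}--\eqref{fineq-4} to verify $\Xi_2\cap\mathbb{R}=(d_3,d_4)$ and $\Xi_3\cap\mathbb{R}=(d_5,d_6)$, on which Theorem \ref{main-thm-b-2}(2) provides exactly one pair of non-degenerate nontrivial saddles ($N=6$); local constancy of the non-degenerate critical set, combined with the connectedness of $\Xi_2,\Xi_3$ (each a lens of two real-centred disks), then propagates $N=6$ to the whole region. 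For the crescents $\Xi_1,\Xi_4$, which miss $\mathbb{R}$, I would connect them to $\Xi_5$ by a short arc crossing only $\partial\mathcal{B}_1$ or $\partial\mathcal{B}_2$ and read off the jump $4\mapsto 6$ from the pitchfork normal form at $\partial\mathcal{B}_k$ (a lone trivial saddle $\frac{\omega_k}{2}$ splits into a trivial extremum together with a pair of nontrivial saddles).

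The central difficulty is showing that the count is genuinely locally constant on each $\Xi_j$, or equivalently that the bifurcation locus of the nontrivial critical points inside these regions is contained in $\bigcup_k\partial\mathcal{B}_k$. This is precisely where the conditional-stability-set approach developed earlier in the paper is indispensable: the stability sets stratify the $\wp(p)$-plane with walls exactly the $\partial\mathcal{B}_k$'s, so the count cannot jump inside any $\Xi_j$. Granting this stratification, the propagation argument of the previous paragraph lifts $N\in\{6,10\}$ to $N=6$ throughout $\Xi_1\cup\Xi_2\cup\Xi_3\cup\Xi_4$.
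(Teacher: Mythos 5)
Your Hessian analysis at the trivial critical points, the M\"obius identification of $\{\wp(p):\wp(p-\frac{\omega_k}{2})\in\mathcal{B}_0\}$ with $\mathcal{B}_k$ (for $k=1,2$) or $\mathbb{C}\setminus\overline{\mathcal{B}_3}$ (for $k=3$), and the index count $\sum\deg_p(q)=-2$ together with the bound of at most $3$ nontrivial pairs from Theorem \ref{main-thm-1} correctly reproduce parts (2) and (3): this is essentially the paper's route, which simply computes $m(\Xi_j)$ via \eqref{eq330}--\eqref{eq332} and cites the quoted Part I result (Theorem \ref{thm-section5}), with Theorem \ref{main-thm-b-22} supplying the relative positions of the four circles for all $b>0$ so that the $\Xi_j$ are nonempty and the counts of trivial extrema are as claimed.

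The genuine gap is in your upgrade of part (1) from $N\in\{6,10\}$ to $N=6$. Your key claim --- that ``the stability sets stratify the $\wp(p)$-plane with walls exactly the $\partial\mathcal{B}_k$'s, so the count cannot jump inside any $\Xi_j$'' --- is false, and the paper itself disproves it: Theorem \ref{thm-section5-5} produces, inside $\Xi_5$ or $\Xi_7$, nonempty open sets $\Omega_4$ and $\Omega_8$ with a common boundary in the interior of $\Xi$, along which nontrivial critical points degenerate; the discussion after that theorem states explicitly that there are additional bifurcation curves besides the $\partial\mathcal{B}_k$'s. Moreover, the conditional stability sets $\sigma_j$ of this paper live in the $A$-plane for a \emph{fixed} $p$ with $\wp(p)\in\mathbb{R}$; they do not stratify the $\wp(p)$-plane and are not available off the real locus. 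The index count on $\Xi_1,\ldots,\Xi_4$ (two trivial extrema, two trivial saddles) only forces the nontrivial pairs to consist of one saddle pair, or of one extremum pair plus two saddle pairs, i.e.\ $N\in\{6,10\}$; ruling out $10$ requires the input of Theorem \ref{thm-section5}(3) (proved in Part I), not a local-constancy principle that would equally ``prove'' constancy on $\Xi_5$ and $\Xi_7$, where it fails. A secondary slip: $\Xi_1$ and $\Xi_4$ do meet $\mathbb{R}$, in $(d_1,d_2)=(d_{1,1},d_{3,1})$ and $(d_7,d_8)=(d_{3,2},d_{2,2})$ respectively, so your crescent-crossing detour is both unnecessary and, as a substitute for the missing constancy argument, insufficient (knowing the jump $4\mapsto 6$ at one boundary point of $\partial\mathcal{B}_1$ controls only the local picture near $\frac{\omega_1}{2}$ and only near that crossing point).
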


Theorem \ref{main-thm-8} for the special case $b=1$ was proved in Part I \cite{CFL}. The novelty of this Part II is that we can prove Theorem \ref{main-thm-8} for all $b>0$ by applying Theorem \ref{main-thm-b-22}. 
Note from Figure 1 that
$$(d_1,d_2)\subset\Xi_1,\quad (d_3,d_4)\subset\Xi_2, \quad (d_5,d_6)\subset\Xi_3,\quad (d_7,d_8)\subset\Xi_4,$$
$$(d_2,d_3)\setminus\{e_2\}\subset\Xi_7,\quad (d_4, d_5)\setminus\{e_3\}\subset\Xi_6,$$
$$(d_6,d_7)\setminus\{e_1\}\subset\Xi_8,\quad (-\infty,d_1)\cup(d_8, +\infty)\subset\Xi_5.$$
Theorem \ref{main-thm-b-2} shows that for each $5\leq j\leq 8$, $G_p(z)$ has exactly $4$ critical points at least for $\wp(p)\in \Xi_j\cap\mathbb R$.
Our next result shows that the number $8$ also occurs for some $\Xi_j$ and so Theorem \ref{main-thm-8}-(2) is sharp.

\begin{theorem}
\label{thm-section5-5} Let $\tau=ib$ with $b>0$. Then there exists $p$ satisfying $\wp(p)\notin\mathbb R$ such that $G_p(z)$ has degenerate nontrivial critical points.

More precisely, there exists $\Xi$ being one of $\Xi_5, \Xi_7$ such that the following holds: Define 
\[\Omega_N:=\bigg\{\wp(p)\in\Xi\,:\begin{array}{l}\text{$G_p(z)$ has exactly $N$ critical points}\\\text{that are all non-degenerate}\end{array}\bigg\}\;\text{for }N=4,8,\]
then $\Omega_4\neq \emptyset$ and $\Omega_{8}\neq \emptyset$ are both open subsets of $\Xi$. Furthermore, $\Xi\subset\overline{\Omega_4}\cup\overline{\Omega_{8}}$,  $\partial\Omega_4\cap\partial\Omega_{8}\cap\Xi\neq\emptyset$, and $G_p(z)$ has degenerate nontrivial critical points for any $\wp(p)\in(\partial\Omega_4\cup\partial\Omega_{8})\cap\Xi$. 
\end{theorem}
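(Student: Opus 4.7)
My plan exploits the dichotomy from Theorem \ref{main-thm-8}(2): for almost every $\wp(p)\in\Xi$, $G_p$ has exactly $4$ or $8$ critical points, all non-degenerate. The key observation is that on $\Xi\in\{\Xi_5,\Xi_7\}$ one has $\wp(p)\notin\cup_k\partial\mathcal B_k$: for $\Xi_5=\mathbb C\setminus\cup_k\overline{\mathcal B_k}$ this is immediate, and for $\Xi_7=\mathcal B_1\cap\mathcal B_3\setminus(\{e_2\}\cup\overline{\mathcal B_0})$ it follows from Theorem \ref{main-thm-b-22}: the formulas \eqref{fineq-1}--\eqref{fineq-4} give $\partial\mathcal B_1\cap\mathbb R=\{d_1,d_4\}$ and $\partial\mathcal B_2\cap\mathbb R=\{d_5,d_8\}$, and $d_4<d_5$ forces $\mathcal B_1\cap\mathcal B_2=\emptyset$ (since the disks have real centers), so $\Xi_7\cap\partial\mathcal B_2=\emptyset$. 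Consequently, by Theorem \ref{main-thm-01} the four trivial critical points of $G_p$ stay non-degenerate throughout $\Xi$, and any interface in $\Xi$ where the count of non-degenerate critical points jumps between $4$ and $8$ must be witnessed by a degenerate \emph{nontrivial} critical point---precisely the assertion of (iv). Openness of $\Omega_N$ in $\Xi$ is the standard implicit function theorem argument (count and non-degeneracy persist under perturbation of $p$), and Theorem \ref{main-thm-8}(2) immediately gives the density $\Xi\subset\overline{\Omega_4}\cup\overline{\Omega_8}$. To get $\Omega_4\neq\emptyset$ in $\Xi=\Xi_5$ pick $\wp(p)\in(-\infty,d_1)\cup(d_8,+\infty)\subset\Xi_5\cap\mathbb R$: by Theorem \ref{main-thm-b-2}(1) there are no nontrivial critical points, and by Theorem \ref{main-thm-01} each trivial one is non-degenerate; the analogous choice $\wp(p)\in(d_2,d_3)\setminus\{e_2\}\subset\Xi_7$ works for $\Xi=\Xi_7$.

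\textbf{The main step: $\Omega_8\neq\emptyset$.} This is the heart of the matter. I would use a continuous deformation from a $10$-point example supplied by Lemma \ref{lemma3-10}/Remark \ref{remk1-2}: for $b\in(0,2b_0)\cup(2b_1,+\infty)$, an open neighborhood of $p_\ast=(1+\tau)/4$ gives $G_p$ with ten critical points, so $\wp(p)$ lies in an open subset of $\Xi_9\subset\mathcal B_3$. Move $p$ so that $\wp(p)$ exits $\mathcal B_3$ through a generic point of $\partial\mathcal B_3\setminus\overline{\mathcal B_0\cup\mathcal B_1\cup\mathcal B_2}$ (nonempty by the disk picture of Theorem \ref{main-thm-b-22}) into $\Xi_5$. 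At the crossing only the trivial point $\frac{\omega_3}{2}$ degenerates (Theorem \ref{main-thm-01}), a generic saddle-node absorbs one pair of nontrivial critical points into $\frac{\omega_3}{2}$, and the count drops by $2$; since Theorem \ref{main-thm-8}(2) excludes the value $10$ generically in $\Xi_5$ (the bound in Theorem \ref{main-thm-1} also rules out $12$), the drop must occur and we land in $\Omega_8\cap\Xi_5$. For the residual range $b\in[2b_0,2b_1]$, where Lemma \ref{lemma3-10} is silent, I would continue this $8$-point open region in $b$ across $b=2b_0$ using real-analyticity of the degenerate-critical-point locus in $(b,p)$-space; any obstruction to the continuation at some $b^\ast\in[2b_0,2b_1]$ is itself a degenerate nontrivial critical point at a parameter $p$ with $\wp(p)\notin\mathbb R$ (which already establishes the first sentence of the theorem at $b=b^\ast$), and just past the obstruction one recovers an open $8$-point region---possibly in $\Xi_7$ rather than $\Xi_5$, which is exactly why the theorem permits either choice of $\Xi$.

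\textbf{Conclusion and main obstacle.} With $\Omega_4$ and $\Omega_8$ open, disjoint, and both nonempty in the connected open set $\Xi$ (connectedness of $\Xi_5$ and $\Xi_7$ is read off the disk configuration: $\Xi_5$ is the complement of a finite union of closed disks arranged as in Figure 1 and is connected; $\Xi_7$ is a lens $\mathcal B_1\cap\mathcal B_3$ with an interior closed subdisk $\overline{\mathcal B_0}$ and a point $e_2$ removed, still connected), and $\Xi\subset\overline{\Omega_4}\cup\overline{\Omega_8}$, the closed subsets $\overline{\Omega_4}\cap\Xi$ and $\overline{\Omega_8}\cap\Xi$ of $\Xi$ must intersect, yielding $\partial\Omega_4\cap\partial\Omega_8\cap\Xi\neq\emptyset$; by the opening observation, $G_p$ has a degenerate nontrivial critical point at every such intersection point (and, more generally, at every point of $(\partial\Omega_4\cup\partial\Omega_8)\cap\Xi$, since trivial critical points cannot degenerate there). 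The principal technical obstacle is producing $\Omega_8\neq\emptyset$ uniformly for all $b>0$: Lemma \ref{lemma3-10} covers the extremal $b$-ranges, and the intermediate range $b\in[2b_0,2b_1]$ demands a careful real-analytic continuation of the $8$-point open region, possibly combined with the ODE/Painlev\'e VI machinery underlying the paper's conditional-stability-set approach, to rule out the degenerate scenario in which the $8$-point region would disappear inside $[2b_0,2b_1]$ without producing the required nontrivial degeneracy.
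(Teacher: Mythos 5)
Your routine steps are sound and coincide with the paper's: openness of $\Omega_4,\Omega_8$ via Lemma \ref{lemma50-0}, non-emptiness of $\Omega_4$ from the real intervals $(-\infty,d_1)\cup(d_8,+\infty)\subset\Xi_5$ and $(d_2,d_3)\setminus\{e_2\}\subset\Xi_7$ together with Theorem \ref{main-thm-b-2}, the inclusion $\Xi\subset\overline{\Omega_4}\cup\overline{\Omega_8}$ from Theorem \ref{main-thm-8}-(2), and the concluding connectedness argument showing that points of $(\partial\Omega_4\cup\partial\Omega_8)\cap\Xi$ carry degenerate \emph{nontrivial} critical points because $\Xi$ avoids $\cup_k\partial\mathcal{B}_k$. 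The genuine gap is precisely where you flag it: $\Omega_8\neq\emptyset$. Your deformation from the $10$-point example near $p=\frac{1+\tau}{4}$ is uncontrolled even in the range $b\in(0,2b_0)\cup(2b_1,+\infty)$ where Lemma \ref{lemma3-10}-(5) applies: you would need the $10$-point subregion of $\Xi_9$ to reach a boundary point of $\partial\mathcal{B}_3$ lying outside $\overline{\mathcal{B}_0\cup\mathcal{B}_1\cup\mathcal{B}_2}$, and you would need the two pairs of nontrivial critical points away from $\frac{\omega_3}{2}$ to survive the deformation, whereas Theorem \ref{main-thm-8}-(3) permits the count in $\Xi_9$ to drop to $6$ through nontrivial degeneracies before the crossing, after which you land in $\Omega_4$. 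Worse, for $b\in[2b_0,2b_1]$ your ``real-analytic continuation in $b$'' is an assertion, not an argument: nothing you say prevents the $8$-point open set from shrinking to nothing as $b\to 2b_0^-$, the claim that ``just past the obstruction one recovers an open $8$-point region'' is unsupported, and exhibiting a degenerate nontrivial critical point at isolated values $b^\ast$ would in any case not prove the theorem, which is stated for every $b>0$.

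The paper produces $\Omega_8\neq\emptyset$ uniformly in $b$ by an entirely different mechanism, which is also what determines whether $\Xi=\Xi_5$ or $\Xi=\Xi_7$. By Theorem \ref{main-thm-b-23}, the Hitchin map $f(r,s)=\wp(p_{r,s}(\tau))$ sends $I^\circ=(0,\tfrac12)^2$ continuously into the open upper half-plane. Since $G_p$ has a (unique) pair of nontrivial critical points for $\wp(p)\in\Xi_1\cup\Xi_2$ (Theorem \ref{main-thm-8}-(1)), one picks $(r_j,s_j)\in I^\circ$ with $f(r_j,s_j)=\wp(p_j)\in\Xi_j$, $\operatorname{Im}\wp(p_j)>0$, for $j=1,2$, and joins them by a path $\ell\subset I^\circ$ avoiding the finite set $f^{-1}(\wp(p_0))$, where $\wp(p_0)$ is the single point of $\partial\mathcal{B}_1\cap\partial\mathcal{B}_3$ in the upper half-plane (finiteness because $G_{p_0}$ has at most $3$ pairs of nontrivial critical points). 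The image path then connects $\Xi_1$ to $\Xi_2$ in the upper half-plane while missing $\wp(p_0)$, so by the disk configuration of Theorem \ref{main-thm-b-22} it must pass through $\Xi_5$ or $\Xi_7$ at some $f(r_3,s_3)$. The image of a small ball around $(r_3,s_3)$ is a positive-measure subset of $\Xi$ on which $G_p$ has at least $6$ critical points, and Theorem \ref{main-thm-8}-(2) (exactly $4$ or $8$, all non-degenerate, for almost all $\wp(p)\in\Xi$) then forces $\Omega_8\neq\emptyset$. You would need to supply an argument of comparable strength to close your main step.
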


Notice from Theorem \ref{main-thm-1} that the four trivial critical points $\frac{\omega_k}{2}$, $k=0,1,2,3$, are always non-degenerate for any $\wp(p)\in \Xi$.
Theorem \ref{thm-section5-5} shows that the number of nontrivial critical points changes when $\wp(p)\in\Xi$ crosses $\partial\Omega_4\cap\partial\Omega_{8}$, because of the degeneracy of nontrivial critical points. 
Theorem \ref{thm-section5-5} also indicates that in Figure 1, becides the four circles $\partial \mathcal{B}_k$ consisting of those $\wp(p)$ such that $\frac{\omega_k}{2}$ is a degenerate trivial critical point of $G_p(z)$, there should be also some curves consisting of those $\wp(p)$ such that $G_p(z)$ has degenerate nontrivial critical points. This indicates that the precise distribution of the number of critical points of $G_p(z)$ could be very complicated when $\wp(p)\in\mathbb{C}$ varies. As a consequence, we obtain the following result.

\begin{corollary}\label{coro-main}
Let $\tau=ib$ with $b>0$, and $b_0, b_1$ be given in Theorem \ref{thm-LW}.
\begin{itemize}
\item[(1)] If $b\in (0, 2b_0)\cup (2b_1,+\infty)$, then for every $N\in\{4,6,8,10\}$, there exists $p\in E_{\tau}\setminus E_{\tau}[2]$ such that $G_p(z)$ has exactly $N$ critical points.
 \item[(2)] If $b\in [2b_0, 2b_1]$, then for every $N\in\{4,6,8\}$, there exists $p\in E_{\tau}\setminus E_{\tau}[2]$ such that $G_p(z)$ has exactly $N$ critical points.
 \end{itemize}
\end{corollary}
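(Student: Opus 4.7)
The proof is a matter of exhibiting, for each admissible value of $N$, a suitable $p$ and invoking previously established results. The values $N\in\{4,6,8\}$ are realised for every $b>0$, while the remaining value $N=10$ is precisely what distinguishes Part (1) from Part (2), and it is the only step where the threshold $b=2b_0,2b_1$ enters.

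For $N=4$, I would choose $p\in E_\tau\setminus E_\tau[2]$ with $\wp(p)\in(-\infty,d_1)$. The chain of inequalities \eqref{eqfc-21} shows that $e_1,e_2,e_3$ all lie strictly to the right of $d_1$, so such a $p$ exists; Theorem \ref{main-thm-b-2}(1) then forces $G_p$ to have no nontrivial critical points, hence exactly the four trivial ones $\frac{\omega_k}{2}$. For $N=6$, pick any $\wp(p)$ in one of the four open intervals in Theorem \ref{main-thm-b-2}(2), e.g.\ $(d_1,d_2)$: $G_p$ has exactly one pair of nontrivial saddle points, giving $4+2=6$. For $N=8$, Theorem \ref{thm-section5-5} furnishes a nonempty open set $\Omega_8$ (lying inside $\Xi_5$ or $\Xi_7$) on which $G_p$ has exactly $8$ non-degenerate critical points; any $\wp(p)\in\Omega_8$ works. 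None of these three constructions requires any hypothesis on $b$, so they cover both (1) and (2).

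For $N=10$, the condition $b\in(0,2b_0)\cup(2b_1,+\infty)$ is used via Lemma \ref{lemma3-10} (announced in Remark \ref{remk1-2}): under precisely this condition there exists $\varepsilon>0$ such that every $p$ with $|p-\frac{1+\tau}{4}|<\varepsilon$ makes $G_p(z)$ have exactly $10$ critical points. Picking any such $p$ closes Part (1); Part (2) is then Part (1) minus this last step.

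The single conceptual ingredient that is not pure bookkeeping, and therefore the potential obstacle hidden in the above quotation of Lemma \ref{lemma3-10}, is the observation that at $p=\frac{1+\tau}{4}$ the substitution $w=z-p$ rewrites $G_p(z)$ as $\frac12(G(w)+G(w+\frac{\omega_3}{2}))$, which is invariant under $w\mapsto w+\frac{\omega_3}{2}$ and hence descends to a Green function on the quotient torus with modulus $\tau^{*}=\frac{1+\tau}{2}=\frac{1}{2}+i\frac{b}{2}$, i.e.\ a rhombus. Theorem \ref{thm-LW}(2) says that the Green function on $E_{\tau^{*}}$ has $5$ critical points exactly when $b/2\in(0,b_0)\cup(b_1,+\infty)$, and lifting through the double cover $E_\tau\to E_\tau/\langle\frac{\omega_3}{2}\rangle$ yields $10$ critical points of $G_p$ at this special $p$; the non-degeneracy guaranteed by Theorem \ref{thm-A0} on $E_{\tau^{*}}$ then propagates the count to nearby $p$ via the implicit function theorem. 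For $b\in[2b_0,2b_1]$ only the three trivial critical points of the rhombus Green function lift, producing at most $6$ critical points of $G_p$ near $\frac{1+\tau}{4}$, which is exactly why Part (2) cannot list $N=10$.
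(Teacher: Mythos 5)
Your proposal is correct and follows essentially the same route as the paper, whose proof is a one-line citation of Theorem \ref{main-thm-b-2} (for $N=4,6$), Theorem \ref{thm-section5-5} (for $N=8$), and Remark \ref{remk1-2}/Lemma \ref{lemma3-10}(5) (for $N=10$); your unpacking of the quotient-torus mechanism behind Lemma \ref{lemma3-10}(5) matches the paper's own proof of that lemma. The only loose phrase is the final sentence: for $b\in[2b_0,2b_1]$ your argument shows that the particular construction at $p=\frac{1+\tau}{4}$ yields $6$ rather than $10$ critical points, not that $N=10$ is impossible — but since part (2) of the corollary makes no such impossibility claim, this does not affect correctness.
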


\begin{proof} This result follows directly from Theorems \ref{main-thm-b-2}, \ref{thm-section5-5} and Remark \ref{remk1-2}.
\end{proof}

Remark that Theorem \ref{main-thm-1} did not prove the existence of $\tau$ satisfying that, for such fixed $\tau$, each number of $\{4,6,8,10\}$ occurs for different $p$'s. Thus Corollary \ref{coro-main} can not follow from Theorem \ref{main-thm-1}. Corollary \ref{coro-main} motivates us to propose the following conjecture.

\medskip
\noindent{\bf Conjecture A.}  {\it Fix any $\tau$. Then for every $N\in\{4,6,8,10\}$, there exists $p\in E_{\tau}$ such that $G_p(z)$ has exactly $N$ critical points.}
\medskip

Conjecture A seems challenging and Corollary \ref{coro-main} proves this conjecture only for $\tau=ib$ with $b\in (0, 2b_0)\cup (2b_1,+\infty)$.

\subsection{Applications to Painlev\'{e} VI equation and curvature equation} 

As mentioned in Part I \cite{CFL}, our study of critical points of $G_p(z)$ has interesting applications to certain Painlev\'{e} VI equation and curvature equation. 

First,
let us consider the following elliptic form of the Painlev\'{e} VI equation with special parameters
\begin{equation}\label{PVI}
\frac{d^{2}p(\tau)}{d\tau^{2}}=\frac{-1}{32\pi^{2}}\sum_{k=0}^{3}
\wp^{\prime}\left( p(\tau)+\frac{\omega_{k}}{2};
\tau \right),\end{equation}
which was first studied by Hitchin \cite{Hit1}.
It is well known (see e.g. \cite{IKSY}) that the general Painlev\'{e} VI equation governs the isomonodromic deformation of a second order linear Fuchsian ODE on $\mathbb{CP}^1$ with five regular singular points. Recently, Chen, Kuo and Lin \cite{CKL-JMPA2016} proved that the elliptic form of the Painlev\'{e} VI equation also governs the isomonodromic deformation of a generalized Lam\'{e} equation (GLE for short). 
Then a natural question arises: \emph{How does the monodromy of the associated GLE effects the property of the solution of the elliptic form of the Painlev\'{e} VI equation}?

In this paper, we study this question for \eqref{PVI} by applying Theorem \ref{main-thm-b-2}.
Note that the associated GLE for \eqref{PVI} is
\begin{align}\label{GLE1}
y''(z)=\Big[&\frac{3}{4}
\big(\wp(z+p)+\wp(z-p)-\wp(2p)\big)\nonumber\\
&+A\big(\zeta(z+p)-\zeta(z-p)-\zeta(2p)\big)+A^{2}\Big]y(z),\;z\in\mathbb{C}.
\end{align}
Let us explain the relation between \eqref{PVI} and critical points of $G_p(z)$.
By using the complex variable $z\in\mathbb C$ and write $z=r+s\tau$ with $r,s\in \mathbb{R}$, it was proved in \cite{LW} that
\begin{equation}
\label{G_z}-4\pi \frac{\partial G}{\partial z}(z)=\zeta(z)-r\eta_{1}
-s\eta_{2}.
\end{equation}
Note that $z\notin E_{\tau}[2]$ is equivalent to $(r,s)\in\mathbb{R}^2\setminus\frac12\mathbb{Z}^2$, where $\frac12\mathbb{Z}^2:=\{(r,s)\,:\, 2r,2s\in\mathbb Z\}.$
Then $a=r+s\tau$ with $(r,s)\in\mathbb{R}^2\setminus\frac12\mathbb{Z}^2$ is a nontrivial critical point of $G_p$ if and only if 
\begin{equation}\label{a+001p}\zeta(a+p)+\zeta(a-p)-2(r\eta_1+s\eta_2)=0.\end{equation}
Using the additional formula of elliptic functions
\begin{equation}\label{eqfc-add}\zeta(z+w)+\zeta(z-w)-2\zeta(z)=\frac{\wp'(z)}{\wp(z)-\wp(w)},\end{equation}
it is easy to see that \eqref{a+001p} is equivalent to
\begin{align}\label{513-1} 
\wp(p)=\wp (r+s\tau)+\frac{\wp ^{\prime }(r+s\tau)}{%
2(\zeta(r+s\tau)-r\eta_1-s\eta_2)}.
\end{align}
The \eqref{513-1} is well known as Hitchin's formula \cite{Hit1}, where Hitchin studied Einstein metrics and proved that for any fixed $(r,s)\in \mathbb{C}^{2}\setminus \frac{1}{2}\mathbb{Z}^{2}$, the $p_{r,s}(\tau)$ defined by $\wp(p_{r,s}(\tau))=\text{RHS of \eqref{513-1}}$, as a function of $\tau\in \mathbb{H}$, is a solution of the elliptic form \eqref{PVI}.
Furthermore, those solutions $p_{r,s}(\tau)$ with one of $(r,s)$ real and the other one purely imaginary have important applications to Einstein metrics, where the case $r\in\mathbb R$ and $s\in i\mathbb R$ (resp. $r\in i\mathbb R$ and $s\in \mathbb R$) is related to Einstein metrics with positive (resp. negative) scalar curvature. See \cite[Theorem 6]{Hit1} for details.

Remark that for the solution $p_{r,s}(\tau)$ of \eqref{PVI},  the monodromy group of the associated GLE \eqref{GLE1} with $p=p_{r,s}(\tau)$ and 
$$A=\frac{1}{2}\left[  \zeta(p_{r,s}(\tau)+r+s\tau)+\zeta(p_{r,s}(\tau)-r-s\tau)-\zeta(2p_{r,s}(\tau))\right] $$
is generated by $-I_2=-\bigl(\begin{smallmatrix}1 & 0\\
0& 1\end{smallmatrix}\bigr)$, $N_1=
\bigl(\begin{smallmatrix}
e^{-2\pi is} & 0\\
0 & e^{2\pi is}%
\end{smallmatrix}\bigr)
$ and $
N_2=\bigl(\begin{smallmatrix}
e^{2\pi ir} & 0\\
0 & e^{-2\pi ir}%
\end{smallmatrix}\bigr).$ See \cite{CKL-PAMQ} and also Section \ref{section2} for a brielf review.
In particular, $(r,s)\in\mathbb R^2\setminus\frac12\mathbb{Z}^2$ implies that the monodromy group of \eqref{GLE1} is a subgroup of the unitary group $SU(2)$, or we simply say that the monodromy of \eqref{GLE1} is unitary. 
Note that the RHS of \eqref{513-1} is invariant if we replace $(r,s)$ with any element of $\pm(r,s)+\mathbb{Z}^2$. Thus, for $(r,s)\in\mathbb R^2\setminus\frac12\mathbb{Z}^2$, we only need to consider $(r,s)\in [-\frac12, \frac12]\times [0,\frac12]\setminus\frac12\mathbb{Z}^2= I\cup II\setminus\frac12\mathbb{Z}^2$, where
$$I:=\Big[0,\frac12\Big]^2,\qquad II:=\Big[-\frac12,0\Big]\times\Big[0,\frac12\Big].$$
We will prove that Theorem \ref{main-thm-b-23-0} is equivalent to the following result. 

\begin{theorem}\label{main-thm-b-23} 
Let
 $p_{r,s}(\tau)$ be a solution of the elliptic form \eqref{PVI} such that the monodromy of the associated GLE \eqref{GLE1} is unitary, i.e. $(r,s)\in [-\frac12, \frac12]\times [0,\frac12]\setminus\frac12\mathbb{Z}^2$. Then the following statements hold.
\begin{itemize}
\item[(1)] If $(r,s)\in \partial I\cup \partial II\setminus\frac{1}{2}\mathbb{Z}^2$, then $\wp(p_{r,s}(\tau))\in\mathbb R$ for any $\tau=ib$ with $b>0$.
\item[(2)] If $(r,s)\in I^\circ=(0,\frac12)^2$, then $\operatorname{Im}\wp(p_{r,s}(\tau))>0$ for any $\tau=ib$ with $b>0$.
\item[(3)] If $(r,s)\in II^\circ=(-\frac12,0)\times(0,\frac12)$, then $\operatorname{Im}\wp(p_{r,s}(\tau))<0$ for any $\tau=ib$ with $b>0$.
\end{itemize}
\end{theorem}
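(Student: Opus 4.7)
The plan is to derive Theorem \ref{main-thm-b-23} from Theorem \ref{main-thm-b-23-0} by using Hitchin's formula to identify $p_{r,s}(\tau)$ with a Green function whose prescribed nontrivial critical point is $a=r+s\tau$, and then exploiting the conjugation symmetry of $\wp$ on a rectangular torus. Fix $(r,s)\in [-\frac12,\frac12]\times[0,\frac12]\setminus\frac12\mathbb{Z}^2$, set $a:=r+s\tau$ and $p:=p_{r,s}(\tau)$. By construction \eqref{513-1} holds at $a$, which (as observed just before \eqref{513-1}) is equivalent to \eqref{a+001p}; equivalently, $a$ is a nontrivial critical point of $G_p$. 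Moreover $p\notin E_{\tau}[2]$: otherwise $G_p=G(\cdot-p)$ would be a translate of $G$, which has only trivial critical points by Theorem \ref{thm-LW}(1), contradicting $a\notin E_{\tau}[2]$. Hence Theorem \ref{main-thm-b-23-0} applies and reduces matters to showing that
\[\operatorname{Im}\wp(r+s\tau)\text{ is }\,0,\ <0,\ >0\ \text{ respectively on }\ \partial I\cup\partial II\setminus\tfrac12\mathbb{Z}^2,\ I^{\circ},\ II^{\circ},\]
with the sign flip in cases (2)--(3) coming directly from Theorem \ref{main-thm-b-23-0}.

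For $\tau=ib$ the lattice $\Lambda_{\tau}$ is stable under complex conjugation, so $\wp(\bar z;\tau)=\overline{\wp(z;\tau)}$. Since $\wp\colon E_{\tau}\to\mathbb{C}\cup\{\infty\}$ is a degree-two map with fibers $\{\pm z\}$, the condition $\operatorname{Im}\wp(z)=0$ amounts to $\bar z\equiv \pm z\pmod{\Lambda_{\tau}}$. The first case gives $s\in\frac12\mathbb{Z}$ and the second $r\in\frac12\mathbb{Z}$; intersecting with $[-\frac12,\frac12]\times[0,\frac12]$ recovers exactly $\partial I\cup\partial II$, which proves part (1).

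To fix the sign on $I^{\circ}$, observe that $\operatorname{Im}\wp$ is continuous and, by the previous paragraph, nowhere zero on the connected open set $I^{\circ}$, hence has constant sign there. Using the Laurent expansion $\wp(z)=z^{-2}+O(z^2)$ near $0$, for $z=\rho e^{i\theta}\in I^{\circ}$ with $\rho\to 0^{+}$ at a fixed angle $\theta\in(0,\pi/2)$ (for instance $z=\rho(1+ib)$), one obtains $\operatorname{Im}\wp(z)\sim -\rho^{-2}\sin(2\theta)<0$, so $\operatorname{Im}\wp<0$ throughout $I^{\circ}$. For $z=r+s\tau\in II^{\circ}$, the reflected point $z':=-\bar z=-r+s\tau$ lies in $I^{\circ}$, and combining the evenness of $\wp$ with the conjugation identity yields $\overline{\wp(z)}=\wp(\bar z)=\wp(-z')=\wp(z')$, whence $\operatorname{Im}\wp(z)=-\operatorname{Im}\wp(z')>0$ by the preceding step.

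The main technical point is the reduction in the first paragraph, where one must exclude $p_{r,s}(\tau)\in E_{\tau}[2]$ along the entire imaginary ray $\tau=ib$; the argument above handles this via Theorem \ref{thm-LW}(1). Everything else is a continuity-and-symmetry analysis reflecting the fact that $\wp$ maps the open quarter period parallelogram $I^{\circ}$ conformally onto the lower half-plane.
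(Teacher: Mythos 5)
Your argument is circular. You deduce Theorem \ref{main-thm-b-23} from Theorem \ref{main-thm-b-23-0}, but in this paper Theorem \ref{main-thm-b-23-0} has no independent proof: the two theorems are proved together, with Theorem \ref{main-thm-b-23} established first by a direct argument and Theorem \ref{main-thm-b-23-0} then obtained as an equivalent reformulation (precisely via the sign analysis of $\operatorname{Im}\wp(r+s\tau)$ that you carry out). So the content you outsource to Theorem \ref{main-thm-b-23-0} --- the link between the sign of $\operatorname{Im}\wp(a)$ at the critical point and the sign of $\operatorname{Im}\wp(p)$ --- is exactly what remains to be proved, and your proposal never supplies it.

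What your write-up does establish correctly is the ``easy half'': the identification of $a=r+s\tau$ as a nontrivial critical point of $G_{p_{r,s}(\tau)}$, the exclusion of $p_{r,s}(\tau)\in E_{\tau}[2]$ via Theorem \ref{thm-LW}(1), and the fact that $\operatorname{Im}\wp(r+s\tau)$ vanishes exactly on $\partial I\cup\partial II$ and is negative on $I^{\circ}$, positive on $II^{\circ}$. This coincides with the equivalence step at the end of the paper's proof. The missing substantive steps are: (i) for part (1), conjugate Hitchin's formula \eqref{513-1} itself, using $\overline{\wp(\bar z)}=\wp(z)$, $\overline{\zeta(\bar z)}=\zeta(z)$, $\eta_1\in\mathbb{R}$, $\eta_2\in i\mathbb{R}$ and $\overline{r+s\tau}=\pm(r+s\tau)$, to get $\overline{\wp(p_{r,s}(\tau))}=\wp(p_{r,s}(\tau))$ directly; (ii) for parts (2)--(3), invoke Theorem \ref{main-thm-b-2}(2), which forces any nontrivial critical point of $G_p$ with $\wp(p)\in\mathbb{R}$ to lie on $\partial I\cup\partial II$, hence $\wp(p_{r,s}(\tau))\notin\mathbb{R}$ for $(r,s)\in I^{\circ}\cup II^{\circ}$; and (iii) a sign determination, which the paper obtains from the degeneration formula $\lim_{s\to 0^{+}}\wp(p_{Cs,s}(\tau))=\frac{2\pi i}{C+\tau}-\eta_1$ of \cite[Theorem 4.5]{CKL-PAMQ}, giving $\operatorname{Im}\wp(p_{Cs,s}(\tau))\to\frac{2\pi C}{|C+\tau|^{2}}$ and thus the correct sign near the corner, propagated over the connected sets $I^{\circ}$ and $II^{\circ}$ by continuity. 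None of (ii) or (iii) can be replaced by your local expansion of $\wp(a)$ near $a=0$, which only controls $\wp$ at the critical point, not at the singular point $p$.
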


Secondly, we apply Theorem \ref{main-thm-b-2} to obtain a sharp existence and non-existence result for the curvature equation.

\begin{theorem}
Let $\tau=ib$ with $b>0$. Let $d_1<\cdots<d_8$ be given by Theorem \ref{main-thm-b-22}. Then the following statements hold.
\begin{itemize}
\item[(1)]If $$\wp(p)\in (-\infty, d_1]\cup [d_2, d_3]\cup [d_4, d_5]\cup [d_6, d_7]\cup [d_8,+\infty),$$ then the curvature equation
\begin{equation}\label{mfe}
\Delta u+e^u=4\pi (\delta_p+\delta_{-p})\quad\text{on }\;E_{\tau}
\end{equation}
 has no solutions.

\item[(2)] If
$$\wp(p)\in (d_1, d_2)\cup (d_3, d_4)\cup (d_5, d_6)\cup (d_7, d_8),$$
then \eqref{mfe} has a unique one-parameter scaling family of solutions $u_{\beta}(z)$, where $\beta>0$ is arbitrary. Furthermore, $u_1(z)=u_1(-z)=u_1(\bar z)$.
\end{itemize}
\end{theorem}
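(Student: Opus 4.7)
The plan is to reduce the theorem to Theorem \ref{main-thm-b-2} via the classical correspondence between solutions of the critical mean field equation \eqref{mfe} and nontrivial critical points of $G_p(z)$. Integrating \eqref{mfe} over $E_\tau$ yields $\int_{E_\tau}e^u=8\pi$, placing \eqref{mfe} at the critical total mass. Via the Liouville representation $u=\log\frac{8|f'|^2}{(1+|f|^2)^2}$, existence of a solution is equivalent to existence of a developing map $f$ whose Schwarzian derivative is the potential of the generalized Lam\'e equation \eqref{GLE1} for some accessory parameter $A$, and whose monodromy on $E_\tau$ is unitary and compatible with the $\pm p$ singular data. By the isomonodromy theory of \cite{CKL-JMPA2016} together with Hitchin's formula \eqref{513-1} and the addition formula \eqref{eqfc-add}, this unitary-monodromy condition reduces exactly to the existence of a nontrivial critical point $a$ of $G_p(z)$; and when such $a$ exists, all solutions arise from $f$ modulo the $\mathbb{R}_+$-gauge $f\mapsto\beta f$, yielding a unique one-parameter scaling family $\{u_\beta\}_{\beta>0}$.

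Granting this correspondence, the two parts of the theorem are immediate consequences of Theorem \ref{main-thm-b-2}. For (1), on the five closed intervals Theorem \ref{main-thm-b-2}(1) asserts that $G_p$ has no nontrivial critical points, so \eqref{mfe} has no solutions. For (2), on the four open intervals Theorem \ref{main-thm-b-2}(2) produces a unique pair $\pm a_0$ of nontrivial (non-degenerate saddle) critical points, which via the correspondence gives the asserted unique one-parameter scaling family $\{u_\beta\}_{\beta>0}$. For the symmetries $u_1(z)=u_1(-z)=u_1(\bar z)$, note first that $\{p,-p\}$ is invariant under $z\mapsto -z$, so $u(-z)$ is a solution whenever $u$ is; thus the involution $u\mapsto u(-\cdot)$ acts on the scaling family, and since a continuous involution on $\mathbb{R}_+$ has a unique fixed point, we may designate $u_1$ to be this fixed point. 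For $\tau=ib$ with $\wp(p)\in\mathbb{R}$, complex conjugation $z\mapsto\bar z$ preserves $\Lambda_\tau$ and satisfies $\wp(\bar p)=\overline{\wp(p)}=\wp(p)$, which forces $\bar p\equiv\pm p\pmod{\Lambda_\tau}$; hence $\{\pm p\}$ is also preserved by conjugation, and the same fixed-point argument, applied compatibly with the first involution, gives $u_1(\bar z)=u_1(z)$.

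The main obstacle is making the first-paragraph equivalence rigorous in the present two-singularity setting. While each ingredient (the Liouville representation, the isomonodromy theory of \eqref{GLE1}, Hitchin's formula, and the translation of unitary monodromy into a critical point of $G_p$) is available in \cite{LW,CFL,CKL-JMPA2016}, the delicate points are (i) ruling out solutions whose developing map does not arise from \eqref{GLE1} for our chosen accessory parameter $A$, and (ii) verifying that the $\mathbb{R}_+$-scaling gauge exhausts the full moduli of solutions. I would therefore organize the proof by first citing or re-deriving these two points, then reading off (1) and (2) directly from Theorem \ref{main-thm-b-2}, and finally extracting the symmetry conclusion from uniqueness together with the obvious $\mathbb{Z}_2\times\mathbb{Z}_2$ symmetry of the singular set $\{p,-p\}$ on the rectangular torus.
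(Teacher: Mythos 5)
Your overall strategy is the same as the paper's: reduce everything to Theorem \ref{main-thm-b-2} through the correspondence between pairs of nontrivial critical points of $G_p(z)$ and one-parameter scaling families of solutions of \eqref{mfe}. The paper does not re-derive this correspondence from the Liouville/developing-map machinery as you propose; it simply invokes Part I \cite[Theorem 1.13]{CFL}, which already packages exactly the two delicate points you flag (that every solution arises from the construction, and that the $\mathbb{R}_+$-scaling exhausts each family), together with two additional facts that the paper uses: $u_\beta$ blows up as $\beta\to+\infty$ at a nontrivial critical point, and $u_\beta$ is even if and only if $\beta=1$. So parts (1) and (2) up to the symmetry claim are handled the same way, just with a citation in place of your sketched re-derivation.

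There is, however, a genuine gap in your symmetry argument. The claim that ``a continuous involution on $\mathbb{R}_+$ has a unique fixed point'' is false: the identity is such an involution and fixes everything. A continuous involution of $\mathbb{R}_+\cong\mathbb{R}$ is either the identity or orientation-reversing, and only in the latter case does it have a unique fixed point. Your argument therefore does not rule out that every $u_\beta$ is even, and this matters: the conclusion $u_1(\bar z)=u_1(z)$ is obtained by observing that $u_1(\bar z)$ is again an even solution (using $\bar p=\pm p$ in $E_\tau$, which you correctly establish) and then invoking \emph{uniqueness} of the even solution within the family. Without knowing that $\beta=1$ is the only even member, $u_1(\bar z)$ could a priori be $u_{\beta'}$ for some $\beta'\neq 1$. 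The paper closes this by quoting the Part I statement that $u_\beta$ is even iff $\beta=1$; you would need either to cite that fact or to show the induced involution on the parameter is genuinely orientation-reversing (e.g.\ by using the blow-up behavior as $\beta\to+\infty$ at a single point $a_0\neq -a_0$ to exclude the identity case). Also note that $u_1$ is already pinned down by the normalization of the family from Part I, so ``designating'' it to be the fixed point is not available to you as a free choice.
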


\begin{proof}
We proved in Part I \cite[Theorem 1.13]{CFL} that there is a one-to-one correspondence between pairs of nontrivial critical points of $G_p(z)$ and one-parameter scaling families of solutions of \eqref{mfe}. Furthermore, among each one-parameter scaling family of solutions $u_{\beta}(z)$ with $\beta>0$, $u_{\beta}(z)$ blows up as $\beta\to+\infty$ and the blowup point is a nontrivial critical point of $G_p(z)$. Besides, $u_{\beta}(z)$ is even (i.e. $u_{\beta}(z)=u_{\beta}(-z)$) if and only if $\beta=1$. Thus this theorem follows from Theorem \ref{main-thm-b-2}. Note that in the statement (2), we have that $u_1(z)$ is the unique even solution of  \eqref{mfe}. Since $\tau=ib$ and $\wp(p)\in\mathbb{R}$, we will see in Section \ref{section4} that $\bar p=\pm p$ in $E_{\tau}$, so $u_1(\bar z)$ is also an even solution of \eqref{mfe}, which implies $u_1(z)=u_1(\bar z)$ by the uniqueness of even solutions. 
\end{proof}

The rest of this paper is organized as follows. In Section \ref{section2}, we briefly review the basic theory of GLE \eqref{GLE1}, and establish its connection with $G_p(z)$. In Section \ref{section3}, we study the conditional stability sets of GLE \eqref{GLE1}, and establish its deep connection with nontrivial critical points of $G_p(z)$. We consider $\tau=ib$ with $b>0$ and $\wp(p)\in\mathbb R$ from Section \ref{section4}. In Section \ref{section4}, we give a precise characterization of the conditional stability sets, which are applied to prove Theorems \ref{main-thm-b-2} and \ref{main-thm-b-22} in Section \ref{section5}. In Section \ref{section7}, we apply Theorems \ref{main-thm-b-2} and \ref{main-thm-b-22} to prove Theorems \ref{main-thm-b-23-0}, \ref{main-thm-8}, \ref{thm-section5-5} and \ref{main-thm-b-23}. Finally, we give another application of the conditional stability sets in Section \ref{section6}.

\section{Preliminary: Generalized Lam\'{e} equation}
\label{section2}

In this section, we briefly review the basic theory of 
 the following second order generalized Lam\'{e} equation (GLE for short)
\begin{align}\label{GLE}
y''(z)=I(z; p, A, \tau)y(z),\quad z\in\mathbb{C},
\end{align}
with the potential given by
\begin{align}\label{potential-I}
I(z; p, A, \tau):=&\frac{3}{4}
\big(\wp(z+p)+\wp(z-p)-\wp(2p)\big)\nonumber\\
&+A\big(\zeta(z+p)-\zeta(z-p)-\zeta(2p)\big)+A^{2},
\end{align}
where
$p\in E_{\tau}\setminus E_{\tau}[2]$ and $A\in \mathbb{C}$.
Then a deep connection between GLE \eqref{GLE} and the Green function $G_p(z)$ will be established; see Theorem \ref{rmk2-5}. This connection will be used to study nontrivial critical points of $G_p(z)$ in subsequent sections.

Let $y_{1}(z),y_{2}(z)$ be any two solutions of \eqref{GLE} and set
$\Phi(z)=y_{1}(z)y_{2}(z).$ Then $\Phi(z)$ satisfies the second
symmetric product equation for \eqref{GLE}:%
\begin{equation}
\Phi^{\prime \prime \prime}(z)-4I(z; p, A, \tau)\Phi^{\prime}(z)-2I^{\prime}(z; p, A, \tau)\Phi(z)=0.
\label{303-1}%
\end{equation}

GLE \eqref{GLE}  is of Fuchsian type with regular singularities at $\pm p$.
The local exponents of GLE (\ref{GLE}) at $\pm p$ are $-\frac12$, $\frac32$, so solutions might have logarithmic singularities at $\pm p$.
It was proved in \cite{CKL-JMPA2016} that $\pm p$ are always apparent singularities, i.e. all solutions of \eqref{GLE} are free of
logarithmic singularities at $\pm p$. 
Consequently, the local monodromy matrix at $\pm p$ is $-I_2$, where $I_2$ is the identity matrix. Denote by $L_p\subset\{t_1+t_2\tau : t_1,t_2\in[-\frac12,\frac12]\}$ the straight segment crossing $0$ and connecting $\pm p$. Then by analytic continuation, any solution $y(z)$ of GLE (\ref{GLE}) can be viewed as a single-valued
meromorphic function in $\mathbb{C}\backslash(L_p+\Lambda_{\tau})$, and in this
region $y(-z)$ and $y(z+\omega_j)$ are well-defined. Here
$L_p+\Lambda_{\tau}=\cup_{\omega\in\Lambda_{\tau}} (\omega+L_p)$ is the preimage of $L_p$ under the projection $\pi: \mathbb{C}\to E_{\tau}$.

Fix any base point $q_{0}\in E_{\tau}\backslash\{ \pm 
p\}$ such that any of $q_0+\mathbb{R}, q_0+\tau\mathbb{R}, q_0+(2\tau-1)\mathbb{R}$ has no intersection with $\pm p+\Lambda_{\tau}$. The monodromy representation of GLE \eqref{GLE} is a group homomorphism $\rho:\pi_{1}(  E_{\tau}\backslash\{\pm p\},q_{0})  \rightarrow
SL(2,\mathbb{C})$ defined as follows. Take any basis of solutions $(y_1(z), y_2(z))$ of GLE  \eqref{GLE}. For any loop $\gamma\in \pi_{1}(  E_{\tau}\backslash
\{\pm p\},q_{0})$, let $\gamma^*y(z)$ denote the analytic continuation of $y(z)$ along $\gamma$. Then $\gamma^*(y_1(z), y_2(z))$ is also a basis of solutions, so there is a matrix $\rho(\gamma)\in SL(2,\mathbb{C})$ such that
$$\gamma^*\begin{pmatrix}
y_{1}(z)\\
y_{2}(z)
\end{pmatrix}
=\rho(\gamma)
\begin{pmatrix}
y_{1}(z)\\
y_{2}(z)
\end{pmatrix}.$$
Here $\rho(\gamma)\in SL(2,\mathbb{C})$ (i.e. $\det \rho(\gamma)=1$) follows from the fact that the Wronskian $y_{1}(z)y_2'(z)-y_1'(z)y_2(z)$ is a nonzero constant. The image of $\rho:\pi_{1}(  E_{\tau}\backslash\{\pm p\},q_{0})  \rightarrow
SL(2,\mathbb{C})$ is called the monodromy group of \eqref{GLE}, which is a subgroup of $SL(2,\mathbb{C})$.

Let
$\gamma_{\pm}\in \pi_{1}(  E_{\tau}%
\backslash\{  \pm p\}  ,q_{0})$ be a
simple loop encircling $\pm p$ counterclockwise respectively, and $\ell_{j}%
\in \pi_{1}(  E_{\tau}%
\backslash\{\pm p\}  ,q_{0})  $, $j=1,2$, be two
fundamental cycles of $E_{\tau}$ connecting $q_{0}$ with $q_{0}+\omega_{j}$
such that $\ell_{j}\cap (L_p+\Lambda_{\tau})=\emptyset$ and satisfies%
\begin{equation}
\gamma_{-}\gamma_{+}=\ell_{1}\ell_{2}\ell_{1}^{-1}\ell_{2}^{-1}\text{ \  \ in
}\pi_{1}\left(  E_{\tau}\backslash \left \{\pm p\right \}
,q_{0}\right)  . \label{II-iv1}%
\end{equation}
Denote by $$N_j=N_j(A):=\rho(\ell_j),\quad j=1,2.$$
Since
$
\rho(\gamma_{\pm})=-I_{2}$, 
it follows from \eqref{II-iv1} that $N_1N_2=N_2N_1$, 
and the monodromy group of (\ref{GLE}) is generated by $\{-I_{2},N_1,N_2\}$ and is abelian. So there is a common eigenfunction $y_{1}(z)$ of all
monodromy matrices. Let
$\varepsilon_{j}$ be the eigenvalue of $N_j$: $$\ell_{j}^{\ast}y_{1}(z)=\varepsilon
_{j}y_{1}(z),\quad j=1,2.$$As mentioned before, $y_{1}(z)$ can be viewed as a single-valued
meromorphic function in $\mathbb{C}\backslash(L_p+\Lambda_{\tau})$, and in this
region, $y_{1}(-z)$ and $y_1(z+\omega_j)$ are well-defined. Then it follows from $\ell_{j}\cap (L_p+\Lambda_{\tau})=\emptyset$ that
\begin{equation}
y_{1}(z+\omega_{j})=\ell_{j}^{\ast}y_{1}(z)=\varepsilon_{j}y_{1}(z),\quad j=1,2. \label{304-111}%
\end{equation}

Let $y_{2}(z)=y_{1}(-z)$ in $\mathbb{C}\backslash(L_p+\Lambda_{\tau})$. Clearly
$y_{2}(z)$ is also a solution of (\ref{GLE}) and (\ref{304-111}) implies
\begin{equation}
y_{2}(z+\omega_{j})=\ell_{j}^{\ast}y_{2}(z)=\varepsilon_{j}^{-1}%
y_{2}(z),\quad j=1,2, \label{304-12}%
\end{equation}
i.e. $y_{2}(z)$ is also an eigenfunction with eigenvalue $\varepsilon_{j}^{-1}$.
Define
\[
\Phi_{e}(z)=\Phi_{e}(z; A):=y_{1}(z)y_{2}(z)=y_{1}(z)y_{1}(-z).
\]
Obviously, $\pm p$ are no longer branch points of $\Phi
_{e}(z)$, so $\Phi_{e}(z)$ is single-valued meromorphic in
$\mathbb{C}$. By (\ref{304-111})-(\ref{304-12}), $\Phi_{e}(z)$ is an even
elliptic function. Therefore, $\Phi_{e}(z)$ is an even
elliptic solution of the third order ODE \eqref{303-1}. It was proved in \cite[Proposition 2.9]{Takemura} that this $\Phi_{e}(z)$ is the unique even elliptic solution of \eqref{303-1} up to multiplying a nonzero constant. Here we need to compute the expression of  $\Phi_{e}(z)$ in terms of elliptic functions.

\begin{lemma}
Up to multiplying a nonzero constant, 
\begin{equation}\label{phie}\Phi_{e}(z)=\Phi_{e}(z; A)=\zeta(z+p)-\zeta(z-p)-2A-\zeta(2p).\end{equation}
\end{lemma}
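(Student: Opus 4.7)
The plan is to verify directly that the proposed right-hand side defines an even elliptic solution of the third-order ODE \eqref{303-1}, and then appeal to the uniqueness from \cite[Proposition 2.9]{Takemura} (already quoted in the text) to identify it with $\Phi_e$ up to a nonzero scalar. Denote the candidate by
\[
\Psi(z) := \zeta(z+p) - \zeta(z-p) - 2A - \zeta(2p).
\]

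First I would check the elementary structural properties of $\Psi$. Quasi-periodicity of $\zeta$ (i.e.\ $\zeta(w+\omega_j) = \zeta(w) + \eta_j$) makes the shifts cancel in the combination $\zeta(z+p) - \zeta(z-p)$, so $\Psi(z+\omega_j) = \Psi(z)$; oddness of $\zeta$ gives $\Psi(-z) = \Psi(z)$. Thus $\Psi$ is even and elliptic with simple poles exactly at $\pm p \pmod{\Lambda_\tau}$. Applying the standard addition formula for $\zeta$ to each of $\zeta(z+p)$ and $\zeta(z-p)$ and simplifying, one obtains the rational $\wp$-form
\[
\Psi(z) = -\frac{\wp'(p)}{\wp(z) - \wp(p)} + \bigl(2\zeta(p) - \zeta(2p) - 2A\bigr),
\]
which makes the analytic structure of $\Psi$ entirely explicit and is the representation convenient for matching against the potential $I(z;p,A,\tau)$ of GLE \eqref{GLE}.

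The main step is to show that $\Psi$ satisfies \eqref{303-1}. Rather than differentiating $\Psi$ three times and plugging in, I would verify the equivalent first-integral identity: for any two linearly independent solutions $y_1, y_2$ of \eqref{GLE} with constant Wronskian $W = y_1 y_2' - y_1' y_2$, a direct computation using $y_i'' = I(z;p,A,\tau) y_i$ yields
\[
2\Phi\Phi'' - (\Phi')^2 - 4\,I(z;p,A,\tau)\,\Phi^2 = -W^2, \qquad \Phi := y_1 y_2,
\]
and differentiating this relation recovers $\Phi \cdot [\Phi''' - 4I\Phi' - 2I'\Phi] = 0$, so \eqref{303-1} and the first-integral identity are equivalent (for $\Phi \not\equiv 0$). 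Substituting the $\wp$-form of $\Psi$ into $2\Psi\Psi'' - (\Psi')^2 - 4I\Psi^2$ and using $\wp'(z)^2 = 4\wp(z)^3 - g_2\wp(z) - g_3$ to eliminate $\wp'(z)^2$, the expression reduces to a rational function of $\wp(z)$ alone; its possible poles (at $\wp(z) = \wp(p)$ and $\wp(z) = \infty$) must cancel, and what remains is a finite polynomial identity in $A$, $\wp(p)$, $\wp'(p)$, $g_2$, $g_3$ that can be checked term by term.

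The main obstacle is the bookkeeping in that last algebraic verification. To keep it tractable I would first identify the expected constant value of the first integral by evaluating at a convenient point such as a half-period $\omega_k/2$, where $\wp'$ vanishes and many terms drop, and then confirm cancellation of all nonconstant parts via pole-order matching at $z = \pm p$ and at $z = 0$ (using the Laurent expansions of $\wp$ and $\zeta$). Once constancy is established, $\Psi$ is a nonzero even elliptic solution of \eqref{303-1}, and the uniqueness of such a solution up to a scalar yields $\Phi_e(z) = c\,\Psi(z)$ for some constant $c \neq 0$, which is the assertion.
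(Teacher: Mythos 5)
Your argument is correct, but it runs in the opposite direction from the paper's. The paper starts from $\Phi_e=y_1(z)y_1(-z)$ itself: the local exponents of the GLE at $\pm p$ force the exponents of $\Phi_e$ there to lie in $\{-1,1,3\}$, and evenness plus the existence of poles forces both to be $-1$, so $\Phi_e=\zeta(z+p)-\zeta(z-p)+d_1$ up to a scalar; a single Laurent coefficient (the $(z-p)^{-3}$ term of \eqref{303-1}) then determines $d_1=-2A-\zeta(2p)$. You instead take the full candidate $\Psi$, verify that it is a nonzero even elliptic solution of \eqref{303-1}, and invoke the quoted uniqueness from Takemura. Your reduction to the first integral is sound: differentiating $(\Phi')^2-2\Phi\Phi''+4I\Phi^2$ gives $-2\Phi\bigl(\Phi'''-4I\Phi'-2I'\Phi\bigr)$, so for meromorphic $\Psi\not\equiv 0$ constancy of the first integral is equivalent to \eqref{303-1}. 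The cancellation you defer to bookkeeping does in fact occur: writing $u=z-p$, one has $\Psi=-u^{-1}-2A-\wp(2p)u-\tfrac12\wp'(2p)u^2+O(u^3)$ and $4I=3u^{-2}-4Au^{-1}+4A^2+O(u)$, and the coefficients of $u^{-4},u^{-3},u^{-2},u^{-1}$ in $(\Psi')^2-2\Psi\Psi''+4I\Psi^2$ all vanish; since $\pm p$ are the only possible poles and the expression is even and elliptic, it is constant. The trade-off between the two routes: the paper checks one Laurent coefficient but needs the local-exponent analysis of $\Phi_e$; you check the entire principal part at one pole (four coefficients) but need nothing about $\Phi_e$ beyond its uniqueness as an even elliptic solution. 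Both are complete proofs; yours is slightly more computational but more self-contained, and your intermediate $\wp$-form $\Psi=2\zeta(p)-\zeta(2p)-2A-\wp'(p)/(\wp(z)-\wp(p))$ is correct and a useful normal form.
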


\begin{proof}
Note that the local exponents of $\Phi_{e}(z)$ at $\pm p$ belong to $\{-1, 1, 3\}$. Since $\Phi_e$ must have poles and is even, so the local exponents of $\Phi_{e}(z)$ at $\pm p$ are both $-1$, and then up to multiplying a nonzero constant, we may assume
$$\Phi_{e}(z)=\zeta(z+p)-\zeta(z-p)+d_1$$
for some constant $d_1\in\mathbb{C}$. Inserting this formula into \eqref{303-1} and expanding the left hand side of \eqref{303-1} as power series of $(z-p)$, we see from the coefficient of $(z-p)^{-3}$ that $d_1=-2A-\zeta(2p)$. 
\end{proof}

\begin{lemma}
\label{lemma2-2} Let $\Phi_e(z)=y_1(z)y_2(z)=y_{1}(z)y_{1}(-z)$ be the even elliptic solution of %
\eqref{303-1} given by \eqref{phie}, and define $W:=y_1(z)y_2'(z)-y_1'(z)y_2(z)$ to be the Wronkian of $y_1$ and $y_2$. Then
\begin{equation}  \label{eq218}
Q(A)=Q_{p,\tau}(A):=\Phi_e'(z)^{ 2}-2\Phi_e^{\prime \prime }(z)\Phi_e(z)+4I(z;
p,A,\tau)\Phi_e(z)^2
\end{equation}
is a polynomial of $A$ with the leading term $16A^4$, and $W^2=Q(A)$. Furthermore,
\begin{equation}  \label{eq219}
y_1(z)=\sqrt{\Phi_e(z)}\exp\int^z\frac{-W}{2\Phi_e(\xi)}d\xi.
\end{equation}
\end{lemma}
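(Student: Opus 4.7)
The plan is to deduce both identities from the fact that the Wronskian $W = y_1 y_2' - y_1' y_2$ of two solutions of the linear ODE $y'' = I(z; p, A, \tau) y$ is constant in $z$: indeed $W' = y_1 y_2'' - y_1'' y_2 = (I - I) y_1 y_2 = 0$. The identity $W^2 = Q(A)$ then reduces to a direct computation. From $\Phi_e = y_1 y_2$ and $\Phi_e' = y_1' y_2 + y_1 y_2'$, the polarization identity gives
$$(\Phi_e')^2 - W^2 = 4 y_1 y_2 y_1' y_2' = 4\Phi_e\, y_1'y_2'.$$
To eliminate $y_1' y_2'$ I would differentiate once more and use the ODE to obtain $\Phi_e'' = 2I\Phi_e + 2 y_1'y_2'$, so that $y_1' y_2' = \tfrac{1}{2}\Phi_e'' - I\Phi_e$. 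Substituting yields exactly $W^2 = (\Phi_e')^2 - 2\Phi_e''\Phi_e + 4I\Phi_e^2 = Q(A)$, pointwise in $z$. As a sanity check, one can verify independently that $\partial_z Q = 0$ by using the third-order equation \eqref{303-1}: a short calculation gives $\partial_z Q = -2\Phi_e(\Phi_e''' - 4I\Phi_e' - 2I'\Phi_e) = 0$, which reconfirms that $Q$ depends only on $A$ (and on $p,\tau$).

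The polynomial nature of $Q$ in $A$ with leading term $16A^4$ then follows by inspection of \eqref{phie} and \eqref{potential-I}: $\Phi_e$ is affine in $A$ with $\partial_A\Phi_e = -2$, while $\Phi_e'$ and $\Phi_e''$ are $A$-independent, and the potential $I(z;p,A,\tau)$ is quadratic in $A$ with leading coefficient $1$. Hence the highest $A$-degree in $Q$ comes entirely from the product $4I\Phi_e^2$, contributing $4\cdot A^2\cdot(-2A)^2 = 16A^4$, whereas $(\Phi_e')^2$ is $A$-independent and $-2\Phi_e''\Phi_e$ contributes degree at most $1$ in $A$. Combined with the $z$-independence established above, this shows that $Q(A)=Q_{p,\tau}(A)$ is a polynomial in $A$ of degree exactly $4$ with leading coefficient $16$.

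Finally, to prove \eqref{eq219}, I would set $u(z) := \sqrt{\Phi_e(z)}\exp\int^z \tfrac{-W}{2\Phi_e(\xi)}\,d\xi$ and compute the logarithmic derivative
$$\frac{u'(z)}{u(z)} = \frac{\Phi_e'(z) - W}{2\Phi_e(z)} = \frac{(y_1'y_2 + y_1 y_2') - (y_1 y_2' - y_1' y_2)}{2 y_1 y_2} = \frac{y_1'(z)}{y_1(z)},$$
so that $u$ and $y_1$ share the same logarithmic derivative and therefore agree up to a multiplicative constant, which can be absorbed into the constant of integration appearing in \eqref{eq219}. The entire argument is essentially formal algebraic manipulation of Wronskians and of the symmetric product equation \eqref{303-1}; the only place requiring genuine care is the bookkeeping of $A$-degrees of each summand of $Q(A)$ in order to isolate the leading coefficient $16$, but no real analytic obstacle arises.
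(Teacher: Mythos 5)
Your proof is correct and follows essentially the same route as the paper: both establish $W^2=Q(A)$ by direct algebraic manipulation of $\Phi_e=y_1y_2$ and the ODE (the paper adds the two Riccati identities for $(y_j'/y_j)'$, while you use the equivalent polarization identity $(\Phi_e')^2-W^2=4\Phi_e\,y_1'y_2'$ together with $\Phi_e''=2I\Phi_e+2y_1'y_2'$), and your derivation of \eqref{eq219} from $y_1'/y_1=(\Phi_e'-W)/(2\Phi_e)$ is exactly the paper's. Your degree bookkeeping for the leading term $16A^4$ and the check $\partial_zQ=0$ via \eqref{303-1} are also correct.
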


\begin{proof} First, it follows from \eqref{303-1} that $Q(A)$ is independent of $z$. Using \eqref{potential-I} and \eqref{phie}, we see that $Q(A)$ is a polynomial of $A$ with the leading term $16A^4$. 

It suffices to prove $W^2=Q(A)$ and \eqref{eq219}, which has been proved in \cite{CKL1, Takemura}. Indeed,
since $\Phi_e(z)=y_1(z)y_2(z)$ and $W=y_1y_2^{\prime }-y_1^{\prime }y_2$, we
have
\begin{equation*}
\frac{y_1^{\prime }}{y_1}=\frac{\Phi_e^{\prime }-W}{2\Phi_e},\quad\frac{%
y_2^{\prime }}{y_2}=\frac{\Phi_e^{\prime }+W}{2\Phi_e},
\end{equation*}
which implies \eqref{eq219} and
\begin{equation*}
\frac{\Phi_e^{\prime \prime }}{2\Phi_e}-\frac{\Phi_e^{\prime }-W}{2\Phi_e^2}%
\Phi_e^{\prime }=\left(\frac{y_1^{\prime }}{y_1}\right)^{\prime }=\frac{%
y_1^{\prime \prime }}{y_1}-\left(\frac{y_1^{\prime }}{y_1}\right)^2=I(z; p, A, \tau)-\left(%
\frac{\Phi_e^{\prime }-W}{2\Phi_e}\right)^2,
\end{equation*}
\begin{equation*}
\frac{\Phi_e^{\prime \prime }}{2\Phi_e}-\frac{\Phi_e^{\prime }+W}{2\Phi_e^2}%
\Phi_e^{\prime }=I(z; p, A, \tau)-\left(\frac{\Phi_e^{\prime }+W}{2\Phi_e}\right)^2.
\end{equation*}
Adding these two formulas together, we obtain $W^2=Q(A)$.
\end{proof}

Let $\sigma(z;\tau)=\sigma(z):=\exp(\int^{z}\zeta(\xi)d\xi)$ be the Weierstrass sigma function, which is an odd
entire function with simple zeros at $\Lambda_{\tau}$, and satisfies the following transformation law
\begin{equation}  \label{123123}
\sigma(z+\omega_j)=-e^{\eta_j(z+\frac{1}{2}\omega_j)}\sigma(z),\quad j=1,2.
\end{equation}
Then the explicit expression of the common eigenfunction $y_1(z)$ can be written down by $\sigma(z)$, which has been studied in \cite{CKL1}.

\begin{lemma}\cite[Section 3]{CKL1}\label{lemma2-3} Let $y_{1}(z)$ be
the common eigenfunction mentioned above. Then there is $a=a(A)\neq \pm p$ in $E_{\tau}$ such that up to multiplying a nonzero
constant, 
\begin{equation}\label{exp1}
y_{1}(z)=y_{a}(z):=\frac{e^{c(a)z}\sigma(z-a)}%
{\sqrt{\sigma(z-p)\sigma(z+p)}},
\end{equation}
with
\begin{equation}
c(a)=\frac{1}{2}[\zeta
(a+p)+\zeta(a-p)].\label{61-38}
\end{equation}
Furthermore, $A$ can be expressed by $a$ as 
\begin{equation}
A=\frac{1}{2}\left[  \zeta(p+a)+\zeta(p-a)-\zeta(2p)\right]  . \label{Aap}%
\end{equation}
\end{lemma}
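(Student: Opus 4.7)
The plan is to construct $y_a$ explicitly from the factorization of $\Phi_e$ given by \eqref{phie} and then verify both the relation \eqref{Aap} and the formula \eqref{exp1} by comparing with Lemma \ref{lemma2-2}.

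\emph{Factorization step.} First I would note from the explicit expression $\Phi_e(z)=\zeta(z+p)-\zeta(z-p)-2A-\zeta(2p)$ that $\Phi_e$ is an even elliptic function of degree $2$ whose only poles are simple poles at $\pm p$, so its zeros form an unordered pair $\{+a,-a\}$ with $a\neq\pm p$. Standard Weierstrass $\sigma$-function factorization then gives
\[
\Phi_e(z)=C\,\frac{\sigma(z-a)\sigma(z+a)}{\sigma(z-p)\sigma(z+p)}
\]
for some nonzero constant $C$. Evaluating the original formula at $z=a$ (where $\Phi_e(a)=0$) and using oddness of $\zeta$ yields \eqref{Aap} at once.

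\emph{Matching step via Lemma \ref{lemma2-2}.} Next I would define $y_a$ by \eqref{exp1} and show it agrees with the common eigenfunction $y_1$ up to a nonzero constant. A direct computation using only oddness of $\sigma$ gives $y_a(z)y_a(-z)=-\Phi_e(z)/C$, so $y_a$ has the right overall shape. Writing $y_a=(\text{const})\sqrt{\Phi_e}\cdot E(z)$ with $E(z):=e^{c(a)z}\sqrt{\sigma(z-a)/\sigma(z+a)}$, the identification $y_a\propto y_1=\sqrt{\Phi_e}\exp\int^{z}(-W/2\Phi_e)\,d\xi$ from Lemma \ref{lemma2-2} reduces to the identity
\[
c(a)+\tfrac12\bigl[\zeta(z-a)-\zeta(z+a)\bigr]=\frac{-W}{2\Phi_e(z)}.
\]
Both sides are meromorphic on $E_\tau$ with simple poles only at $\pm a$; matching residues forces $W=-\Phi_e'(a)$ (one of the two branches of $W=\pm\sqrt{Q(A)}$), and matching the remaining constant part (by evaluating at a convenient point and invoking the addition formula for $\zeta$) pins down $c(a)=\tfrac12[\zeta(a+p)+\zeta(a-p)]$, proving \eqref{61-38}.

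\emph{Monodromy check.} Finally I would confirm $y_a$ is the common monodromy eigenfunction by substituting $z\mapsto z+\omega_j$ into \eqref{exp1} using $\sigma(z+\omega_j)=-e^{\eta_j(z+\omega_j/2)}\sigma(z)$; the $\sqrt{\sigma(z-p)\sigma(z+p)}$ denominator absorbs the $e^{\eta_j z}$ growth coming from both $\sigma(z\pm p)$, yielding $y_a(z+\omega_j)/y_a(z)=-e^{c(a)\omega_j-\eta_j a}$, independent of $z$.

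The main obstacle is pinning down the additive constant $c(a)$. The $\sigma$-factorization of $\Phi_e$ determines $y_a$ only up to an overall exponential factor $e^{cz}$, and the eigenfunction property under $z\mapsto z+\omega_j$ holds for \emph{any} value of $c$; the specific value $c(a)=\tfrac12[\zeta(a+p)+\zeta(a-p)]$ is forced not by symmetry but by the requirement that $y_a$ actually solve the GLE, which via Lemma \ref{lemma2-2} translates into the nontrivial residue and constant-part matching carried out in the matching step.
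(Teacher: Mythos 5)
Your proof is correct, but it runs in a different direction from the paper's. The paper works directly with $y_{1}$: since the local exponents at $\pm p$ are $-\tfrac12,\tfrac32$, the square $y_{1}(z)^{2}$ is single-valued and, by \eqref{304-111}, elliptic of the second kind with at most simple poles at $\pm p$ and an even-order zero divisor, which forces the shape $y_1(z)^2=e^{2cz}\sigma(z-a)^2/(\sigma(z-p)\sigma(z+p))$ for some undetermined $a$ and $c$; the constants $c(a)$ in \eqref{61-38} and $A$ in \eqref{Aap} are then extracted by substituting \eqref{exp1} into the GLE and computing the Laurent expansions at $z=\pm p$. You instead locate $a$ as the zero pair of the already-computed $\Phi_e$ from \eqref{phie}, which gives \eqref{Aap} immediately and very cleanly from $\Phi_e(a)=0$ plus oddness of $\zeta$, and you then recover \eqref{exp1}--\eqref{61-38} by matching $y_a$ against the Wronskian representation \eqref{eq219} of Lemma \ref{lemma2-2}; your key identity $c(a)+\tfrac12[\zeta(z-a)-\zeta(z+a)]=-W/(2\Phi_e(z))$ is verified correctly by comparing residues of two elliptic functions (forcing $W=-\Phi_e'(a)$, consistent with $Q(A)=\Phi_e'(a)^2$ from \eqref{eq218}) and evaluating the constant discrepancy at $z=p$, where the right-hand side vanishes. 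What your route buys is the elimination of the Laurent-expansion computation at the singular points, replaced by elementary divisor/residue bookkeeping; what it costs is reliance on \eqref{phie} and \eqref{eq219}, whereas the paper's argument determines the form of $y_1$ independently of $\Phi_e$. Two small caveats, neither a gap: the sign ambiguity $W=\pm\sqrt{Q(A)}$ only swaps $y_a$ with $y_{-a}$, which is harmless since the lemma asserts existence of some $a$; and in your final monodromy check the overall sign of the multiplier $y_a(z+\omega_j)/y_a(z)$ depends on the branch chosen for $\sqrt{\sigma(z-p)\sigma(z+p)}$ under continuation along $\ell_j$ (this is exactly what \eqref{304-1} pins down via $\Psi_p$), but since you only need the multiplier to be independent of $z$, this does not affect your argument.
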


\begin{proof}
We sketch the proof for completeness. Since the local exponents of $y_1(z)^2$ at $\pm p$ belong to $\{-1,3\}$, $y_1(z)^2$ has no branch points and so is meromorphic in $\mathbb{C}$. By \eqref{304-111} we have $y_1(z+\omega_j)^2=\varepsilon_j^2y_1(z)^2$, so $y_1(z)^2$ is an elliptic function of the second kind. Then the theory of elliptic functions implies that up to multiplying a nonzero constant, there are $a\neq \pm p$ in $E_{\tau}$ and $c\in\mathbb{C}$ such that $$y_1(z)^2=\frac{e^{2cz}\sigma(z-a)^2}{\sigma(z-p)\sigma(z+p)}.$$
This proves \eqref{exp1}. Inserting \eqref{exp1} into the linear ODE \eqref{GLE} and computing the Laurent expansions at $z=\pm p$, we easily obtain \eqref{61-38} and \eqref{Aap}.
\end{proof}

Observe that in $\mathbb{C}\backslash(L_p+\Lambda_{\tau})$,
\begin{equation}
y_2(z)=y_{{a}}(-z)=y_{-{a}}(z)\text{ \ up to multiplying a nonzero constant,} \label{exp11}%
\end{equation}
which infers that
\begin{align}
\Phi_{e}(z)=y_{a}(z)y_{-a}(z) \text{ \ up to multiplying a nonzero constant.}\label{a-a1}%
\end{align}
By the uniqueness of $\Phi_{e}(z)$, we see that 
\begin{equation} \label{eq-data1}
  \parbox{\dimexpr\linewidth-5em}{$\pm a
\in E_{\tau}$ is uniquely determined for given GLE \eqref{GLE} (i.e. uniquely determined by $A$ for fixed $p, \tau$),
  }
\end{equation}
and for different representatives $a, \tilde{a}\in a+\Lambda_{\tau}$ of the same $a\in E_{\tau}$, we have
\[
y_{a}(z)=y_{\tilde{a}}(z)\text{ \ up to multiplying a nonzero
constant}
\]
by using the transformation law \eqref{123123}.

\begin{lemma} We have
\[Q(A)=16\prod_{k=0}^3(A-A_k),\]
where
\begin{align}\label{Ak}
A_k:=&\frac{1}{2}\left[  \zeta\Big(p+\frac{\omega_k}{2}\Big)+\zeta\Big(p-\frac{\omega_k}{2}\Big)-\zeta(2p)\right] \nonumber\\
=&\begin{cases}
-\frac{\wp''(p)}{4\wp'(p)} & k=0,\\
A_0+\frac{\wp'(p)}{2(\wp(p)-e_k)}&k=1,2,3,
\end{cases}
\end{align}
satisfies $\sum_{k=0}^{3}A_k=0$.
\end{lemma}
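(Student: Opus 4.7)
The plan is to exploit the identity $W^{2}=Q(A)$ from Lemma \ref{lemma2-2} together with the parametrization of the common eigenfunction in Lemma \ref{lemma2-3}. Since Lemma \ref{lemma2-2} already shows that $Q(A)$ is a polynomial in $A$ of degree $4$ with leading coefficient $16$, it has exactly four roots counted with multiplicity, so it suffices to exhibit four distinct values $A_{0},\dots,A_{3}$ at which $Q(A)=0$ and then appeal to the factorization of a quartic.

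The key observation is that $Q(A)=W^{2}=0$ if and only if the Wronskian of $y_{1},y_{2}$ vanishes, i.e.\ $y_{1},y_{2}$ are proportional. By Lemma \ref{lemma2-3} and \eqref{exp11}--\eqref{a-a1}, we have $y_{1}=y_{a}$ and $y_{2}=y_{-a}$ up to nonzero constants for some $a\in E_{\tau}\setminus\{\pm p\}$, so this proportionality holds precisely when $-a\equiv a$ in $E_{\tau}$, i.e.\ $a\in E_{\tau}[2]=\{\omega_{k}/2:k=0,1,2,3\}$. Substituting $a=\omega_{k}/2$ into \eqref{Aap} then produces exactly the four candidates $A_{k}=\tfrac{1}{2}[\zeta(p+\omega_{k}/2)+\zeta(p-\omega_{k}/2)-\zeta(2p)]$ appearing in the statement.

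Next I would simplify these expressions by standard elliptic identities. For $A_{0}$ (where $\omega_{0}=0$), the $\zeta$-duplication identity $\zeta(2p)=2\zeta(p)+\tfrac{\wp''(p)}{2\wp'(p)}$, obtained from the addition formula by letting one argument tend to the other, gives $A_{0}=-\tfrac{\wp''(p)}{4\wp'(p)}$. For $k=1,2,3$, applying \eqref{eqfc-add} with $z=p$, $w=\omega_{k}/2$ yields $\zeta(p+\omega_{k}/2)+\zeta(p-\omega_{k}/2)=2\zeta(p)+\tfrac{\wp'(p)}{\wp(p)-e_{k}}$, hence $A_{k}=A_{0}+\tfrac{\wp'(p)}{2(\wp(p)-e_{k})}$.

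To finish, I must check distinctness of the $A_{k}$'s and the vanishing of their sum. Since $p\notin E_{\tau}[2]$, we have $\wp'(p)\ne 0$ and $\wp(p)\ne e_{k}$ for $k=1,2,3$, forcing $A_{k}\ne A_{0}$; and the $e_{k}$'s being mutually distinct forces $A_{k}\ne A_{k'}$ for distinct $k,k'\in\{1,2,3\}$. Hence $A_{0},\dots,A_{3}$ are four distinct roots of the degree-four polynomial $Q$ with leading coefficient $16$, giving $Q(A)=16\prod_{k=0}^{3}(A-A_{k})$. For the sum, logarithmic differentiation of $\wp'(z)^{2}=4\prod_{k=1}^{3}(\wp(z)-e_{k})$ at $z=p$ yields $\sum_{k=1}^{3}\tfrac{1}{\wp(p)-e_{k}}=\tfrac{2\wp''(p)}{\wp'(p)^{2}}$, which combined with the explicit formula for $A_{0}$ gives $\sum_{k=0}^{3}A_{k}=4A_{0}+\tfrac{\wp'(p)}{2}\cdot\tfrac{2\wp''(p)}{\wp'(p)^{2}}=0$. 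The only nontrivial step is the opening identification of the roots of $Q$ with the four half-periods via the $a\leftrightarrow A$ correspondence of Lemma \ref{lemma2-3}; all remaining steps are bookkeeping with the addition and duplication formulas for $\zeta$ and $\wp$.
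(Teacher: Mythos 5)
Your proof is correct and follows essentially the same route as the paper: identify the zeros of $Q(A)=W^2$ with the degeneration $a=-a$ of the pair $y_{\pm a}$, read off $A_k$ from \eqref{Aap} at $a=\frac{\omega_k}{2}$, and finish with the addition/duplication formulas for $\zeta$ and the logarithmic derivative of $\wp'^2$. The one place you are less explicit than the paper is the converse direction (that each $A_k$ actually \emph{is} a root, not merely that every root lies in $\{A_0,\dots,A_3\}$): this needs either the paper's direct check that $y_{\omega_k/2}$ solves \eqref{GLE} with $A=A_k$, or the observation that the right-hand side of \eqref{Aap} is an even elliptic function of $a$ of degree two, so $A$ determines the pair $\pm a$ and hence $a(A_k)=\frac{\omega_k}{2}$; on the other hand, your explicit distinctness check of the $A_k$'s is a useful supplement that the paper leaves implicit.
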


\begin{proof}
Let $Q(A)=0$. Then $y_{a}(z)$ and $y_{-a}(z)$ are linearly dependent, so $a=-a$ in $E_{\tau}$, namely $a=\frac{\omega_k}{2}$ and then \eqref{Aap} implies
$$A=\frac{1}{2}\left[  \zeta\Big(p+\frac{\omega_k}{2}\Big)+\zeta\Big(p-\frac{\omega_k}{2}\Big)-\zeta(2p)\right]=:A_k.$$
Conversely, for each $k\in\{0,1,2,3\}$, a direct computation shows that $y_{\frac{\omega_k}{2}}(z)$ is a solution of GLE \eqref{GLE}  with $A=A_k$, and $y_{\pm\frac{\omega_k}{2}}(z)$ are linearly dependent, so $Q(A_k)=0$. Together with $\deg Q(A)=4$ with leading coefficient $16$, we conclude that $Q(A)=16\prod_k(A-A_k)$. Using the additional formula of elliptic functions
$$2\zeta(z)-\zeta(2z)=-\frac{\wp''(z)}{2\wp'(z)}$$ and \eqref{eqfc-add},
we obtain \eqref{Ak}. Finally, by
\begin{align*}
&\wp'(z)^2 = 4\wp(z)^3 - g_2\wp(z) - g_3=4\prod_{k=1}^3(\wp(z)-e_k), \\  & \wp''(z) = 2\sum_{k=1}^3\prod_{j\in\{1,2,3\}\setminus\{k\}}(\wp(z)-e_j), \end{align*}
we have
$$\sum_{k=0}^3A_k=\sum_{k=1}^3\frac{\wp'(p)}{2(\wp(p)-e_k)}-\frac{\wp''(p)}{\wp'(p)}=0.$$
The proof is complete.
\end{proof}

To establish the connection with the Green function $G_p(z)$, we need to compute the monodromy matrices $N_j(A)$ explicitly.
To avoid the branch points $\pm p$ of $y_{\pm a}(z)$, as in \cite{CKL1}, we define
\begin{equation*}
\Psi_{p}(z):=\frac{\sigma(z) }{\sqrt{\sigma(z-p) \sigma (z+p)}}.
\end{equation*}
By using the transformation law \eqref{123123},
we see that $\Psi_p(z)^2$ is an {elliptic function}. Since $\ell_{j}, j=1,2,$ are the two fixed fundamental
cycles of $E_{\tau}$ which do not intersect with $L_p+\Lambda_{\tau}$, it was proved in  \cite{CKL1} that
 the analytic continuation of $\Psi_p(z)$ along $\ell_{j}$ satisfies%
\begin{equation}
\ell_{j}^{\ast}\Psi_{p}(z)=\Psi_{p}(z),\quad j=1,2.   \label{304-1}
\end{equation}
Write
\begin{equation}
y_{\pm a}(z)=\tilde{y}_{\pm a}(z)\Psi_{p}(z),   \label{y-}
\end{equation}
where
$$\tilde{y}_{\pm a}(z)=e^{\pm c(a)z}\frac{\sigma(z\mp a)}%
{\sigma(z)}=e^{\frac{\pm1}{2}[\zeta
(a+p)+\zeta(a-p)]z}\frac{\sigma(z\mp a)}%
{\sigma(z)}$$
has no branch points and is meromorphic.
Define $(r,s)=(r(A), s(A))\in \mathbb{C}^2$ by
\begin{equation}\label{61-37-1}-2\pi i s:=c(a)-\eta_1a,\qquad 2\pi i r:=c(a)\tau-\eta_2a,
\end{equation}
or equivalently,  by using the Legendre relation $\tau\eta_1-\eta_2=2\pi i$,
\begin{equation}\label{61-37-2} \begin{cases}r+s\tau =a,\\
 r\eta_1+s\eta_2=c(a)=\frac{1}{2}[\zeta
(a+p)+\zeta(a-p)].\end{cases}
\end{equation}
Clearly we can rewrite \eqref{61-37-2} as
\begin{equation}\label{61-37-22}
\zeta
(a+p)+\zeta(a-p)-2(r\eta_1+s\eta_2)=0,\quad\text{with }\;a=r+s\tau.
\end{equation}
Then
by applying the
transformation law \eqref{123123}
to $\tilde{y}_{\pm a}(z)$, we obtain
\begin{align*}\tilde{y}_{\pm a}(z+1)&=e^{\pm c(a)(z+1)}\frac{\sigma(z+1\mp a)}%
{\sigma(z+1)}\\
&=e^{\mp 2\pi is}e^{\pm c(a)z}\frac{\sigma(z\mp a)}%
{\sigma(z)}=e^{\mp 2\pi is}\tilde{y}_{\pm a}(z),\end{align*}
and similarly,
$$\tilde{y}_{\pm a}(z+\tau)=e^{\pm 2\pi ir}\tilde{y}_{\pm a}(z),$$
so
\begin{equation}\label{eqfc-1}{y}_{\pm a}(z+1)=\ell_1^*{y}_{\pm a}(z)=e^{\mp 2\pi is}{y}_{\pm a}(z),\end{equation}
\begin{equation}\label{eqfc-2} {y}_{\pm a}(z+\tau)=\ell_2^*{y}_{\pm a}(z)=e^{\pm 2\pi ir}{y}_{\pm a}(z).\end{equation}
Consequently, there are two cases.

{\bf Case 1.} $Q(A)\neq 0$. Then $y_a(z)$ and $y_{-a}(z)$ are linearly independent, so it follows from \eqref{eqfc-1}-\eqref{eqfc-2} that the monodromy matrices are given by
\begin{equation}
N_1=N_1(A)=%
\begin{pmatrix}
e^{-2\pi is} & 0\\
0 & e^{2\pi is}%
\end{pmatrix}
,\text{ \  \  \ }N_2=N_2(A)=%
\begin{pmatrix}
e^{2\pi ir} & 0\\
0 & e^{-2\pi ir}%
\end{pmatrix}.
\label{Mono-001}%
\end{equation}
Furthermore, the uniqueness of the even elliptic solution $\Phi_e(z)=y_a(z)y_{-a}(z)$ implies $(r,s)\not\in \frac12\mathbb{Z}^2$ (see \cite[Corollary 2.5]{CKL1}). In
particular,
\begin{equation}
(\text{tr}N_1(A),\text{tr}N_2(A))=(2\cos2\pi s,2\cos2\pi
r)\not \in \{ \pm(2,2),\pm(2,-2)\}. \label{complete-rs}%
\end{equation}
Clearly,
\begin{equation} \label{eq-data5}
  \parbox{\dimexpr\linewidth-5em}{$N_1(A), N_2(A)$ are unitary matrices, i.e. the monodromy of \eqref{GLE} is unitary, if and only if the corresponding $(r,s)=(r(A), s(A))\in\mathbb{R}^2\setminus\frac12\mathbb Z^2$.
  }
\end{equation}

{\bf Case 2.} $Q(A)= 0$, i.e. $A=A_k$ and $a=\frac{\omega_k}{2}$ for $k\in\{0,1,2,3\}$. Then we see from \eqref{61-37-2} that $(r,s)\in\frac12\mathbb{Z}^2$ and more precisely,
$$(r,s)\equiv\begin{cases}(0,0)& k=0\\
(\frac12,0)&k=1\\
(0,\frac12)&k=2\\
(\frac12,\frac12)&k=3\end{cases}\quad\operatorname{mod}\;\mathbb{Z}^2.$$
Denote
\begin{equation}\label{var-jjkk}\varepsilon_{j,k}=\begin{cases}1& (j,k)=(1,0),(1,1),(2,0),(2,2),\\
-1&(j,k)=(1,2),(1,3),(2,1),(2,3).\end{cases}\end{equation}
Then  \eqref{eqfc-1}-\eqref{eqfc-2} imply that the eigenvalues of $N_j(A_k)$ are $\{\varepsilon_{j,k},\varepsilon_{j,k}\}$. From here and noting that $y_a(z)$ and $y_{-a}(z)$ are linearly dependent, it follows from the uniqueness of the even elliptic solution $\Phi_e(z)=y_a(z)y_{-a}(z)$ that \eqref{GLE} has no another eigenfunction that is linearly independent with $y_a(z)$. Thus, up to a common conjugation,
\begin{equation}
N_1=N_1(A_k)=\varepsilon_{1,k}%
\begin{pmatrix}
1 & 0\\
1 & 1
\end{pmatrix}
,\text{ \  \  \ }N_2=N_2(A_k)=\varepsilon_{2,k}%
\begin{pmatrix}
1 & 0\\
\mathcal{C}_k & 1
\end{pmatrix}
, \label{Mono-21}%
\end{equation}
where $\mathcal{C}_k%
\in \mathbb{C}\cup \{ \infty \}$. In particular,
\begin{equation}
(\text{tr}N_1(A_k),\text{tr}N_2(A_k))=(2\varepsilon_{1,k}%
,2\varepsilon_{2,k})\in \{ \pm(2,2),\pm(2,-2)\}. \label{notcompleteC}%
\end{equation}
Here if $\mathcal{C}_k=\infty$, then (\ref{Mono-21}) should be understood
as%
\[
N_1=N_1(A_k)=\varepsilon_{1,k}%
\begin{pmatrix}
1 & 0\\
0 & 1
\end{pmatrix}
,\text{ \  \  \ }N_2=N_2(A_k)=\varepsilon_{2,k}%
\begin{pmatrix}
1 & 0\\
1 & 1
\end{pmatrix}
. \]
See \cite{CKL1} for the detaied proof. In particular, at least one of $N_1(A_k), N_2(A_k)$ are not unitary matrices for all $k=0,1,2,3$. 

Clearly there is a one-to-one correspondence between $\{A_k\}_{k=0}^3$ and the trivial critical points $\{\frac{\omega_k}{2}\}_{k=0}^3$ of $G_p(z)$.
We conclude this section by establishing the deep connection with nontrivial critical points of $G_p(z)$.

\begin{theorem}\label{rmk2-5}
There is a one-to-one correspondence between those $A$'s such that $(r,s)=(r(A), s(A))\in\mathbb{R}^2\setminus\frac12\mathbb Z^2$ and pairs of nontrivial critical points $\pm a=\pm (r+s\tau)$ of $G_p(z)$. 
\end{theorem}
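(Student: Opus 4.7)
The plan is to match the system \eqref{61-37-2} defining $(r(A),s(A))$ directly with the critical point equation \eqref{a+001p} for $G_p$. The first equation of \eqref{61-37-2} reads $a=r+s\tau$, and when $(r,s)\in\mathbb{R}^2$ this is just the decomposition of $a\in\mathbb{C}$ in the real basis $\{1,\tau\}$. The second equation is $c(a)=r\eta_1+s\eta_2$, which by the definition \eqref{61-38} of $c(a)$ is exactly
\[
\zeta(a+p)+\zeta(a-p)-2(r\eta_1+s\eta_2)=0,
\]
i.e. \eqref{a+001p}. So the two sides of the desired bijection are governed by the same equation; the only content is to assemble this carefully.

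For the forward direction, take $A\in\mathbb{C}$ with $(r(A),s(A))\in\mathbb{R}^2\setminus\tfrac12\mathbb{Z}^2$. By \eqref{complete-rs}, $Q(A)\neq 0$, so Lemma \ref{lemma2-3} produces $a=a(A)\in E_\tau\setminus E_\tau[2]$ with $a=r+s\tau$ and $c(a)=r\eta_1+s\eta_2$. The observation above then shows $a$ is a nontrivial critical point of $G_p$.

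For the converse direction, let $\pm a$ be a pair of nontrivial critical points of $G_p$, and write $a=r_0+s_0\tau$ with $(r_0,s_0)\in\mathbb{R}^2\setminus\tfrac12\mathbb{Z}^2$. Define $A$ by \eqref{Aap}; note that this value is even in $a$ and so depends only on the pair $\pm a$. Since $a\notin E_\tau[2]$, \eqref{Ak} gives $Q(A)\neq 0$, and by \eqref{eq-data1} this $A$ corresponds to the pair $\pm a$ via Lemma \ref{lemma2-3}. Now the critical point equation \eqref{a+001p} combined with the Legendre relation $\tau\eta_1-\eta_2=2\pi i$ gives, via a direct substitution into \eqref{61-37-1}, that $(r(A),s(A))=(r_0,s_0)$ (or $(-r_0,-s_0)$ depending on which representative of $\pm a$ is selected by the GLE). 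Either way $(r(A),s(A))\in\mathbb{R}^2\setminus\tfrac12\mathbb{Z}^2$.

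Finally, bijectivity: injectivity is immediate from \eqref{eq-data1}, since distinct $A$'s with $Q(A)\neq 0$ produce distinct pairs $\pm a(A)\in E_\tau\setminus E_\tau[2]$, hence distinct pairs of critical points; surjectivity is Step 2. I do not expect a serious obstacle here: the only mild care needed is tracking the sign ambiguity $(r,s)\leftrightarrow (-r,-s)$ corresponding to the choice of representative in the pair $\pm a$, and noting that this ambiguity preserves the property of lying in $\mathbb{R}^2\setminus\tfrac12\mathbb{Z}^2$ and gives the same critical pair $\{a,-a\}$.
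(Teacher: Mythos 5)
Your proposal is correct and follows essentially the same route as the paper: both proofs rest on the observation that the system \eqref{61-37-2} defining $(r(A),s(A))$ is literally the critical point equation \eqref{a+001p}, with the forward direction immediate and the converse obtained by defining $A$ via \eqref{Aap} and invoking Lemma \ref{lemma2-3}. Your extra care with injectivity and the $(r,s)\leftrightarrow(-r,-s)$ sign ambiguity is sound (injectivity really comes from \eqref{Aap} expressing $A$ as a function of the pair $\pm a$, rather than from \eqref{eq-data1} alone), but this is only a refinement of the paper's argument, not a different one.
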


\begin{proof} Let $a\in E_{\tau}\setminus E_{\tau}[2]$ and write $a=r+s\tau$ with $(r,s)\in\mathbb{R}^2$, then $(r,s)\notin\frac12\mathbb Z^2$. Recall \eqref{a+001p} that $\pm a$ is a pair of nontrivial critical points of $G_p(z)$ if and only if 
\begin{equation}\label{61-37-22-2}
\zeta
(a+p)+\zeta(a-p)-2(r\eta_1+s\eta_2)=0,\;\text{with}\;a=r+s\tau, \;(r,s)\in\mathbb{R}^2\setminus\frac12\mathbb Z^2,
\end{equation}
which is precisely  \eqref{61-37-22}.

Suppose $(r(A), s(A))\in\mathbb{R}^2\setminus\frac12\mathbb Z^2$ for some $A$, then it follows from \eqref{61-37-22} that $\pm(r(A)+s(A)\tau)$ is a pair of nontrivial critical points of $G_p(z)$.

Conversely, suppose $\pm a=\pm (r+s\tau)$ with 
$(r,s)\in\mathbb{R}^2\setminus\frac12\mathbb Z^2$ is a pair of nontrivial critical points of $G_p(z)$, then \eqref{61-37-22-2} holds. Define $A$ by \eqref{Aap}, i.e.
\[A:=\frac{1}{2}\left[  \zeta(p+a)+\zeta(p-a)-\zeta(2p)\right].\]
Then a direct computation shows that $y_a(z)$ given by \eqref{exp1}-\eqref{61-38}
is a solution of GLE \eqref{GLE} and so is $y_{-a}(z)$. Furthermore, by \eqref{61-37-1} and \eqref{61-37-22-2}, we see that $(r(A), s(A))=(r,s)\in\mathbb{R}^2\setminus\frac12\mathbb Z^2$. This completes the proof.
\end{proof}

\section{Conditional stability sets: A general theory}
\label{section3}

Thanks to Theorem \ref{rmk2-5}, to count the number of pairs of nontrivial critical points of $G_p(z)$, we only need to count the number of $A$'s such that $(r(A), s(A))\in\mathbb{R}^2\setminus\frac12\mathbb Z^2$. This motivates us to study the conditional stability sets of GLE \eqref{GLE}. This idea is inspired by our previous work \cite{CL-AJM}, where we used the conditional stabilitiy sets of the Lam\'{e} equation \begin{equation}\label{Lame}y''(z)=[n(n+1)\wp(z)+B]y(z),\quad n\in\mathbb{N}_{\geq 1},\end{equation} to prove the non-existence of solutions for the curvature equation
$$\Delta u+e^u=8\pi n\delta_0\;\;\text{on }\;E_{\tau},\quad n\in\mathbb{N}_{\geq 1}.$$  
See also \cite{FL} for the study of conditional stabilitiy sets (as parts of the spectral geometry) of other linear ODEs. Remark that the conditional stabilitiy sets of the Lam\'{e} equation \eqref{Lame} coincide with the spectrums of the associated Lam\'{e} operators, but the conditional stability sets of GLE \eqref{GLE} are not because $A$ can not be seen as an eigenvalue.

Recalling \eqref{eqfc-1}-\eqref{eqfc-2}, we have
\begin{equation}\label{eqfc-001}{y}_{\pm a}(z+2\tau-1)=e^{\pm 2\pi i(2r+s)}{y}_{\pm a}(z).\end{equation}
Define 
\begin{equation}\label{eqfc-3}\triangle_j (A):=\frac12\operatorname{tr} N_j(A)=\begin{cases}\frac12(e^{-2\pi is(A)}+e^{2\pi is(A)}),& j=1,\\
\frac12(e^{2\pi ir(A)}+e^{-2\pi ir(A)}),& j=2,\end{cases}\quad A\in\mathbb{C},\end{equation}
\begin{align}\label{eqfc-003}\triangle_3 (A):=&\frac12\operatorname{tr} N_2(A)^2N_1(A)^{-1}\nonumber\\
=&\frac12(e^{2\pi i(2r(A)+s(A))}+e^{-2\pi i(2r(A)+s(A))}),\quad A\in\mathbb{C}.\end{align}
Clearly $\triangle_j (A)$ are holomorphic functions (see e.g. \eqref{eqcs} below). Define
\begin{equation}\label{eqfc-4}\sigma_j:=\triangle_j^{-1}([-1,1]),\quad j=1,2,3.\end{equation}
Then $\sigma_j$ consists of analytic arcs.  Observe that
\begin{itemize}
\item When $A\in \sigma_1$, the corresponding $s(A)\in\mathbb{R}$, so $y_{a}(z)$ is bounded for $z\in q_0+\mathbb{R}$. 
\item When $A\in \sigma_2$, the corresponding $r(A)\in\mathbb{R}$, so $y_{a}(z)$ is bounded for $z\in q_0+\tau\mathbb{R}$. 
\item When $A\in\sigma_3$, the corresponding $2r(A)+s(A)\in\mathbb{R}$, so $y_{a}(z)$ is bounded for $z\in q_0+(2\tau-1)\mathbb{R}$. 
\end{itemize}
Therefore, $\sigma_j$ can be considered as {\it conditional stability sets} of GLE (\ref{GLE}).
The reason why we study these conditional stability sets is clear from the following result.
\begin{theorem}\label{rmk2-8}
The number of pairs of nontrivial critical points of $G_p(z)$ equals to $\#(\sigma_1\cap\sigma_j\setminus\{A_0, A_1, A_2, A_3\})$ for $j=2,3$.
\end{theorem}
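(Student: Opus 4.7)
The plan is to leverage Theorem \ref{rmk2-5}, which provides a bijection between pairs of nontrivial critical points of $G_p(z)$ and the set
\[
\mathcal{A} := \bigl\{A \in \mathbb{C} : (r(A), s(A)) \in \mathbb{R}^2 \setminus \tfrac{1}{2}\mathbb{Z}^2\bigr\},
\]
where $(r(A), s(A))$ is determined modulo $\mathbb{Z}^2$ by \eqref{61-37-2}. Thus it suffices to prove the set-theoretic identity
\[
\sigma_1 \cap \sigma_j \setminus \{A_0, A_1, A_2, A_3\} = \mathcal{A} \quad \text{for } j = 2, 3,
\]
which in turn I would derive from a direct characterization of each conditional stability set $\sigma_j$ in terms of the parameters $r(A), s(A)$.

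The key lemma is: $A \in \sigma_1$ iff $s(A) \in \mathbb{R}$, $A \in \sigma_2$ iff $r(A) \in \mathbb{R}$, and $A \in \sigma_3$ iff $2r(A) + s(A) \in \mathbb{R}$ (each modulo $\mathbb{Z}$). For $\sigma_1$, write $s(A) = u + iv$ with $u, v \in \mathbb{R}$ and use \eqref{eqfc-3} to expand
\[
\triangle_1(A) = \cos(2\pi s) = \cos(2\pi u)\cosh(2\pi v) - i\sin(2\pi u)\sinh(2\pi v).
\]
The constraint $\triangle_1(A) \in [-1, 1]$ forces the imaginary part to vanish, giving either $v = 0$ (so $s(A) \in \mathbb{R}$, and the real part then lies in $[-1,1]$ automatically) or $u \in \tfrac{1}{2}\mathbb{Z}$; in the latter case $|\cos(2\pi u)| = 1$, and the bound $|\cos(2\pi u)\cosh(2\pi v)| \leq 1$ again forces $v = 0$. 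The converse is immediate. The arguments for $\sigma_2$ and $\sigma_3$ are strictly parallel via \eqref{eqfc-3}--\eqref{eqfc-003}, with $s$ replaced by $r$ and by $2r + s$ respectively.

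Given the lemma, both ``$A \in \sigma_1 \cap \sigma_2$'' and ``$A \in \sigma_1 \cap \sigma_3$'' become equivalent to $(r(A), s(A)) \in \mathbb{R}^2$ modulo $\mathbb{Z}^2$ (for the second one, note that $s \in \mathbb{R}$ together with $2r + s \in \mathbb{R}$ forces $r \in \mathbb{R}$); in particular $\sigma_1 \cap \sigma_2 = \sigma_1 \cap \sigma_3$ as sets. By the analysis surrounding \eqref{Mono-21}--\eqref{notcompleteC}, the four points $A_0, A_1, A_2, A_3$ are exactly the elements of this locus with $(r, s) \in \tfrac{1}{2}\mathbb{Z}^2$, and indeed all of them lie in $\sigma_1 \cap \sigma_j$ for every $j$ since $\text{tr}\,N_j(A_k) \in \{\pm 2\}$. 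Removing them yields $\mathcal{A}$, and the equality of cardinalities asserted in the theorem then follows from Theorem \ref{rmk2-5}.

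The only delicate point is the lemma itself: one must verify that $\triangle_j(A) \in [-1, 1]$ truly forces the relevant linear combination of $r(A), s(A)$ to be real, rather than merely pinning it modulo some lattice in $\mathbb{C}$. The elementary real/imaginary-part analysis above settles this, but the boundary case $u \in \tfrac{1}{2}\mathbb{Z}$ with $v \neq 0$ requires a touch of care. Once the lemma is in place, the rest of the proof is routine bookkeeping.
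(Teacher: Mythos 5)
Your proposal is correct and follows essentially the same route as the paper: reduce to Theorem \ref{rmk2-5} by showing that $A\in\sigma_1\cap\sigma_2\setminus\{A_0,A_1,A_2,A_3\}$ is equivalent to $(r(A),s(A))\in\mathbb{R}^2\setminus\frac12\mathbb{Z}^2$ (hence also equivalent to $A\in\sigma_1\cap\sigma_3\setminus\{A_0,A_1,A_2,A_3\}$), using \eqref{complete-rs} to exclude $(r,s)\in\frac12\mathbb{Z}^2$ when $Q(A)\neq 0$. The only difference is that you spell out the elementary fact that $\cos(2\pi s)\in[-1,1]$ forces $s\in\mathbb{R}$, which the paper uses implicitly (and records in the proof of Lemma \ref{lemma2-8}).
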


\begin{proof} Note that $(r,s)\in\mathbb R^2$ if and only if $(2r+s, s)\in\mathbb R^2$.
Suppose $A\in \sigma_1\cap\sigma_2\setminus\{A_0, A_1, A_2, A_3\}$, then $Q(A)\neq 0$, i.e. Case 1 (see the argument after \eqref{eqfc-2}) happens, and it follows from \eqref{Mono-001}-\eqref{complete-rs} and \eqref{eqfc-3}-\eqref{eqfc-4} that the corresponding $(r(A),s(A))\in \mathbb R^2\setminus\frac12\mathbb{Z}^2$, so $A\in \sigma_1\cap\sigma_3\setminus\{A_0, A_1, A_2, A_3\}$.
Therefore,
$$\sigma_1\cap\sigma_2\setminus\{A_0, A_1, A_2, A_3\}=\sigma_1\cap\sigma_3\setminus\{A_0, A_1, A_2, A_3\},$$
and $(r(A),s(A))\in \mathbb R^2\setminus\frac12\mathbb{Z}^2$ if and only if $A\in \sigma_1\cap\sigma_2\setminus\{A_0, A_1, A_2, A_3\}$. The proof is complete by using Theorem \ref{rmk2-5}.
\end{proof}

In the next section,
we will see that the set $\sigma_1\cap\sigma_2$
has important applications to study the critical points of $G_p(z)$ for the rectangular case $\tau = ib$ with $b>0$. In a subsequent work, we will apply the set $\sigma_1\cap\sigma_3$ to study the critical points of $G_p(z)$ for the rhombus case $\tau=\frac12+ib$ with $b>0$.

The main purpose of this section is to prove that $\sigma_j$ contains at most $5$ analytic arcs; see Corollaries \ref{coro2-11}-\ref{coro2-11-3} below. First, we need to give the definitions of endpoints, cusps and branch points.

\begin{figure}[ht]
\label{O511-0}\includegraphics[scale=0.5, trim=250 370 50 80]{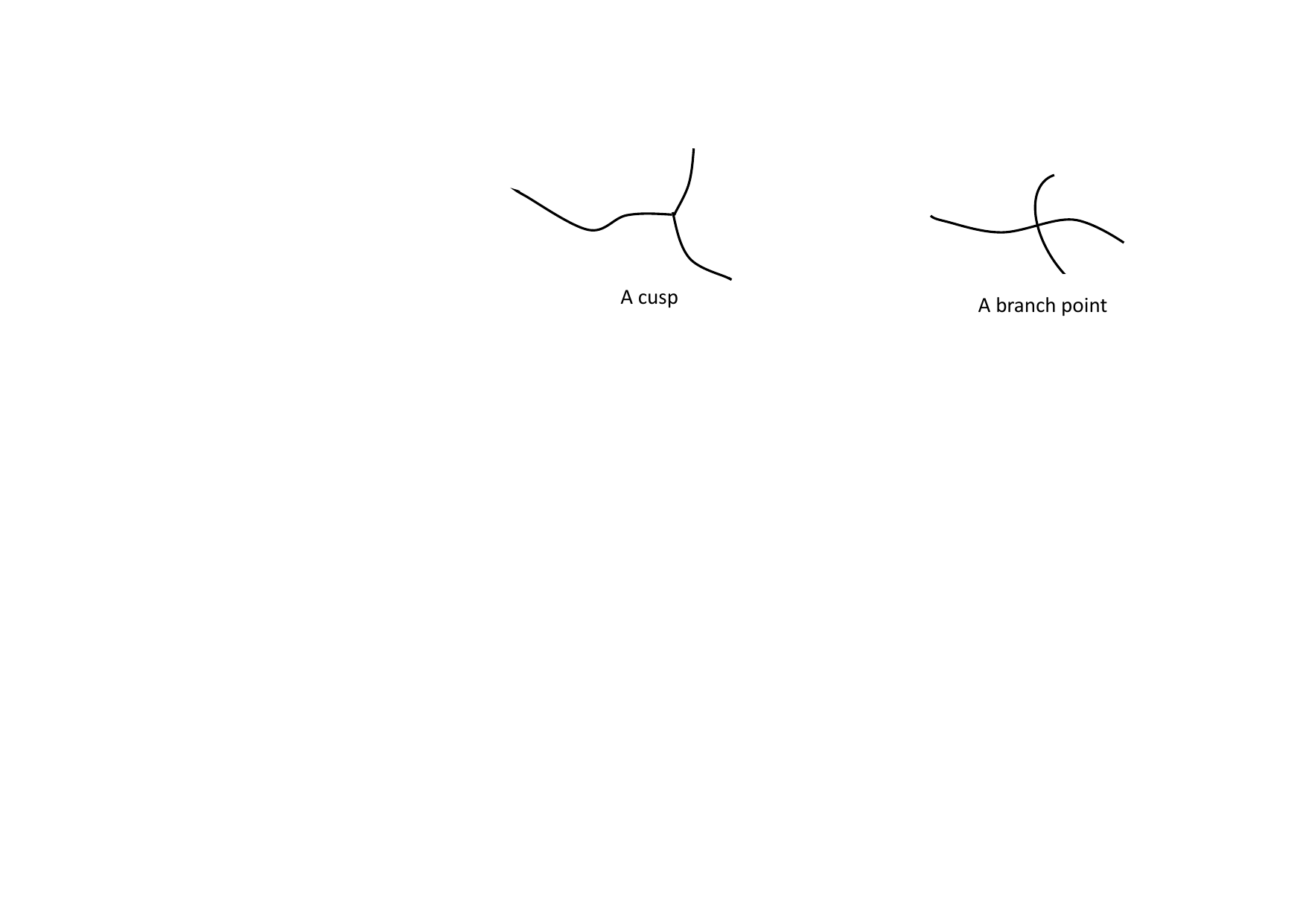}\caption{Cusp and branch point}%
\end{figure}

\begin{remark}\label{rmk2-6}
An endpoint $\tilde{A}$ of an arc of $\sigma_j$ is a point where the arc can not be analytically continued (cf. \cite{GW}). By the Taylor expansion, we see that $\tilde{A}$ is an endpoint if and only if $\triangle_j(\tilde A)^2-1=0$ and
\[d_j(\tilde{A}):=\text{ord}_{\tilde{A}}(\triangle_j(\cdot)^2-1)\]
is \emph{odd}, and in this case there are $d_j(\tilde{A})$ arcs meeting at $\tilde{A}$, each of which can not be analytically continued at $\tilde{A}$ because adjacent arcs meet at $\tilde{A}$ with the same angle $2\pi/d_j(\tilde{A})$. 
Notice that if $\triangle_j(\tilde A)^2-1=0$ and $d_j(\tilde{A})=2k\geq 2$ is even, then $\tilde{A}$ is an inner point of $k$ arcs which are all analytic at $\tilde{A}$, so such $\tilde{A}$ is not considered as an endpoint.
\begin{itemize}
\item
We call an endpoint $\tilde{A}$ of $\sigma_j$ to be {\bf a cusp} if $d_j(\tilde{A})\geq 3$, i.e. there are odd numbers (at least $3$) of arcs meeting at $\tilde{A}$. See the left one in Figure 2 for example.
\item
We call a point $\tilde{A}$ of $\sigma_j$ to be {\bf a branch point} if $\tilde{A}$ is not an endpoint and $\tilde{A}$ is a common inner point of at least two analytic arcs. See the right one in Figure 2 for example.
\end{itemize}
\end{remark}

Note from \eqref{var-jjkk} that $\{A_0, A_1, A_2, A_3\}\subset \sigma_j$ with
\begin{equation}\label{eqfc-vjk1}\triangle_j(A_k)=\varepsilon_{j,k}=\begin{cases}1& (j,k)=(1,0),(1,1),(2,0),(2,2),(3,0),(3,1)\\
-1&(j,k)=(1,2),(1,3),(2,1),(2,3),(3,2),(3,3),\end{cases}\end{equation}
i.e. $\triangle_j(A_k)^2-1=0$. We prove that these $A_k$ give all the endpoints of $\sigma_j$.

\begin{lemma}\label{lemma2-7} Fix $j\in \{1,2,3\}$. Then 
\begin{itemize}
\item[(1)]
 $\{A_0, A_1, A_2, A_3\}$ are precisely the set of endpoints of $\sigma_j$.
 \item[(2)] Any connected component of $\mathbb{C}\setminus\sigma_j$ is unbounded.
 \end{itemize}
\end{lemma}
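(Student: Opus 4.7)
The plan is to push the analysis from the complex parameter $A$ down to the local parameter $\epsilon = a - \omega_k/2$ via the multi-valued map $a \mapsto A(a) = \tfrac{1}{2}[\zeta(p+a) + \zeta(p-a) - \zeta(2p)]$ of \eqref{Aap}, exploit the evenness/oddness structure forced by the quasi-periodicity of $\zeta$ and $\wp$ to compare the orders of vanishing of $A - A_k$ and $\triangle_j(A) - \varepsilon_{j,k}$, and use the maximum principle for (2).

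\textbf{Proof of (1).} First I would verify the structural symmetries. Differentiating gives $A'(a) = -\tfrac{1}{2}[\wp(p+a) - \wp(p-a)]$, and the $\omega_k$-periodicity of $\wp$ yields $A'(\omega_k/2) = 0$; moreover $A''(\omega_k/2) = -\wp'(p+\omega_k/2) \ne 0$ since $p \notin E_\tau[2]$ (as $\wp'$ vanishes only at half-periods). Hence $A - A_k$ is an even power series in $\epsilon$ vanishing to order exactly two. Next, $\triangle_j(A) = \cosh F_j(a)$ for a suitable linear combination of $c(a)$ and $a$ (e.g.\ $F_1(a) = c(a) - \eta_1 a$, and analogously $F_2, F_3$); the identities $F_j(-a) = -F_j(a)$, $F_j(a+\omega_k) = F_j(a) + 2\pi i n_{j,k}$ for some $n_{j,k} \in \mathbb{Z}$, and $-\omega_k/2 \equiv \omega_k/2 \pmod{\Lambda_\tau}$ combine to show that $F_j(\omega_k/2 + \epsilon) - F_j(\omega_k/2)$ is \emph{odd} in $\epsilon$ and that $F_j(\omega_k/2) \in i\pi\mathbb{Z}$ with $\cosh F_j(\omega_k/2) = \varepsilon_{j,k}$, $\sinh F_j(\omega_k/2) = 0$. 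The addition formula then gives
\[\triangle_j(A) - \varepsilon_{j,k} = \varepsilon_{j,k}\bigl[\cosh(F_j(a) - F_j(\omega_k/2)) - 1\bigr].\]
If the leading nonzero coefficient of the odd series $F_j - F_j(\omega_k/2)$ is $b\epsilon^{2\ell+1}$ with $b \ne 0$ for some $\ell \ge 0$ (which holds because $F_j$ is non-constant near $\omega_k/2$), the right-hand side has leading term $\tfrac{\varepsilon_{j,k}b^2}{2}\epsilon^{2(2\ell+1)}$. Dividing by the order-two behaviour of $A - A_k$ shows $\triangle_j(A) - \varepsilon_{j,k}$ vanishes to the \emph{odd} order $2\ell+1$ in $A - A_k$, so $d_j(A_k)$ is odd and $A_k$ is an endpoint. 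At any other point $A_* \in \sigma_j$ with $\triangle_j(A_*) = \pm 1$, the corresponding $a_* := a(A_*) \ne \omega_k/2$ for all $k$, so $A'(a_*) \ne 0$ and $a$ is a local biholomorphism of $A$; the same $\cosh$ expansion, now without parity doubling, gives the order $2\,\mathrm{ord}_{a_*}(F_j - F_j(a_*))$ in $A - A_*$, which is \emph{even}. Hence $A_*$ is an inner point, and the endpoints of $\sigma_j$ are exactly $\{A_0, A_1, A_2, A_3\}$.

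\textbf{Proof of (2).} Suppose for contradiction that $U \subset \mathbb{C} \setminus \sigma_j$ is a bounded connected component. Then $\partial U \subset \sigma_j$ forces $\triangle_j|_{\partial U} \subset [-1,1] \subset \mathbb{R}$, so $\operatorname{Im}\triangle_j$ is harmonic in $U$, continuous on $\overline{U}$, and vanishes on $\partial U$; the maximum principle on the bounded domain $U$ gives $\operatorname{Im}\triangle_j \equiv 0$ on $U$. A real-valued holomorphic function on a connected open set is constant, and the identity theorem then forces the entire function $\triangle_j$ to be constant on $\mathbb{C}$, contradicting the existence of $k, k' \in \{0,1,2,3\}$ with $\triangle_j(A_k) = 1$ and $\triangle_j(A_{k'}) = -1$ guaranteed by \eqref{eqfc-vjk1}.

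\textbf{Main obstacle.} The delicate step is the parity count in (1): even when $F_j'(\omega_k/2)$ vanishes, the order of $\triangle_j - \varepsilon_{j,k}$ in $\epsilon$ must remain of the form $2(2\ell+1)$. This is not a generic statement but a rigid structural consequence of the oddness of $F_j(\omega_k/2 + \epsilon) - F_j(\omega_k/2)$ in $\epsilon$, which traces back to the $\omega_k$-quasi-periodicity of $\zeta$ together with $-\omega_k/2 \equiv \omega_k/2 \pmod{\Lambda_\tau}$; it is this oddness, rather than any genericity hypothesis on $p$ or $\tau$, that makes the argument uniform across all four $A_k$ and all three $j \in \{1,2,3\}$.
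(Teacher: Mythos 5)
Your proof is correct, but for part (1) it takes a genuinely different route from the paper. The paper argues globally via Floquet theory: it introduces the fundamental system $c(x;A),s(x;A)$ of the periodic ODE along $q_0+\mathbb{R}$, identifies $\triangle_1(A)$ as half the trace of the period matrix as in \eqref{eqcs}, and combines the Wronskian relation $W^2=Q(A)=16\prod_k(A-A_k)$ with an explicit eigenvector computation to obtain the single global identity $\triangle_1(A)^2-1=\bigl(\tfrac{2s(1;A)}{y_1(q_0;A)y_2(q_0;A)}\bigr)^2\prod_{k=0}^3(A-A_k)$; the odd-order points of $\triangle_j^2-1$ are then read off at once from the simple zeros of $Q$, with the square prefactor automatically handling all other points. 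You instead argue locally in the spectral parameter $a$, writing $\triangle_j=\cosh F_j(a)$ via \eqref{61-37-1} and exploiting the oddness of $F_j(\tfrac{\omega_k}{2}+\epsilon)-F_j(\tfrac{\omega_k}{2})$ together with the evenness and exact order two of $A(a)-A_k$ in $\epsilon$ (which requires the separate checks $A''(\tfrac{\omega_k}{2})=-\wp'(p+\tfrac{\omega_k}{2})\neq 0$ and a case analysis at points $A_*\notin\{A_k\}$ with $\triangle_j(A_*)=\pm1$ that the paper's square prefactor makes unnecessary). What your route buys is that the same local expansion is precisely the engine behind the paper's subsequent Lemma \ref{lemma2-10} and Corollary \ref{coro2-11} on cusps: your leading exponent $2\ell+1$ is exactly their $\operatorname{ord}_{A_k}(\triangle_j(\cdot)^2-1)\in\{1,3,5\}$, and your condition $F_j'(\tfrac{\omega_k}{2})=0$ reproduces their cusp criterion $\wp(p-\tfrac{\omega_k}{2})+\eta_1=0$ (and its analogues), so your argument unifies the endpoint and cusp analyses. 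For part (2) your argument is the paper's, with the terse ``clearly a contradiction'' correctly expanded: a real-valued holomorphic function on an open set is constant, the identity theorem propagates this to all of $\mathbb{C}$, and \eqref{eqfc-vjk1} supplies values $+1$ and $-1$ of $\triangle_j$ at distinct $A_k$'s.
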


\begin{proof}
(1). There are two approaches to prove this statment, one is to use the associated hyperelliptic curve as \cite{FL}. Here we use another method from \cite{GW}. Let us take $j=1$ for example (the case $j\in\{2,3\}$ can be proved similarly and is omitted). Recall that $q_0$ is a fixed base point such that $(q_0+\mathbb R)\cap(\pm p+\Lambda_{\tau})=\emptyset$, so $I(x+q_0; p, A, \tau)$ is analytic for $x\in\mathbb R$.
Let $c(x; A)$ and $s(x; A)$ be the solutions of \begin{equation}\label{eqy}y''(x)=I(x+q_0; p, A, \tau)y(x),\quad x\in\mathbb{R}\end{equation} satisfying the initial condition
$$c(0; A)=s'(0; A)=1,\quad c'(0; A)=s(0; A)=0.$$
Since $I(x+q_0; p, A, \tau)$ is of period $1$, so $c(x+1; A)$ and $s(x+1; A)$ are also solutions of \eqref{eqy}, and then
$$\begin{pmatrix}
c(x+1; A)\\
s(x+1; A)
\end{pmatrix}=\begin{pmatrix}
c(1; A)&c'(1;A)\\
s(1; A)& s'(1;A)
\end{pmatrix}\begin{pmatrix}
c(x; A)\\
s(x; A)
\end{pmatrix}.$$
On the other hand, since $y_{\pm a}(x+q_0)$ are also solutions of \eqref{eqy} and
$$y_{\pm a}(x+1+q_0)=e^{\mp 2\pi i s}y_{\pm a}(x+q_0),$$
it follows that $e^{\mp 2\pi is}$ are eigenvalues of $\begin{pmatrix}
c(1; A)&c'(1;A)\\
s(1; A)& s'(1;A)
\end{pmatrix}$, i.e.
\begin{equation}\label{eqcs}\triangle_1(A)=\frac12(e^{2\pi is}+e^{-2\pi is})=\frac12(c(1; A)+s'(1;A)).\end{equation}
In particular, this implies that $\triangle_1(A)$ is holomorphic in $A$.

Take any $A$ such that $s(1; A)\neq 0$. A direct computation shows that $(1, \frac{e^{\mp 2\pi i s}-c(1; A)}{s(1;A)})$ is the 
eigenvector of $\begin{pmatrix}
c(1; A)&c'(1;A)\\
s(1; A)& s'(1;A)
\end{pmatrix}$ with respect to the eigenvalue $e^{\mp 2\pi is}$, i.e.
$$\Big(1, \frac{e^{\mp 2\pi i s}-c(1; A)}{s(1;A)}\Big)\begin{pmatrix}
c(1; A)-e^{\mp 2\pi i s}&c'(1;A)\\
s(1; A)& s'(1;A)-e^{\mp 2\pi i s}
\end{pmatrix}=(0,0).$$
Then we see from $y_1(z; A)=y_a(z)$ and $y_2(z; A)=y_{-a}(z)$ that (write $y_j(z)=y_j(z; A)$ to emphasize the dependence on $A$)
$$y_{1}(x+q_0; A)=y_{1}(q_0; A)\left(c(x;A)+\frac{e^{-2\pi i s}-c(1; A)}{s(1;A)}s(x;A)\right).$$
$$y_{2}(x+q_0; A)=y_{2}(q_0; A)\left(c(x;A)+\frac{e^{2\pi i s}-c(1; A)}{s(1;A)}s(x;A)\right).$$
Therefore, it follows from $cs'-c's\equiv 1$ that
\begin{align*}
W=y_1y_2'-y_1'y_2&=y_{1}(q_0; A)y_{2}(q_0; A)\frac{e^{2\pi i s}-e^{-2\pi i s}}{s(1; A)}(cs'-c's)\\
&=y_{1}(q_0; A)y_{2}(q_0; A)\frac{e^{2\pi i s}-e^{-2\pi i s}}{s(1; A)}.
\end{align*}
From here, $W^2=Q(A)$ and $(e^{2\pi i s}-e^{-2\pi i s})^2=4(\triangle_1(A)^2-1)$, we obtain
\begin{align*}\triangle_1(A)^2-1&=\left(\frac{s(1; A)}{2y_{1}(q_0; A)y_{2}(q_0; A)}\right)^2Q(A)\\
&=\left(\frac{2s(1; A)}{y_{1}(q_0; A)y_{2}(q_0; A)}\right)^2\prod_{k=0}^3(A-A_k).\end{align*}
This formula holds for any $A$ satisfying $s(1; A)\neq 0$ and so holds for all $A$ by continuity.
Thus, $\text{ord}_{A}(\triangle_1(\cdot)^2-1)$ is odd if and only if $A=A_k$ for $k\in\{0,1,2,3\}$. This implies that $\{A_0, A_1, A_2, A_3\}$ are precisely the set of endpoints of $\sigma_1$.

(2). Assume by contradiction that $\mathbb{C}\setminus\sigma_j$ has a bounded connected component $\Omega$. Then $\partial\Omega\subset\sigma_j$ consists of closed curves, and the harmonic function $\operatorname{Im}\triangle_j$ satisfies $\operatorname{Im}\triangle_j\equiv 0$ on $\partial\Omega$, so $\operatorname{Im}\triangle_j\equiv 0$ in $\Omega$, clearly a contradiction.
\end{proof}

\begin{lemma}\label{lemma2-10}
\begin{itemize}
\item[(1)]
The endpoint $A_k$ is a cusp of $\sigma_1$ if and only if $\wp(p-\frac{\omega_k}{2})+\eta_1=0$.
\item[(2)] The endpoint $A_k$ is a cusp of $\sigma_2$ if and only if $\tau\wp(p-\frac{\omega_k}{2})+\eta_2=0$, or equivalently, $\wp(p-\frac{\omega_k}{2})+\eta_1=\frac{2\pi i}{\tau}$.
\item[(3)] The endpoint $A_k$ is a cusp of $\sigma_3$ if and only if $(2\tau-1)\wp(p-\frac{\omega_k}{2})+2\eta_2-\eta_1=0$, or equivalently, $\wp(p-\frac{\omega_k}{2})+\eta_1=\frac{4\pi i}{2\tau-1}$.
\item[(4)] $\sigma_j$ has at most one cusp for $j=1,2,3$ as long as $2p\notin E_{\tau}[2]$.
\end{itemize}
\end{lemma}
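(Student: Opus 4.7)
My plan is to reduce the cusp criterion at $A_k$ to the vanishing of $\triangle_j'(A_k)$, and then evaluate this derivative through the branched parametrization $A=A(a):=\tfrac12[\zeta(a+p)+\zeta(a-p)-\zeta(2p)]$ of Lemma~\ref{lemma2-3}. Repeating the argument of Lemma~\ref{lemma2-7}(1) for $j=2,3$ yields a factorization $\triangle_j(A)^2-1=g_j(A)^2\prod_{k=0}^3(A-A_k)$ with $g_j$ entire, so $d_j(A_k)=1+2\,\mathrm{ord}_{A_k}g_j$ and the cusp condition $d_j(A_k)\geq 3$ is equivalent to $g_j(A_k)=0$. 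Since $\triangle_j(A_k)+\varepsilon_{j,k}=2\varepsilon_{j,k}\neq 0$, this is in turn equivalent to $\triangle_j'(A_k)=0$.

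To compute $\triangle_j'(A_k)$, set $\mu_1:=s$, $\mu_2:=r$, $\mu_3:=2r+s$, so $\triangle_j(A)=\cos(2\pi\mu_j(A))$ by \eqref{eqfc-3}--\eqref{eqfc-003}, and let $\phi_j(a):=\triangle_j(A(a))=\cos(2\pi\mu_j(a))$. Evenness of $\wp$ together with $\omega_k\in\Lambda_\tau$ gives
\[
A'(\tfrac{\omega_k}{2})=\tfrac12\bigl[-\wp(p+\tfrac{\omega_k}{2})+\wp(p-\tfrac{\omega_k}{2})\bigr]=0,\qquad A''(\tfrac{\omega_k}{2})=-\wp'(p+\tfrac{\omega_k}{2})\neq 0,
\]
the non-vanishing using $p\pm\omega_k/2\notin E_\tau[2]$ for $p\in E_\tau\setminus E_\tau[2]$. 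Since $\mu_j(\omega_k/2)\in\tfrac12\mathbb{Z}$ (as recorded in Case~2 of Section~\ref{section2}), we have $\phi_j'(\omega_k/2)=-2\pi\sin(2\pi\mu_j(\omega_k/2))\mu_j'(\omega_k/2)=0$ and $\phi_j''(\omega_k/2)=-4\pi^2\varepsilon_{j,k}\mu_j'(\omega_k/2)^2$. Matching the $(a-\omega_k/2)^2$ coefficient in the chain-rule expansion
\[
\phi_j(a)-\varepsilon_{j,k}=\triangle_j'(A_k)\bigl(A(a)-A_k\bigr)+\tfrac12\triangle_j''(A_k)\bigl(A(a)-A_k\bigr)^2+\cdots
\]
yields $\triangle_j'(A_k)=\phi_j''(\omega_k/2)/A''(\omega_k/2)$, so $A_k$ is a cusp of $\sigma_j$ if and only if $\mu_j'(\omega_k/2)=0$.

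Differentiating the defining relations \eqref{61-37-1} gives
\[
2\pi i\,s'(a)=\eta_1+\tfrac12[\wp(a+p)+\wp(a-p)],\qquad 2\pi i\,r'(a)=-\eta_2-\tfrac{\tau}{2}[\wp(a+p)+\wp(a-p)],
\]
and consequently $2\pi i\,(2r+s)'(a)=(\eta_1-2\eta_2)+\tfrac{1-2\tau}{2}[\wp(a+p)+\wp(a-p)]$. Evaluating at $a=\omega_k/2$ and using $\wp(p+\omega_k/2)=\wp(p-\omega_k/2)$ gives the three cusp criteria in (1)--(3) directly; the equivalent reformulations $\wp(p-\omega_k/2)+\eta_1\in\{2\pi i/\tau,\;4\pi i/(2\tau-1)\}$ follow from the Legendre relation $\tau\eta_1-\eta_2=2\pi i$.

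For part (4), suppose $A_k$ and $A_{k'}$ with $k\neq k'$ are both cusps of $\sigma_j$; then (1)--(3) force $\wp(p-\omega_k/2)=\wp(p-\omega_{k'}/2)$. Evenness of $\wp$ on $E_\tau$ leaves only $p-\omega_k/2\equiv\pm(p-\omega_{k'}/2)\pmod{\Lambda_\tau}$: the $+$ sign contradicts $k\neq k'$, while the $-$ sign gives $2p\equiv(\omega_k+\omega_{k'})/2\in E_\tau[2]$, contradicting the hypothesis. The main technical subtlety is the chain-rule step through the branch point of $A(a)$; it works precisely because $\phi_j'(\omega_k/2)=0$ automatically, so that the leading $(a-\omega_k/2)^2$ terms of $\phi_j(a)-\varepsilon_{j,k}$ and $A(a)-A_k$ line up in the same order and make $\triangle_j'(A_k)$ a genuine derivative.
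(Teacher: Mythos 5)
Your proposal is correct and follows essentially the same route as the paper: reduce the cusp condition at $A_k$ to $\triangle_j'(A_k)=0$ via the odd-order factorization of $\triangle_j(\cdot)^2-1$, compute that derivative through the parametrization $a\mapsto A(a)$ of \eqref{Aap}, and conclude part (4) exactly as the paper does. The only difference is cosmetic: you extract $\triangle_j'(A_k)$ by matching second-order Taylor coefficients of $\triangle_j\circ A$ at the branch point $a=\frac{\omega_k}{2}$, whereas the paper computes $\triangle_j'(A)$ for nearby $A$ and passes to the limit by L'H\^{o}pital; both yield the same value $-\frac{\varepsilon_{j,k}}{\wp'(p-\omega_k/2)}\bigl(\cdot\bigr)^2$ and hence the same criteria.
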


\begin{proof} 
Fix any $\tilde{A}$ such that $Q(\tilde{A})\neq 0$, and let $a_0\notin E_{\tau}[2]$ such that \eqref{Aap} holds with $(A, a)=(\tilde{A}, a_0)$. Take $\epsilon>0$ small such that $Q(A)\neq 0$ for any $A\in B_{\epsilon}(\tilde{A}):=\{A : |A-\tilde{A}|<\epsilon\}$. By \eqref{Aap} and \eqref{eq-data1}, and by taking $\epsilon>0$ smaller if necessary, each $A\in B_{\epsilon}(\tilde{A})$ determines a unique $a=a(A)\notin E_{\tau}[2]$ in a small neighborhood of $a_0$ such that $a(\tilde A)=a_0$ and
\begin{equation}\label{eqfc-51}A=\frac{1}{2}\left[  \zeta(p+a(A))+\zeta(p-a(A))-\zeta(2p)\right].\end{equation}
Noting that $\wp(p-a(A))-\wp(p+a(A))\neq 0$ (otherwise $p-a(A)=\pm (p+a(A))$ in $E_{\tau}$ and so $p\in E_{\tau}[2]$ or $a(A)\in E_{\tau}[2]$, a contradiction), we obtain from \eqref{eqfc-51} and the implicit function theorem that
\begin{align}
a'(A)=\frac{2}{\wp(p-a(A))-\wp(p+a(A))}\neq 0,\quad\forall A\in B_{\epsilon}(\tilde A).
\end{align}
Recalling \eqref{61-38} and \eqref{61-37-1}, we have
\begin{align}
2\triangle_1(A)&=e^{-2\pi is}+e^{2\pi is}\nonumber\\
&=e^{\frac{1}{2}[\zeta
(a+p)+\zeta(a-p)]-\eta_1a}+e^{-(\frac{1}{2}[\zeta
(a+p)+\zeta(a-p)]-\eta_1a)},
\end{align}
\begin{align}
2\triangle_2(A)&=e^{2\pi ir}+e^{-2\pi ir}\nonumber\\
&=e^{\frac{\tau}{2}[\zeta
(a+p)+\zeta(a-p)]-\eta_2a}+e^{-(\frac{\tau}{2}[\zeta
(a+p)+\zeta(a-p)]-\eta_2a)},
\end{align}
{\allowdisplaybreaks
\begin{align}
2\triangle_3(A)&=e^{2\pi i(2r+s)}+e^{-2\pi i(2r+s)}\nonumber\\
&=e^{{\tau}[\zeta
(a+p)+\zeta(a-p)]-2\eta_2a-(\frac{1}{2}[\zeta
(a+p)+\zeta(a-p)]-\eta_1a)}\nonumber\\&\quad+e^{-({\tau}[\zeta
(a+p)+\zeta(a-p)]-2\eta_2a)+(\frac{1}{2}[\zeta
(a+p)+\zeta(a-p)]-\eta_1a)}.
\end{align}}%
Consequently,
{\allowdisplaybreaks
\begin{align}\label{triangle1}
2\triangle_1'(A)&=\bigg(e^{\frac{1}{2}[\zeta
(a+p)+\zeta(a-p)]-\eta_1a}-e^{-(\frac{1}{2}[\zeta
(a+p)+\zeta(a-p)]-\eta_1a)})\nonumber\\
&\quad\times\frac{-a'(A)}{2}(\wp(a+p)+\wp(a-p)+2\eta_1)\nonumber\\
&=\frac{-a'(A)}{2}(e^{-2\pi is}-e^{2\pi is})(\wp(a+p)+\wp(a-p)+2\eta_1),
\end{align}}%
\begin{align}\label{triangle2}
2\triangle_2'(A)&=\bigg(e^{\frac{\tau}{2}[\zeta
(a+p)+\zeta(a-p)]-\eta_2a}-e^{-(\frac{\tau}{2}[\zeta
(a+p)+\zeta(a-p)]-\eta_2a)})\nonumber\\
&\quad\times\frac{-a'(A)}{2}(\tau\wp(a+p)+\tau\wp(a-p)+2\eta_2).
\end{align}
{\allowdisplaybreaks
\begin{align}\label{triangle3}
2\triangle_3'(A)&=\bigg(e^{{\tau}[\zeta
(a+p)+\zeta(a-p)]-2\eta_2a-(\frac{1}{2}[\zeta
(a+p)+\zeta(a-p)]-\eta_1a)}\nonumber\\
&\quad-e^{-({\tau}[\zeta
(a+p)+\zeta(a-p)]-2\eta_2a)+(\frac{1}{2}[\zeta
(a+p)+\zeta(a-p)]-\eta_1a)}\bigg)\nonumber\\
&\quad\times\frac{-a'(A)}{2}\Big((2\tau-1)(\wp(a+p)+\wp(a-p))+4\eta_2-2\eta_1\Big).
\end{align}}%

For $A=A_k$, the corresponding $a=\frac{\omega_k}{2}$ and $\wp(p-\frac{\omega_k}{2})-\wp(p+\frac{\omega_k}{2})=0$, 
$$e^{\frac{1}{2}[\zeta
(\frac{\omega_k}{2}+p)+\zeta(\frac{\omega_k}{2}-p)]-\eta_1\frac{\omega_k}{2}}=\varepsilon_{1,k}\in\{\pm 1\}.$$
Then by L'h\^{o}pital's rule, we have
{\allowdisplaybreaks
\begin{align*}
2\triangle_1'(A_k)&=\lim_{A\to A_k} 2\triangle_1'(A)\\
&=\lim_{a\to \frac{\omega_k}{2}}\frac{e^{\frac{1}{2}[\zeta
(a+p)+\zeta(a-p)]-\eta_1a}-e^{-(\frac{1}{2}[\zeta
(a+p)+\zeta(a-p)]-\eta_1a)}}{\wp(p+a)-\wp(p-a)}\\
&\qquad\qquad\times(\wp(a+p)+\wp(a-p)+2\eta_1)\\
&=-\frac{2\varepsilon_{1,k}}{\wp'(p-\frac{\omega_k}{2})}\left(\wp\Big(p-\frac{\omega_k}{2}\Big)+\eta_1\right)^2.
\end{align*}
}%
Simlarly, we have
\begin{align*}
2\triangle_2'(A_k)=-\frac{2\varepsilon_{2,k}}{\wp'(p-\frac{\omega_k}{2})}\left(\tau\wp\Big(p-\frac{\omega_k}{2}\Big)+\eta_2\right)^2,
\end{align*}
\begin{align*}
2\triangle_3'(A_k)=-\frac{2\varepsilon_{3,k}}{\wp'(p-\frac{\omega_k}{2})}\left((2\tau-1)\wp\Big(p-\frac{\omega_k}{2}\Big)+2\eta_2-\eta_1\right)^2.
\end{align*}
Together with Remark \ref{rmk2-6}, we see that $A_k$ is a cusp of $\sigma_j$ if and only if the odd number $\text{ord}_{{A}_k}(\triangle_j(\cdot)^2-1)\geq 3$, if and only if $\triangle_j'(A_k)=0$, if and only if $\wp(p-\frac{\omega_k}{2})+\eta_1=0$ for $j=1$, or $\tau\wp(p-\frac{\omega_k}{2})+\eta_2=0$ for $j=2$, or $(2\tau-1)\wp(p-\frac{\omega_k}{2})+2\eta_2-\eta_1=0$ for $j=3$. This proves the assertions (1)-(3) by using the Legendre relation $\tau\eta_1-\eta_2=2\pi i$.

Finally, if $\sigma_j$ has two cusps $A_k$ and $A_l$, $k\neq l$, then we see from (1)-(3) that
$\wp(p-\frac{\omega_k}{2})=\wp(p-\frac{\omega_l}{2})$, which implies $p-\frac{\omega_k}{2}=-(p-\frac{\omega_l}{2})$ and then $2p\in E_{\tau}[2]$. Thus, $\sigma_j$ has at most one cusp as long as $2p\notin E_{\tau}[2]$.
\end{proof}

Recalling the definition of branch points in Remark \ref{rmk2-6}, similarly we have the following result.

\begin{lemma}\label{lemma2-10-3} Let $A\in\sigma_j\setminus\{A_0, A_1, A_2, A_3\}$. Then the following statements hold.
\begin{itemize}
\item[(1)]
 $A$ is a branch point of $\sigma_1$ if and only if $\wp(a+p)+\wp(a-p)+2\eta_1=0$.
\item[(2)]  $A$ is a branch point of $\sigma_2$ if and only if $\tau(\wp(a+p)+\wp(a-p))+2\eta_2=0$, or equivalently, $\wp(a+p)+\wp(a-p)+2\eta_1=\frac{4\pi i}{\tau}$.
\item[(3)] $A$ is a branch point of $\sigma_3$ if and only if $(2\tau-1)(\wp(a+p)+\wp(a-p))+4\eta_2-2\eta_1=0$, or equivalently, $\wp(a+p)+\wp(a-p)+2\eta_1=\frac{8\pi i}{2\tau-1}$.
\end{itemize}
\end{lemma}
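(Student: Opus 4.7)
My plan is to reduce the branch-point condition to the vanishing of the $A$-derivative of one of the parameters $s(A)$, $r(A)$, or $2r(A)+s(A)$, which can then be computed directly by differentiating \eqref{61-37-1} to recover the stated conditions. Working with these parameters rather than $\triangle_j$ itself is essential, because $\triangle_j=\cos(2\pi\cdot)$ has intrinsic critical points at $\tfrac12\mathbb{Z}$ that create spurious zeros of $\triangle_j'$ unrelated to the branch-point structure of $\sigma_j$.

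First I would set up the local picture: since $A\in\sigma_j\setminus\{A_0,\ldots,A_3\}$ implies $Q(A)\neq 0$, exactly as in the proof of Lemma \ref{lemma2-10}, there is a unique holomorphic $a(\cdot)$ on a neighborhood of $A$ with $a(A)\notin E_\tau[2]$ solving \eqref{eqfc-51}, and $a'(A)=2/[\wp(p-a)-\wp(p+a)]\neq 0$. Via \eqref{61-37-1}, the parameters $s$, $r$, and hence $2r+s$ become locally well-defined holomorphic functions of $A$ on this neighborhood.

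Next I would identify $\sigma_j$ locally with a preimage. Since $\cos(2\pi w)\in[-1,1]$ iff $w\in\mathbb{R}$ for complex $w$, $\sigma_1$ coincides locally with $s^{-1}(\mathbb{R})$, and analogously $\sigma_2=r^{-1}(\mathbb{R})$, $\sigma_3=(2r+s)^{-1}(\mathbb{R})$. For any holomorphic $f$ near $A$ with $f(A)\in\mathbb{R}$, the level set $f^{-1}(\mathbb{R})$ near $A$ consists of exactly $n:=\operatorname{ord}_A(f-f(A))$ analytic arcs crossing transversally at $A$: if $f(\xi)-f(A)\approx c(\xi-A)^n$, then $c(\xi-A)^n\in\mathbb{R}$ defines $n$ lines through $A$. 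Thus $A$ is a branch point (inner point of $\ge 2$ analytic arcs) iff $n\ge 2$, i.e., iff the corresponding parameter has vanishing $A$-derivative at $A$.

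The remaining step is a routine computation: differentiating the two identities in \eqref{61-37-1} and using $\zeta'=-\wp$ yields
$$s'(A)=\frac{a'(A)}{4\pi i}\bigl[\wp(a+p)+\wp(a-p)+2\eta_1\bigr],\qquad r'(A)=\frac{-a'(A)}{4\pi i}\bigl[\tau(\wp(a+p)+\wp(a-p))+2\eta_2\bigr],$$
and combining these gives the analogous formula for $(2r+s)'(A)$ with bracketed factor $(2\tau-1)(\wp(a+p)+\wp(a-p))+4\eta_2-2\eta_1$. Since $a'(A)\neq 0$, each derivative vanishes iff the bracketed expression vanishes, yielding exactly the conditions in (1), (2), (3); the equivalent forms follow from the Legendre relation $\tau\eta_1-\eta_2=2\pi i$.

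The main obstacle will be the second step above, justifying the equivalence \emph{branch point} $\Leftrightarrow$ $\operatorname{ord}_A(s-s(A))\ge 2$ rather than the naive $\triangle_j'(A)=0$. The subtlety is that $\triangle_j'$ can vanish at points where $s\in\tfrac12\mathbb{Z}$ but $s'(A)\neq 0$; such $A$ satisfy $\triangle_j(A)=\pm 1$ with $\operatorname{ord}_A(\triangle_j^2-1)=2$, giving only a single analytic arc through $A$, so $A$ is not a branch point. Working directly with the locally holomorphic parameter $s(A)$ cleanly bypasses this degeneration.
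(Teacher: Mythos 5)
Your proposal is correct and follows essentially the same route as the paper: the paper's formula \eqref{triangle1} is precisely $2\triangle_1'(A)=-2\pi i\,s'(A)\,(e^{-2\pi is}-e^{2\pi is})$, so your chain-rule computation of $s'(A)$ (and of $r'(A)$, $(2r+s)'(A)$) is the same calculation that drives the paper's argument. The only difference is packaging: where you use the locally holomorphic parameter $s$ to treat the locus $\triangle_1(A)=\pm1$ uniformly, the paper splits into the cases $\triangle_1(A)\in(-1,1)$ (branch point $\Leftrightarrow\triangle_1'(A)=0$) and $\triangle_1(A)=\pm1$ (where $\triangle_1'(A)=0$ automatically and one instead checks $\triangle_1''(A)=0$), arriving at the same condition.
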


\begin{proof} The proof is similar to that of Lemma \ref{lemma2-10}. Since $A\in\sigma_j\setminus\{A_0, A_1, A_2, A_3\}$, we have $Q(A)\neq 0$ and $\triangle_j(A)\in [-1,1]$. Let us take $j=1$ for example.

{\bf Case 1.} $\triangle_1(A)\in (-1,1)$. Then by the Taylor expansion
$$[-1,1]\ni\triangle_1(\tilde A)=\triangle_1(A)+\triangle_1'(A)(\tilde A-A)+O((\tilde A-A)^2),$$
  we see that $A$ is a branch point of $\sigma_1$ if and only if $\triangle_1'(A)=0$, which is equivalent to $\wp(a+p)+\wp(a-p)+2\eta_1=0$ by using \eqref{triangle1}. 
  
  {\bf Case 2.} $\triangle_1(A)=\pm 1$. Then \eqref{triangle1} implies $\triangle_1'(A)=0$ automatically. Again by  
  the Taylor expansion
$$[-1,1]\ni\triangle_1(\tilde A)=\pm 1+\frac12\triangle_1''(A)(\tilde A-A)^2+O((\tilde A-A)^3),$$
we see that $A$ is a branch point of $\sigma_1$ if and only if $\triangle_1''(A)=0$. By using \eqref{triangle1} and $\triangle_1(A)=\pm 1$, i.e. $e^{2\pi is}=e^{-2\pi is}=\pm 1$, we see that
$$2\triangle_1''(A)=\frac{a'(A)^2}{4}(e^{-2\pi is}+e^{2\pi is})(\wp(a+p)+\wp(a-p)+2\eta_1)^2,$$
so $\triangle_1''(A)=0$ is equivalent to $\wp(a+p)+\wp(a-p)+2\eta_1=0$. 
The proof is complete.
\end{proof}

\begin{lemma}\label{lemma2-8}
Denote $B_R:=\{A\in\mathbb{C} : |A|<R\}$. Then for $R>0$ large, the following statements hold.
\begin{itemize}
\item[(1)] $\sigma_1\setminus{B_R}$ consists of only two disjoint analytic arcs that can be parametrized by 
\begin{equation}\label{sigma1}
A=it-\frac{2p\eta_1-\zeta(2p)}{2}+O(t^{-1})
,\quad t\in(-\infty, -t_1]\cup[t_2,+\infty),\end{equation}
for some $t_j>0$ large.

\item[(2)] $\sigma_2\setminus{B_R}$ consists of only two disjoint analytic arcs that can be parametrized by 
\begin{equation}\label{sigma2}
A=\frac{it}{\tau}-\frac{2p\eta_2-\tau\zeta(2p)}{2\tau}+O(t^{-1})
,\quad t\in(-\infty, -t_3]\cup[t_4,+\infty),\end{equation}
for some $t_j>0$ large.

\item[(3)] $\sigma_3\setminus{B_R}$ consists of only two disjoint analytic arcs that can be parametrized by 
\begin{equation}\label{sigma3}
A=\frac{it}{2\tau-1}-\frac{2p(2\eta_2-\eta_1)-(2\tau-1)\zeta(2p)}{2(2\tau-1)}+O(t^{-1})
,\end{equation}
$t\in(-\infty, -t_5]\cup[t_6,+\infty)$ for some $t_j>0$ large.
\end{itemize}

\end{lemma}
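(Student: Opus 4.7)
The strategy is to use the parametrisation of $A$ by the branch point $\pm a \in E_\tau$ provided by \eqref{Aap}, analyse it as $A\to\infty$, and then convert the asymptotic formulas for $s(A)$, $r(A)$ and $(2r+s)(A)$ into explicit parametrisations of $\sigma_1,\sigma_2,\sigma_3$.

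\textbf{Step 1: Inverting $A(a)$ near infinity.} Since $\zeta$ has simple poles only on $\Lambda_\tau$ and $\pm 2p\notin\Lambda_\tau$, the relation \eqref{Aap} forces $a\to\pm p$ in $E_\tau$ when $|A|\to\infty$ (the two possibilities produce the same $\pm a$ and hence the same $A$). Writing $u:=a-p$ and Laurent-expanding $\zeta$ at $0$ and Taylor-expanding at $2p$ yields
\begin{equation}\label{planA}
2A\;=\;\zeta(2p+u)-\zeta(u)-\zeta(2p)\;=\;-\tfrac{1}{u}-\wp(2p)\,u+O(u^{2}).
\end{equation}
Thus the change of coordinate $\xi:=-1/(2A)$ satisfies $\xi=u\bigl(1+O(u^{2})\bigr)$, which is biholomorphic between small neighborhoods of $u=0$ and $\xi=0$; inverting gives the single-valued analytic function $u(A)=-\frac{1}{2A}+O(A^{-3})$, i.e.\ $a(A)=p+u(A)$ for $|A|$ large.

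\textbf{Step 2: Asymptotics of $s(A),r(A),(2r+s)(A)$.} From the oddness of $\zeta$, the definition $c(a)=\frac12[\zeta(a+p)+\zeta(a-p)]$ together with \eqref{Aap} gives $c(a)=A+\zeta(u)+\frac12\zeta(2p)$. Since \eqref{planA} yields $\zeta(u)=\frac{1}{u}+O(u^{3})=-2A+O(A^{-1})$, we obtain $c(a(A))=-A+\frac12\zeta(2p)+O(A^{-1})$. Substituting this and $a(A)=p+O(A^{-1})$ into the identities $-2\pi is=c(a)-\eta_1a$, $2\pi ir=c(a)\tau-\eta_2a$ from \eqref{61-37-1} (and their linear combination $2\pi i(2r+s)=(2\tau-1)c(a)+(\eta_1-2\eta_2)a$), one finds
\begin{align*}
-2\pi is(A)&=-A+\tfrac12\bigl(\zeta(2p)-2\eta_1p\bigr)+O(A^{-1}),\\
2\pi ir(A)&=-\tau A+\tfrac12\bigl(\tau\zeta(2p)-2\eta_2p\bigr)+O(A^{-1}),\\
2\pi i(2r+s)(A)&=-(2\tau-1)A+\tfrac12\bigl((2\tau-1)\zeta(2p)-2(2\eta_2-\eta_1)p\bigr)+O(A^{-1}).
\end{align*}

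\textbf{Step 3: Parametrisation and counting of arcs.} Each $\sigma_j$ is the locus where the corresponding scalar ($s$, $r$, or $2r+s$) is real, equivalently where the left-hand side above lies in $i\mathbb{R}$. Since the coefficient of $A$ on each right-hand side is a nonzero complex number ($-1,-\tau,-(2\tau-1)$; here $2\tau-1\neq 0$ as $\tau\in\mathbb H$), we can solve by the analytic implicit function theorem for $A$ in terms of the real parameter $t$ equal to $2\pi s$, $-2\pi r$, or $-2\pi(2r+s)$ respectively. The resulting expansions are exactly \eqref{sigma1}, \eqref{sigma2}, \eqref{sigma3}. As $t$ runs over $\mathbb{R}$ with $|t|$ large, it produces exactly two analytic arcs — the ``$t\to+\infty$'' and ``$t\to-\infty$'' ends — which are disjoint since they escape to infinity in opposite directions along the line $\{\mathrm{Re}(\lambda_jA)=\mathrm{Re}(\lambda_jC_j)\}$.

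\textbf{Main obstacle.} The delicate point is showing these two arcs \emph{exhaust} $\sigma_j\setminus B_R$, i.e.\ that no further components of the conditional stability set escape to infinity. This follows from the biholomorphism in Step 1: every $A$ with $|A|\geq R$ has $u(A)=a(A)-p$ uniquely determined and small, so the asymptotic expansions of Step 2 describe \emph{all} of $\sigma_j\setminus B_R$. Equivalently, outside $B_R$ the entire function $\triangle_j(A)$ equals $\cosh\bigl(\lambda_j(A-C_j)\bigr)+O(A^{-1})$, and the preimage of $[-1,1]$ under $\cosh$ is precisely $i\mathbb R$, so $\sigma_j\setminus B_R$ is concentrated in a thin tube around the single vertical line $\{\lambda_j(A-C_j)\in i\mathbb R\}\setminus B_R$, which has exactly two unbounded components.
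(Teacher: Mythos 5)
Your proof is correct, but it takes a genuinely different route from the paper's. You invert the algebraic relation \eqref{Aap} near $a=\pm p$ (using that $\zeta(p+a)+\zeta(p-a)$ is bounded on the torus away from $\pm p$, so $|A|\to\infty$ forces $a\to\pm p$, and that $\xi=-1/(2A)\mapsto u=a-p$ is a biholomorphism near the origin), and then read off the asymptotics of $s$, $r$, $2r+s$ directly from the closed-form expressions \eqref{61-37-1} in terms of $c(a)$ and $a$. The paper instead never inverts $A(a)$: it writes $e^{-2\pi is}=\varepsilon_1\exp\int_{q_0}^{q_0+1}Q(A)^{1/2}/(2\Phi_e(\xi;A))\,d\xi$ via Lemma \ref{lemma2-2}, expands the integrand in powers of $A^{-1}$ using $Q(A)=16A^4+O(A^2)$ and \eqref{phie}, and integrates $\zeta(z+p)-\zeta(z-p)$ over a period using the transformation law of $\sigma$ to produce the constants $2p\eta_1-\zeta(2p)$, etc. Both routes yield identical expansions and both must address the same exhaustion issue (that the two parametrized arcs are all of $\sigma_j\setminus B_R$): the paper does this by observing the error is a genuine holomorphic function $\varphi(A^{-1})=O(A^{-2})$ so that each large $it$ has a unique preimage, while you do it via the biholomorphism $u\leftrightarrow\xi$ plus the uniqueness of $\pm a$ given $A$ (the paper's \eqref{eq-data1}). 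Your approach buys freedom from the square root $Q(A)^{1/2}$ and the attendant sign constants $\varepsilon_j$, and all three cases follow from one expansion of $c(a(A))$; the paper's integral approach is more self-contained in that it works directly with the monodromy data. One small caveat: the sentence asserting $\triangle_j(A)=\cosh(\lambda_j(A-C_j))+O(A^{-1})$ is not literally correct (the error in the argument of $\cosh$ gets amplified by $\sinh$, which is exponentially large off the critical line), but this is only an illustrative aside; your actual argument, phrased at the level of the exponent lying in $i\mathbb{R}$, is sound.
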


\begin{proof}
By \eqref{phie}-\eqref{eq219}, \eqref{exp1} and \eqref{eqfc-1}-\eqref{eqfc-2}, we have
\begin{align*}
e^{-2\pi i s}&=-\left(\frac{\Phi_e(z+1;A)}{\Phi_e(z;A)}\right)^{\frac12}\exp\int_{q_0}^{q_0+1}\frac{Q(A)^{\frac12}}{2\Phi_e(\xi; A)}d\xi\\
&=\varepsilon_1\exp\int_{q_0}^{q_0+1}\frac{Q(A)^{\frac12}}{2\Phi_e(\xi; A)}d\xi,
\end{align*}
\begin{align*}
e^{2\pi i r}&=-\left(\frac{\Phi_e(z+\tau;A)}{\Phi_e(z;A)}\right)^{\frac12}\exp\int_{q_0}^{q_0+\tau}\frac{Q(A)^{\frac12}}{2\Phi_e(\xi; A)}d\xi\\
&=\varepsilon_2\exp\int_{q_0}^{q_0+\tau}\frac{Q(A)^{\frac12}}{2\Phi_e(\xi; A)}d\xi,
\end{align*}
where $\varepsilon_j\in\{\pm 1\}$ because $\Phi_e(z)=\Phi_e(z;A)$ is an elliptic function. For $|A|\geq R$ with $R$ large, we have that for any $z\in [q_0, q_0+1]\cup[q_0, q_0+\tau]\not\ni\pm p$,
\begin{align*}
\frac{Q(A)^{\frac12}}{2\Phi_e(z)}&=\frac{\left(16A^4+O(A^2)\right)^\frac12}{-4A(1-\frac{\zeta(z+p)-\zeta(z-p)-\zeta(2p)}{2}A^{-1})}\\
&=\varepsilon_3A\left(1+\frac{\zeta(z+p)-\zeta(z-p)-\zeta(2p)}{2}A^{-1}+O(A^{-2})\right),
\end{align*}
where $\varepsilon_3\in\{\pm 1\}$. By using the transformation law \eqref{123123}, we have
\begin{align*}\int_{q_0}^{q_0+1}(\zeta(z+p)-\zeta(z-p))dz&=\log\frac{\sigma(q_0+1+p)\sigma(q_0-p)}{\sigma(q_0+p)\sigma(q_0+1-p)}\\
&=\log e^{2p\eta_1}=2p\eta_1+2m_1\pi i,\end{align*}
\begin{align*}\int_{q_0}^{q_0+\tau}(\zeta(z+p)-\zeta(z-p))dz&=\log\frac{\sigma(q_0+\tau+p)\sigma(q_0-p)}{\sigma(q_0+p)\sigma(q_0+\tau-p)}\\
&=\log e^{2p\eta_2}=2p\eta_2+2m_2\pi i,\end{align*}
where $m_j\in\mathbb Z$, so
\begin{align}\label{sxds}e^{-2\pi i s}&=\varepsilon_1\exp\left({\varepsilon_3A\left(1+\frac{2p\eta_1+2m_1\pi i-\zeta(2p)}{2}A^{-1}+O(A^{-2})\right)}\right)\nonumber\\
&=\varepsilon_4\exp\left({\varepsilon_3A\left(1+\frac{2p\eta_1-\zeta(2p)}{2}A^{-1}+O(A^{-2})\right)}\right),
\end{align}
and similarly,
\begin{align*}e^{2\pi i r}
=\varepsilon_5\exp\left({\varepsilon_3A\left(\tau+\frac{2p\eta_2-\tau\zeta(2p)}{2}A^{-1}+O(A^{-2})\right)}\right),
\end{align*}
where $\varepsilon_j\in\{\pm 1\}$. 

Hence, $\triangle_1(A)=\frac12(e^{-2\pi i s}+e^{2\pi i s})\in [-1,1]$ if and only if $s\in\mathbb{R}$, if and only if
$$A\left(1+\frac{2p\eta_1-\zeta(2p)}{2}A^{-1}+O(A^{-2})\right)=it\quad\text{with}\;t\in\mathbb{R}, |t|\text{ large},$$
which is equivalent to \eqref{sigma1}. Clearly the above argument of proving \eqref{sxds} actually implies
$$e^{-2\pi i s}=\varepsilon_4\exp\left({\varepsilon_3A\left(1+\frac{2p\eta_1-\zeta(2p)}{2}A^{-1}+\varphi(A^{-1})\right)}\right)$$
for $|A|$ large, where $\varphi(z)$ is holomorphic for $|z|$ small and $\varphi(z)=O(z^2)$ for $|z|$ small. Thus, given any $t\in\mathbb R$ with $|t|$ large enough, there is a unique $A\in\mathbb C$ with $|A|$ large such that
$$A\left(1+\frac{2p\eta_1-\zeta(2p)}{2}A^{-1}+\varphi(A^{-1})\right)=it,$$
which implies $\triangle_1(A)\in [-1, 1]$, i.e. $A\in\sigma_1$. This proves the assertion (1).

Similarly, $\triangle_2(A)=\frac12(e^{2\pi i r}+e^{-2\pi i r})\in [-1,1]$ if and only if $r\in\mathbb{R}$, if and only if
$$A\left(\tau+\frac{2p\eta_2-\tau\zeta(2p)}{2}A^{-1}+O(A^{-2})\right)=it\quad\text{with}\;t\in\mathbb{R}, |t|\text{ large},$$
which is equivalent to \eqref{sigma2}. 
Since
\begin{align*}&e^{2\pi i (2r+s)}\\
=&\varepsilon_4\exp\left({\varepsilon_3A\left((2\tau-1)+\frac{2p(2\eta_2-\eta_1)-(2\tau-1)\zeta(2p)}{2}A^{-1}+O(A^{-2})\right)}\right),
\end{align*}
we see that $\triangle_3(A)=\frac12(e^{2\pi i (2r+s)}+e^{-2\pi i (2r+s)})\in [-1,1]$ if and only if $2r+s\in\mathbb{R}$, if and only if
$$A\left((2\tau-1)+\frac{2p(2\eta_2-\eta_1)-(2\tau-1)\zeta(2p)}{2}A^{-1}+O(A^{-2})\right)=it,$$
with $t\in\mathbb{R}, |t|$ large, which is equivalent to \eqref{sigma3}. 
Then a simiar argument implies the assertions (2)-(3).
This completes the proof.
\end{proof}

\begin{corollary}\label{coro2-11} Let $p\in E_{\tau}\setminus E_{\tau}[2]$ and $j\in\{1,2\}$. Then $\sigma_j$ consists of $m_j\in [3,5]$ analytic arcs with finite endpoints $A_0, A_1, A_2, A_3$, among which there is one or two unbounded arcs. 
More precisely,
\begin{itemize}
\item[(1)] If $\omega_j\wp(p-\frac{\omega_k}{2})+\eta_j\neq 0$ for all $k$, then $\sigma_j$ has no cusps and $\sigma_j$ consists of $3$ analytic arcs.

\item[(2)] If $\omega_j\wp(p-\frac{\omega_k}{2})+\eta_j=0$ and $12\eta_j^2-\omega_j^2g_2\neq 0$ for some $k$, then $A_k$ is a cusp of $\sigma_j$  with $\text{ord}_{{A}_k}(\triangle_j(\cdot)^2-1)=3$. Furthermore,
\begin{itemize}
\item[(2-1)] If $\omega_j\wp(p-\frac{\omega_l}{2})+\eta_j\neq 0$ for any $l\neq k$, then $A_k$ is the unique cusp of $\sigma_j$,
and $\sigma_j$ consists of $4$ analytic arcs, among which there are three of them having the common endpoint $A_k$.

\item[(2-2)] If $\omega_j\wp(p-\frac{\omega_l}{2})+\eta_j=0$ for some $l\neq k$, then $A_l$ is also a cusp of $\sigma_j$ with $\text{ord}_{{A}_l}(\triangle_j(\cdot)^2-1)=3$,
and $\sigma_j$ is path-connected and consists of $5$ analytic arcs, among which one has endpoints $A_k$ and $A_l$, two others have the common endpoints $A_k$, and the remaining two have the common endpoints $A_l$.
\end{itemize}

\item[(3)] If $\omega_j\wp(p-\frac{\omega_k}{2})+\eta_j=0$ and $12\eta_j^2-\omega_j^2g_2=0$ for some $k$, then $A_k$ is the unique cusp of $\sigma_j$  with $\text{ord}_{{A}_k}(\triangle_j(\cdot)^2-1)=5$, and $\sigma_j$ is path-connected and consists of $5$ analytic arcs that have the common endpoint $A_k$.
\end{itemize}

\end{corollary}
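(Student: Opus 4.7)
My strategy is to reduce the classification to an end-counting identity for $\sigma_j$, then determine the possible multiplicities at the endpoints. From the factorization
\[
\triangle_j(A)^{2}-1 \;=\; C_j(A)^{2}\prod_{k=0}^{3}(A-A_k)
\]
established in the proof of Lemma~\ref{lemma2-7}, the integer $d_j(A_k):=\operatorname{ord}_{A_k}(\triangle_j(\cdot)^{2}-1)$ is odd. Writing $f=\triangle_j-\triangle_j(A_k)$, one has $\triangle_j^{2}-1=\pm 2f+f^{2}$ locally at $A_k$, so $d_j(A_k)$ coincides with the smallest $n\ge 1$ for which $\triangle_j^{(n)}(A_k)\neq 0$, and this index is necessarily odd. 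Lemma~\ref{lemma2-7} identifies $\{A_0,\dots,A_3\}$ as precisely the finite endpoints, Remark~\ref{rmk2-6} asserts that $d_j(A_k)$ analytic arcs meet at $A_k$, and Lemma~\ref{lemma2-8} contributes exactly two unbounded ends. Summing all arc-ends gives the central identity
\[
2m_j \;=\; \sum_{k=0}^{3}d_j(A_k)\;+\;2.
\]

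To pin down $d_j(A_k)$, I would first invoke the formula for $\triangle_j'(A_k)$ derived inside the proof of Lemma~\ref{lemma2-10}: up to a nonzero factor it equals $(\omega_j\wp(p-\omega_k/2)+\eta_j)^{2}/\wp'(p-\omega_k/2)$, so $d_j(A_k)=1$ precisely when $\omega_j\wp(p-\omega_k/2)+\eta_j\neq 0$. In the cusp case $\omega_j\wp(p-\omega_k/2)+\eta_j=0$, oddness of $d_j(A_k)$ forces $\triangle_j''(A_k)=0$ automatically, so the dichotomy $d_j(A_k)\in\{3,5\}$ is detected by $\triangle_j'''(A_k)$. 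Since $a(A)$ has square-root branching at $A_k$ (from $\wp(p+\omega_k/2)=\wp(p-\omega_k/2)$), I would invert $A=\tfrac12[\zeta(p+a)+\zeta(p-a)-\zeta(2p)]$ as a series in $(A-A_k)^{1/2}$, substitute into \eqref{triangle1}--\eqref{triangle2}, and collect the $(A-A_k)^{3}$-coefficient. Using $\wp''(\omega_k/2)=6e_k^{2}-g_2/2$ together with the cusp relation, this coefficient should simplify to a nonzero constant times $12\eta_j^{2}-\omega_j^{2}g_{2}$, whence $d_j(A_k)=3$ if $12\eta_j^{2}\neq\omega_j^{2}g_{2}$ and $d_j(A_k)=5$ otherwise. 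Moreover Lemma~\ref{lemma2-10}(4) shows that simultaneous cusps at $A_k$ and $A_l$, $k\neq l$, force $2p\in E_\tau[2]$, which singles out subcase (2-2).

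One useful auxiliary observation is that no analytic arc of $\sigma_j$ can be a loop $A_k\to A_k$, because such a loop would bound a region whose interior must contain a bounded component of $\mathbb{C}\setminus\sigma_j$, contradicting Lemma~\ref{lemma2-7}(2). Feeding the computed multiplicities into the end-counting identity yields $m_j=3,4,5,5$ in cases (1), (2-1), (2-2), (3) respectively. The detailed structure then follows by elementary combinatorics together with the no-loop observation: in (2-1) the three ends at $A_k$ belong to three distinct arcs sharing $A_k$, while the fourth arc has no end at $A_k$; in (2-2), three arcs meet at each cusp $A_k,A_l$, so since $3+3>m_j=5$ at least one arc has both ends in $\{A_k,A_l\}$, and since loops are excluded, exactly one arc connects $A_k$ to $A_l$, leaving two arcs based at each cusp and yielding the stated path-connected configuration; in (3) all five arcs emanate from the single cusp $A_k$. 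The dichotomy ``one or two unbounded arcs'' records whether the two asymptotic tails in Lemma~\ref{lemma2-8} lie in a single maximal arc or in two distinct ones.

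The main obstacle I anticipate is the Taylor computation identifying $12\eta_j^{2}-\omega_j^{2}g_{2}$ as the next-order obstruction when the cusp condition holds. One has to expand $\triangle_j$ through third order in $(A-A_k)$ in the presence of square-root branching of $a(A)$, then simplify the resulting elliptic-function expression using standard identities at $\omega_k/2$ until the combination $12\eta_j^{2}-\omega_j^{2}g_{2}$ emerges cleanly. Everything else is bookkeeping with the end-counting identity and Remark~\ref{rmk2-6}.
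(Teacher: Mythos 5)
Your proposal is correct and follows essentially the same route as the paper: endpoints and unbounded tails from Lemmas \ref{lemma2-7} and \ref{lemma2-8}, the cusp criterion via the vanishing of $\triangle_j'(A_k)$, a local expansion at the cusp identifying $12\eta_j^2-\omega_j^2g_2$ as the next-order obstruction, and then the arc-end count (which the paper leaves implicit but you usefully make explicit, together with the no-loop observation). One small correction: the identity needed in the cusp computation is $\wp''(p-\tfrac{\omega_k}{2})=6\wp(p-\tfrac{\omega_k}{2})^2-\tfrac{g_2}{2}$, evaluated at $p-\tfrac{\omega_k}{2}$ rather than at $\tfrac{\omega_k}{2}$; combined with the cusp relation $\omega_j\wp(p-\tfrac{\omega_k}{2})=-\eta_j$ this gives $(12\eta_j^2-\omega_j^2g_2)/(2\omega_j^2)$, which is exactly how the stated factor emerges in the paper's expansion (carried out there by parametrizing with $a$ near $\tfrac{\omega_k}{2}$ instead of a Puiseux series in $(A-A_k)^{1/2}$, an equivalent device).
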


\begin{proof} (1). Suppose $\omega_j\wp(p-\frac{\omega_k}{2})+\eta_j\neq0$ for all $k$. Then Lemma \ref{lemma2-10} says that $\sigma_j$ has no cusps, namely each $A_k$ is an endpoint of exactly one analytic arc in $\sigma_j$. Together with Lemmas \ref{lemma2-7} and \ref{lemma2-8}, we see that $\sigma_j$ consists of $3$ analytic arcs with finite endpoints $A_0, A_1, A_2$, $A_3$, among which there is one or two unbounded arcs. Note that different analytic arcs might intersect at branch points.

(2)-(3). 
Suppose $\omega_j\wp(p-\frac{\omega_k}{2})+\eta_j=0$. Then Lemma \ref{lemma2-10} says that $A_k$ is a cusp of $\sigma_j$. Together with Lemmas \ref{lemma2-7}, \ref{lemma2-8} and the fact that $\text{ord}_{{A}_k}(\triangle_j(\cdot)^2-1)>1$ is odd, we see that $\text{ord}_{{A}_k}(\triangle_j(\cdot)^2-1)\in\{3,5\}$. 

More precisely, when $\text{ord}_{{A}_k}(\triangle_j(\cdot)^2-1)=5$, then
$\sigma_j$ consists of $5$ analytic arcs that have the common endpoint $A_k$, and two of them are unbounded, while the remaining three arcs are bounded with the other endpoints $A_l$, $l\in\{0,1,2,3\}\setminus\{k\}$, respectively. In particular, all other $A_l$, $l\in\{0,1,2,3\}\setminus\{k\}$, are not cusps of $\sigma_j$. 

While when $\text{ord}_{{A}_k}(\triangle_j(\cdot)^2-1)=3$, there are two possibilities. The first case is $\omega_j\wp(p-\frac{\omega_l}{2})+\eta_j\neq 0$ for any $l\neq k$, then $A_k$ is the unique cusp of $\sigma_j$,  so $\sigma_j$ consists of $4$ analytic arcs, among which there are three of them having the common endpoint $A_k$, and the $4$ analytic arcs contain one or two unbounded arcs. The second case is $\omega_j\wp(p-\frac{\omega_l}{2})+\eta_j=0$ for some $l\neq k$, then $A_l$ is also a cusp of $\sigma_j$ with $\text{ord}_{{A}_l}(\triangle_j(\cdot)^2-1)\geq3$. Since any connected components of $\mathbb{C}\setminus \sigma_j$ are unbounded,  we conclude that $\sigma_j$ consists of $5$ analytic arcs, among which one has endpoints $A_k$ and $A_l$, two others have the common endpoints $A_k$, and the remaining two have the common endpoints $A_l$. In particular, $\text{ord}_{{A}_l}(\triangle_j(\cdot)^2-1)=3$.


Now we determine when $\text{ord}_{{A}_k}(\triangle_j(\cdot)^2-1)=3$. Let us take $j=1$ for example.
By $\wp(p-\frac{\omega_k}{2})+\eta_1=0$, direct computations give that as $a\to\frac{\omega_k}{2}$,
$$\wp(a+p)+\wp(a-p)+2\eta_1=\wp''\Big(p-\frac{\omega_k}{2}\big)\Big(a-\frac{\omega_k}{2}\Big)^2+O\Big(\Big(a-\frac{\omega_k}{2}\Big)^4\Big),$$
$$\wp(p+a)-\wp(p-a)=2\wp'\Big(p-\frac{\omega_k}{2}\Big)\Big(a-\frac{\omega_k}{2}\Big)+O\Big(\Big(a-\frac{\omega_k}{2}\Big)^3\Big),$$
\begin{align*}
e^{\frac{1}{2}[\zeta
(a+p)+\zeta(a-p)]-\eta_1a}-e^{-(\frac{1}{2}[\zeta
(a+p)+\zeta(a-p)]-\eta_1a)}\\
=-\frac{\varepsilon_{1,k}}{3}\wp''\Big(p-\frac{\omega_k}{2}\Big)\Big(a-\frac{\omega_k}{2}\Big)^3+O\Big(\Big(a-\frac{\omega_k}{2}\Big)^5\Big).
\end{align*}
Furthermore, \eqref{eqfc-51} leads to
$$A-A_k=-\frac12\wp'\Big(p-\frac{\omega_k}{2}\Big)\Big(a-\frac{\omega_k}{2}\Big)^2+O\Big(\Big(a-\frac{\omega_k}{2}\Big)^4\Big).$$
Therefore, we deduce from \eqref{triangle1} that
{\allowdisplaybreaks
\begin{align*}
2\triangle_1'(A)
&=\frac{e^{\frac{1}{2}[\zeta
(a+p)+\zeta(a-p)]-\eta_1a}-e^{-(\frac{1}{2}[\zeta
(a+p)+\zeta(a-p)]-\eta_1a)}}{\wp(p+a)-\wp(p-a)}\\
&\qquad\qquad\times(\wp(a+p)+\wp(a-p)+2\eta_1)\\
&=-\frac{\varepsilon_{1,k}\wp''(p-\frac{\omega_k}{2})^2}{6\wp'(p-\frac{\omega_k}{2})}\Big(a-\frac{\omega_k}{2}\Big)^4+O((a-\frac{\omega_k}{2})^6)\\
&=-\frac{2\varepsilon_{1,k}\wp''(p-\frac{\omega_k}{2})^2}{3\wp'(p-\frac{\omega_k}{2})^3}(A-A_k)^2+O((A-A_k)^3).
\end{align*}
}%
Note from $\wp(p-\frac{\omega_k}{2})+\eta_1=0$ that $\wp''(p-\frac{\omega_k}{2})=6\wp(p-\frac{\omega_k}{2})^2-\frac{g_2}{2}=6\eta_1^2-\frac{g_2}{2}$. 

If $12\eta_1^2-g_2\neq 0$, then $\wp''(p-\frac{\omega_k}{2})\neq 0$, so $\text{ord}_{{A}_k}(\triangle_1(\cdot)^2-1)=3$. This proves the assertion (2) for $j=1$.
If $12\eta_1^2-g_2=0$, then $\wp''(p-\frac{\omega_k}{2})=0$, so $\text{ord}_{{A}_k}(\triangle_1(\cdot)^2-1)>3$ and consequently, $\text{ord}_{{A}_k}(\triangle_1(\cdot)^2-1)=5$. This proves the assertion (3) for $j=1$.

For $j=2$,
by $\tau\wp(p-\frac{\omega_k}{2})+\eta_2=0$, a similar argument leads to
$$2\triangle_2'(A)=-\frac{2\varepsilon_{2,k}\tau^2\wp''(p-\frac{\omega_k}{2})^2}{3\wp'(p-\frac{\omega_k}{2})^3}(A-A_k)^2+O((A-A_k)^3).$$
Note that $\wp''(p-\frac{\omega_k}{2})=6\wp(p-\frac{\omega_k}{2})^2-\frac{g_2}{2}=6\frac{\eta_2^2}{\tau^2}-\frac{g_2}{2}$, so $\wp''(p-\frac{\omega_k}{2})=0$ if and only if $12\eta_2^2-\tau^2g_2=0$. Then $12\eta_2^2-\tau^2g_2\neq0$ implies $\text{ord}_{{A}_k}(\triangle_2(\cdot)^2-1)=3$, and $12\eta_2^2-\tau^2g_2=0$ implies $\text{ord}_{{A}_k}(\triangle_2(\cdot)^2-1)=5$. The proof is complete.
\end{proof}

\begin{corollary}\label{coro2-11-3} Let $p\in E_{\tau}\setminus E_{\tau}[2]$. Then $\sigma_3$ consists of $m_3\in [3,5]$ analytic arcs with finite endpoints $A_0, A_1, A_2, A_3$, among which there is one or two unbounded arcs. 
More precisely,
\begin{itemize}
\item[(1)] If $(2\tau-1)\wp(p-\frac{\omega_k}{2})+2\eta_2-\eta_1\neq 0$ for all $k$, then $\sigma_3$ has no cusps and $\sigma_3$ consists of $3$ analytic arcs.

\item[(2)] If $(2\tau-1)\wp(p-\frac{\omega_k}{2})+2\eta_2-\eta_1=0$ and $12(2\eta_2-\eta_1)^2-(2\tau-1)^2g_2\neq 0$ for some $k$, then $A_k$ is a cusp of $\sigma_3$  with $\text{ord}_{{A}_k}(\triangle_3(\cdot)^2-1)=3$. 
Furthermore,
\begin{itemize}
\item[(2-1)] If $(2\tau-1)\wp(p-\frac{\omega_l}{2})+2\eta_2-\eta_1\neq0$ for any $l\neq k$, then $A_k$ is the unique cusp of $\sigma_3$,
and $\sigma_3$ consists of $4$ analytic arcs, among which there are three of them having the common endpoint $A_k$.

\item[(2-2)] If $(2\tau-1)\wp(p-\frac{\omega_l}{2})+2\eta_2-\eta_1=0$ for some $l\neq k$, then $A_l$ is also a cusp of $\sigma_3$ with $\text{ord}_{{A}_l}(\triangle_3(\cdot)^2-1)=3$,
and $\sigma_3$ is path-connected and consists of $5$ analytic arcs, among which one has endpoints $A_k$ and $A_l$, two others have the common endpoints $A_k$, and the remaining two have the common endpoints $A_l$.
\end{itemize}

\item[(3)] If $(2\tau-1)\wp(p-\frac{\omega_k}{2})+2\eta_2-\eta_1=0$ and $12(2\eta_2-\eta_1)^2-(2\tau-1)^2g_2=0$ for some $k$, then $A_k$ is the unique cusp of $\sigma_3$  with $\text{ord}_{{A}_k}(\triangle_3(\cdot)^2-1)=5$, and $\sigma_3$ consists of $5$ analytic arcs that have the common endpoint $A_k$.
\end{itemize}

\end{corollary}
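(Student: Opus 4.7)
The plan is to mirror the proof of Corollary~\ref{coro2-11} step by step, replacing the factor $\omega_j\wp(p-\frac{\omega_k}{2})+\eta_j$ that controlled cusps of $\sigma_j$ ($j=1,2$) with its $\sigma_3$-counterpart $(2\tau-1)\wp(p-\frac{\omega_k}{2})+2\eta_2-\eta_1$, as given by Lemma~\ref{lemma2-10}(3). The four basic structural inputs are identical: Lemma~\ref{lemma2-7} says the endpoints of $\sigma_3$ are exactly $\{A_0,A_1,A_2,A_3\}$ and that every connected component of $\mathbb{C}\setminus\sigma_3$ is unbounded; Lemma~\ref{lemma2-8}(3) gives two unbounded analytic arcs outside a large disk; and Lemma~\ref{lemma2-10}(3) pinpoints when an $A_k$ is a cusp. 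Combining these exactly as in the proof of Corollary~\ref{coro2-11}(1), if no $A_k$ is a cusp then $\sigma_3$ is a tree with four valence-one endpoints and two unbounded ends, forcing precisely $3$ analytic arcs; this handles (1).

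For (2) and (3), suppose $(2\tau-1)\wp(p-\frac{\omega_k}{2})+2\eta_2-\eta_1=0$. I need to determine $\operatorname{ord}_{A_k}(\triangle_3(\cdot)^2-1)$, which by Remark~\ref{rmk2-6} is an odd integer $\geq 3$, hence in $\{3,5\}$. Starting from formula \eqref{triangle3} and using the local expansions already used in Corollary~\ref{coro2-11}(3)
\[
\wp(a+p)+\wp(a-p)+\tfrac{2(2\eta_2-\eta_1)}{2\tau-1}=\wp''(p-\tfrac{\omega_k}{2})(a-\tfrac{\omega_k}{2})^2+O((a-\tfrac{\omega_k}{2})^4),
\]
\[
\wp(p+a)-\wp(p-a)=2\wp'(p-\tfrac{\omega_k}{2})(a-\tfrac{\omega_k}{2})+O((a-\tfrac{\omega_k}{2})^3),
\]
together with the parallel expansion of the exponential difference appearing in \eqref{triangle3}, I obtain
\[
2\triangle_3'(A)=-\frac{2\varepsilon_{3,k}(2\tau-1)^2\wp''(p-\frac{\omega_k}{2})^2}{3\wp'(p-\frac{\omega_k}{2})^3}(A-A_k)^2+O((A-A_k)^3).
\]
Now the cusp condition gives $\wp(p-\frac{\omega_k}{2})=\frac{\eta_1-2\eta_2}{2\tau-1}$, so
\[
\wp''(p-\tfrac{\omega_k}{2})=6\wp(p-\tfrac{\omega_k}{2})^2-\tfrac{g_2}{2}=\frac{12(2\eta_2-\eta_1)^2-(2\tau-1)^2 g_2}{2(2\tau-1)^2}.
\]
Thus $\wp''(p-\frac{\omega_k}{2})\neq 0$ iff $12(2\eta_2-\eta_1)^2-(2\tau-1)^2g_2\neq 0$, giving $\operatorname{ord}_{A_k}(\triangle_3(\cdot)^2-1)=3$; otherwise this order is $\geq 5$ and, by checking the next nontrivial term (or simply by parity together with the global count of endpoints), equals $5$.

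The remaining task is the topological classification of the resulting arc pattern, carried out exactly as in Corollary~\ref{coro2-11}(2)-(3). In case (3), all five arcs at the unique cusp $A_k$ already account for the three other endpoints $A_l$ ($l\neq k$) plus the two unbounded arcs from Lemma~\ref{lemma2-8}(3); no other cusp can occur. In case (2-1) the $4$-arc picture follows because exactly $3$ arcs meet at the single cusp and the remaining endpoints give one additional arc. In case (2-2), both $A_k$ and $A_l$ are order-$3$ cusps; the boundedness of every component of $\mathbb{C}\setminus\sigma_3$ (Lemma~\ref{lemma2-7}(2)) forces the five arcs to be glued in the stated pattern (one arc joining $A_k$ and $A_l$, two loops based at each of them). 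The main obstacle in this proof is purely computational: extracting the $(A-A_k)^2$ coefficient of $\triangle_3'(A)$ from \eqref{triangle3} and simplifying it to $\wp''(p-\frac{\omega_k}{2})^2$, the rest being a direct transcription of the argument used for $\sigma_1$ and $\sigma_2$.
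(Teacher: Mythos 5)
Your proposal is correct and follows essentially the same route as the paper: the paper's own proof of this corollary consists precisely of transcribing the argument of Corollary~\ref{coro2-11} with the cusp criterion of Lemma~\ref{lemma2-10}(3), deriving the same expansion $2\triangle_3'(A)=-\frac{2\varepsilon_{3,k}(2\tau-1)^2\wp''(p-\frac{\omega_k}{2})^2}{3\wp'(p-\frac{\omega_k}{2})^3}(A-A_k)^2+O((A-A_k)^3)$, and observing that $\wp''(p-\frac{\omega_k}{2})=0$ if and only if $12(2\eta_2-\eta_1)^2-(2\tau-1)^2g_2=0$. The only slip is the phrase ``boundedness of every component of $\mathbb{C}\setminus\sigma_3$'' in case (2-2), which should read \emph{unboundedness} (as you correctly state earlier and as Lemma~\ref{lemma2-7}(2) asserts).
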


\begin{proof} By $(2\tau-1)\wp(p-\frac{\omega_k}{2})+2\eta_2-\eta_1=0$, a similar argument as Corollary \ref{coro2-11} leads to
$$2\triangle_3'(A)=-\frac{2\varepsilon_{3,k}(2\tau-1)^2\wp''(p-\frac{\omega_k}{2})^2}{3\wp'(p-\frac{\omega_k}{2})^3}(A-A_k)^2+O((A-A_k)^3),$$
so $\wp''(p-\frac{\omega_k}{2})=0$ if and only if $12(2\eta_2-\eta_1)^2-(2\tau-1)^2g_2=0$.
The rest proof is similar to that of Corollary \ref{coro2-11} and is omitted.
\end{proof}

Remark that it was proved in \cite[Theorem 1.2]{CL-JDG19} that there are countably many $\tau$ such that $12\eta_1(\tau)^2-g_2(\tau)=0$. Furthermore, since $$12\eta_2(\tau)^2-\tau^2g_2(\tau)=\frac1{\tau^2}\Big(12\eta_1\Big(\frac{-1}{\tau}\Big)^2-g_2\Big(\frac{-1}{\tau}\Big)\Big)$$ by using the modular property of $\eta_1, \eta_2, g_2$ (see e.g. \cite[page 205]{CL-JDG19}), there are also countably many $\tau$ such that $12\eta_2(\tau)^2-\tau^2g_2(\tau)=0$. Similarly, \begin{align*}&12(2\eta_2(\tau)-\eta_1(\tau))^2-(2\tau-1)^2g_2(\tau)\\=&\frac{1}{(2\tau-1)^2}\Big(12\eta_1\Big(\frac{\tau-1}{2\tau-1}\Big)^2-g_2\Big(\frac{\tau-1}{2\tau-1}\Big)\Big),\end{align*}so there are also countably many $\tau$ such that $12(2\eta_2(\tau)-\eta_1(\tau))^2-(2\tau-1)^2g_2(\tau)=0$. Thus Corollary \ref{coro2-11}-(3) and Corollary \ref{coro2-11-3}-(3) really occurs for some $(\tau, p)$.

\section{Rectangular torus: Conditional Stability sets}
\label{section4}

In this section, we consider $\tau=ib$ with $b>0$, i.e. $E_{\tau}$ is a rectangular torus, and give a more accurate characterization of the conditional stability sets $\sigma_1$ and $\sigma_2$.

Note from $\tau=ib$ that $\bar\tau=-\tau$. Then it follows from the definition of $\wp(z)=\wp(z;\tau)$ and $\zeta(z)=\zeta(z;\tau)$ that
\begin{equation}\label{barwp}\overline{\wp(\bar{z})}=\wp(z),\quad\overline{\zeta(\bar{z})}=\zeta(z).\end{equation}
In particular, $\wp(z)\in\mathbb{R}$ if and only if $\wp(z)=\wp(\bar z)$, if and only if $z=\pm \bar z$ in $E_{\tau}$, which is equivalent to 
\begin{equation}\label{sdsd}\pm z\in \Big(0,\frac{1}{2} \Big]\cup  \Big[\frac{1}{2},\frac{1+\tau}{2} \Big]\cup  \Big[\frac{\tau}{2},\frac{1+\tau}{2} \Big]\cup  \Big(0,\frac{\tau}{2} \Big],\quad\operatorname{mod}\;\mathbb{Z}+\mathbb{Z}\tau.\end{equation}
Here
$[z_{1},z_{2}]:=\{(1-t)z_{1}+tz_{2} :\;0\leq t\leq1\}$, and $[z_1,z_2)$, $(z_1, z_2]$, $(z_1,z_2)$ are defined similarly. Then it is easy to obtain the following result.

\begin{lemma} (see e.g. \cite[Lemma 3.2]{CKL-CAG})
\label{Lemma3-1} Let $\omega_{2}=\tau=ib$ with $b>0$. Then $\wp$ is one to
one from $(0,\frac{\tau}{2}]\cup \lbrack
\frac{\tau}{2},\frac{1+\tau}{2}]\cup \lbrack \frac{1+\tau}{2},\frac{1}{2}%
]\cup \lbrack \frac{1}{2},0)$ onto $(-\infty,+\infty)$, with
\[
\lim_{\,[\frac{1}{2},0)\ni z\rightarrow0}\wp(z)=+\infty,\quad \lim
_{(0,\frac{\tau}{2}]\ni z\rightarrow0}\wp(z)=-\infty.
\] 
\end{lemma}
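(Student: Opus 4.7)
The plan is to reduce the lemma to four short steps: real-valuedness of $\wp$ on each segment, strict monotonicity on the interior of each segment, identification of the endpoint values, and assembly of the four images into the advertised bijection onto $\mathbb{R}$.

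The first step is essentially already in hand: since $\bar\tau=-\tau$, identity \eqref{barwp} combined with the characterization \eqref{sdsd} shows that $\wp$ is real-valued (or divergent, at $z=0$) on exactly the stated union of four segments. The endpoint values are also immediate: $\wp(\omega_k/2)=e_k$ by definition, and the Laurent expansion $\wp(z)=z^{-2}+O(z^{2})$ gives $\wp(x)\to +\infty$ as $x\to 0^{+}$ and $\wp(it)\to -\infty$ as $t\to 0^{+}$, yielding the two explicit limits in the lemma.

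The core step is strict monotonicity. I would parametrize each segment by a real parameter $t$ and show that $\frac{d}{dt}\wp(z(t))$ is real-valued and nonvanishing on the interior. Differentiating \eqref{barwp} yields $\overline{\wp'(\bar z)}=\wp'(z)$, so $\wp'$ is real on the two horizontal segments $z=t$ and $z=t+\tau/2$ and purely imaginary on the two vertical segments $z=it$ and $z=\tfrac{1}{2}+it$; in either case the chain-rule factor of $1$ or $i$ produces a real $t$-derivative. Since $\wp'$ is an odd elliptic function of order three whose zeros in $E_\tau$ are exactly the three half-periods $\omega_k/2$ (with $k=1,2,3$), the derivative is nonzero in the interior of each segment, yielding strict monotonicity.

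Finally, each segment then maps continuously and strictly monotonically onto a closed interval in $\mathbb{R}\cup\{\infty\}$ with endpoints drawn from $\{-\infty,e_1,e_2,e_3,+\infty\}$, and consecutive segments share an endpoint at some $e_k$. Requiring the union of the four images to be connected (which reflects the fact that $\wp$ sends the boundary loop of the fundamental domain $[0,\tfrac{1}{2}]\times[0,\tfrac{b}{2}]$ of the involution $z\mapsto -z$ onto $\mathbb{R}\cup\{\infty\}$) pins down the ordering $e_2<e_3<e_1$ and the direction of monotonicity on each segment, and the four intervals $(-\infty,e_2]$, $[e_2,e_3]$, $[e_3,e_1]$, $[e_1,+\infty)$ then fit together without overlap to cover $\mathbb{R}$. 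The main subtlety is establishing the ordering $e_2<e_3<e_1$; this is a standard fact for rectangular tori, and can alternatively be extracted from a short sign analysis of $\wp''(\omega_k/2)=6e_k^{2}-g_2/2$ together with the already-known direction of monotonicity on the two segments adjacent to $z=0$.
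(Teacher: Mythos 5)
The paper does not actually prove this lemma: it is quoted verbatim from \cite[Lemma 3.2]{CKL-CAG}, so there is no in-paper argument to compare against. Your outline is the standard proof and is essentially correct: real-valuedness on the four segments via \eqref{barwp} and \eqref{sdsd}, the limits at $0$ from $\wp(z)=z^{-2}+O(z^2)$, and strict monotonicity on each open segment because $\wp'$ is real (resp.\ purely imaginary) on the horizontal (resp.\ vertical) segments and its only zeros in $E_\tau$ are the three simple zeros at the half-periods.

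The one place where the write-up is loose is the assembly step. ``Requiring the union of the four images to be connected'' does not pin down the ordering: if one had $e_1<e_3<e_2$, the four images $(-\infty,e_2]$, $[e_3,e_2]$, $[e_1,e_3]$, $[e_1,+\infty)$ would still be connected and would still cover $\mathbb{R}$, only failing injectivity. What actually closes the argument is one of the two things you mention in passing. Either (i) injectivity of $\wp$ on the boundary loop: by \eqref{sdsd} the full preimage of $\mathbb{R}\cup\{\infty\}$ in $E_\tau$ is $S\cup(-S)$ with $S$ the stated union of segments together with $0$, the interiors of the segments are disjoint from their negatives modulo $\Lambda_\tau$, and $\wp$ is $2$-to-$1$ with fibers $\{z,-z\}$; hence $S$ meets each fiber over $\mathbb{R}\cup\{\infty\}$ at most once and $\wp|_S$ is injective, which forces $e_2<e_3<e_1$ given the two unbounded images. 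Or (ii) the $\wp''$ route, which does work but needs its last step spelled out: the known monotonicity on $(0,\tfrac12]$ and $(0,\tfrac\tau2]$ plus the symmetries $\wp(\tfrac{\omega_k}{2}+w)=\wp(\tfrac{\omega_k}{2}-w)$ give $\wp''(\tfrac12)>0$ and $\wp''(\tfrac\tau2)>0$ (strictly, since the half-periods are simple zeros of $\wp'$); the Taylor expansion $\wp(\tfrac{\omega_k}{2}+w)=e_k+\tfrac12\wp''(\tfrac{\omega_k}{2})w^2+\cdots$ then shows $\wp$ increases away from $e_2$ along $[\tfrac\tau2,\tfrac{1+\tau}2]$ and drops below $e_1$ along $[\tfrac12,\tfrac{1+\tau}2]$, giving $e_2<e_3$ and $e_3<e_1$ directly rather than merely placing $e_3$ between the other two. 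With either of these made explicit, the proof is complete.
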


\begin{remark}\label{rmk3-2} Write $z=x_1+ix_2$ for $x_1, x_2\in\mathbb R$. 
Recall $e_k=\wp(\frac{\omega_k}{2})$ for $k=1,2,3$ and
\begin{align*}
 &\wp'(z)^2 = 4\wp(z)^3 - g_2\wp(z) - g_3=4\prod_{k=1}^3(\wp(z)-e_k). \end{align*}
From Lemma \ref{Lemma3-1} and $e_1+e_2+e_3=0$, we have $e_{j}, g_2, g_3
\in \mathbb{R}$, $e_{2}<e_{3}<e_{1}$, and $e_{2}<0<e_{1}$. Moreover, $\wp^{\prime
}(z)=\frac{\partial \wp(z)}{\partial x_{1}}\in \mathbb{R}$ if $z\in(0,\frac
{1}{2}]\cup[\frac{\tau}{2},\frac{1+\tau}{2}]$, and
$\wp^{\prime}(z)=-i\frac{\partial \wp(z)}{\partial x_{2}}\in i\mathbb{R}$ if
$z\in(0,\frac{\tau}{2}]\cup[\frac{1}{2},\frac{1+\tau}{2}]$. Lemma~\ref{Lemma3-1} and \eqref{barwp} also imply that
\begin{itemize}
\item For $z\in(0,1)$, 
$\zeta(z)\in \mathbb{R}$
 and so $\eta_{1}\in \mathbb{R}$, $\eta_2=\tau\eta_1-2\pi i\in i\mathbb{R}$.
\item For $z\in(0,{\tau})$, 
$\zeta(z)\in i\mathbb{R}$.
\end{itemize}
Consequently,
\begin{itemize}
\item For $z\in(0,\frac{1}{2})$, 
$\zeta(2z)\in \mathbb{R}$.
\item For $z\in(0,\frac{\tau}{2})$, 
$\zeta(2z)\in i\mathbb{R}$.
\item For $z\in(\frac{1}{2},\frac{1+\tau}{2})$, $2z=1+t\tau$ for some $t\in (0,1)$, so $\zeta(2z)=\eta_1+\zeta(t\tau)\in\eta_1+i\mathbb R$. 
\item For $z\in(\frac{\tau}{2}, \frac{1+\tau}{2})$, $2z=\tau+t$ for some $t\in (0,1)$, so  $\zeta(2z)=\eta_2+\zeta(t)\in \eta_2+\mathbb R$.
\end{itemize}
\end{remark}

\begin{lemma}\label{lemma3-3}
Let $\tau=ib$ with $b>0$, $p\notin E_{\tau}[2]$ and $\wp(p)\in\mathbb{R}$. Then the following statements hold.
\begin{itemize}
\item[(1)] For $p\in (0, \frac12)$, we have $\wp'(p)<0$ and so $A_1<A_3<A_2<A_0$.
\item[(2)] For $p\in (\frac12, \frac{1+\tau}{2})$, we have $i\wp'(p)=\frac{\partial \wp(p)}{\partial x_2}<0$ and so $A_k=ia_k$ with $a_1<a_0<a_2<a_3$.
\item[(3)] For $p\in (\frac{\tau}{2}, \frac{1+\tau}2)$, we have $\wp'(p)>0$ and so $A_3<A_1<A_0<A_2$.
\item[(4)] For $p\in (0, \frac{\tau}{2})$, we have $i\wp'(p)=\frac{\partial \wp(p)}{\partial x_2}>0$ and so $A_k=ia_k$ with $a_0<a_1<a_3<a_2$.
\end{itemize}
\end{lemma}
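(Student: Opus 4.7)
The plan is a direct case-by-case calculation. The signs of $\wp'(p)$ follow from the monotonicity of $\wp$ on each of the four segments described by Lemma \ref{Lemma3-1} together with the sign conventions in Remark \ref{rmk3-2}. Once the sign of $\wp'(p)$ and the real interval containing $\wp(p)$ are known, the ordering of $A_1,A_2,A_3$ relative to $A_0$ is obtained from the explicit formula
\[
A_k-A_0=\frac{\wp'(p)}{2(\wp(p)-e_k)},\qquad k=1,2,3,
\]
provided by \eqref{Ak}, using $e_2<e_3<e_1$ to rank the three denominators.

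For the signs of $\wp'(p)$: in cases (1) and (3), $p$ lies on a horizontal segment, so $\wp'(p)=\partial_{x_1}\wp(p)\in\mathbb R$; in cases (2) and (4), $p$ lies on a vertical segment, so $i\wp'(p)=\partial_{x_2}\wp(p)\in\mathbb R$. Lemma \ref{Lemma3-1} tells us the limits and monotonicity of $\wp$ on each arc: on $(0,\tfrac12)$, $\wp$ decreases from $+\infty$ to $e_1$, giving $\wp'(p)<0$; on $(\tfrac12,\tfrac{1+\tau}{2})$, $\wp$ decreases from $e_1$ to $e_3$ as $x_2$ grows, giving $i\wp'(p)<0$; on $(\tfrac{\tau}{2},\tfrac{1+\tau}{2})$, $\wp$ increases from $e_2$ to $e_3$ as $x_1$ grows, giving $\wp'(p)>0$; on $(0,\tfrac{\tau}{2})$, $\wp$ increases from $-\infty$ to $e_2$ as $x_2$ grows, giving $i\wp'(p)>0$.

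For the orderings: in case (1), $\wp(p)\in(e_1,+\infty)$ makes $\wp(p)-e_2>\wp(p)-e_3>\wp(p)-e_1>0$, and multiplying by $\wp'(p)/2<0$ reverses the inequalities to produce $A_1-A_0<A_3-A_0<A_2-A_0<0$. In case (3), $\wp(p)\in(e_2,e_3)$ gives $\wp(p)-e_1<\wp(p)-e_3<0<\wp(p)-e_2$, and multiplying by $\wp'(p)/2>0$ preserves them to yield $A_3-A_0<A_1-A_0<0<A_2-A_0$. For cases (2) and (4), one first notes that $\wp''(p)=6\wp(p)^2-\tfrac{g_2}{2}\in\mathbb R$, so $A_0=-\wp''(p)/(4\wp'(p))\in i\mathbb R$, and since $\wp'(p)\in i\mathbb R$ while $\wp(p)-e_k\in\mathbb R$, each $A_k-A_0\in i\mathbb R$ as well; writing $A_k=ia_k$ and factoring out $i$, the question reduces to ranking the real numbers $c/(2(\wp(p)-e_k))$ (with $c$ the appropriate real sign). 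In case (2), $\wp(p)\in(e_3,e_1)$ gives $\wp(p)-e_1<0<\wp(p)-e_3<\wp(p)-e_2$, and the corresponding real ordering becomes $a_1<a_0<a_2<a_3$; in case (4), $\wp(p)<e_2<e_3<e_1$ yields $\wp(p)-e_1<\wp(p)-e_3<\wp(p)-e_2<0$, producing $a_0<a_1<a_3<a_2$.

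There is no real obstacle here; the only thing to watch carefully is the bookkeeping between the complex derivative $\wp'(p)$ and the real partial $\partial_{x_2}\wp(p)=i\wp'(p)$ on vertical segments, which affects whether $A_k-A_0$ lies in $\mathbb R$ or in $i\mathbb R$. Once this is kept straight, each of the four cases reduces to a one-line inequality.
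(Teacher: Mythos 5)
Your proposal is correct and follows essentially the same route as the paper: the signs of $\wp'(p)$ (resp. $i\wp'(p)=\partial_{x_2}\wp(p)$) come from the monotonicity of $\wp$ along the four segments in Lemma \ref{Lemma3-1}, and the orderings then drop out of $A_k-A_0=\frac{\wp'(p)}{2(\wp(p)-e_k)}$ together with $e_2<e_3<e_1$ and the interval containing $\wp(p)$. The only cosmetic caution is that the comparisons involve reciprocals of the quantities $\wp(p)-e_k$ (so ``multiplying by $\wp'(p)/2$'' should really be ``dividing by $2(\wp(p)-e_k)$ and then multiplying''), but all four of your final orderings are the correct ones.
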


\begin{proof}
By Lemma \ref{Lemma3-1} and by replacing $p$ with $-p$ if necessary, the assumption $\wp(p)\in\mathbb{R}$ and $p\notin E_{\tau}[2]$ imply that $$p\in\Big(0,\frac{1}{2}\Big)\cup \Big(\frac{1}{2},\frac{1+\tau}{2}\Big)\cup \Big(\frac{\tau}{2},\frac{1+\tau}{2}\Big)\cup\Big(0,\frac{\tau}{2}\Big).$$
Furthermore, $\wp''(p)=6\wp(p)^2-g_2/2\in\mathbb{R}$.

(1). Since $p\in (0, \frac12)$, we have $e_2<e_3<e_1<\wp(p)$, which together with $\wp'(p)<0$ implies
$$\frac{\wp'(p)}{2(\wp(p)-e_1)}<\frac{\wp'(p)}{2(\wp(p)-e_3)}<\frac{\wp'(p)}{2(\wp(p)-e_2)}<0,$$
so we see from \eqref{Ak} that $A_1<A_3<A_2<A_0$.

(2). Since $p\in (\frac12, \frac{1+\tau}{2})$ implies $e_2<e_3<\wp(p)<e_1$, it follows from $\wp'(p)=-i\frac{\partial \wp(p)}{\partial x_2}$ with $-\frac{\partial \wp(p)}{\partial x_2}>0$ that
$$\frac{-\frac{\partial \wp(p)}{\partial x_2}}{2(\wp(p)-e_1)}<0<\frac{-\frac{\partial \wp(p)}{\partial x_2}}{2(\wp(p)-e_2)}<\frac{-\frac{\partial \wp(p)}{\partial x_2}}{2(\wp(p)-e_3)},$$
so $A_k=ia_k$ with $a_1<a_0<a_2<a_3$.

(3) Since $p\in (\frac{\tau}{2}, \frac{1+\tau}2)$ implies $e_2<\wp(p)<e_3<e_1$, a similar argument gives $A_3<A_1<A_0<A_2$.

(4) Since   $p\in (0, \frac{\tau}{2})$ implies $\wp(p)<e_2<e_3<e_1$, a similar argument gives $A_k=ia_k$ with $a_0<a_1<a_3<a_2$.
\end{proof}

\begin{lemma}\label{lemma3-4}
Let $\tau=ib$ with $b>0$.
\begin{itemize}
\item[(1)] If $p\in (0, \frac12)\cup (\frac{\tau}{2}, \frac{1+\tau}2)$, then $\sigma_j$ is symmetric with respect to the real axis for each $j=1,2$.
\item[(2)] If $p\in  (0, \frac{\tau}{2})\cup(\frac12, \frac{1+\tau}{2})$, then $\sigma_j$ is symmetric with respect to the imaginary axis for each $j=1,2$.
\end{itemize}

\end{lemma}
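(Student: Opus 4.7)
The plan is to prove, for $j=1,2$, the stronger conjugation identity
$\triangle_j(\bar A)=\overline{\triangle_j(A)}$ in case (1), and
$\triangle_j(-\bar A)=\overline{\triangle_j(A)}$ in case (2).
Since $[-1,1]\subset\mathbb R$ is invariant under complex conjugation, these identities immediately force $\sigma_j=\triangle_j^{-1}([-1,1])$ to be invariant under $A\mapsto\bar A$ (resp.\ $A\mapsto-\bar A$), which is precisely the reflection in the real (resp.\ imaginary) axis. So the whole lemma reduces to these two identities.

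First I would record the conjugation rules forced by $\bar\tau=-\tau$: one has $\overline{\wp(\bar z)}=\wp(z)$, $\overline{\zeta(\bar z)}=\zeta(z)$, $\bar\eta_1=\eta_1$, $\bar\eta_2=-\eta_2$. Using Lemma~\ref{Lemma3-1} and the descriptions in Remark~\ref{rmk3-2}, a direct check with appropriate lifts to $\mathbb C$ gives $\bar p=p+\omega$ with $\omega\in\{0,-\tau\}$ in case (1), and $\bar p=-p+\omega$ with $\omega\in\{0,1\}$ in case (2). In particular $\bar p\equiv p$ in case (1) and $\bar p\equiv -p$ in case (2) in $E_\tau$, which is the structural reason for the two different symmetries.

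Second, I would track how the parameter $a$ attached to $A$ via \eqref{Aap} transforms. Taking complex conjugates in \eqref{Aap} and using the oddness of $\zeta$ together with $\zeta(w+\omega)=\zeta(w)+\eta_\omega$, I expect to find that the parameter associated to $\bar A$ (resp.\ $-\bar A$) is $\tilde a=\bar a-\omega$ in case (1) (resp.\ $\tilde a=-\bar a+\omega$ in case (2)), up to the $\pm$-ambiguity and $\Lambda_\tau$. A parallel computation yields the key identity $\overline{c(a)}=c(\bar a)$. Plugging $\tilde a$ into the expressions
\begin{equation*}
-2\pi i s=c(a)-\eta_1 a,\qquad 2\pi i r=c(a)\tau-\eta_2 a
\end{equation*}
from \eqref{61-37-1}, and simplifying via $\overline{c(a)}=c(\bar a)$, $\bar\eta_1=\eta_1$, $\bar\eta_2=-\eta_2$, $\bar\tau=-\tau$, the new quantities differ from the complex conjugates of the old ones only by the lattice corrections $\eta_\omega-\eta_1\omega$ and $\eta_\omega\tau-\eta_2\omega$, which by the Legendre relation $\tau\eta_1-\eta_2=2\pi i$ are themselves integer multiples of $2\pi i$. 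After exponentiation they disappear, and substitution into $2\triangle_1=e^{-2\pi i s}+e^{2\pi i s}$ and $2\triangle_2=e^{2\pi i r}+e^{-2\pi i r}$ yields the two desired conjugation identities.

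The main obstacle is the meticulous bookkeeping of the $\eta_\omega$-corrections produced by the non-periodicity of $\zeta$, which enter at multiple places (in relating $\bar A$ to $\tilde a$, in computing $\overline{c(a)}$, and in comparing $c(\tilde a)-\eta_1\tilde a$ with $\overline{c(a)-\eta_1 a}$). The structural heart of the argument is that every such correction is forced into $2\pi i\mathbb Z$ by the Legendre relation, so it becomes invisible after exponentiation; it is precisely this cancellation that allows the two different symmetries of $\sigma_j$ to emerge from the two different positions of $p$.
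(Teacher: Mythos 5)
Your proposal is correct, and it arrives at exactly the same two key identities the paper uses, namely $\triangle_j(\overline{A})=\overline{\triangle_j(A)}$ when $\bar p\equiv p$ and $\triangle_j(-\overline{A})=\overline{\triangle_j(A)}$ when $\bar p\equiv -p$ in $E_\tau$; but the route is genuinely different. The paper's proof conjugates the solution itself: it sets $\hat y_a(z):=\overline{y_a(\bar z)}$, notes from \eqref{barwp} that $\overline{I(\bar z;p,A,\tau)}=I(z;\bar p,\overline A,\tau)$, which equals $I(z;p,\overline A,\tau)$ or $I(z;p,-\overline A,\tau)$ according to whether $\bar p\equiv p$ or $\bar p\equiv -p$ (the sign flip coming from the oddness of $\zeta$ in the $A$-linear term of the potential), and then reads the multipliers $\overline{e^{-2\pi is}}$, $\overline{e^{-2\pi ir}}$ of $\hat y_a$ directly off \eqref{eqfc-1}--\eqref{eqfc-2}. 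This bypasses entirely the quasi-period bookkeeping that forms the bulk of your argument: no tracking of $a\mapsto\tilde a$, no computation of $\overline{c(a)}$, and no appeal to the Legendre relation. Your computation with \eqref{Aap}, \eqref{61-37-1} and the corrections $\eta_\omega-\eta_1\omega$, $\eta_\omega\tau-\eta_2\omega\in 2\pi i\mathbb Z$ does close correctly (indeed, with the natural representative the corrections cancel identically, since the $\eta_\omega$ contributions from $\zeta(p+\bar a)$, $\zeta(p-\bar a)$ and $\zeta(2p)$ already cancel in \eqref{Aap} and in $c(a)$), and it has the merit of making explicit \emph{why} the two positions of $p$ produce two different symmetries at the level of the parametrization $(r,s)$; the paper's argument buys brevity and robustness (it needs no case analysis on representatives and works uniformly for the exceptional values $A=A_k$), at the cost of hiding the role of the Legendre relation inside the functional equations for $y_{\pm a}$.
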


\begin{proof} Note from $\tau=ib$ that $\bar{\tau}=-\tau$.
Set $\hat{y}_a(z):=\overline{y_{a}(\bar z)}$. Then \eqref{eqfc-1}-\eqref{eqfc-2} imply that
\begin{equation}\label{eqfc-5}\hat{y}_{a}(z+1)=\ell_1^*\hat{y}_a(z)=\overline{e^{-2\pi is}}\hat{y}_{a}(z),\end{equation}
\begin{equation}\label{eqfc-6} \hat{y}_{a}(z+\tau)=\ell_2^*\hat{y}_{a}(z)=\overline{e^{-2\pi ir}}\hat{y}_{ a}(z).\end{equation}
Furthermore, $\hat{y}_a''(z)=\overline{I(\bar z; p, A, \tau)}\hat{y}_a(z)$, where by using \eqref{barwp}, 
\begin{align}\label{eqfc-7}
\overline{I(\bar z; p, A, \tau)}=
&\frac{3}{4}
(\wp(z+\bar{p})+\wp(z-\bar{p})-\wp (2\bar p))\nonumber\\
&+\overline{A}(\zeta(z+\bar{p})-\zeta(z-\bar{p})-\zeta(2\bar p))+\overline{A}^{2}\nonumber\\
=&I(z; \bar p, \overline A, \tau).
\end{align}

(1) For $p\in (0, \frac12)\cup (\frac{\tau}{2}, \frac{1+\tau}2)$, we have either $\bar p=p$ or $\bar p=p-\tau\equiv p$ mod $\Lambda_\tau$, so $I(z; \bar p, \overline A, \tau)=I(z; p, \overline A, \tau)$. Then it follows from \eqref{eqfc-5}-\eqref{eqfc-6} that \begin{equation}\label{eqfc-8}\triangle_1(\overline{A})=\overline{\cos 2\pi s}=\overline{\triangle_1(A)},\quad \triangle_2(\overline{A})=\overline{\cos 2\pi r}=\overline{\triangle_2(A)}.\end{equation}
Thus, $\overline{A}\in \sigma_j$ if and only if $A\in\sigma_j$, namely $\sigma_j$ is symmetric with respect to the real axis.

(2) For $p\in  (0, \frac{\tau}{2})\cup (\frac12, \frac{1+\tau}{2})$, we have either $\bar{p}=-p$ or $\bar p=1-p\equiv -p$ mod $\Lambda_{\tau}$, so $I(z; \bar p, \overline A, \tau)=I(z; p, -\overline A, \tau)$. Consequently,
\begin{equation}\label{eqfc-9}\triangle_1(-\overline{A})=\overline{\cos 2\pi s}=\overline{\triangle_1(A)},\quad \triangle_2(-\overline{A})=\overline{\cos 2\pi r}=\overline{\triangle_2(A)}.\end{equation}
Thus, $-\overline{A}\in \sigma_j$ if and only if $A\in\sigma_j$, namely $\sigma_j$ is symmetric with respect to the imaginary axis.
\end{proof}


\begin{lemma}\label{Lemma3-5} Let $\tau=ib$ with $b>0$ and $p\in (0,\frac12)$. Then
\begin{itemize}
\item[(1)] $\sigma_1=\sigma_{1,\infty}\cup[A_1, A_3]\cup [A_2, A_0]$, where $\sigma_{1,\infty}$ is an unbounded simple curve tending to $-\frac{2p\eta_1-\zeta(2p)}{2}\pm i\infty$, which is symmetric with respect to the real axis with $\sigma_{1,\infty}\cap\mathbb{R}$ containing a single point (this single point might be contained in $[A_1, A_3]\cup [A_2, A_0]$). 

\item[(2)] $\sigma_2=(-\infty, A_1]\cup [A_3, A_2]\cup [A_0, +\infty)$.
\end{itemize}

\end{lemma}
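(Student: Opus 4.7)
The plan is to compute $\sigma_1$ and $\sigma_2$ explicitly by tracing the image of four real one-parameter curves of $a\in E_{\tau}$ under the bijection $A=\tfrac{1}{2}[\zeta(p+a)+\zeta(p-a)-\zeta(2p)]$ from \eqref{Aap}. These curves are the four edges of the half-fundamental domain $[0,\tfrac12]\times[0,\tfrac{b}{2}]$, whose corners are exactly the $\tfrac{\omega_k}{2}$:
\begin{itemize}
\item[(i)] $a\in(0,\tfrac12)$ real (a horizontal edge from $A_0$ through the pole at $a=p$ to $A_1$);
\item[(ii)] $a=it$, $t\in(0,\tfrac{b}{2})$ (a vertical edge from $A_0$ to $A_2$);
\item[(iii)] $a=u+\tfrac{\tau}{2}$, $u\in(0,\tfrac12)$ (a translated horizontal edge from $A_2$ to $A_3$);
\item[(iv)] $a=\tfrac12+it$, $t\in(0,\tfrac{b}{2})$ (a translated vertical edge from $A_1$ to $A_3$).
\end{itemize}
Using the conjugation identities $\overline{\zeta(\bar z;\tau)}=\zeta(z;\tau)$ and $\overline{\wp(\bar z;\tau)}=\wp(z;\tau)$, available since $\bar\tau=-\tau$, together with $\eta_1\in\mathbb R$, $\eta_2\in i\mathbb R$ from Remark \ref{rmk3-2}, I will verify on each curve that $A(a)\in\mathbb R$. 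I then compute the pair $(r,s)=(r(A),s(A))$ via $2\pi i r=c(a)\tau-a\eta_2$ and $2\pi i s=a\eta_1-c(a)$ with $c(a)=\tfrac12[\zeta(a+p)+\zeta(a-p)]$, and find the following dichotomy: curves (i) and (iii) yield $r\in\mathbb R$ and $s\in i\mathbb R$ (respectively $s\in\tfrac12+i\mathbb R$), placing $A$ in $\sigma_2$; curves (ii) and (iv) yield $s\in\mathbb R$ and $r\in i\mathbb R$ (respectively $r\in\tfrac12+i\mathbb R$), placing $A$ in $\sigma_1$. On the interior of each curve the \emph{other} trace satisfies $\cos(2\pi it)=\cosh(2\pi t)>1$ or $\cos(\pi+2\pi it)=-\cosh(2\pi t)<-1$, so $\sigma_1\cap\sigma_2$ along these curves reduces to the endpoints $\{A_0,A_1,A_2,A_3\}$.

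Next I establish the ranges. Differentiating gives $\tfrac{dA}{da}=\tfrac12(\wp(p-a)-\wp(p+a))$, and a short lattice-point check, using $p\in(0,\tfrac12)$, shows $\wp(p+a)=\wp(p-a)$ has no solution in the open interior of any of (i)--(iv). Hence $A$ is strictly monotonic in each parameter. Combining this with the boundary values $A(\tfrac{\omega_k}{2})=A_k$, the orderings of Lemma~\ref{lemma3-3}(1), and the pole of $\zeta(p-a)$ at $a=p$ sending $A\to\pm\infty$ on (i), the ranges are
\[
(\text{i})\leadsto[A_0,+\infty)\cup(-\infty,A_1],\quad(\text{iii})\leadsto[A_3,A_2],
\]
\[
(\text{ii})\leadsto[A_2,A_0],\quad(\text{iv})\leadsto[A_1,A_3].
\]
This gives the containments $\sigma_2\supset(-\infty,A_1]\cup[A_3,A_2]\cup[A_0,+\infty)$ and $\sigma_1\supset[A_2,A_0]\cup[A_1,A_3]$.

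To prove equality I invoke Corollary~\ref{coro2-11}. The key auxiliary step is to verify that, for $p\in(0,\tfrac12)$ and $\tau=ib$, none of the cusp conditions $\wp(p-\tfrac{\omega_k}{2})+\eta_1=0$ (for $\sigma_1$) or $\wp(p-\tfrac{\omega_k}{2})=\tfrac{2\pi}{b}-\eta_1$ (for $\sigma_2$) is triggered for any $k\in\{0,1,2,3\}$. This uses the inclusions $\wp(p-\tfrac{\omega_k}{2})\in(e_1,+\infty)$ for $k=0,1$ and $\in(e_2,e_3)$ for $k=2,3$ from Remark~\ref{rmk3-2}, combined with appropriate inequalities on $\eta_1$ and $\tfrac{2\pi}{b}-\eta_1$ relative to $\{e_k\}$. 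Granting this, Corollary~\ref{coro2-11} forces each of $\sigma_1,\sigma_2$ to consist of exactly three analytic arcs whose finite endpoint set is $\{A_0,A_1,A_2,A_3\}$. For $\sigma_2$ the three real arcs above exhibit exactly this endpoint profile, so they exhaust $\sigma_2$, proving~(2). For $\sigma_1$ the two bounded real arcs $[A_2,A_0]$ and $[A_1,A_3]$ already account for all four $A_k$ endpoints, so the remaining analytic arc $\sigma_{1,\infty}$ carries no finite $A_k$-endpoint; by Lemma~\ref{lemma2-8}(1) its two ends tend to $-\tfrac{2p\eta_1-\zeta(2p)}{2}\pm i\infty$, and Lemma~\ref{lemma3-4}(1) forces conjugation symmetry, so $\sigma_{1,\infty}$ is a single simple curve crossing $\mathbb R$ in exactly one point (with vertical tangent, forced by that symmetry). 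This proves~(1).

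The main obstacle is the uniform cusp exclusion in the third step: it requires sign/range estimates on $\eta_1(b)$ and $\tfrac{2\pi}{b}-\eta_1(b)$ relative to $\{e_k(b)\}$ valid across the full range $b>0$, and it is what underpins the ``exactly three arcs'' upper bound. A secondary technical point is verifying strict monotonicity of $A$ along curve~(iii), where $\tfrac{dA}{du}$ reduces to a difference of $\wp$-values on the horizontal line $\operatorname{Im}z=\tfrac{b}{2}$; again the argument reduces to a lattice identity check, but the sign tracking must be done carefully to exclude interior critical points.
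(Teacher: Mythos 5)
Your first three steps are sound and take a genuinely different, more computational route than the paper: you trace the four edges of the half-period rectangle under $a\mapsto A(a)=\tfrac12[\zeta(p+a)+\zeta(p-a)-\zeta(2p)]$, check $A\in\mathbb R$ on each edge, compute $(r,s)$ to sort the images into $\sigma_1$ or $\sigma_2$, and use $\tfrac{dA}{da}=\tfrac12(\wp(p-a)-\wp(p+a))$ together with the ordering of Lemma~\ref{lemma3-3}(1) to get the stated ranges. This correctly yields the containments $\sigma_2\supset(-\infty,A_1]\cup[A_3,A_2]\cup[A_0,+\infty)$ and $\sigma_1\supset[A_1,A_3]\cup[A_2,A_0]$. (One minor imprecision: on curves (i) and (iii) the ``other'' trace can equal $\pm1$ at an \emph{interior} point --- exactly when that point corresponds to a nontrivial critical point of $G_p$ --- so $\sigma_1\cap\sigma_2$ along these edges need not reduce to the corners; this does not affect the containments.)

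The genuine gap is in your final step. The uniform cusp exclusion you rely on is \emph{false} for $\sigma_1$: by Lemma~\ref{lemma2-10}(1) the cusp condition at $A_2$ (resp.\ $A_3$) reads $\wp(p+\tfrac{\tau}{2})=-\eta_1$ (resp.\ $\wp(\tfrac{1+\tau}{2}-p)=-\eta_1$), and since $-\eta_1\in(e_2,e_3)$ for every $b>0$ (this is the inequality $e_2<d_3<e_3$ of Theorem~\ref{main-thm-b-22}; for $b=1$ it is $-2.188\pi<-\pi<0$) while $\wp$ maps $(\tfrac{\tau}{2},\tfrac{1+\tau}{2})$ onto $(e_2,e_3)$, there is a $p^*\in(0,\tfrac12)$ for which $A_2$ is a cusp of $\sigma_1$, and another for which $A_3$ is. For such $p$ Corollary~\ref{coro2-11} gives \emph{four} arcs, not three, so your endpoint-counting argument for $\sigma_1$ breaks down there; you would have to rerun it in the case of Corollary~\ref{coro2-11}(2-1), where the lemma's conclusion still holds but with $\sigma_{1,\infty}$ passing through the cusp. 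Moreover, even for $\sigma_2$, where no cusp in fact occurs, your exclusion requires $\tfrac{2\pi}{b}-\eta_1\notin(e_1,+\infty)\cup(e_2,e_3)$ for all $b>0$; this is essentially the inequality $e_3<d_6<e_1$ of Theorem~\ref{main-thm-b-22}, which the paper proves \emph{later} using this very lemma, so taking it as input is circular unless you establish it independently. The paper's proof avoids both problems topologically: it first shows $\sigma_2\subset\mathbb R$ (from the reflection symmetry of Lemma~\ref{lemma3-4}, the unboundedness of the components of $\mathbb C\setminus\sigma_2$ from Lemma~\ref{lemma2-7}, and the asymptotics of Lemma~\ref{lemma2-8}), and then the fact that at most two semi-arcs of a subset of $\mathbb R$ can meet at any point forces $d_2(A_k)=1$, ruling out cusps a posteriori rather than a priori.
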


\begin{proof}
For $p\in (0,\frac12)$, we have $2p\eta_1-\zeta(2p)\in \mathbb{R}$ and
$$\frac{2p\eta_2-\tau\zeta(2p)}{2\tau}=p\Big(\eta_1-\frac{2\pi}{b}\Big)-\frac{\zeta(2p)}{2}\in\mathbb{R}.$$
Together with Lemmas \ref{lemma2-7}, \ref{lemma2-8}, \ref{lemma3-3}, \ref{lemma3-4} and Corollary \ref{coro2-11}, we obtain the following facts:

\begin{itemize}
\item[(a)] $A_1<A_3<A_2<A_0$ are all the endpoints of $\sigma_j$;

\item[(b)] $\sigma_j$ is symmetric with respect to the
real line $\mathbb{R}$;

\item[(c)] any connected component of
$\mathbb{C}\setminus \sigma_j$ is unbounded;

\item[(d)] for $R>0$ large, $\sigma_2\setminus B_R$ consists of exactly two unbounded arcs that tend to $+\infty$ and $-\infty$ separately, and $\sigma_1\setminus B_R$ consists of exactly two unbounded arcs that tend to $-\frac{2p\eta_1-\zeta(2p)}{2}+ i\infty$ and $-\frac{2p\eta_1-\zeta(2p)}{2}-i\infty$ separately.
\end{itemize}
Then $\sigma_2\subset \mathbb{R}$, and so there are at most two semi-arcs of $\sigma_2$ meeting at each $A_j$. This, together with the fact
$$d_j(A_k)=\text{ord}_{A_k}(\triangle_j(\cdot)^2-1)\quad\text{is odd},$$
yields that $d_2(A_k)=1$ for all $k$, namely there is exactly one semi-arc of $\sigma_2$ ending at $A_k$, which finally implies $\sigma_2=(-\infty, A_1]\cup [A_3, A_2]\cup [A_0, +\infty)$.
A similar argument implies the assertion (1).
\end{proof}

\begin{lemma}\label{Lemma3-6} Let $\tau=ib$ with $b>0$ and $p\in (\frac{\tau}{2}, \frac{1+\tau}2)$. Then
\begin{itemize}
\item[(1)] $\sigma_1=\sigma_{1,\infty}\cup[A_3, A_1]\cup [A_0, A_2]$, where $\sigma_{1,\infty}$ is an unbounded simple curve tending to $-\frac{(2p-\tau)\eta_1-\zeta(2p-\tau)}{2}\pm i\infty$, which is symmetric with respect to the real axis with $\sigma_{1,\infty}\cap\mathbb{R}$ containing a single point (this single point might be contained in $[A_3, A_1]\cup [A_0, A_2]$). 

\item[(2)] $\sigma_2=(-\infty, A_3]\cup [A_1, A_0]\cup [A_2, +\infty)$.
\end{itemize}

\end{lemma}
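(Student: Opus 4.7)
The plan is to mirror the proof of Lemma~\ref{Lemma3-5}, with $(0,\tfrac12)$ replaced by $(\tfrac{\tau}{2},\tfrac{1+\tau}{2})$. The first task is to collect the four structural facts that drove the earlier argument, adapted to the new location of $p$: by Lemma~\ref{lemma3-3}(3), the four endpoints are real and ordered as $A_3<A_1<A_0<A_2$; by Lemma~\ref{lemma3-4}(1), each $\sigma_j$ is symmetric across $\mathbb{R}$; and by Lemma~\ref{lemma2-7}(2), no component of $\mathbb{C}\setminus\sigma_j$ is bounded. The remaining input is the asymptotic behaviour of the unbounded arcs from Lemma~\ref{lemma2-8}, which I verify is consistent with the stated claim by a short reality check.

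Writing $2p=\tau+t_0$ with $t_0\in(0,1)$, Remark~\ref{rmk3-2} gives $\zeta(2p)=\eta_2+\zeta(t_0)$ with $\zeta(t_0)\in\mathbb{R}$, while the Legendre relation supplies $\tau\eta_1=\eta_2+2\pi i$. Combining these,
\[
2p\eta_1-\zeta(2p)=\bigl((2p-\tau)\eta_1-\zeta(2p-\tau)\bigr)+2\pi i,
\]
so by Lemma~\ref{lemma2-8}(1) the two unbounded arcs of $\sigma_1$ tend along the vertical line through the real number $-\tfrac{(2p-\tau)\eta_1-\zeta(2p-\tau)}{2}$, matching the stated limits $\pm i\infty$. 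A parallel computation, using $\eta_2/\tau=\eta_1-\tfrac{2\pi}{b}\in\mathbb{R}$, yields $\tfrac{2p\eta_2-\tau\zeta(2p)}{2\tau}\in\mathbb{R}$, so by Lemma~\ref{lemma2-8}(2) the two unbounded arcs of $\sigma_2$ go to $\pm\infty$ along $\mathbb{R}$.

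With these ingredients in place, I would close the argument by importing the combinatorial reasoning of Lemma~\ref{Lemma3-5} essentially verbatim. Symmetry across $\mathbb{R}$, together with real endpoints and real asymptotic directions, forces $\sigma_2\subset\mathbb{R}$: otherwise an arc leaving $\mathbb{R}$ would, together with its reflection, bound a compact component of the complement, contradicting Lemma~\ref{lemma2-7}(2). Consequently at most two semi-arcs of $\sigma_2$ can meet at each $A_k$, and since $\operatorname{ord}_{A_k}(\triangle_2(\cdot)^2-1)$ is odd by Remark~\ref{rmk2-6} we conclude $d_2(A_k)=1$; the only way to connect the four real endpoints by three arcs with two rays running to $\pm\infty$ and no bounded complementary component is $\sigma_2=(-\infty,A_3]\cup[A_1,A_0]\cup[A_2,+\infty)$. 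For $\sigma_1$, the symmetric pair of unbounded arcs must merge into a single simple curve $\sigma_{1,\infty}$ symmetric in $\mathbb{R}$ that meets $\mathbb{R}$ exactly once, and the remaining bounded real portion of $\sigma_1$, by the same parity-plus-symmetry argument applied to the four real endpoints, must consist of two segments joining them in pairs.

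The one genuine obstacle I foresee is confirming that these two bounded arcs pair the endpoints as $\{A_3,A_1\}$ and $\{A_0,A_2\}$, rather than some other pairing consistent with the combinatorics. The cleanest route I see is to read off the sign of $\triangle_1(A)^2-1$ on each real subinterval between successive $A_k$, using the local Taylor expansion of $\triangle_1$ at each $A_k$ computed inside the proof of Corollary~\ref{coro2-11} together with the sign at $\pm\infty$ inherited from Lemma~\ref{lemma2-8}(1); this local/global sign tracking should pin down exactly which real subintervals lie inside $\sigma_1$ and therefore force the correct pairing.
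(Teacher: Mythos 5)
Your proposal matches the paper's proof, which consists of exactly the two reality computations you perform (writing $p=t+\frac{\tau}{2}$ and showing $2p\eta_1-\zeta(2p)=(2p-\tau)\eta_1-\zeta(2p-\tau)+2\pi i$ with real part in $\mathbb{R}$, and $\frac{2p\eta_2-\tau\zeta(2p)}{2\tau}\in\mathbb{R}$) followed by the remark that everything else is identical to Lemma~\ref{Lemma3-5}. The ``obstacle'' in your last paragraph is not actually one: the two bounded arcs of $\sigma_1$ are disjoint real segments (a non-real bounded arc together with its conjugate would enclose a bounded component of the complement, and overlapping real segments would give an even number of semi-arcs at some $A_k$), and the only way to pair the four endpoints $A_3<A_1<A_0<A_2$ by two disjoint, non-nested real intervals is $[A_3,A_1]\cup[A_0,A_2]$, so the sign-tracking of $\triangle_1(\cdot)^2-1$ is unnecessary (though it would also work).
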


\begin{proof}
For $p\in (\frac{\tau}{2}, \frac{1+\tau}2)$, i.e. $p=t+\frac{\tau}{2}$ for $t\in (0,\frac12)$, we have \begin{align*}2p\eta_1-\zeta(2p)&=2t\eta_1+\tau\eta_1-\zeta(2t)-\eta_2\\&=2t\eta_1-\zeta(2t)+2\pi i\in 2t\eta_1-\zeta(2t)+ i\mathbb{R},\end{align*} 
\begin{align*}
\frac{2p\eta_2-\tau\zeta(2p)}{2\tau}=t\Big(\eta_1-\frac{2\pi}{b}\Big)-\frac{\zeta(2t)}{2}\in\mathbb{R}.
\end{align*}
Consequently, this lemma can be proved similarly as Lemma \ref{Lemma3-5}.
\end{proof}

\begin{lemma}\label{Lemma3-7} Let $\tau=ib$ with $b>0$ and $p\in (0,\frac\tau2)$. Then
\begin{itemize}
\item[(1)] $\sigma_1=(-i\infty, A_0]\cup [A_1, A_3]\cup [A_2, +i\infty)\subset i\mathbb{R}$.

\item[(2)] $\sigma_2=\sigma_{2,\infty}\cup[A_0, A_1]\cup [A_3, A_2]$, where $\sigma_{2,\infty}$ is an unbounded simple curve tending to $-\frac{2p\eta_2-\tau\zeta(2p)}{2\tau}\pm \infty$, which is symmetric with respect to the imaginary axis with $\sigma_{2,\infty}\cap i\mathbb{R}$ containing a single point (this single point might be contained in $[A_0, A_1]\cup [A_3, A_2]$). 
\end{itemize}

\end{lemma}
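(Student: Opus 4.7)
The proof will follow the strategy of Lemmas \ref{Lemma3-5} and \ref{Lemma3-6} with the roles of $\sigma_1$ and $\sigma_2$ exchanged, reflecting that $p$ is now purely imaginary rather than real. Since $p\in(0,\tau/2)\subset i\mathbb{R}$ and $\tau=ib$, Lemma \ref{lemma3-4}(2) gives that both $\sigma_j$ are symmetric about the imaginary axis $i\mathbb{R}$, and Lemma \ref{lemma3-3}(4) places all four endpoints $A_k=ia_k$ on $i\mathbb{R}$ with ordering $a_0<a_1<a_3<a_2$. Because $2p\in(0,\tau)$, Remark \ref{rmk3-2} yields $\zeta(2p)\in i\mathbb{R}$; combined with $\eta_1\in\mathbb{R}$ and $\eta_2\in i\mathbb{R}$, a direct computation shows the two asymptotic centers in Lemma \ref{lemma2-8} satisfy
\begin{equation*}
\frac{2p\eta_1-\zeta(2p)}{2}\in i\mathbb{R},\qquad \frac{2p\eta_2-\tau\zeta(2p)}{2\tau}\in i\mathbb{R}.
\end{equation*}
Consequently the two unbounded arcs of $\sigma_1$ approach $\pm i\infty$ asymptotically tangent to $i\mathbb{R}$, while the two unbounded arcs of $\sigma_2$ approach $\pm\infty$ along a common horizontal line and are exchanged by reflection about $i\mathbb{R}$.

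For part (1), the proof of Lemma \ref{lemma3-4}(2) gives $\triangle_1(-\bar A)=\overline{\triangle_1(A)}$, so $\triangle_1$ is real-valued on $i\mathbb{R}$; moreover the expansion of $e^{-2\pi is}$ derived in the proof of Lemma \ref{lemma2-8} shows $\triangle_1(it)\in[-1,1]$ for $|t|$ large, whence $i\mathbb{R}\setminus B_R\subset\sigma_1$ for $R$ large. Each unbounded arc of $\sigma_1$ is unique (it is the only arc tending to its designated $\pm i\infty$) and is therefore invariant under reflection about $i\mathbb{R}$. I then invoke the self-symmetric simple-arc argument used in the proof of Lemma \ref{Lemma3-5}(2): an axial reflection preserving a simple analytic arc with endpoints on the axis induces an order-preserving continuous involution of the parameter interval fixing both endpoints, which must be the identity, so the arc lies on the axis. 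Thus both unbounded arcs are contained in $i\mathbb{R}$. At any point of $i\mathbb{R}$ only two tangent directions along $i\mathbb{R}$ are available, so no $A_k$ can be a cusp, and Corollary \ref{coro2-11}(1) forces exactly three arcs in $\sigma_1$; the single remaining bounded arc is also self-symmetric and hence lies on $i\mathbb{R}$. The ordering $a_0<a_1<a_3<a_2$ together with the requirement that the unbounded arcs terminate at $\pm i\infty$ then forces $\sigma_1=(-i\infty,A_0]\cup[A_1,A_3]\cup[A_2,+i\infty)$.

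For part (2), the two unbounded arcs of $\sigma_2$ asymptote to $+\infty$ and $-\infty$ respectively at a common imaginary height and are exchanged by the reflection about $i\mathbb{R}$. They cannot be two separate self-symmetric arcs (each one tends to only one side of infinity), so they must be the two branches of a single self-symmetric unbounded analytic arc $\sigma_{2,\infty}$, crossing $i\mathbb{R}$ at precisely one point (the involution argument again excluding multiple crossings). Corollary \ref{coro2-11}(1) supplies two more bounded arcs; each is the unique arc between its pair of endpoints, hence self-symmetric, hence lies on $i\mathbb{R}$. Among the three possible pairings of $\{A_0,A_1,A_3,A_2\}$ into two pairs, only $(A_0,A_1)$ and $(A_3,A_2)$ yield two intervals on $i\mathbb{R}$ with disjoint interiors (compatible with the ordering $a_0<a_1<a_3<a_2$), giving the decomposition $\sigma_2=\sigma_{2,\infty}\cup[A_0,A_1]\cup[A_3,A_2]$.

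The main technical hurdle is the self-symmetric simple-arc lemma, the same device already used in Lemmas \ref{Lemma3-5} and \ref{Lemma3-6}; once this is in hand, ruling out cusps (via the two-direction constraint on $i\mathbb{R}$) and pinning down the correct pairing of endpoints (via the linear ordering on $i\mathbb{R}$ and the unboundedness of $\sigma_{2,\infty}$) are routine topological consequences.
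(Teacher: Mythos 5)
Your overall strategy is the paper's own: the published proof of Lemma \ref{Lemma3-7} consists of checking that the two asymptotic centers $\frac{2p\eta_1-\zeta(2p)}{2}$ and $\frac{2p\eta_2-\tau\zeta(2p)}{2\tau}$ both lie in $i\mathbb{R}$ and then repeating the argument of Lemma \ref{Lemma3-5} with the roles of $\sigma_1$ and $\sigma_2$ exchanged, and your reflection--involution device is a correct reconstruction of the step ``$\sigma_2\subset\mathbb{R}$'' that the paper leaves implicit there. However, the order of your deductions in part (1) has a genuine gap. You exclude cusps of $\sigma_1$ on the grounds that only two tangent directions along $i\mathbb{R}$ are available at $A_k$, but at that stage you have only pinned the two \emph{unbounded} arcs to $i\mathbb{R}$. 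A cusp at $A_k$ contributes three semi-arcs at angles $2\pi/3$, and nothing proved so far forbids one of them from lying along $i\mathbb{R}$ while the other two form a mirror pair leaving the axis; note that the cusp criterion of Lemma \ref{lemma2-10}(1), $\wp(p-\frac{\omega_k}{2})+\eta_1=0$, is a real equation for $p\in(0,\frac{\tau}{2})$, and the paper later \emph{deduces} its failure from Lemma \ref{Lemma3-7} rather than the other way around, so it cannot be invoked here. The correct order --- the one in the proof of Lemma \ref{Lemma3-5} --- is to first prove $\sigma_1\subset i\mathbb{R}$ in its entirety: a subarc that leaves the axis and returns to it, together with its mirror image, encloses a bounded component of $\mathbb{C}\setminus\sigma_1$, contradicting Lemma \ref{lemma2-7}(2), while an off-axis escape to infinity contradicts Lemma \ref{lemma2-8}(1); only then does ``at most two semi-arcs at each $A_k$'' combine with the oddness of $\operatorname{ord}_{A_k}(\triangle_1(\cdot)^2-1)$ to give $d_1(A_k)=1$ and hence exactly three arcs.

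Two smaller points. Your claim that the expansion in Lemma \ref{lemma2-8} already yields $\triangle_1(it)\in[-1,1]$ for $|t|$ large is circular: the expansion only places $\sigma_1\setminus B_R$ within $O(t^{-1})$ of $i\mathbb{R}$, and a real value of $\triangle_1$ at a point of $i\mathbb{R}$ can still have modulus exceeding $1$; the containment $i\mathbb{R}\setminus B_R\subset\sigma_1$ is a consequence of, not an input to, the involution argument. In part (2), the two unbounded branches of $\sigma_2$ need not be the two ends of a single self-symmetric \emph{analytic} arc: they may be two maximal arcs meeting at a common finite endpoint $A_k\in i\mathbb{R}$, i.e.\ a cusp of $\sigma_2$, which Corollary \ref{coro-s4-1} bounds by one but does not exclude (the criterion $\wp(p-\frac{\omega_k}{2})+\eta_1=\frac{2\pi}{b}$ of Lemma \ref{lemma2-10}(2) can hold for suitable $p\in(0,\frac\tau2)$). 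In that case Corollary \ref{coro2-11}(1) does not apply and $\sigma_2$ has four maximal arcs. The stated decomposition survives --- the union of the two unbounded arcs is still a simple $R$-invariant curve meeting $i\mathbb{R}$ in the unique fixed point of an orientation-reversing involution, and there are still exactly two bounded arcs --- but your argument should be rephrased to cover this configuration rather than to rule it out.
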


\begin{proof}
For $p\in (0, \frac{\tau}{2})$, we have $2p\eta_1-\zeta(2p)\in i\mathbb{R}$ and
\begin{align*}
\frac{2p\eta_2-\tau\zeta(2p)}{2\tau}=p\Big(\eta_1-\frac{2\pi}{b}\Big)-\frac{\zeta(2p)}{2}\in i\mathbb{R}.
\end{align*}
Consequently, this lemma can be proved similarly as Lemma \ref{Lemma3-5}.
\end{proof}

\begin{lemma}\label{Lemma3-8} Let $\tau=ib$ with $b>0$ and $p\in (\frac12,\frac{1+\tau}2)$. Then
\begin{itemize}
\item[(1)] $\sigma_1=(-i\infty, A_1]\cup [A_0, A_2]\cup [A_3, +i\infty)\subset i\mathbb{R}$.

\item[(2)] $\sigma_2=\sigma_{2,\infty}\cup[A_1, A_0]\cup [A_2, A_3]$, where $\sigma_{2,\infty}$ is an unbounded simple curve tending to $-\frac{(2p-1)\eta_2-\tau\zeta(2p-1)}{2\tau}\pm \infty$, which is symmetric with respect to the imaginary axis with $\sigma_{2,\infty}\cap i\mathbb{R}$ containing a single point (this single point might be contained in $[A_1, A_0]\cup [A_2, A_3]$). 
\end{itemize}

\end{lemma}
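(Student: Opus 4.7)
The plan is to parallel the argument used for Lemma \ref{Lemma3-5}, and more closely Lemma \ref{Lemma3-7}, which handles the mirror case $p\in(0,\tfrac{\tau}{2})$. Writing $p=\tfrac12+s\tau$ with $s\in(0,\tfrac12)$, one has $\bar p=1-p\equiv-p\pmod{\Lambda_{\tau}}$, so by Lemma \ref{lemma3-4}(2) both $\sigma_1$ and $\sigma_2$ are symmetric with respect to the imaginary axis, and by Lemma \ref{lemma3-3}(2) the endpoints satisfy $A_k=ia_k$ with $a_1<a_0<a_2<a_3$. The first step is to compute the asymptotic centers from Lemma \ref{lemma2-8}. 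Using $\zeta(2p)=\zeta(2s\tau)+\eta_1$ and Remark \ref{rmk3-2} (which gives $\zeta(2s\tau)\in i\mathbb{R}$ and $\tau\eta_1\in i\mathbb{R}$), one sees $2p\eta_1-\zeta(2p)=2s\tau\eta_1-\zeta(2s\tau)\in i\mathbb{R}$, so the two unbounded arcs of $\sigma_1$ asymptote exactly to the imaginary axis. Using instead $\tau\eta_2=-b^2\eta_1+2\pi b\in\mathbb{R}$ together with $\tau\zeta(2s\tau)\in\mathbb{R}$, one obtains $(2p-1)\eta_2-\tau\zeta(2p-1)\in\mathbb{R}$, hence $\tfrac{(2p-1)\eta_2-\tau\zeta(2p-1)}{2\tau}\in i\mathbb{R}$; the identity $\tfrac{2p\eta_2-\tau\zeta(2p)}{2\tau}=\tfrac{(2p-1)\eta_2-\tau\zeta(2p-1)}{2\tau}-\tfrac{\pi}{b}$ then shows that the two unbounded arcs of $\sigma_2$ are horizontal rays meeting at the common imaginary height $\operatorname{Im}\bigl(-\tfrac{(2p-1)\eta_2-\tau\zeta(2p-1)}{2\tau}\bigr)$.

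For (1), as in the proof of Lemma \ref{Lemma3-5}, the combination of (a) all endpoints on $i\mathbb{R}$, (b) symmetry of $\sigma_1$ about $i\mathbb{R}$, (c) unbounded arcs tending to $\pm i\infty$ along $i\mathbb{R}$, and (d) Lemma \ref{lemma2-7}(2) forbidding bounded complementary components of $\sigma_1$, forces $\sigma_1\subset i\mathbb{R}$: any off-axis analytic arc would pair with its mirror image and bound a disk. Then each $A_k$ is the endpoint of a unique semi-arc on $i\mathbb{R}$, so $d_1(A_k)=1$ and by Corollary \ref{coro2-11}(1) there are exactly three analytic arcs; pairing them using the ordering $a_1<a_0<a_2<a_3$ and the tails at $\pm i\infty$, the only admissible decomposition is $(-i\infty,A_1]\cup[A_0,A_2]\cup[A_3,+i\infty)$.

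For (2), the two unbounded arcs of $\sigma_2$ extend to $\pm\infty$ horizontally at the same imaginary height, and mirror symmetry about $i\mathbb{R}$ forces them to glue into a single simple curve $\sigma_{2,\infty}$ meeting $i\mathbb{R}$ in exactly one point. The remaining analytic arcs are bounded and must connect the $A_k$'s in pairs within $\sigma_2$; the values $\triangle_2(A_k)=\varepsilon_{2,k}$ at the ordered points $ia_1,ia_0,ia_2,ia_3$ are $-1,+1,+1,-1$ respectively, and, together with the no-bounded-hole condition from Lemma \ref{lemma2-7}(2), this fixes the pairing as $[A_1,A_0]$ and $[A_2,A_3]$. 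The main obstacle will be to rigorously rule out off-axis arcs in (1) and to justify the correct pairing of bounded arcs in (2); both reduce to the same short topological observation, namely that a pair of mirror-symmetric analytic arcs sharing both endpoints would enclose a bounded region, contradicting Lemma \ref{lemma2-7}(2).
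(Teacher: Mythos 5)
Your proposal is correct and follows essentially the same route as the paper: the paper's proof of this lemma consists precisely of verifying that $2p\eta_1-\zeta(2p)\in i\mathbb{R}$ and that $\frac{2p\eta_2-\tau\zeta(2p)}{2\tau}$ lies in a horizontal line through the purely imaginary point $\frac{(2p-1)\eta_2-\tau\zeta(2p-1)}{2\tau}$, and then invoking the same structural argument as in Lemma \ref{Lemma3-5} (endpoints ordered on $i\mathbb{R}$ by Lemma \ref{lemma3-3}, symmetry from Lemma \ref{lemma3-4}, unboundedness of complementary components from Lemma \ref{lemma2-7}, and the asymptotics from Lemma \ref{lemma2-8}). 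Your computations and your identification of the arc pairings via the sign pattern of $\triangle_2$ at the ordered endpoints are all consistent with the paper; if anything, you supply more detail on the "similar argument" step than the paper does.
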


\begin{proof}
For $p\in (\frac12, \frac{1+\tau}{2})$, i.e. $p=\frac12+it$ for $t\in (0,\frac12)$, we have \begin{align*}2p\eta_1-\zeta(2p)=2it\eta_1-\zeta(2it)\in i\mathbb{R},\end{align*} 
\begin{align*}
\frac{2p\eta_2-\tau\zeta(2p)}{2\tau}=-\frac{\pi}{b}+it\Big(\eta_1-\frac{2\pi}{b}\Big)-\frac{\zeta(2it)}{2}\in it\Big(\eta_1-\frac{2\pi}{b}\Big)-\frac{\zeta(2it)}{2}+\mathbb{R}.
\end{align*}
Consequently, this lemma can be proved similarly as Lemma \ref{Lemma3-5}.
\end{proof}

\begin{corollary}\label{coro-s4-1}
Let $\tau=ib$ with $b>0$, $p\notin E_{\tau}[2]$ and $\wp(p)\in\mathbb{R}$. Then $\sigma_j$ has at most one cusp for $j\in\{1,2\}$, and  $\sigma_1\cap\sigma_2\setminus\{A_0, A_1, A_2, A_3\}$ consists of at most one point.
\end{corollary}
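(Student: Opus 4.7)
The plan is to read off both claims directly from the explicit topological descriptions of $\sigma_1$ and $\sigma_2$ furnished by Lemmas \ref{Lemma3-5}--\ref{Lemma3-8}. Since $p\notin E_{\tau}[2]$ with $\wp(p)\in\mathbb R$, I may assume, after replacing $p$ by $-p$ if needed, that $p$ lies in exactly one of the four open segments $(0,\tfrac12)$, $(\tfrac\tau2,\tfrac{1+\tau}2)$, $(0,\tfrac\tau2)$, $(\tfrac12,\tfrac{1+\tau}2)$. These are precisely the four cases treated by Lemmas \ref{Lemma3-5}--\ref{Lemma3-8}.

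In each of the four cases, one of $\sigma_1,\sigma_2$ is a disjoint union of three honest intervals on one axis $L\in\{\mathbb R, i\mathbb R\}$ with endpoints precisely $A_0,\dots,A_3$, while the other is the union of two such intervals together with an unbounded simple curve $\sigma_{j,\infty}$ whose intersection with $L$ consists of a single point. For the cusp claim I would argue: on the ``flat'' $\sigma_j$, each $A_k$ is an endpoint of exactly one of the three intervals, so $d_j(A_k)=1$ and no $A_k$ is a cusp; on the other $\sigma_j$, an $A_k$ can acquire the odd number $d_j(A_k)\geq 3$ of semi-arcs needed to be a cusp only if $\sigma_{j,\infty}$ passes through it, and since $\sigma_{j,\infty}\cap L$ is a single point, this can happen for at most one $A_k$.

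For the intersection count I would use the same decompositions together with the orderings of the $A_k$'s given in Lemma \ref{lemma3-3}. For instance, when $p\in(0,\tfrac12)$ the bounded intervals $[A_1,A_3]\cup[A_2,A_0]$ of $\sigma_1$ exactly fill the two gaps of $\sigma_2=(-\infty,A_1]\cup[A_3,A_2]\cup[A_0,+\infty)$, so these bounded pieces meet $\sigma_2$ only at the $A_k$'s; the remaining intersection comes from $\sigma_{1,\infty}\cap\sigma_2\subset\sigma_{1,\infty}\cap\mathbb R$, which has a single point. Hence $\sigma_1\cap\sigma_2\setminus\{A_0,\dots,A_3\}$ contains at most one element. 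The three remaining cases are completely symmetric after swapping the roles of $\sigma_1,\sigma_2$ and of $\mathbb R, i\mathbb R$ according to which of Lemmas \ref{Lemma3-6}--\ref{Lemma3-8} applies.

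The only subtle point---and the place where one must be a little careful in the bookkeeping---is that the unique crossing point of $\sigma_{j,\infty}$ with the axis $L$ may coincide with one of the $A_k$, in which case it simultaneously produces the (unique) cusp of $\sigma_j$ and fails to yield a new point of $\sigma_1\cap\sigma_2$ outside $\{A_0,\dots,A_3\}$; if it lies in the interior of one of the bounded intervals it is a branch point of $\sigma_j$ that does give the single extra intersection point. In either sub-case the two claimed bounds follow simultaneously, so no further case analysis beyond the four parametric intervals of $p$ is needed.
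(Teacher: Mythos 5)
Your argument is correct and is essentially the paper's own proof: the paper likewise reduces to the four segments for $p$ and reads both assertions off the explicit descriptions of $\sigma_1,\sigma_2$ in Lemmas \ref{Lemma3-5}--\ref{Lemma3-8}, only without spelling out the details you supply. The one imprecision is in your final sentence: when the crossing point of $\sigma_{j,\infty}$ with the axis lies in the interior of a bounded interval of the \emph{same} $\sigma_j$ it is indeed a branch point of $\sigma_j$, but that interval is a gap of the other conditional stability set, so it contributes \emph{no} extra intersection point, whereas the single extra point of $\sigma_1\cap\sigma_2$ arises precisely when the crossing point falls inside an interval of the \emph{other} set; either way the bound of one point survives, so nothing is lost.
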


\begin{proof} Again we may assume $$p\in\Big(0,\frac{1}{2}\Big)\cup \Big(\frac{1}{2},\frac{1+\tau}{2}\Big)\cup \Big(\frac{\tau}{2},\frac{1+\tau}{2}\Big)\cup\Big(0,\frac{\tau}{2}\Big).$$
Consequently, the assertions follow directly from Lemmas \ref{Lemma3-5}-\ref{Lemma3-8}.
\end{proof}

\section{Rectangular torus: Proof of main results}

\label{section5}

In this section, we continue to consider $\tau=ib$ with $b>0$, and give the proofs of Theorems \ref{main-thm-b-2} and \ref{main-thm-b-22}.

First, we need to prove the following weaker version of Theorem \ref{main-thm-b-2}.

\begin{theorem}
\label{thm3-10} Let $\tau=ib$ with $b>0$, $p\in E_{\tau}\setminus E_{\tau}[2]$ and $\wp(p)\in\mathbb{R}$. Then $G_p(z)$ has at most one pair of nontrivial critical points.

Conversely,
suppose the unique pair of nontrivial critical points $\pm a_0=\pm(r_0+s_0\tau)$ exists, where $(r_0,s_0)\in\mathbb{R}^2\setminus\frac12\mathbb{Z}^2$. Then $\pm a_0$ are non-degenerate critical points and the following statements hold.
\begin{itemize}
\item[(1)] For  $p\in (0, \frac12)\cup(\frac\tau2,\frac{1+\tau}{2})$, we have $a_0=\overline{a_0}$ and $s_0\in\frac12\mathbb{Z}$. 
\item[(2)] For $p\in  (0, \frac{\tau}{2})\cup (\frac12, \frac{1+\tau}{2})$, we have $a_0=-\overline{a_0}$ and $r_0\in\frac12\mathbb{Z}$.
\end{itemize}
\end{theorem}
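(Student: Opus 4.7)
The first assertion follows directly from the machinery already established: Theorem \ref{rmk2-8} (taking $j=2$) identifies the number of pairs of nontrivial critical points of $G_p(z)$ with $\#(\sigma_1\cap\sigma_2\setminus\{A_0,A_1,A_2,A_3\})$, and Corollary \ref{coro-s4-1} bounds this cardinality by one under the hypotheses $\tau=ib$ and $\wp(p)\in\mathbb{R}$.

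Assume now that the unique pair $\pm a_0=\pm(r_0+s_0\tau)$ exists, so by Theorem \ref{rmk2-5} there is a unique intersection point $A_*\in\sigma_1\cap\sigma_2\setminus\{A_0,\ldots,A_3\}$ with $(r(A_*),s(A_*))=(r_0,s_0)$. In case (1), $p\in(0,\tfrac{1}{2})\cup(\tfrac{\tau}{2},\tfrac{1+\tau}{2})$, Lemmas \ref{Lemma3-5} and \ref{Lemma3-6} give $\sigma_2\subset\mathbb{R}$ entirely, while the real portions $[A_1,A_3]\cup[A_2,A_0]$ of $\sigma_1$ meet $\sigma_2$ only at the endpoints $A_k$; hence $A_*$ must lie at the unique real crossing of the unbounded arc $\sigma_{1,\infty}$ with $\mathbb{R}$. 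The key geometric step is then the following continuity observation: by Lemma \ref{lemma3-4}, $\triangle_1$ is real-valued on $\mathbb{R}$, and points of $\mathbb{R}$ close to but distinct from $A_*$ lie outside $\sigma_1$ and therefore satisfy $\triangle_1\in\mathbb{R}\setminus[-1,1]$; since $\triangle_1(A_*)\in[-1,1]$, continuity forces $\triangle_1(A_*)=\pm 1$, equivalently $\cos 2\pi s_0=\pm 1$, so $s_0\in\tfrac{1}{2}\mathbb{Z}$ and $a_0=\bar a_0$ in $E_\tau$. Case (2), $p\in(0,\tfrac{\tau}{2})\cup(\tfrac{1}{2},\tfrac{1+\tau}{2})$, will be handled symmetrically using Lemmas \ref{Lemma3-7}--\ref{Lemma3-8} together with the imaginary-axis symmetry in Lemma \ref{lemma3-4}: $A_*$ lies at the unique crossing of $\sigma_{2,\infty}$ with $i\mathbb{R}$ inside an open segment of $\sigma_1\subset i\mathbb{R}$, and the same continuity argument applied to $\triangle_2$ restricted to $i\mathbb{R}$ yields $\triangle_2(A_*)=\pm 1$, i.e.\ $r_0\in\tfrac{1}{2}\mathbb{Z}$ and $a_0=-\bar a_0$.

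For the non-degeneracy of $\pm a_0$, the plan is to translate it into a transversality statement at $A_*$. In case (1), $\sigma_{1,\infty}$ is a simple arc symmetric about $\mathbb{R}$ with a unique real crossing; the mirror symmetry rules out a tangential crossing, since such a tangency would force $\sigma_{1,\infty}$ to remain on a single side of $\mathbb{R}$ in a neighbourhood, contradicting that its two asymptotic ends approach $\pm i\infty$ on opposite sides. Combined with the fact that $\sigma_2$ contains a full neighbourhood of $A_*$ in $\mathbb{R}$, this yields a transversal crossing of $\sigma_1$ and $\sigma_2$ at $A_*$. The main obstacle I expect is converting this $A$-plane transversality into the non-vanishing of $\det D^2 G_p(a_0)$: the map $A\mapsto a$ is a local diffeomorphism near $A_*$ since $\wp(p-a_0)\ne\wp(p+a_0)$, and the Jacobian of the critical-point equation \eqref{a+001p} should give the Hessian up to a non-zero factor; as a fallback, the explicit form $a_0\in\mathbb{R}+\tfrac{\tau}{2}\mathbb{Z}$ in case (1) (and $a_0\in\tfrac{1}{2}\mathbb{Z}+i\mathbb{R}$ in case (2)) makes all relevant partial derivatives real and admits a direct Hessian computation. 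Case (2) is handled by the same transversality argument with $\sigma_{2,\infty}$ and $i\mathbb{R}$ in place of $\sigma_{1,\infty}$ and $\mathbb{R}$.
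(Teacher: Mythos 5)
Your first assertion and the symmetry classification in (1)--(2) are essentially the paper's argument: Theorem \ref{rmk2-8} plus Corollary \ref{coro-s4-1} give the count, and your continuity argument (nearby real points of $\sigma_2$ cannot lie in $\sigma_1$ by uniqueness of $A_*$, so $\triangle_1$ is real of modulus $>1$ there, forcing $\triangle_1(A_*)=\pm1$) is the contrapositive of exactly what the paper does. (The paper also gets $\overline{a_0}=\pm a_0$ more directly by conjugating the critical-point equation \eqref{a+001p} using $\bar p\equiv\pm p$, $\eta_1\in\mathbb{R}$, $\eta_2\in i\mathbb{R}$, but then uses the same $\sigma_1,\sigma_2$ analysis to decide which of $s_0,r_0$ is half-integral.)

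The non-degeneracy part has a genuine gap, in two places. First, your symmetry argument does not rule out a tangential crossing of $\sigma_{1,\infty}$ with $\mathbb{R}$: for an arc $\gamma$ with $\overline{\gamma(t)}=\gamma(-t)$ one has $\operatorname{Im}\gamma$ odd and $\operatorname{Re}\gamma$ even in $t$, so a higher-order contact such as $\operatorname{Im}\gamma(t)\sim c\,t^{3}$ still switches sides of $\mathbb{R}$ while being tangent to it; "ends on opposite sides" therefore does not force transversality. Second, and more seriously, the passage from transversality in the $A$-plane to $\det D^2G_p(a_0)\neq0$ is only gestured at, and it is not a routine Jacobian computation: the Hessian of the \emph{real} function $G_p$ is not the derivative of the holomorphic critical-point equation. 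The paper closes both gaps at once with the explicit identity \eqref{eqfc-15}, $4\pi^2\det D^2G_p(a_0)=\frac{\pi^2}{b^2}-|\alpha-\frac{\pi}{b}|^2$ with $\alpha=\frac12(\wp(a_0+p)+\wp(a_0-p)+2\eta_1)\in\mathbb{R}$, so degeneracy is equivalent to $\alpha\in\{0,\frac{2\pi}{b}\}$; by Lemma \ref{lemma2-10-3} these are precisely the conditions for $A_4$ to be a branch point of $\sigma_1$ resp.\ $\sigma_2$, which Lemmas \ref{Lemma3-5}--\ref{Lemma3-8} exclude (equivalently, $\triangle_1''(A_4)\neq0$ and $\triangle_2'(A_4)\neq0$ via \eqref{triangle1}--\eqref{triangle2}). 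Your transversality intuition is pointing at the right quantities, but without \eqref{eqfc-15} and the branch-point criterion the argument does not close.
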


\begin{proof} Again we may assume $$p\in\Big(0,\frac{1}{2}\Big)\cup \Big(\frac{1}{2},\frac{1+\tau}{2}\Big)\cup \Big(\frac{\tau}{2},\frac{1+\tau}{2}\Big)\cup\Big(0,\frac{\tau}{2}\Big).$$
It follows from Theorem \ref{rmk2-8} and Corollary \ref{coro-s4-1} that $G_p(z)$ has at most one pair of nontrivial critical points.

Suppose
the unique pair of nontrivial critical points $\pm a_0=\pm(r_0+s_0\tau)$ exists with $(r_0,s_0)\in\mathbb{R}^2\setminus\frac12\mathbb{Z}^2$, and denote $\{ A_4\}=\sigma_1\cap\sigma_2\setminus\{A_0, A_1, A_2, A_3\}$. Then
\begin{equation}\label{eqfc-11}
\zeta(a_0+p)+\zeta(a_0-p)-2r_0\eta_1-2s_0\eta_2=0,
\end{equation}
\begin{equation}\label{eqfc-12}A_4=\frac{1}{2}\left[  \zeta(p+a_0)+\zeta(p-a_0)-\zeta(2p)\right]. 
\end{equation}
Since $\bar p=p$ or $\bar p=-p$ or $\bar p=1-p\equiv -p$ or $\bar p=p-\tau\equiv p$ mod $\Lambda_\tau$, we see from \eqref{eqfc-11}, $\eta_1\in\mathbb{R}$ and $\eta_2\in i\mathbb{R}$ that
$$\zeta(\overline{a_0}+p)+\zeta(\overline{a_0}-p)-2r_0\eta_1+2s_0\eta_2=0.$$
Thus, $\overline{a_0}=r_0-s_0\tau$ is also a nontrivial critical point of $G_p(z)$, so $\overline{a_0}=a_0$ or $\overline{a_0}=-a_0$ in $E_{\tau}$, or equivalently, $s_0\in\frac12\mathbb{Z}$ or $r_0\in\frac{1}{2}\mathbb{Z}$. 

To prove the non-degeneracy of $a_0$, we recall Part I \cite{CFL} that the Hessian of $G_p(z)$ at $a_0$ is given by
\begin{align}\label{eqfc-15}&4\pi^2\det D^2G_p(a_0)\nonumber\\
=&\frac{\pi^2}{(\operatorname{Im}\tau)^2}-\left|\frac{\wp(a_0+p)+\wp(a_0-p)+2\eta_1}{2}-\frac{\pi}{\operatorname{Im}\tau}\right|^2.\end{align}
Denote
\begin{align}
\alpha:=\frac{\wp(a_0+p)+\wp(a_0-p)+2\eta_1}{2}.
\end{align}
It follows from $\overline{a_0}=\pm a_0$ that $\bar\alpha=\alpha\in\mathbb{R}$. Then by \eqref{eqfc-15}, we see that to prove $\det D^2G_p(a_0)\neq 0$ is equivalent to prove
\begin{align}\label{alpha}
\alpha\neq 0\quad\text{and}\quad\alpha\neq \frac{2\pi}{b}.
\end{align}

{\bf Case 1.} $p\in (0, \frac12)\cup(\frac\tau2,\frac{1+\tau}{2})$.

Then it follows from Lemmas \ref{Lemma3-5}-\ref{Lemma3-6} that $A_4\in \sigma_{1,\infty}\cap\sigma_2\subset\mathbb{R}$ and $A_4$ is an inner point of $\sigma_2$. Note from $A_4\in\sigma_1$ that $\triangle_1(A_4)\in [-1,1]$.
Also recall from \eqref{eqfc-8} that $\triangle_1(A)\in \mathbb{R}$ for any $A\in \mathbb{R}$. If $\triangle_1(A_4)\in (-1,1)$, then there is small $\delta>0$ such that for any $A\in(A_4-\delta, A_4+\delta)\subset \sigma_2$, there holds $\triangle_1(A)\in (-1,1)$, which implies $A\in\sigma_1\cap\sigma_2$ for any $A\in(A_4-\delta, A_4+\delta)$, a contradiction. Therefore, $\triangle_1(A_4)\in \{\pm 1\}$ and so $s_0\in\frac12\mathbb{Z}$, i.e. $\overline{a_0}=a_0$.

Recalling Lemmas \ref{Lemma3-5}-\ref{Lemma3-6} that $A_4$ is only an inner point of $\sigma_{1,\infty}$ among the arcs of $\sigma_1$, namely $A_4$ is not a branch point of $\sigma_1$, it follows from Remark \ref{rmk2-6} that
$$\text{ord}_{A_4}(\triangle_1(\cdot)^2-1)=2,$$
namely $\triangle_1'(A_4)=0$ and $\triangle_1''(A_4)\neq 0$. Note that $Q(A_4)\neq 0$. Then by \eqref{triangle1} and $s_0\in\frac12\mathbb{Z}$, we easily obtain that
$$0\neq 2\triangle_1''(A_4)=\frac{a'(A_4)^2}{4}(e^{-2\pi is_0}+e^{2\pi is_0})(\wp(a_0+p)+\wp(a_0-p)+2\eta_1)^2,$$
so $\alpha=\frac{\wp(a_0+p)+\wp(a_0-p)+2\eta_1}{2}\neq 0$.

On the other hand, $s_0\in\frac12\mathbb{Z}$ implies $r_0\in \mathbb{R}\setminus\frac12\mathbb Z$ (because $a_0\notin E_{\tau}[2]$), so $\triangle_2(A_4)\in (-1,1)$. Since Lemmas \ref{Lemma3-5}-\ref{Lemma3-6} says that $A_4$ can not be a branch point of $\sigma_2$, we have $\triangle_2'(A_4)\neq 0$, which together with \eqref{triangle2} imply that
\begin{align*}
0\neq \tau\wp(a_0+p)+\tau\wp(a_0-p)+2\eta_2=\tau\wp(a_0+p)+\tau\wp(a_0-p)+2\tau\eta_1-4\pi i.
\end{align*}
This yields $\alpha\neq \frac{2\pi}{b}$ by using $\tau=ib$, i.e. \eqref{alpha} holds and so $\det D^2G_p(a_0)\neq 0$.


{\bf Case 2.} $p\in  (0, \frac{\tau}{2})\cup (\frac12, \frac{1+\tau}{2})$.

Then it follows from Lemmas \ref{Lemma3-7}-\ref{Lemma3-8} that $A_4\in \sigma_1\cap\sigma_{2,\infty}\subset i\mathbb{R}$ and $A_4$ is an inner point of $\sigma_1$. Note from $A_4\in\sigma_2$ that $\triangle_2(A_4)\in [-1,1]$. Also recall from
 \eqref{eqfc-9} that $\triangle_2(A)\in \mathbb{R}$ for any $A\in i\mathbb{R}$. If $\triangle_2(A_4)\in (-1,1)$, then there is small $\delta>0$ such that for any $A\in(A_4-i\delta, A_4+i\delta)\subset \sigma_1$, there holds $\triangle_2(A)\in (-1,1)$, which implies $A\in\sigma_1\cap\sigma_2$ for any $A\in(A_4-i\delta, A_4+i\delta)$, a contradiction. Therefore, $\triangle_2(A_4)\in \{\pm 1\}$ and so $r_0\in\frac12\mathbb{Z}$, i.e. $\overline{a_0}=-a_0$.
Then a similar argument as Case 1 also implies that \eqref{alpha} holds and so $\det D^2G_p(a_0)\neq 0$.


The proof is complete.
\end{proof}

To prove Theorems \ref{main-thm-b-2} and \ref{main-thm-b-22}, we also need the following results.

\begin{theorem}\cite[Theorem 3.5]{CFL}\label{thm-B}
Let $\tau=ib$ with $b>0$. Then there is $\varepsilon>0$ small such that if $|p-\frac{\omega_k}{2}|<\varepsilon$ for some $k\in \{0,1,2,3\}$, then $G_p(z)$ has no nontrivial critical points, and all trivial critical points of $G_p(z)$ are non-degenerate.
\end{theorem}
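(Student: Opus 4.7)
The approach is to view $G_p(z)$ as a small perturbation of $G(z-\omega_k/2)$. Setting $w := z - \omega_k/2$ and $\epsilon := p - \omega_k/2$, and using $-\omega_k/2 \equiv \omega_k/2 \pmod{\Lambda_\tau}$, we rewrite
\[
G_p(z) = \tilde G_p(w) := \tfrac{1}{2}\bigl(G(w-\epsilon) + G(w+\epsilon)\bigr),
\]
so critical points of $G_p$ correspond bijectively to those of $\tilde G_p$, and trivial (resp.\ nontrivial) ones correspond to $w \in E_\tau[2]$ (resp.\ $w \notin E_\tau[2]$). On any compact subset of $E_\tau \setminus \{0\}$, $\tilde G_p \to G$ in $C^2$ as $\epsilon \to 0$, since the two logarithmic singularities at $\pm\epsilon$ cluster at $0$.

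By Theorem \ref{thm-LW}(1), $G$ has exactly three critical points $\omega_j/2$, $j=1,2,3$, all non-degenerate. The implicit function theorem applied at each such point yields, for $|\epsilon|$ small, a unique critical point of $\tilde G_p$ in a fixed small disk around $\omega_j/2$; by evenness this critical point must coincide with $\omega_j/2$ and is non-degenerate. A standard compactness argument on $E_\tau \setminus B_r(0)$ then shows that for $|\epsilon|$ small enough, every critical point of $\tilde G_p$ outside $B_r(0)$ lies in $E_\tau[2]$. This handles three of the four trivial critical points and rules out nontrivial ones outside a small neighborhood of $0$.

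The main step is to exclude nontrivial critical points of $\tilde G_p$ inside $B_r(0)$. Writing $w = u + v\tau$ with $(u,v) \in \mathbb{R}^2\setminus\tfrac12\mathbb{Z}^2$, the critical point equation becomes (via Hitchin's formula \eqref{513-1} with $p,a$ replaced by $\epsilon, w$)
\[
\wp(\epsilon) \;=\; \wp(w) + \frac{\wp'(w)}{2\bigl(\zeta(w) - u\eta_1 - v\eta_2\bigr)}.
\]
Using the Laurent expansions $\wp(w) = w^{-2} + \tfrac{g_2}{20}w^2 + O(w^4)$, $\wp'(w) = -2w^{-3} + \tfrac{g_2}{10}w + O(w^3)$, $\zeta(w) = w^{-1} - \tfrac{g_2}{60}w^3 + O(w^5)$, and the bound $|u\eta_1 + v\eta_2| \le C|w|$ (valid for $\tau = ib$ since $u = \mathrm{Re}(w)$ and $v = b^{-1}\mathrm{Im}(w)$), a direct expansion shows that the two singular contributions $\pm w^{-2}$ cancel and that the right-hand side equals $-(u\eta_1+v\eta_2)/w + O(1)$, hence is \emph{uniformly bounded} as $w \to 0$. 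Since $\wp(\epsilon) \to \infty$ as $\epsilon \to 0$, the equation has no solution in $B_r(0)$ once $|\epsilon|$ is small enough. Combined with the previous paragraph, this excludes all nontrivial critical points.

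Finally, non-degeneracy of the remaining trivial critical point $\omega_k/2$ follows from Theorem \ref{main-thm-01}: it would be degenerate iff $\wp(p) \in \partial \mathcal{B}_k$, but Theorem \ref{thm-LW}(1) forces $|\alpha_k| \neq \beta_k$ for $\tau=ib$, so $\mathcal{B}_k$ is a bounded disk and $\wp(p) \to \infty$ as $p \to \omega_k/2$ eventually escapes $\partial \mathcal{B}_k$. The hard part is the cancellation bookkeeping in the asymptotic expansion of the previous paragraph: one must track that the leading $w^{-2}$ singularities in $\wp(w)$ and $\wp'(w)/(2\zeta(w))$ exactly cancel and that the next-order term $(u\eta_1+v\eta_2)/w$ remains bounded by the $C|w|$ estimate. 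The rest is a routine perturbation/implicit-function-theorem argument.
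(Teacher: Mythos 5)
The paper does not reprove this statement; it is quoted from Part I \cite[Theorem 3.5]{CFL}, so your argument has to stand on its own. It essentially does. The reduction to $p=\epsilon$ near $0$ via $w=z-\omega_k/2$ is legitimate (since $\omega_k\in\Lambda_\tau$, $G_p(z)=G_\epsilon(w)$), the $C^2_{\rm loc}$ convergence $G_\epsilon\to G$ away from $0$ plus Theorem~\ref{thm-LW}(1) handles the three trivial critical points $\omega_j/2$ and excludes nontrivial ones outside $B_r(0)$, and your key computation is right: for $w=u+v\tau$ with $|u|+|v|\le C|w|$, the right-hand side of Hitchin's formula expands as $-\frac{u\eta_1+v\eta_2}{w}+O(1)$ (the $w^{-2}$ poles of $\wp(w)$ and of $\wp'(w)/(2\zeta(w))$ cancel), so it stays bounded on $B_r(0)\setminus\{0\}$ while $\wp(\epsilon)\to\infty$; in fact its limit set as $w\to 0$ is exactly the circle $\partial\mathcal{B}_0$, which is a good consistency check against Theorem~\ref{main-thm-01}. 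One should also note that on the annulus $r'\le|w|\le r$ the denominator $\zeta(w)-u\eta_1-v\eta_2=-4\pi\,\partial G/\partial z$ is bounded away from zero because $G$ has no critical points there, but that is immediate.

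The only genuine error is in your last paragraph. You assert that $\wp(p)\to\infty$ as $p\to\omega_k/2$; this is false for $k=1,2,3$, where $\wp(p)\to e_k$. The statement you need from Theorem~\ref{main-thm-01} is the \emph{other} formulation: $\omega_k/2$ is degenerate iff $\wp(p-\tfrac{\omega_k}{2})=\wp(\epsilon)\in\partial\mathcal{B}_0$, and $\partial\mathcal{B}_0$ is a fixed bounded circle (by its definition \eqref{B00}, with no hypothesis on $\tau$ needed), so $\wp(\epsilon)\to\infty$ escapes it. Equivalently, $\wp(p)\to e_k$ and $e_k\notin\partial\mathcal{B}_k$. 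Either fix is one line and uses exactly the theorem you cite, so this is a repairable slip rather than a structural gap; but as written the sentence is wrong and the appeal to $|\alpha_k|\neq\beta_k$ is not the relevant point.
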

\begin{lemma}\cite[Lemma 4.4]{CFL}\label{lemma50-0}
Fix $\tau$ and let $p_0\in E_{\tau}\setminus E_{\tau}[2]$. Suppose $G_{p_0}(z)$ has exactly $N\geq 4$ critical points which are all non-degenerate. Then there is $\varepsilon>0$ small such that for any $|p-p_0|<\varepsilon$, $G_{p}(z)$ has also exactly $N$ critical points that are all non-degenerate.
\end{lemma}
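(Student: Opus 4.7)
The plan is to apply the implicit function theorem to the critical point equation, combined with a compactness argument on $E_{\tau}$ away from the singularities $\pm p$. By \eqref{G_z}, writing $z=r+s\tau$, the equation characterizing critical points of $G_p$ is
\[
F(r,s;p):=\zeta(z-p)+\zeta(z+p)-2(r\eta_1+s\eta_2)=0,
\]
which is real-analytic in $(r,s,p)$ away from $z\in\{\pm p\}+\Lambda_\tau$. A direct computation shows the real Jacobian $\det\partial_{(r,s)}F(r,s;p_0)$ at a critical point $a=r+s\tau$ of $G_{p_0}$ is (up to a nonzero constant) $\det D^2G_{p_0}(a)$, so non-degeneracy is exactly the IFT hypothesis.

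First I would establish persistence. For each of the $N$ critical points $a$ of $G_{p_0}$ (trivial or nontrivial, all non-degenerate by assumption), the IFT produces a unique real-analytic branch $p\mapsto a(p)$ with $a(p_0)=a$, defined on some disk $|p-p_0|<\varepsilon_a$. For trivial critical points, the branch is simply the constant $a(p)\equiv\frac{\omega_k}{2}$, since the symmetry $G_p(z)=G_p(-z)$ forces $\nabla G_p(\frac{\omega_k}{2})=0$ for all $p$. Non-degeneracy of each $a(p)$ persists by continuity of the Hessian. Taking $\varepsilon:=\min_a\varepsilon_a$ small enough that the $N$ branches remain distinct, this yields at least $N$ non-degenerate critical points of $G_p$ for $|p-p_0|<\varepsilon$.

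Next I would establish the upper bound by contradiction. Suppose there exist $p_n\to p_0$ and critical points $z_n$ of $G_{p_n}$ lying outside the $N$ branches. Passing to a subsequence, $z_n\to z_\infty\in E_\tau$. If $z_\infty\notin\{\pm p_0\}$, then $G_p$ and its gradient are jointly continuous in $(z,p)$ near $(z_\infty,p_0)$, so $\nabla G_{p_0}(z_\infty)=0$, i.e.\ $z_\infty$ is one of the $N$ critical points $a$ of $G_{p_0}$. Local uniqueness in the IFT then forces $z_n$ to coincide with $a(p_n)$ for large $n$, contradicting the choice of $z_n$.

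The remaining case $z_\infty\in\{\pm p_0\}$ is where the only genuine obstacle lies: the functions $G_p$ develop logarithmic singularities whose locations $\pm p$ are moving with $p$. The cure is uniform control: near $\pm p$ one has $G_p(z)=-\frac{1}{2\pi}\log|z\mp p|+H_p(z)$ with $H_p$ smooth and locally uniformly bounded in $p$, so
\[
|\nabla G_p(z)|\;\geq\;\frac{1}{2\pi|z\mp p|}-C
\]
for $p$ in a compact neighborhood of $p_0$. Hence there is $\delta>0$ with no critical points of $G_p$ in $\{|z\mp p|<\delta\}$ for all $p$ close to $p_0$, which rules out accumulation at $\pm p_0$ and completes the contradiction. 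Combining the two bounds, $G_p$ has exactly $N$ critical points, all non-degenerate, for $|p-p_0|$ sufficiently small.
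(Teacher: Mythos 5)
This lemma is quoted from Part I (\cite[Lemma 4.4]{CFL}) and the present paper gives no proof of it, so there is nothing in-paper to compare against; your argument is the standard and correct one: the implicit function theorem at each non-degenerate zero of $\nabla G_{p_0}$ gives persistence and local uniqueness, and the compactness argument plus the uniform gradient blow-up near the moving singularities excludes any extra critical points. The only slip is cosmetic: since $G_p=\tfrac12\bigl(G(\cdot-p)+G(\cdot+p)\bigr)$, the local expansion is $G_p(z)=-\tfrac{1}{4\pi}\log|z\mp p|+H_p(z)$, so the lower bound should read $|\nabla G_p(z)|\geq \tfrac{1}{4\pi|z\mp p|}-C$; this does not affect the conclusion.
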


\begin{remark}
For any non-degenerate critical point $q$ of $G_p(z)$, it is well known that the local degree (denoted by $\deg_p(q)$) of $\nabla G_p(z)$ at $z=q$ is $1$ (resp. $-1$) if $\det D^2G_p(q)>0$ (resp. $\det D^2G_p(q)<0$). Suppose that all critical points of $G_p(z)$ are non-degenerate, then we proved in Part I \cite[(1.11)]{CFL} that 
\begin{equation}\label{deg-count}
\sum_{\text{$q$ is a critical point of $G_p$}}\deg_p(q)=-2.
\end{equation}
\end{remark}

\begin{lemma}\label{lemma3-10}
Let $\tau=ib$ with $b>0$. Then the following statements hold.
\begin{itemize}
\item[(1)]$G_{\frac14}(z)$ has a unique pair of nontrivial critical points $\pm(\frac14+\frac{\tau}{2})$, and all critical points of $G_{\frac14}(z)$ are non-degenerate. 
\item[(2)]$G_{\frac{\tau}{4}}$ has a unique pair of nontrivial critical points $\pm(\frac12+\frac{\tau}{4})$, and all critical points of $G_{\frac{\tau}{4}}(z)$ are non-degenerate. 
\item[(3)]$G_{\frac14+\frac{\tau}{2}}(z)$ has a unique pair of nontrivial critical points $\pm\frac14$, and all critical points of $G_{\frac14+\frac{\tau}{2}}(z)$ are non-degenerate.
\item[(4)]$G_{\frac12+\frac{\tau}{4}}(z)$ has a unique pair of nontrivial critical points $\pm\frac\tau4$, and all critical points of $G_{\frac12+\frac{\tau}{4}}(z)$ are non-degenerate.
\item[(5)] If $b\in (0, 2b_0)\cup(2b_1,+\infty)$, then there exists $\varepsilon>0$ small such that for any $|p-\frac{1+\tau}{4}|<\varepsilon$, $G_{p}(z)$ has exactly $3$ pairs of nontrivial critical points, or equivalently, has exactly $10$ critical points that are all non-degenerate.
\end{itemize}
\end{lemma}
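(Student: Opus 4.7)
My plan is a unified covering-space descent argument. The algebraic starting point, verified case-by-case, is that for each of the five specified $p$ values, $2p$ equals a half-period $\omega_{k_0}/2$ of $E_\tau$, and hence $-p + \omega_{k_0}/2 \equiv p \bmod \Lambda_\tau$. This swaps the two singularities $\pm p$ of $G_p$ and gives the extra invariance
\[
G_p\Big(z + \frac{\omega_{k_0}}{2}\Big) = G_p(z).
\]
So $G_p$ descends to a function $\tilde g$ on the quotient torus $T := \mathbb C/\Lambda_T$, where $\Lambda_T := \Lambda_\tau + \mathbb Z(\omega_{k_0}/2)$, via the double cover $\pi\colon E_\tau \to T$. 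A short Laplacian computation, using $\pi^\ast \delta^T_q = \delta^{E_\tau}_q + \delta^{E_\tau}_{q+\omega_{k_0}/2}$ and $|E_\tau|=2|T|$, identifies $\tilde g$ with $\tfrac12$ of the translated Green function on $T$ whose singularity lies at the image $p^T$ of $\pm p$ (itself a half-period of $T$, because $2p\equiv\omega_{k_0}/2\equiv 0$ in $T$). Since $\pi$ is a local biholomorphism, critical points of $G_p$ on $E_\tau$ are in $2$-to-$1$ correspondence with those of $\tilde g$ on $T$, and non-degeneracy is preserved.

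For parts (1)--(4), I compute directly that the lattice $\Lambda_T$ is always generated by one real and one purely imaginary period (e.g.\ $\tfrac12\mathbb Z + \mathbb Z\tau$ or $\mathbb Z + \tfrac12\mathbb Z\tau$), so $T$ is a rectangular torus. Theorem~\ref{thm-LW}(1) therefore gives $\tilde g$ exactly $3$ non-degenerate critical points on $T$, namely the three half-periods of $T$ other than $p^T$; these lift to $6$ non-degenerate critical points of $G_p$ on $E_\tau$. An explicit identification of preimages in the fundamental domain shows that two of the three half-periods of $T$ pull back precisely to the four half-periods $\{\omega_k/2\}_{k=0}^3$ of $E_\tau$, while the third pulls back to the advertised pair $\pm a_0$; for example, in case (1) the half-period $\tfrac14+\tfrac{\tau}{2}$ of $T=\mathbb C/(\tfrac12\mathbb Z+\mathbb Z\tau)$ has preimages $\pm(\tfrac14+\tfrac{\tau}{2})$ in $E_\tau$. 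Uniqueness of the nontrivial pair is compatible with Theorem~\ref{thm3-10}.

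For part (5), with $p_0 = (1+\tau)/4$ and $2p_0 = \omega_3/2$, the lattice $\Lambda_T = \mathbb Z + \mathbb Z(1+\tau)/2$ has modular parameter $\tau_T = \tfrac12 + i\tfrac{b}{2}$, so $T$ is a \emph{rhombus} torus. The hypothesis $b \in (0, 2b_0)\cup(2b_1,+\infty)$ places the rhombus parameter $b/2$ in $(0, b_0)\cup(b_1,+\infty)$, so Theorem~\ref{thm-LW}(2), combined with Theorem~\ref{thm-A0}, gives $\tilde g$ exactly $5$ non-degenerate critical points on $T$, which lift to $10$ non-degenerate critical points of $G_{p_0}$ on $E_\tau$. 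Lemma~\ref{lemma50-0} then propagates this to a neighborhood $|p-p_0|<\varepsilon$. The one subtle step, which I expect to be the main obstacle, is the half-period bookkeeping: I must check that only two of the three half-period critical points of $\tilde g$ (namely $0$ and $1/2$ in $T$) pull back to half-periods of $E_\tau$, while the third, $(3+\tau)/4\in T$, pulls back to the nontrivial pair $\pm(3+\tau)/4$ in $E_\tau$; together with the two lifts of the genuinely nontrivial pair of $\tilde g$, this produces exactly three pairs of nontrivial critical points of $G_{p_0}$ and $10$ critical points in total. This reduces to a direct computation in the fundamental domain of $E_\tau$.
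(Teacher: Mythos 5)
Your proposal is correct and follows essentially the same route as the paper: the extra invariance $G_p(z+\omega_{k_0}/2)=G_p(z)$ coming from $2p\equiv\omega_{k_0}/2$, descent to the half-index sublattice torus where $2\tilde g$ becomes the translated Green function, application of Theorem \ref{thm-LW} (rectangular case for (1)--(4), rhombus case plus Theorem \ref{thm-A0} for (5)), lifting through the unramified degree-$2$ cover, and Lemma \ref{lemma50-0} to perturb in (5). The half-period bookkeeping you flag as the delicate point checks out exactly as in the paper's computation (e.g.\ $0$ and $\tfrac12$ in $T$ pull back to the four half-periods of $E_\tau$ and the remaining half-period of $T$ pulls back to the advertised nontrivial pair).
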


\begin{proof} The statements (1)-(2) were proved in Part I \cite{CFL}. Here we apply a similar argument as Part I \cite{CFL} to prove (3)-(5).

(3). By Theorem \ref{thm-LW},  the Green function $G(z;\frac12,\tau)$ on the torus $E_{\frac12,\tau}:=\mathbb{C}/(\mathbb Z\frac12+\mathbb Z{\tau})$ has exactly $3$ critical points $\frac14, \frac{\tau}{2}, \frac{1}{4}+\frac{\tau}{2}$ that are all non-degenerate. 

Since $G(z)=G(z;\tau)$ is doubly periodic with periods $1$ and $\tau$, we see that $G_{\frac14+\frac{\tau}{2}}(z)=\frac12(G(z+\frac14+\frac{\tau}{2})+G(z-\frac14-\frac{\tau}{2}))$ satisfies
\begin{align*}G_{\frac14+\frac{\tau}{2}}\Big(z+\frac{1}{2}\Big)&=\frac{G(z+\frac34+\frac{\tau}{2})+G(z+\frac14-\frac{\tau}{2})}{2}\\
&=\frac{G(z-\frac14-\frac{\tau}{2})+G(z+\frac14+\frac{\tau}{2})}{2}=G_{\frac14+\frac{\tau}{2}}(z),\end{align*}
namely $G_{\frac14+\frac{\tau}{2}}(z)$ is doubly periodic with periods $\frac12$ and $\tau$, so $G_{\frac14+\frac{\tau}{2}}(z)$ is well-defined on $E_{\frac12,\tau}$. Furthermore, since $\frac14+\frac{\tau}{2}=-(\frac14+\frac{\tau}{2})$ in $E_{\frac{1}{2},\tau}$, it follows that
$$-\Delta (2G_{\frac14+\frac{\tau}{2}}(z))=\delta_{\frac14+\frac{\tau}{2}}-\frac{1}{\left \vert E_{\frac{1}{2},\tau}\right \vert }\text{
\ on }E_{\frac{1}{2},\tau},$$
i.e. $2G_{\frac14+\frac{\tau}{2}}(z)$ is the Green function of $E_{\frac{1}{2},\tau}$ with singularity at $\frac14+\frac{\tau}{2}$. By the uniqueness of the Green function up to adding a constant, it follows that
 there is a constant $C$ such that 
$$2G_{\frac14+\frac{\tau}{2}}(z)=G\Big(z-\frac14-\frac{\tau}{2}; \frac{1}{2},\tau\Big)+C.$$ 
Consequently, $G_{\frac14+\frac{\tau}{2}}(z)$ has exactly $3$ non-degenerate critical points $\frac{\tau}{2}, \frac14, 0$ on $E_{\frac{1}{2},\tau}$.
Since $z=z+\frac{1}{2}$ in $E_{\frac{1}{2},\tau}$ but $z\neq z+\frac{1}{2}$ in $E_{\tau}$, we finally conclude that 
$G_{\frac14+\frac{\tau}{2}}(z)$ has exactly $6$ critical points
$ 0,\frac12,\frac{\tau}{2},\frac{1+\tau}{2},\pm\frac14
$  on $E_{{\tau}}$
which are all non-degenerate.

(4). By Theorem \ref{thm-LW}, the Green function $G(z;\frac{\tau}{2})$ (i.e. the Green function on the torus $E_{\frac{\tau}{2}}=\mathbb{C}/(\mathbb Z+\mathbb Z\frac{\tau}{2})$) has exactly $3$ critical points $\frac12, \frac{\tau}{4}, \frac{1}{2}+\frac{\tau}{4}$ that are all non-degenerate. Then a similar argument as (3) implies that $G_{\frac12+\frac{\tau}{4}}(z)$ has exactly $6$ critical points $ 0,\frac12,\frac{\tau}{2},\frac{1+\tau}{2},\pm\frac\tau4
$ that are all non-degenerate.

(5) Let $b\in (0, 2b_0)\cup(2b_1,+\infty)$. Then $\tau':=\frac{1+\tau}{2}=\frac12+i \frac{b}{2}$ with $\frac{b}{2}\in (0, b_0)\cup (b_1, +\infty)$, so it follows from Theorem \ref{thm-LW} that the Green function $G(z; \tau')$ has exactly $5$ critical points on $E_{\tau'}$, which are all non-degenerate by using Theorem \ref{thm-A0}. On the other hand, it is easy to see that $G_{\frac{1+\tau}{4}}(z)=G_{\frac{\tau'}{2}}(z)$ is doubly periodic with periods $1$ and $\tau'$, so $G_{\frac{1+\tau}{4}}(z)$ is well-defined on $E_{\tau'}$. Consequently, a similar argument as (3) implies that $G_{\frac{1+\tau}{4}}(z)$ has exactly $10$ critical points on $E_{\tau}$, which are all non-degenerate. Then by Lemma \ref{lemma50-0}, there exists $\varepsilon>0$ small such that for any $|p-\frac{1+\tau}{4}|<\varepsilon$, $G_{p}(z)$ has exactly $10$ critical points that are all non-degenerate. The proof is complete.
\end{proof}

\begin{lemma}\label{lemma3-22} Let $\tau=ib$ with $b>0$. Then $\partial\mathcal{B}_k\cap\mathbb{R}=\{d_{k,1}, d_{k,2}\}$ with $d_{k,1}<d_{k,2}$ and
\begin{align}
\label{efc-1}&d_{0,1}=-\eta_1,\qquad\qquad\qquad\qquad\;\; d_{0,2}=\frac{2\pi}{b}-\eta_1,\\
&d_{k,1}=e_k+\frac{3e_k^2-\frac{g_2}{4}}{\frac{2\pi}{b}-(e_k+\eta_1)},\quad\;\;\; d_{k,2}=e_k-\frac{3e_k^2-\frac{g_2}{4}}{e_k+\eta_1},\qquad k=1,2,3.\nonumber
\end{align}
Furthermore, if $b=1$, then
\begin{equation}\label{ee11ee}
d_{1,1}<d_{3,1}<d_{0,1}<d_{1,2}<d_{2,1}<d_{0,2}<d_{3,2}<d_{2,2}.
\end{equation}
\end{lemma}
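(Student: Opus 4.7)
The plan is to reduce the problem to a simple geometric observation (intersection of a real-centered disk with $\mathbb{R}$), derive the algebraic formulas for $d_{k,j}$ by a short case analysis on the sign of $D_k := 3e_k^2 - g_2/4$, fix the ordering $d_{k,1} < d_{k,2}$ via the Hessian signs from Theorem~\ref{thm-LW}, and finally read off the specific ordering at $b=1$ from the classical values $\eta_1(i) = \pi$ and $e_1(i) = \Gamma(1/4)^4/(8\pi)$.

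By Remark~\ref{rmk3-2}, for $\tau = ib$ we have $e_k, g_2, \eta_1 \in \mathbb{R}$, so each $\alpha_k$ is real and each $\beta_k > 0$. By Theorem~\ref{thm-LW} combined with Theorem~\ref{main-thm-01}, the non-degeneracy of the half-periods gives $|\alpha_k| \neq \beta_k$ for $k \in \{1,2,3\}$, so every $\mathcal{B}_k$ is a genuine open disk with center on $\mathbb{R}$, whence $\partial\mathcal{B}_k \cap \mathbb{R}$ consists of exactly two points (center $\pm$ radius). The $k = 0$ case is immediate from \eqref{B00}. For $k \in \{1,2,3\}$, I would unify the two sign-cases $\alpha_k^2 - \beta_k^2 \gtrless 0$ by writing
\[
\partial\mathcal{B}_k \cap \mathbb{R} = \Big\{ e_k + \frac{1}{\alpha_k + \beta_k},\ e_k + \frac{1}{\alpha_k - \beta_k}\Big\},
\]
and then use the factorization $D_k = (e_k - e_l)(e_k - e_m)$ (with $\{l,m\} = \{1,2,3\}\setminus\{k\}$) to see that $D_1, D_2 > 0$ and $D_3 < 0$. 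A short substitution shows that $\alpha_k \pm \beta_k$ reduces to $-(e_k + \eta_1)/D_k$ or $(\tfrac{2\pi}{b} - e_k - \eta_1)/D_k$ (with signs depending on $\mathrm{sgn}(D_k)$), so regardless of this sign the two intersection points are precisely the values in \eqref{efc-1}.

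To establish $d_{k,1} < d_{k,2}$, I would compute
\[
d_{k,2} - d_{k,1} = -\frac{(2\pi/b)\, D_k}{(e_k + \eta_1)(\tfrac{2\pi}{b} - e_k - \eta_1)},
\]
and derive the closed form $4\pi^2 \det D^2 G(\omega_k/2) = (e_k + \eta_1)(\tfrac{2\pi}{b} - e_k - \eta_1)$ by differentiating the identity \eqref{G_z} in $z$ and $\bar z$ (a one-line computation using the Legendre relation $\tau\eta_1 - \eta_2 = 2\pi i$ with $\tau = ib$). Theorem~\ref{thm-LW} then pins down the sign: the product is negative for $k = 1, 2$ (saddles) and positive for $k = 3$ (minimum). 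Combined with $D_1, D_2 > 0 > D_3$, this forces $d_{k,2} - d_{k,1} > 0$ in every case.

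For the ordering \eqref{ee11ee} at $b = 1$, I would substitute the classical specializations at $\tau = i$: $\eta_1(i) = \pi$ (from the modular law $\eta_1(-1/\tau) = \tau\eta_2(\tau)$ at the fixed point $\tau = i$ combined with the Legendre relation), $e_3(i) = 0$, and $e_2(i) = -e_1(i)$ (from the extra automorphism $\wp(iz;i) = -\wp(z;i)$). The eight quantities then exhibit the antisymmetry $d_{1,1} = -d_{2,2}$, $d_{3,1} = -d_{3,2}$, $d_{0,1} = -d_{0,2}$, $d_{1,2} = -d_{2,1}$, so \eqref{ee11ee} reduces to the negative-half chain $d_{1,1} < d_{3,1} < d_{0,1} < d_{1,2} < 0$. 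After clearing denominators, each of these four inequalities collapses to one of two scalar conditions: $e_1(i) > \pi$ and $e_1(i)^2 - 2\pi e_1(i) - \pi^2 < 0$, i.e., $\pi < e_1(i) < \pi(1 + \sqrt{2})$. Both hold from the classical value $e_1(i) = \Gamma(1/4)^4/(8\pi) \approx 6.8754$. The main obstacle is only notational: cleanly managing the two sign cases of $D_k$ in the general-$k$ simplification; no deep ingredient beyond Theorem~\ref{thm-LW} and the one-line Hessian calculation is needed.
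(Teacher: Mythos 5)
Your proposal is correct, and for the core computation it follows essentially the same path as the paper: reduce to real-centered disks via $e_k,g_2,\eta_1\in\mathbb{R}$ and $|\alpha_k|\neq\beta_k$ (non-degeneracy of the half-periods for rectangular $\tau$), then use the factorization $3e_k^2-\tfrac{g_2}{4}=(e_k-e_l)(e_k-e_m)$ to simplify $\alpha_k\pm\beta_k$. Two points differ in detail. First, the paper splits into the cases $|\alpha_k|>\beta_k$ ($k=1,2$, saddles) and $|\alpha_k|<\beta_k$ ($k=3$, minimum), quoting Part I [Lemma 3.3], so that ``center $\mp$ radius'' directly yields $d_{k,1}<d_{k,2}$; you instead keep the two intersection points in the unified form $e_k+\tfrac{1}{\alpha_k\pm\beta_k}$ and settle the ordering by computing $d_{k,2}-d_{k,1}=-\tfrac{(2\pi/b)D_k}{(e_k+\eta_1)(\frac{2\pi}{b}-e_k-\eta_1)}$ together with the identity $4\pi^2\det D^2G(\tfrac{\omega_k}{2})=(e_k+\eta_1)(\tfrac{2\pi}{b}-e_k-\eta_1)$; since $\alpha_k^2-\beta_k^2=-4\pi^2\det D^2G(\tfrac{\omega_k}{2})/D_k^2$, this is the same dichotomy in a slightly more self-contained packaging — I checked the sign bookkeeping and it is consistent in all four cases. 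Second, and more substantively, for \eqref{ee11ee} at $b=1$ the paper simply appeals to the numerically computed circles of Figure~1, whereas you exploit $\eta_1(i)=\pi$, $e_3(i)=0$, $e_2(i)=-e_1(i)$ to get the antisymmetries $d_{0,1}=-d_{0,2}$, $d_{3,1}=-d_{3,2}$, $d_{1,1}=-d_{2,2}$, $d_{1,2}=-d_{2,1}$, reducing the eight-term chain to $d_{1,1}<d_{3,1}<d_{0,1}<d_{1,2}<0$ and then to the two scalar conditions $e_1(i)>\pi$ and $e_1(i)^2-2\pi e_1(i)-\pi^2<0$; I verified that each of the four inequalities does collapse to one of these two, and both hold since $e_1(i)\approx 2.188\pi\in(\pi,(1+\sqrt{2})\pi)$. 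This is a cleaner and more verifiable treatment of that step than the paper's figure-based argument, though it still rests on the (classical) numerical value of $e_1(i)$.
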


\begin{proof} Since $\tau=ib$ with $b>0$, we have $e_1, e_2, e_3, g_2, \eta_1\in\mathbb{R}$ and $e_2<e_3<e_1$. Recall \eqref{B00}-\eqref{alphak1} that (Recall from Section 1 that $\mathcal{B}_k$'s are all open disks for $\tau=ib$ with $b>0$)
\[
\mathcal{B}_0=\Big\{z\in\mathbb{C}\; :\; \Big|z-\Big(\frac{\pi}{b}-\eta_1\Big)\Big|<\frac{\pi}{b}\Big\},
\]
and for $k\in\{1,2,3\}$,
\begin{equation}\label{al0phak1}
\mathcal{B}_k=\bigg\{z\in\mathbb{C}\; :\; \bigg|z-e_k-\frac{\alpha_k}{\alpha_k^2-\beta_k^2}\bigg|<\frac{\beta_k}{\left|\alpha_k^2-\beta_k^2\right|}\bigg\}\quad\text{since }|\alpha_k|\neq \beta_k,
\end{equation}
where
$$
\alpha_k=\frac{\frac{\pi}{b}-(\eta_1+e_k)}{3e_k^2-\frac{g_2}{4}}\in\mathbb R,\quad \beta_k=\frac{\pi}{b|3e_k^2-\frac{g_2}{4}|}>0.
$$
This proves $\partial\mathcal{B}_0\cap\mathbb{R}=\{d_{0,1}, d_{0,2}\}$ and \eqref{efc-1}. For later usage, we recall Part I \cite[Remark 3.2]{CFL} that
\begin{equation}\label{eq330}\det D^2 G_p(0)>0\;\Leftrightarrow\;\wp(p)\in \mathcal{B}_0,\end{equation}
$$\det D^2 G_p(0)<0\;\Leftrightarrow\;\wp(p)\in \mathbb{C}\setminus\overline{\mathcal{B}_0},$$
$$\det D^2 G_p(0)=0\;\Leftrightarrow\;\wp(p)\in \partial\mathcal{B}_0.$$

Since $-\frac{g_2}{4}=e_1e_2+e_1e_3+e_2e_3$ and $e_1+e_2+e_3=0$, by denoting $\{i,j,k\}=\{1,2,3\}$, we easily obtain
\[
3e_k^2-\frac{g_2}{4}=2e_k^2+e_ie_j=(e_k-e_i)(e_k-e_j)\begin{cases}>0\quad\text{if }k=1,2\\
<0\quad\text{if }k=3.\end{cases}
\]

{\bf Case 1.} We consider $k=1,2$.

Then Theorem \ref{thm-LW} says that $\frac{\omega_k}{2}$ is a non-degenerate saddle point of $G(z)$, so it follows from Part I \cite[Lemma 3.3]{CFL} that $|\alpha_k|>\beta_k$ and
\begin{equation}\label{eq331}
\det D^2 G_p\Big(\frac{\omega_k}{2}\Big)>0\;\Leftrightarrow\;\wp(p)\in \mathcal{B}_k,
\end{equation}
$$\det D^2 G_p\Big(\frac{\omega_k}{2}\Big)<0\;\Leftrightarrow\;\wp(p)\in \mathbb C\setminus \overline{\mathcal{B}_k},$$
$$\det D^2 G_p\Big(\frac{\omega_k}{2}\Big)=0\;\Leftrightarrow\;\wp(p)\in \partial\mathcal{B}_k.$$
Consequently, $\beta_k=\frac{\pi}{b(3e_k^2-\frac{g_2}{4})}$ and
$$\mathcal{B}_k=\bigg\{z\in\mathbb{C}\; :\; \bigg|z-e_k-\frac{\alpha_k}{\alpha_k^2-\beta_k^2}\bigg|<\frac{\beta_k}{\alpha_k^2-\beta_k^2}\bigg\}.$$
This implies $\partial\mathcal{B}_k\cap\mathbb{R}=\{d_{k,1}, d_{k,2}\}$ with $d_{k,1}<d_{k,2}$ and
$$d_{k,1}=e_k+\frac{\alpha_k-\beta_k}{\alpha_k^2-\beta_k^2}=e_k+\frac{3e_k^2-\frac{g_2}{4}}{\frac{2\pi}{b}-(e_k+\eta_1)},$$
$$d_{k,2}=e_k+\frac{\alpha_k+\beta_k}{\alpha_k^2-\beta_k^2}=e_k-\frac{3e_k^2-\frac{g_2}{4}}{e_k+\eta_1}.$$

{\bf Case 2.} We consider $k=3$.

Then Theorem \ref{thm-LW} says that $\frac{\omega_3}{2}$ is a non-degenerate minimal point of $G(z)$, so it follows from Part I \cite[Lemma 3.3]{CFL} that $|\alpha_3|<\beta_3$ and
\begin{equation}\label{eq332}
\det D^2 G_p\Big(\frac{\omega_3}{2}\Big)>0\;\Leftrightarrow\;\wp(p)\in \mathbb C\setminus \overline{\mathcal{B}_3},
\end{equation}
$$\det D^2 G_p\Big(\frac{\omega_3}{2}\Big)<0\;\Leftrightarrow\;\wp(p)\in  {\mathcal{B}_3},$$
$$\det D^2 G_p\Big(\frac{\omega_3}{2}\Big)=0\;\Leftrightarrow\;\wp(p)\in \partial\mathcal{B}_3.$$
Consequently, $\beta_3=-\frac{\pi}{b(3e_3^2-\frac{g_2}{4})}$ and
$$\mathcal{B}_3=\bigg\{z\in\mathbb{C}\; :\; \bigg|z-e_3-\frac{\alpha_3}{\alpha_3^2-\beta_3^2}\bigg|<\frac{-\beta_3}{\alpha_3^2-\beta_3^2}\bigg\}.$$
This implies $\partial\mathcal{B}_3\cap\mathbb{R}=\{d_{3,1}, d_{3,2}\}$ with $d_{3,1}<d_{3,2}$ and
$$d_{3,1}=e_3+\frac{\alpha_3+\beta_3}{\alpha_3^2-\beta_3^2}=e_3+\frac{3e_3^2-\frac{g_2}{4}}{\frac{2\pi}{b}-(e_3+\eta_1)},$$
$$d_{3,2}=e_3+\frac{\alpha_3-\beta_3}{\alpha_3^2-\beta_3^2}=e_3-\frac{3e_3^2-\frac{g_2}{4}}{e_3+\eta_1}.$$

Finally, when $b=1$, it is well known that $\eta_1=\pi$, $e_3=0$, $e_1=-e_2\approx 2.18844\pi$, so $\partial\mathcal{B}_k$'s can be computed numerically; see \cite[Section 5]{CFL} for the details, and the figures of $\partial\mathcal{B}_k$'s is seen in Figure 1. Clearly \eqref{ee11ee} follows directly from Figure 1.
\end{proof}

\begin{lemma}\label{lemma3-22} Let $\tau=ib$ with $b>0$. Then $(\cup_k\partial \mathcal{B}_k)\cap \mathbb{R}$ contains exactly $8$ points $\{d_{k,j} : k=0,1,2,3, j=1,2\}$ with \eqref{ee11ee} holding for all $b>0$, and
\begin{equation}\label{ektau}\Big\{e_1, e_2, e_3, \wp\Big(\frac14\Big), \wp\Big(\frac{\tau}{4}\Big), \wp\Big(\frac{1}{4}+\frac{\tau}2\Big), \wp\Big(\frac{1}{2}+\frac{\tau}{4}\Big)\Big\}\cap(\cup_k\partial \mathcal{B}_k)=\emptyset.\end{equation}
In particular, the relative positions of the four circles $\partial\mathcal{B}_k$'s are the same as Figure 1 for all $b>0$.
\end{lemma}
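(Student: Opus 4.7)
The strategy is to propagate the known ordering \eqref{ee11ee} at $b=1$ to all $b>0$ by a continuity-in-$b$ argument, using \eqref{ektau} to prevent any $d_{k,j}$ from ever crossing one of the seven real "anchor" values. So first I would establish \eqref{ektau} for every $b>0$. For each $p$ in $\{\frac14,\frac\tau4,\frac14+\frac\tau2,\frac12+\frac\tau4\}$, Lemma \ref{lemma3-10}(1)--(4) asserts that all critical points of $G_p(z)$, and in particular every trivial critical point $\frac{\omega_j}{2}$, are non-degenerate; since such $p\notin E_\tau[2]$, Theorem \ref{main-thm-01} forces $\wp(p)\notin\partial\mathcal{B}_j$ for every $j\in\{0,1,2,3\}$. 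For $e_k$ with $k\in\{1,2,3\}$ and $j\neq k$, take $p=\frac{\omega_k}{2}$: then $G_p(z)=G(z-\frac{\omega_k}{2})$ inherits from Theorem \ref{thm-LW} non-degenerate critical points at every $\frac{\omega_j}{2}$, so Theorem \ref{main-thm-01} yields $e_k\notin\partial\mathcal{B}_j$. Finally $e_k\notin\partial\mathcal{B}_k$ is immediate from the formulas in the preceding lemma: $e_k\in\{d_{k,1},d_{k,2}\}$ would force $3e_k^2-\frac{g_2}{4}=(e_k-e_i)(e_k-e_m)=0$ with $\{i,k,m\}=\{1,2,3\}$, which is impossible since $e_1,e_2,e_3$ are distinct and $\partial\mathcal{B}_k\cap\mathbb{R}=\{d_{k,1},d_{k,2}\}$.

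Next, Lemma \ref{Lemma3-1} together with the observation that each of $\frac14,\frac\tau4,\frac14+\frac\tau2,\frac12+\frac\tau4$ lies strictly inside one of the four real segments on which $\wp$ is a monotone bijection onto $(-\infty,e_2)$, $(e_2,e_3)$, $(e_3,e_1)$, $(e_1,+\infty)$ respectively, forces the seven anchor values to satisfy
\[
\wp\Big(\frac\tau4\Big)<e_2<\wp\Big(\frac14+\frac\tau2\Big)<e_3<\wp\Big(\frac12+\frac\tau4\Big)<e_1<\wp\Big(\frac14\Big)
\]
for every $b>0$, to depend continuously on $b$, and to partition $\mathbb R$ into eight disjoint open "slots". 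At $b=1$, the known ordering \eqref{ee11ee} combined with the numerical positions of the anchors (readable from Figure 1 using $\eta_1=\pi$, $e_3=0$, $e_1=-e_2\approx 2.188\pi$ at $\tau=i$) shows that each $d_{k,j}(1)$ lies in a separate slot in the natural interlaced pattern. Since each $d_{k,j}(b)$ is a continuous function of $b$ on the connected interval $(0,\infty)$, and since \eqref{ektau} guarantees $d_{k,j}(b)$ never coincides with any anchor value, the intermediate value theorem forces each $d_{k,j}$ to remain in its original slot throughout $(0,\infty)$. Hence the eight values remain distinct and in the prescribed order \eqref{ee11ee} for every $b>0$; together with \eqref{ektau} this is exactly the conclusion of the lemma.

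The main obstacle is the base-case positioning at $b=1$: one must verify not merely \eqref{ee11ee} at $b=1$ but the sharper statement that each $d_{k,j}(1)$ sits in its correct slot relative to the seven real anchors, in order to seed the continuity argument. This reduces to a concrete numerical check at $\tau=i$ based on Figure 1 and the explicit formulas for the $d_{k,j}$'s. The remaining ingredients — continuity in $b$ on the connected interval $(0,\infty)$, together with Theorem \ref{main-thm-01} and Lemma \ref{lemma3-10} — are clean and require no further input.
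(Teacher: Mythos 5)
Your proof of \eqref{ektau} is essentially the paper's (the paper handles $e_k\in\partial\mathcal{B}_l$, $l\neq k$, by deducing $\wp(\frac{\omega_k-\omega_l}{2})\in\partial\mathcal{B}_0$ from Theorem \ref{main-thm-01}, which is equivalent to your direct use of $p=\frac{\omega_k}{2}$). For the core claim that the eight points stay distinct and ordered as in \eqref{ee11ee} for every $b>0$, you take a genuinely different route. The paper shows $\partial\mathcal{B}_k\cap\partial\mathcal{B}_l\cap\mathbb{R}=\emptyset$ for all $b$ by an ODE argument: a common real point $\wp(p)$ would make both $\frac{\omega_k}{2}$ and $\frac{\omega_l}{2}$ degenerate critical points of $G_p$, forcing $\wp(p-\frac{\omega_{k'}}{2})+\eta_1\in\{0,\frac{2\pi}{b}\}$ for $k'=k,l$ and hence two cusps on a single conditional stability set $\sigma_1$ or $\sigma_2$, contradicting Corollary \ref{coro-s4-1}; it then transports the $b=1$ ordering by continuity. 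You instead trap each $d_{k,j}(b)$ in one of the eight slots cut out by the seven anchor values, which \eqref{ektau} forbids it from crossing. This bypasses the cusp machinery entirely and, as a bonus, delivers the interlacing \eqref{eqfc-21} for all $b$ at once (the paper only obtains \eqref{eqfc-21} later, in Step 3 of the proof of Theorems \ref{main-thm-b-2} and \ref{main-thm-b-22}, via a separate counting argument). The costs are two: you need the stronger numerical base case \eqref{eqfc-21} at $\tau=i$ rather than just \eqref{ee11ee} (a routine computation from $\eta_1=\pi$, $e_3=0$, $e_1=-e_2\approx 2.188\pi$, which does check out), and you should say explicitly why each $d_{k,j}(b)$ is finite and continuous on all of $(0,\infty)$ --- otherwise a point could in principle escape through $-\infty$ in the first (unbounded) slot and reappear through $+\infty$ in the last. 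This follows from \eqref{efc-1}: the denominators $e_k+\eta_1$ and $\frac{2\pi}{b}-(e_k+\eta_1)$ vanish only when $|\alpha_k|=\beta_k$, i.e. when $\frac{\omega_k}{2}$ is a degenerate critical point of $G(z)$, which Theorem \ref{thm-LW} excludes on rectangular tori.
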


\begin{proof}
First, we prove \eqref{ektau}. Note from Lemma \ref{lemma3-10} and Theorem \ref{main-thm-01} that $\wp(\frac14), \wp(\frac{\tau}{4})$,  $\wp(\frac{1}{4}+\frac{\tau}2)$, $\wp(\frac{1}{2}+\frac{\tau}{4})\notin\cup_k\partial\mathcal{B}_k$.
Now we prove
$e_k\notin\cup_l\partial\mathcal{B}_l$.
It is known from Theorem \ref{main-thm-01} that $\frac{\omega_k}{2}$ is a degenerate critical point of $G(z)=G_0(z)$ if and only if $e_k\in\partial \mathcal{B}_0$. Since $\tau=ib$ with $b>0$, we see from Theorem \ref{thm-LW} that $e_k\notin\partial \mathcal{B}_0$ for all $k$.
Fix any $k\in\{1,2,3\}$ and note from the definition \eqref{alphak1} of $\mathcal{B}_k$ that $e_k\notin \partial \mathcal{B}_k$. If $e_k\in \partial \mathcal{B}_l$ for some $l\in\{1,2,3\}\setminus\{k\}$, then Theorem \ref{main-thm-01} implies $e_{l'}=\wp(\frac{\omega_k-\omega_l}{2})\in \partial \mathcal{B}_0$ (where $\{l',l,k\}=\{1,2,3\}$), a contradiction. This proves that $e_k\notin\cup_l\partial\mathcal{B}_l$, so \eqref{ektau} holds.

By Lemma \ref{lemma3-22}, to prove $(\cup_k\partial \mathcal{B}_k)\cap \mathbb{R}$ contains exactly $8$ points $\{d_{k,j} : k=0,1,2,3, j=1,2\}$ is equivalent to prove $d_{k,j}\neq d_{l,j'}$ for any $k\neq l$ and $j,j'$. Hence, it suffices to prove that $\partial\mathcal{B}_k\cap\partial\mathcal{B}_l\cap\mathbb{R}=\emptyset$ for any $k\neq l$.

Suppose there are $k\neq l$ and $d\in\mathbb{R}$ such that $d\in\partial\mathcal{B}_k\cap\partial\mathcal{B}_l$. Then there is a unique $p\in(0,\frac{1}{2})\cup (\frac{1}{2},\frac{1+\tau}{2})\cup (\frac{\tau}{2},\frac{1+\tau}{2})\cup (0,\frac{\tau}{2})$ such that $\wp(p)=d$. Consequently, both $\frac{\omega_k}{2}$ and $\frac{\omega_l}{2}$ are degenerate critical points of $G_p(z)$. Recall \eqref{eqfc-15} that $$0=4\pi^2\det D^2G_p\left(\frac{\omega_{k'}}{2}\right)=\frac{\pi^2}{(\operatorname{Im}\tau)^2}-\left|{\wp\left(p-\frac{\omega_{k'}}{2}\right)+\eta_1}-\frac{\pi}{\operatorname{Im}\tau}\right|^2,\; k'=k,l.$$
Since $\wp(p-\frac{\omega_{k'}}{2}), \eta_1\in\mathbb{R}$, we obtain
\begin{align}\label{eqfc-45}\wp\left(p-\frac{\omega_{k'}}{2}\right)+\eta_1=0 \text{ or }\frac{2\pi}{b},\quad\text{for }\; k'=k,l.\end{align}

{\bf Case 1.} $p\in (0, \frac12)\cup (\frac{\tau}{2}, \frac{1+\tau}2)$.

Then Lemmas \ref{Lemma3-5}-\ref{Lemma3-6} imply that $\sigma_2$ has no cusps, so 
we see from Lemma \ref{lemma2-10} and $\tau=ib$ that $\wp(p-\frac{\omega_{k'}}{2})+\eta_1\neq\frac{2\pi i}{\tau}=\frac{2\pi}{b}$ for any $k'$. Then \eqref{eqfc-45} implies $\wp(p-\frac{\omega_{k'}}{2})+\eta_1=0$ for $k'\in\{k,l\}$, from which and Lemma \ref{lemma2-10} we see that $A_k$ and $A_l$ are both cusps of $\sigma_1$,
a contradiction with Corollary \ref{coro-s4-1}.

{\bf Case 2.} $p\in (0, \frac{\tau}2)\cup (\frac{1}{2}, \frac{1+\tau}2)$.

Then Lemmas \ref{Lemma3-7}-\ref{Lemma3-8} imply that $\sigma_1$ has no cusps, so 
we see from Lemma \ref{lemma2-10} that $\wp(p-\frac{\omega_{k'}}{2})+\eta_1\neq0$ for any $k'$. Then \eqref{eqfc-45} implies $\wp(p-\frac{\omega_{k'}}{2})+\eta_1=\frac{2\pi}{b}=\frac{2\pi i}{\tau}$ for $k'\in\{k,l\}$, so Lemma \ref{lemma2-10} implies that $A_k$ and $A_l$ are both cusps of $\sigma_2$, a contradiction with Corollary \ref{coro-s4-1}.

This proves $\partial\mathcal{B}_k\cap\partial\mathcal{B}_l\cap\mathbb{R}=\emptyset$ for any $k\neq l$, so $d_{k,j}\neq d_{l,j'}$ for any $k\neq l$ and $j,j'$. Together with the fact that \eqref{ee11ee} holds for $b=1$, we see from the continuity that \eqref{ee11ee} holds for all $b>0$.
\end{proof}

Now we are ready to prove Theorems \ref{main-thm-b-2} and \ref{main-thm-b-22}.

\begin{proof}[Proof of Theorems \ref{main-thm-b-2} and \ref{main-thm-b-22}] Note from Lemma \ref{Lemma3-1} that
\begin{equation}\label{eqfc-20}\wp\Big(\frac\tau4\Big)<e_2<\wp\Big(\frac{1}{4}+\frac{\tau}2\Big)<e_3
<\wp\Big(\frac{1}{2}+\frac{\tau}{4}\Big)<e_1<\wp\Big(\frac14\Big).\end{equation}
Recall Lemma \ref{lemma3-22} that $(\cup_k\partial \mathcal{B}_k)\cap \mathbb{R}$ contains exactly $8$ points, denoted by
$$d_1<d_2<\cdots<d_8,$$
and $\{e_1, e_2, e_3, \wp(\frac14), \wp(\frac{\tau}{4}), \wp(\frac{1}{4}+\frac{\tau}2), \wp(\frac{1}{2}+\frac{\tau}{4})\}\cap\{d_j\}_{j=1}^8=\emptyset$.
Then \eqref{ee11ee} yields
$$d_1=d_{1,1},\quad d_2=d_{3,1},\quad d_3=d_{0,1},\quad d_4=d_{1,2},$$
$$d_5=d_{2,1},\quad d_6=d_{0,2},\quad d_7=d_{3,2},\quad d_8=d_{2,2}.$$
This together with Lemma \ref{lemma3-22} implies \eqref{fineq-1}-\eqref{fineq-4}.

{\bf Step 1.} Denote $d_0=-\infty$ and $d_{9}=+\infty$. We prove that for each $1\leq j\leq 9$, the number of pairs of nontrivial critical points of $G_p(z)$ is the same constant $n_j\in\{0,1\}$ for any $\wp(p)\in (d_{j-1}, d_j)$.

Define
$$\Omega_{j,0}:=\Big\{t\in (d_{j-1}, d_j) \;:\;\text{$G_p(z)$ has no nontrivial critical points for $\wp(p)=t$}\Big\},$$
$$\Omega_{j,1}:=\bigg\{t\in (d_{j-1}, d_j) \;:\;\begin{array}{l}\text{$G_p(z)$ has a unique pair of nontrivial }\\\text{critical points for $\wp(p)=t$}\end{array}\bigg\}.$$
Recall the following facts:
\begin{enumerate}
\item[(a)]
Theorem \ref{thm-LW} says that $G_{\frac{\omega_k}{2}}(z)=G(z-\frac{\omega_k}{2})$ has no nontrivial critical points and all trivial critical points of $G_{\frac{\omega_k}{2}}(z)$ are non-degenerate.
\item[(b)] Lemma \ref{lemma3-10} says that $G_p(z)$ has a unique pair of nontrivial critical points for $p\in\{\frac14, \frac{\tau}{4}, \frac{1}{4}+\frac{\tau}2, \frac{1}{2}+\frac{\tau}4\}$.
\item[(c)] For any $\wp(p)\in (d_{j-1}, d_j)$, it follows from $\wp(p)\notin\cup_l\partial\mathcal{B}_l$ that all trivial critical points of $G_{p}(z)$ are non-degenerate.
\item[(d)] For any $\wp(p)\in\mathbb{R}\setminus\{e_1,e_2,e_3\}$, Theorem \ref{thm3-10} says that $G_p(z)$ has at most one pair of nontrivial critical points, which are non-degenerate if exists.
\end{enumerate}
Then $\Omega_{j,0}\cup \Omega_{j,1}=(d_{j-1},d_{j})$. Furthermore, together with Theorem \ref{thm-B} and Lemma \ref{lemma50-0}, we see that both $\Omega_{j,0}$ and $\Omega_{j,1}$ are open, so either $\Omega_{j,0}=(d_{j-1}, d_j)$ or $\Omega_{j,1}=(d_{j-1}, d_j)$.
This completes the proof of Step 1.

{\bf Step 2.} Fix $1\leq j\leq 8$ and let $\wp(p_j)=d_j$. We show that $G_{p_j}(z)$ has no nontrivial critical points.

Assume by contradiction that $G_{p_j}(z)$ has a unique pair of nontrivial critical points $\pm q_0$, then they are non-degenerate. So by the implicit function theorem,  $G_{p}(z)$ has a pair of nontrivial critical points $\pm q_p$ in a small neighborhood of $\pm q_0$ for any $p$ close to $p_j$.

On the other hand, note that $\wp(p_j)=d_j\in\partial\mathcal{B}_{k_j}$ for some $k_j\in\{0,1,2,3\}$, so ${\omega_{k_j}}/{2}$ is a degenerate critical point of $G_{p_j}(z)$. Recall \eqref{eq330}, \eqref{eq331} and \eqref{eq332} that $\det D^2G_p({\omega_{k_j}}/{2})$ changes sign when $\wp(p)$ crosses $\partial\mathcal{B}_{k_j}$, so the Brouwer degree of $\nabla G_p(z)$ contributed by the critical point ${\omega_{k_j}}/{2}$ changes between $1$ and $-1$ when  $\wp(p)$ crosses $\partial\mathcal{B}_{k_j}$. By the invariance of the local degree and (d), we conclude that the number of the pairs of nontrivial critical points in a small neighborhood of ${\omega_{k_j}}/{2}$ changes between $0$ and $1$ when $\wp(p)$ crosses $d_j\in\partial\mathcal{B}_{k_j}$ along $\mathbb{R}$. That is, there is $p$ close to $p_j$ satisfying $\wp(p)\in\mathbb{R}$ such that $G_p(z)$ has a pair of nontrivial critical points in a small neighborhood of  ${\omega_{k_j}}/{2}$ and so has two pairs of nontrivial critical points, a contradiction with (d). This proves that $G_{p_j}(z)$ has no nontrivial critical points.

{\bf Step 3.} We complete the proof of Theorem \ref{main-thm-b-22}.

By Steps 1-2 and Theorem \ref{thm-B}, we know that when $\wp(p)\in (-\infty, d_1]\cup [d_8,+\infty)$, $G_p(z)$ has no nontrivial critical points. Then \eqref{eqfc-20} and (b) imply that $d_1<\wp(\frac{\tau}{4})$ and $\wp(\frac14)<d_8$. Let $I_j=(d_{j-1}, d_j)$ be the finite open intervals for $2\leq j\leq 8$. Then Step 1 says that the number of the pair of nontrivial critical points is the same number $n_j\in\{0,1\}$ for any $\wp(p)\in I_j$. From here, \eqref{eqfc-20}, (a) and (b), we easily see that each $I_j$ contains exactly one element of $\{e_1, e_2, e_3, \wp(\frac14), \wp(\frac{\tau}{4}), \wp(\frac{1}{4}+\frac{\tau}2), \wp(\frac{1}{2}+\frac{\tau}{4})\}$. 
 This proves \eqref{eqfc-21} and so completes the proof of Theorem \ref{main-thm-b-22}.
 
 {\bf Step 4.} We complete the proof of Theorem \ref{main-thm-b-2}.
 
It follows from \eqref{eqfc-21}, Steps 1-2, (a) and (b) that $G_p(z)$ has no nontrivial critical points for
$$\wp(p)\in (-\infty, d_1]\cup [d_2, d_3]\cup [d_4, d_5]\cup [d_6, d_7]\cup [d_8,+\infty),$$
but $G_p(z)$ has a unique pair of nontrivial critical points $\pm a_0$ that are always non-degenerate for
$$\wp(p)\in (d_1, d_2)\cup (d_3, d_4)\cup (d_5, d_6)\cup (d_7, d_8).$$

Furthermore, for $\wp(p)\in (d_1, d_2)$, we have $p\in (0, \frac{\tau}{2})$ and it follows from Theorem \ref{thm3-10} that the nontrivial critical point $a_0=r_0+s_0\tau$ of $G_p(z)$ satisfies $r_0\in\frac12\mathbb Z$. Since $\wp(\frac{\tau}{4})\in (d_1, d_2)$  and $G_{\frac{\tau}{4}}(z)$ has a unique pair of nontrivial critical points $\pm(\frac12+\frac{\tau}{4})$, i.e. $r_0=\frac12$ for $p=\frac{\tau}{4}$, it follows from the continuity that $r_0\equiv \frac12$ for any $\wp(p)\in (d_1, d_2)$. This proves the assertion (2-1). The other (2-2), (2-3) and (2-4) can be proved similarly.

We have already proved that $\pm a_0$ are non-degenerate, i.e. $\det D^2G_p(\pm a_0)\neq 0$.
It remains to prove that $\pm a_0$ are saddle points. Let us take $\wp(p)\in (d_1, d_2)$ for example. Recall Lemma \ref{lemma3-22} that the relative positions of the four circles $\partial\mathcal{B}_k$'s are the same as Figure 1 for all $b>0$. It follows from $d_1=d_{1,1}$, $d_2=d_{3,1}$ and Figure 1 that $\wp(p)\in (d_1, d_2)\subset \mathcal{B}_1\setminus \cup_{k\neq 1} \overline{\mathcal{B}_k}$. From here, \eqref{eq330}, \eqref{eq331} and \eqref{eq332}, we obtain
$$\det D^2G_p\Big(\frac{\omega_k}{2}\Big)>0\quad\text{for }k=1, 3,$$
$$\det D^2G_p\Big(\frac{\omega_k}{2}\Big)<0\quad\text{for }k=0, 2.$$
Thus, we conclude from \eqref{deg-count} that $\det D^2G_p(\pm a_0)<0$, namely $\pm a_0$ are non-degenerate saddle points. The cases $\wp(p)\in (d_3, d_4)\cup (d_5, d_6)\cup (d_7, d_8)$ can be proved similarly.
 The proof is complete.
\end{proof}

\section{Proofs of several theorems}

\label{section7}

This section is devoted to the proofs of Theorems \ref{main-thm-b-23-0}, \ref{main-thm-8},  \ref{thm-section5-5} and \ref{main-thm-b-23} by using Theorems \ref{main-thm-b-2} and \ref{main-thm-b-22}. First, we prove Theorems \ref{main-thm-b-23-0} and \ref{main-thm-b-23}.

\begin{proof}[Proof of Theorems \ref{main-thm-b-23-0} and \ref{main-thm-b-23}] Let $\tau=ib$ with $b>0$. First, we prove Theorem \ref{main-thm-b-23}.
Let $(r,s)\in [-\frac12, \frac12]\times [0,\frac12]\setminus\frac12\mathbb{Z}^2$ and 
recall \eqref{513-1} that
$$
\wp(p_{r,s}(\tau))=\wp (r+s\tau)+\frac{\wp ^{\prime }(r+s\tau)}{%
2(\zeta(r+s\tau)-r\eta_1-s\eta_2)},
$$
or equivalently, $\pm(r+s\tau)$ is a pair of nontrivial critical points of $G_{p_{r,s}(\tau)}(z)$. Since Theorem \ref{thm-LW}-(1) says that $G_0(z)=G(z)$ has no nontrivial critical points, we have $p_{r,s}(\tau)\neq 0$ in $E_{\tau}$, namely $\wp(p_{r,s}(\tau))\neq \infty$ and so $\wp(p_{r,s}(\tau))\in \mathbb C$.

(1) Let $(r,s)\in \partial I\cup \partial II\setminus\frac12\mathbb{Z}^2$, then $\overline{r+s\tau}=\pm (r+s\tau)$ in $E_{\tau}$, so it follows from \eqref{barwp}, $\eta_1\in\mathbb R$ and $\eta_2\in i\mathbb R$ that
\begin{align*}\overline{\wp(p_{r,s}(\tau))}&=\wp (\overline{r+s\tau})+\frac{\wp ^{\prime }(\overline{r+s\tau})}{%
2(\zeta(\overline{r+s\tau})-r\eta_1+s\eta_2)}\\
&=\wp (r+s\tau)+\frac{\wp ^{\prime }(r+s\tau)}{%
2(\zeta(r+s\tau)-r\eta_1-s\eta_2)}=\wp(p_{r,s}(\tau)),\end{align*}
namely $\wp(p_{r,s}(\tau))\in\mathbb{R}$.

(2)-(3) Let $(r,s)\in I^\circ\cup II^{\circ}$. Since $\pm(r+s\tau)$ is a pair of nontrivial critical points of $G_{p_{r,s}(\tau)}(z)$, we see from Theorem \ref{main-thm-b-2} that
\begin{equation}\label{wpp}
\wp(p_{r,s}(\tau))\in\mathbb C\setminus\mathbb R,\quad\forall (r,s)\in I^\circ\cup II^{\circ}.
\end{equation}
Take $(r,s)=(Cs,s)$ with $C\in\mathbb{R}\setminus\{0\}$ and $s>0$. Then it follows from \cite[Theorem 4.5]{CKL-PAMQ} that
$$\lim\limits_{s\to 0+}\wp(p_{Cs,s}(\tau))=-\frac{C\eta_1+\eta_2}{C+\tau}=\frac{2\pi i}{C+\tau}-\eta_1=\frac{2\pi i(C-ib)}{|C+\tau|^2}-\eta_1,$$
namely
$$\lim\limits_{s\to 0+}\operatorname{Im}\wp(p_{Cs,s}(\tau))=\frac{2\pi C}{|C+\tau|^2}.$$
Thus for $C>0$, we have $(Cs, s)\in I^\circ$ and $\operatorname{Im}\wp(p_{Cs,s}(\tau))>0$ for $s>0$ small. Then by \eqref{wpp} and the continuity, we obtain $\operatorname{Im}\wp(p_{r,s}(\tau))>0$ for all $(r,s)\in I^\circ$.

Similarly, for $C<0$, we have $(Cs, s)\in II^\circ$ and $\operatorname{Im}\wp(p_{Cs,s}(\tau))<0$ for $s>0$ small, so $\operatorname{Im}\wp(p_{r,s}(\tau))<0$ for all $(r,s)\in II^\circ$. 
This proves Theorem \ref{main-thm-b-23}.

It suffices to prove the equivalence between Theorem \ref{main-thm-b-23-0} and Theorem \ref{main-thm-b-23}. Suppose $\pm a$ is a pair of nontrivial critical point of $G_p(z)$. By replacing $a$ with $-a$ if necessary, we may assume $a=r+s\tau$ with $(r,s)\in [-\frac12, \frac12]\times [0,\frac12]\setminus\frac12\mathbb{Z}^2$. Then $\wp(p)=\wp(p_{r,s}(\tau))$.

By \eqref{sdsd}, we see that $\wp(a)\in\mathbb{R}$ if and only if $(r,s)\in \partial I\cup \partial II\setminus\frac12\mathbb{Z}^2$. Thus, 
$$(r,s)\in I^\circ\cup II^\circ\quad\text{if and only if}\quad \operatorname{Im}\wp(a)\neq 0.$$
When $I^\circ\cup II^\circ\ni (r,s)\to (0,0)$, we have
$$\wp(a)=\frac{1}{a^2}(1+O(|a|^4))=\frac{r^2-s^2b^2-2rsbi}{|r+s\tau|^4}(1+O(|a|^4)).$$
Thus, $\operatorname{Im}\wp(a)<0$ when $I^\circ\ni (r,s)\to (0,0)$, and $\operatorname{Im}\wp(a)>0$ when $II^\circ\ni (r,s)\to (0,0)$. Then by the continuity, we obtain
 $$(r,s)\in I^\circ\quad\text{if and only if}\quad \operatorname{Im}\wp(a)< 0,$$
 $$(r,s)\in II^\circ\quad\text{if and only if}\quad \operatorname{Im}\wp(a)> 0.$$
This proves the equivalence between Theorem \ref{main-thm-b-23-0} and Theorem \ref{main-thm-b-23}, so Theorem \ref{main-thm-b-23-0} holds.
\end{proof}

To prove Theorem \ref{main-thm-8}, we recall \cite[Theorem 1.10]{CFL}.

\begin{theorem}\cite{CFL}\label{thm-section5}
Fix $\tau$. Let $\Xi$ be a connected component of the open set $\mathbb{C}\setminus (\{e_1,e_2,e_3\}\cup\cup_{k=0}^3\partial \mathcal{B}_k)$, and define
$$m(\Xi):=\#\Big\{\frac{\omega_k}{2}\;: \;0\leq k\leq 3,\; \det D^2G_p\Big(\frac{\omega_k}{2}\Big)>0\;\text{for }p\in\wp^{-1}(\Xi)\Big\},$$ which is a constant independent of $p\in \wp^{-1}(\Xi)$ by Theorem \ref{main-thm-01}. Then $m(\Xi)\leq 2$, and
\begin{itemize}
\item[(1)] if $m(\Xi)=0$, then $G_p(z)$ has at least $6$ critical points for any $\wp(p)\in\Xi$, and $G_p(z)$ has exactly either $6$ or $10$ critical points that are all non-degenerate for almost all $\wp(p)\in \Xi$. 
\item[(2)] if $m(\Xi)=1$,  then $G_p(z)$ has exactly either $4$ or $8$ critical points that are all non-degenerate for almost all $\wp(p)\in \Xi$.
\item[(3)] if $m(\Xi)=2$, then $G_p(z)$ has exactly $6$ critical points for any $\wp(p)\in\Xi$, and critical points are all non-degenerate for almost all $\wp(p)\in \Xi$.
\end{itemize}
\end{theorem}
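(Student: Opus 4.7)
The strategy rests on three ingredients: (i) the degree identity \eqref{deg-count}, $\sum_{q}\deg_p(q)=-2$; (ii) the strict subharmonicity $\Delta G_p=2/|E_\tau|>0$ on $E_\tau\setminus\{\pm p\}$, which forbids interior local maxima and thus forces every non-degenerate critical point of $G_p$ to have local degree $+1$ (minimum, $\det D^2G_p>0$) or $-1$ (saddle, $\det D^2G_p<0$); and (iii) Theorem \ref{main-thm-01}, which guarantees that throughout a component $\Xi$ of $\mathbb{C}\setminus(\{e_1,e_2,e_3\}\cup\bigcup_k\partial\mathcal{B}_k)$ the four trivial critical points remain non-degenerate, so $m=m(\Xi)$ is well defined and locally constant. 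The plan is to combine these to enumerate the admissible configurations $(N_+,N_-)$ of pairs of nontrivial minima and saddles, then close up the boundary cases.

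First I would set up the arithmetic. On the dense open subset $\Xi^*$ of $\Xi$ on which every critical point (trivial and nontrivial) is non-degenerate---of full measure by Theorem \ref{main-thm-1}---the degree identity gives
$$m\cdot 1+(4-m)\cdot(-1)+2N_+\cdot 1+2N_-\cdot(-1)=-2,\qquad\text{i.e.,}\qquad N_--N_+=m-1,$$
while the count $4+2(N_++N_-)\in\{4,6,8,10\}$ from Theorem \ref{main-thm-1} forces $N_++N_-\leq 3$. Enumerating admissible pairs: $m=0\Rightarrow(N_+,N_-)\in\{(1,0),(2,1)\}$, giving $6$ or $10$ critical points (with $N_+\geq 1$); $m=1\Rightarrow(0,0)$ or $(1,1)$, i.e.\ $4$ or $8$; $m=2\Rightarrow(0,1)$ or $(1,2)$, i.e.\ $6$ or $10$; while $m=3,4$ would force $(0,2)$ or $(0,3)$, with totals $8$ or $10$. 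This immediately delivers the ``almost all'' clauses in cases (1) and (2) and the $6$-or-$10$ dichotomy in case (3) on $\Xi^*$.

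Second, I would prove the bound $m(\Xi)\leq 2$, which I expect to be the \textbf{main obstacle}. Writing $\mathcal{A}_k$ for the open region in $\wp(p)$-space where $\omega_k/2$ is a local minimum of $G_p$, one has $\mathcal{A}_0=\mathcal{B}_0$, and by \cite[Lemma 3.3]{CFL} each $\mathcal{A}_k$ for $k\in\{1,2,3\}$ equals either $\mathcal{B}_k$ or its exterior $\mathbb{C}\setminus\overline{\mathcal{B}_k}$ (or a half-plane in the degenerate case), depending on the sign of $|\alpha_k|-\beta_k$, i.e., on whether $\omega_k/2$ is a saddle or a local minimum of the unperturbed Green function $G(z)$. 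The bound $m(\Xi)\leq 2$ is equivalent to $\mathcal{A}_{k_1}\cap\mathcal{A}_{k_2}\cap\mathcal{A}_{k_3}=\emptyset$ for all triples of distinct indices in $\{0,1,2,3\}$. I would derive this algebraically from the explicit formulas \eqref{B00}--\eqref{alphak1}, exploiting the identity $e_1+e_2+e_3=0$ and the relations among the four parameter pairs $(\alpha_k,\beta_k)$ coming from $e_k=\wp(\omega_k/2)$ and the definition of $\eta_1$; the case-split by the signs of $|\alpha_k|-\beta_k$ is what makes this step delicate.

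Finally I would upgrade the almost-all conclusions to all $\wp(p)\in\Xi$ where the statement demands it. For case (1), $N_+\geq 1$ on $\Xi^*$ gives at least one pair of nontrivial minima; by local constancy of the Brouwer degree and the subharmonicity bound $\deg\leq 1$ at any isolated critical point of $G_p$, a minimum pair cannot simply vanish as $\wp(p)$ leaves $\Xi^*$ inside $\Xi$, yielding the ``at least $6$'' bound everywhere on $\Xi$. For case (3), one must exclude the $10$-critical-point configuration $(N_+,N_-)=(1,2)$: I would argue by connectivity, since on $\Xi^*$ the map $\wp(p)\mapsto(N_+,N_-)$ is locally constant and a transition between $(0,1)$ and $(1,2)$ inside the connected open set $\Xi$ would require a simultaneous bifurcation producing a minimum/saddle pair, a codimension-$2$ phenomenon in the complex parameter that cannot form a separating hypersurface in $\Xi$. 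A limit analysis at the isolated degenerations (where nontrivial critical points collide) then shows $N=1$ persists on all of $\wp^{-1}(\Xi)$, giving exactly $6$ critical points for every $\wp(p)\in\Xi$.
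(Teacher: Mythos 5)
Your overall strategy---combining the degree identity \eqref{deg-count} with the fact that $\Delta G_p>0$ away from $\pm p$ forces every isolated critical point to have local degree at most $1$, and then enumerating the admissible configurations $(N_+,N_-)$---is exactly the route of the paper (the theorem is quoted from Part I \cite{CFL}, where it is proved ``by using the degree counting formula \eqref{deg-count}''), and your arithmetic $N_--N_+=m(\Xi)-1$ together with $N_++N_-\le 3$ does correctly yield the almost-everywhere dichotomies in (1) and (2) as well as the ``at least $6$'' bound in (1). However, two of the genuinely hard points are not established. First, you identify $m(\Xi)\le 2$ as the main obstacle and then only promise to ``derive it algebraically'' from \eqref{B00}--\eqref{alphak1}; since the degree identity alone is perfectly consistent with $m=3$ (configuration $(0,2)$, eight critical points) and $m=4$ (configuration $(0,3)$, ten critical points), both of which are allowed by Theorem \ref{main-thm-1}, this step cannot be waved through, and no actual argument is supplied.

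Second, and more seriously, your argument for case (3) is incorrect. To exclude the configuration $(N_+,N_-)=(1,2)$ you assert that a transition between $(0,1)$ and $(1,2)$ inside $\Xi$ would be ``a codimension-$2$ phenomenon'' that ``cannot form a separating hypersurface.'' But such a transition is a saddle-node bifurcation at some $a\notin E_{\tau}[2]$, automatically mirrored at $-a$, which creates one pair of nontrivial minima and one pair of nontrivial saddles simultaneously; this is a codimension-$1$ event in the $\wp(p)$-parameter. Theorem \ref{thm-section5-5} of this very paper exhibits precisely this phenomenon in a component with $m(\Xi)=1$: the sets $\Omega_4$ (configuration $(0,0)$) and $\Omega_{8}$ (configuration $(1,1)$) are both nonempty open subsets of $\Xi$, separated inside $\Xi$ by parameters at which $G_p$ has degenerate nontrivial critical points. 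Your reasoning, applied verbatim with $m=1$, would force the number of critical points to be constant on $\Xi$ and thus contradicts Theorem \ref{thm-section5-5}; it therefore proves too much. Excluding ten critical points when $m(\Xi)=2$ requires an input beyond degree counting and connectivity, and the proposal does not contain one.
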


Theorem \ref{thm-section5} was proved by using the degree counting formula \eqref{deg-count} in Part I \cite{CFL}, where 
we also proved that Theorem \ref{thm-section5}-(1) is sharp (i.e. there are $\tau$ and a connected component $\Xi$ satisfying $m(\Xi)=0$ such that the number of critical points of $G_p(z)$ is exactly $6$ for some $\wp(p)\in \Xi$, while the number is exactly $10$ for some other $\wp(p)\in \Xi$). However, whether Theorem \ref{thm-section5}-(2) is sharp or not remained open there.
Now Theorem \ref{thm-section5-5} of this paper shows that Theorem \ref{thm-section5}-(2) is also sharp.

\begin{proof}[Proof of Theorem \ref{main-thm-8}]
Let $\tau=ib$ with $b>0$. Then Theorem \ref{main-thm-b-22} shows that 
the relative positions of the four circles $\partial\mathcal{B}_k$'s are the same as Figure 1 for all $b>0$. Consequently, we see from Figure 1 that all the sets $\Xi_j$'s defined in Theorem \ref{main-thm-8} are nonempty open subsets of  $\mathbb{C}\setminus (\{e_1,e_2,e_3\}\cup\cup_{k=0}^3\partial \mathcal{B}_k)$. Moreover, \eqref{eq330}, \eqref{eq331} and \eqref{eq332} together imply that
$$m(\Xi_1)=m(\Xi_2)=m(\Xi_3)=m(\Xi_4)=2,$$
$$m(\Xi_5)=m(\Xi_6)=m(\Xi_7)=m(\Xi_8)=1, \quad m(\Xi_9)=0.$$
Thus, Theorem \ref{main-thm-8} follows directly from Theorem \ref{thm-section5}.
\end{proof}

\begin{proof}[Proof of Theorem \ref{thm-section5-5}]
Let $\tau=ib$ with $b>0$, and recall Theorem \ref{main-thm-8} that
$$\Xi_1= \mathcal{B}_1\setminus \overline{\mathcal{B}_3},\quad \Xi_2=\mathcal{B}_0\cap\mathcal{B}_1.$$
Then $\Xi_1, \Xi_2\neq \emptyset$ are both open, and $G_p(z)$ has a unique pair of nontrivial critical points for any $\wp(p)\in \Xi_1\cup\Xi_2$.

Recalling  Theorem \ref{main-thm-b-23}, 
we consider the analytic map $f: I^\circ=(0,\frac12)^2\to \mathbb{C}^+:=\{z\in\mathbb{C} : \operatorname{Im} z>0\}$ defined by 
$$f(r,s):=\wp(p_{r,s}(\tau))=\wp (r+s\tau)+\frac{\wp ^{\prime }(r+s\tau)}{%
2(\zeta(r+s\tau)-r\eta_1-s\eta_2)}.$$
Take $\wp(p_j)\in \Xi_j\cap\mathbb{C}^+$ for $j=1,2$. Then there is a unique $(r_j, s_j)\in I^\circ$ such that $\pm(r_j+s_j\tau)$ is the unique pair of nontrivial critical points of $G_{p_j}(z)$, i.e. $f(r_j, s_j)=\wp(p_j)$ for $j=1,2$.

Denote $\{\wp(p_0)\}:=\partial \mathcal{B}_1\cap \partial \mathcal{B}_3\cap\mathbb{C}^+\subset\partial \Xi_1$, then $f^{-1}(\wp(p_0))\cap I^\circ$ consists of at most $3$ points because $G_{p_0}(z)$ has at most $3$ pairs of nontrivial critical points. Therefore, we can take a path-connected loop $\ell\subset I^\circ\setminus f^{-1}(\wp(p_0))$ connecting $(r_1, s_1)$ and $(r_2, s_2)$. Then $f(\ell)\subset \mathbb{C}^+\setminus\{\wp(p_0)\}$ is a path-connected loop connecting $\wp(p_1)\in \Xi_1$ and $\wp(p_2)\in \Xi_2\subset \mathbb C\setminus\overline{\Xi_1}$. 
Consequently, it follows from Figure 1 that there exists $(r_3, s_3)\in \ell$ such that $\wp(p_3):=f(r_3, s_3)$ satisfies 
$$\wp(p_3)\in \Xi_5=\mathbb{C}\setminus\cup_{k=0}^3 \overline{\mathcal{B}_k}\quad\text{or }\;\wp(p_3)\in\Xi_7=\mathcal{B}_1\cap\mathcal{B}_3\setminus(\{e_2\}\cup\overline{\mathcal{B}_0}).$$

{\bf Case 1.} $\wp(p_3)\in \Xi_5=\mathbb{C}\setminus\cup_{k=0}^3 \overline{\mathcal{B}_k}$.

In this case, we define
$$\Xi:=\Xi_5=\mathbb{C}\setminus\cup_{k=0}^3 \overline{\mathcal{B}_k}.$$
It follows from Theorem \ref{main-thm-b-22} and Figure 1 that $\Xi$ is a connected component of $\mathbb{C}\setminus (\{e_1,e_2,e_3\}\cup\cup_{k=0}^3\partial \mathcal{B}_k)$, and
$$(-\infty, d_1)\cup(d_8,+\infty)\subset \Xi.$$
Recalling 
\[\Omega_N:=\bigg\{\wp(p)\in\Xi\,:\begin{array}{l}\text{$G_p(z)$ has exactly $N$ critical points}\\\text{that are all non-degenerate}\end{array}\bigg\}\;\text{for }N=4,8,\]
we see from Theorem \ref{main-thm-b-2} that $(-\infty, d_1)\cup(d_8,+\infty)\subset\Omega_4$, so $\Omega_4\neq\emptyset$ is an open subset of $\Xi$ by using Lemma \ref{lemma50-0}.

To prove $\Omega_8\neq \emptyset$, we note from $\wp(p_3)=f(r_3, s_3)\in \Xi$ that there is a small $\delta>0$ such that $f(B_{\delta})\subset \Xi$, where $$B_{\delta}:=\big\{(r,s)\in I^\circ : |(r,s)-(r_3, s_3)|<\delta\big\}.$$ Then for any $\wp(p)\in f(B_{\delta})\subset \Xi$, there is $(r,s)\in B_{\delta}\subset I^\circ$ such that $\wp(p)=f(r,s)$, namely $\pm(r+s\tau)$ is a pair of nontrivial critical points of $G_p(z)$, so $G_p(z)$ has at least $6$ critical points for any  $\wp(p)\in f(B_{\delta})\subset \Xi$. Since $f(B_{\delta})$ is of positive Lebegue measure, we see from Theorem \ref{main-thm-8}-(2) that $\Omega_8\neq\emptyset$, and so $\Omega_8$ is an open subset of $\Xi$ by using Lemma \ref{lemma50-0}. 

Furthermore,  
it follows from Theorem \ref{main-thm-8}-(2) that $\Xi\subset\overline{\Omega_4}\cup\overline{\Omega_{8}}$. Since $\Xi$ is connected, we see that $\overline{\Omega_4}\cap\overline{\Omega_{8}}\cap\Xi\neq\emptyset$, which implies $\partial\Omega_4\cap\partial\Omega_{8}\cap \Xi\neq \emptyset$. Finally, take any $\wp(p)\in (\partial\Omega_4\cup\partial\Omega_{8})\cap\Xi$. If all critical points of $G_p(z)$ are non-degenerate, then Lemma \ref{lemma50-0} implies $\wp(p)\in\Omega_4\cup\Omega_{8}$, a contradiction. Thus $G_p(z)$ has degenerate nontrivial critical points.

{\bf Case 2.} $\wp(p_3)\in\Xi_7=\mathcal{B}_1\cap\mathcal{B}_3\setminus(\{e_2\}\cup\overline{\mathcal{B}_0})$.

In this case, we define
$$\Xi:=\Xi_7=\mathcal{B}_1\cap\mathcal{B}_3\setminus(\{e_2\}\cup\overline{\mathcal{B}_0}).$$
It follows from Theorem \ref{main-thm-b-22} and Figure 1 that $\Xi$ is a connected component of $\mathbb{C}\setminus (\{e_1,e_2,e_3\}\cup\cup_{k=0}^3\partial \mathcal{B}_k)$, and
$$(d_2, d_3)\setminus\{e_2\}\subset \Xi.$$
Again, it follows from Theorem \ref{main-thm-b-2} and Lemma \ref{lemma50-0} that $(d_2, d_3)\setminus\{e_2\}\subset\Omega_4$, i.e. $\Omega_4\neq\emptyset$ is an open subset of $\Xi$.
The rest proof is similar to Case 1 and is omitted here. 
The proof is complete.
\end{proof}

\section{Other applications}

\label{section6}

As mentioned in Section 1.2, it was shown by Hitchin \cite{Hit1} that the case with one of $(r,s)$ real and the other purely imaginary has important applications to Einstein metrics. Thus, we are also interested in the following question: Given $(\tau, p)$, how many $A$ are there such that the correpsonding $(r,s)\in \mathbb{R}\times (i\mathbb{R}\setminus\{0\})$ or $(r,s)\in (i\mathbb{R}\setminus\{0\})\times \mathbb{R}$?  Notice that if $s\in i\mathbb{R}\setminus\{0\}$, then
$$\triangle_1(A)=\frac12(e^{2\pi is}+e^{-2\pi is})>1,$$
and if $r\in i\mathbb{R}\setminus\{0\}$, then
$$\triangle_2(A)=\frac12(e^{2\pi ir}+e^{-2\pi ir})>1.$$
This motivates us to define
$$\sigma_j^*:=\triangle_j^{-1}((1,+\infty)),\quad j=1,2.$$
Then 
\begin{align*}&\text{$(r,s)\in \mathbb{R}\times (i\mathbb{R}\setminus\{0\})$ if and only if $A\in\sigma_1^*\cap \sigma_2$,}\\
&\text{$(r,s)\in (i\mathbb{R}\setminus\{0\})\times\mathbb{R}$ if and only if $A\in\sigma_1\cap \sigma_2^*$. }\end{align*}
We will see from Lemmas \ref{lemma2-7-2}-\ref{lemma2-8-2} that $\sigma_j^*$ consists of countably many analyitc arcs, so $\sigma_1^*\cap \sigma_{2}$ (resp. $\sigma_1\cap \sigma_2^*$) contains at most countably many points for generic $(\tau, p)$. 
The main result of this section is to prove that for the special case $\tau=ib$ and $\wp(p)\in\mathbb{R}$, $\sigma_1^*\cap \sigma_{2}$ (resp. $\sigma_1\cap \sigma_2^*$) contains open intervals.

\begin{theorem}\label{thm4-1}
Let $\tau=ib$ with $b>0$. 
\begin{itemize}
\item[(a)] If $p\in (0,\frac12)$, then
\begin{equation}\label{12-sigma1}\sigma_1^*\cap\sigma_2=(-\infty, A_1)\cup (A_0, +\infty)\setminus
\{\text{at most one point}\},\end{equation}
\begin{equation}\label{12-sigma2}\sigma_1\cap\sigma_2^*\cap\mathbb{R}=(A_2, A_0)\setminus\{\text{at most one point}\},\; \sigma_1\cap\sigma_2^*\setminus\mathbb{R}\;\text{is at most finite}.\end{equation}
\item[(b)] If $p\in (\frac\tau2,\frac{1+\tau}2)$, then
$$\sigma_1^*\cap\sigma_2=(A_1, A_0)\setminus\{\text{at most one point}\},$$
$$\sigma_1\cap\sigma_2^*\cap\mathbb{R}=(A_0, A_2)\setminus\{\text{at most one point}\},\; \sigma_1\cap\sigma_2^*\setminus\mathbb{R}\;\text{is at most finite}.$$
\item[(c)] If $p\in (0,\frac\tau2)$, then
\begin{equation}\label{123-sigma1}\sigma_1\cap\sigma_2^*=(-i\infty, A_0)\cup(A_2,+i\infty)\setminus\{\text{at most one point}\},\end{equation}
\begin{equation}\label{123-sigma2}\sigma_1^*\cap\sigma_2\cap i\mathbb{R}=(A_0, A_1)\setminus\{\text{at most one point}\},\; \sigma_1^*\cap\sigma_2\setminus i\mathbb{R}\;\text{is at most finite}.\end{equation}
\item[(d)] If $p\in (\frac12,\frac{1+\tau}2)$, then
$$\sigma_1\cap\sigma_2^*=(A_0, A_2)\setminus\{\text{at most one point}\},$$
$$\sigma_1^*\cap\sigma_2\cap i\mathbb{R}=(A_1, A_0)\setminus\{\text{at most one point}\},\; \sigma_1^*\cap\sigma_2\setminus i\mathbb{R}\;\text{is at most finite}.$$
\end{itemize}
\end{theorem}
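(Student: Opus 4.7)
The plan is to reduce the characterization to a parametrization by the four half-period segments of $E_{\tau}$ for $\tau=ib$, namely
\[S_1=\Big(0,\tfrac12\Big),\ S_2=\tfrac12+i\Big(0,\tfrac{b}{2}\Big),\ S_3=\Big(0,\tfrac12\Big)+\tfrac{ib}{2},\ S_4=i\Big(0,\tfrac{b}{2}\Big),\]
and to carry out each of the four cases by the same mechanism. I will work out case (a), $p\in(0,\tfrac12)$; the other cases reduce to the same bookkeeping after relabelling the segments in accordance with the position of $p$ and, for (c) and (d), replacing the real axis by $i\mathbb{R}$. Using $\overline{\zeta(\bar z)}=\zeta(z)$ and $\overline{\wp(\bar z)}=\wp(z)$, the relation \eqref{Aap} shows that $A\in\mathbb{R}$ together with $p\in\mathbb{R}$ forces $\{a,-a\}=\{\bar a,-\bar a\}$ in $E_{\tau}$, so $\pm a$ must lie on one of $S_1,\dots,S_4$; conversely each $S_j$ maps into $\mathbb{R}$ under $a\mapsto A(a)$.

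On each $S_j$ I would compute in parallel the image of $a\mapsto A(a)$ (with monotonicity) and the pair $(\tilde r,\tilde s)$ from the linear system \eqref{61-37-2}. Monotonicity follows from $A'$ being a signed multiple of $\operatorname{Im}\wp$ along a shifted side of the fundamental rectangle, on which $\wp$ conformally sends the interior to a fixed half-plane; this gives the bijections $S_1\to(-\infty,A_1)\cup(A_0,+\infty)$, $S_2\to(A_1,A_3)$, $S_3\to(A_3,A_2)$, and $S_4\to(A_2,A_0)$. The $(\tilde r,\tilde s)$ computation uses $\zeta(z+1)=\zeta(z)+\eta_1$, $\zeta(z+\tau)=\zeta(z)+\eta_2$ together with the identity $\operatorname{Im}\zeta(x+\tfrac{ib}{2})=\tfrac{b\eta_1-2\pi}{2}$ (which is a consequence of $\overline{\zeta(\bar z)}=\zeta(z)$ and the $\tau$-quasi-period). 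One finds $(\tilde r,\tilde s)\in\mathbb{R}\times i\mathbb{R}$ on $S_1$, $\in(\tfrac12+i\mathbb{R})\times\mathbb{R}$ on $S_2$, $\in\mathbb{R}\times(\tfrac12+i\mathbb{R})$ on $S_3$, and $\in i\mathbb{R}\times\mathbb{R}$ on $S_4$, so that $\triangle_1\ge 1$ on $S_1$, $\triangle_1\le -1$ on $S_3$, $\triangle_2\ge 1$ on $S_4$, and $\triangle_2\le -1$ on $S_2$. Hence only $S_1$ feeds $\sigma_1^*\cap\sigma_2$ and only $S_4$ feeds $\sigma_1\cap\sigma_2^*\cap\mathbb{R}$, producing the intervals $(-\infty,A_1)\cup(A_0,+\infty)$ and $(A_2,A_0)$ respectively. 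The excluded point, where $\tilde s=0$ on $S_1$ (resp.\ $\tilde r=0$ on $S_4$), is precisely the nontrivial critical-point condition \eqref{a+001p}, which by Theorem~\ref{thm3-10} holds at most once.

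To bound $\sigma_1\cap\sigma_2^*\setminus\mathbb{R}$, observe that a one-dimensional arc of this intersection would require $\tilde s\in\mathbb{R}$ and $\tilde r\in i\mathbb{R}\setminus\{0\}$ throughout, hence $a\in i\mathbb{R}$; but such $a$ are parametrized by $S_4$, whose image lies in $(A_2,A_0)\subset\mathbb{R}$, contradicting the assumption that the arc leaves $\mathbb{R}$. So the off-real part is zero-dimensional, and Lemma~\ref{lemma2-8} confines it to a compact region: the vertical asymptotes of $\sigma_{1,\infty}$ and of the unbounded branches of $\sigma_2^*$ are at $\operatorname{Re}A=\tfrac{\zeta(2p)}{2}-p\eta_1$ and $\operatorname{Re}A=\tfrac{\zeta(2p)}{2}-p\eta_1+\tfrac{2\pi p}{b}$, which differ by $\tfrac{2\pi p}{b}\neq 0$, so the two curves diverge at infinity and their intersection is finite. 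Cases (b), (c), (d) follow by exactly the same scheme, with $S_1,\dots,S_4$ permuted according to which side of the fundamental rectangle contains $p$.

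The main obstacle is the bookkeeping on $S_2$ and $S_3$: the half-integer shifts in $(\tilde r,\tilde s)$ are what force $\triangle_j\le -1$ on these segments and thereby rule out their contribution to either of the intersections. Producing them correctly requires systematic use of the $\zeta$ quasi-periods together with the non-obvious constancy $\operatorname{Im}\zeta(x+\tfrac{ib}{2})=\tfrac{b\eta_1-2\pi}{2}$; once this identity is in hand the remainder of the argument is a direct reading-off of which $S_j$ realizes which sign pattern of $(\triangle_1,\triangle_2)$.
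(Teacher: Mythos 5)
Your proposal is correct in substance but takes a genuinely different route from the paper. The paper first establishes the global arc structure of $\sigma_1$ and $\sigma_2$ (Lemmas \ref{Lemma3-5}--\ref{Lemma3-8}) by topological arguments and then reads off $\overline{\sigma}_j^*\cap\mathbb{R}$ from the continuity of $\triangle_j$ on $\mathbb{R}$ together with the endpoint values $\triangle_j(A_k)=\pm1$; the off-axis part of $\sigma_1\cap\sigma_2^*$ is then bounded by comparing the vertical asymptote of $\sigma_{1,\infty}$ with those of the countably many unbounded branches of $\overline{\sigma}_2^*$ from Lemma \ref{lemma2-8-2}. You instead parametrize all the relevant loci by $a$ on the four half-period segments, compute the reality type of $(r,s)$ on each segment via the quasi-periods of $\zeta$ (your identity $\operatorname{Im}\zeta(x+\tfrac{ib}{2})=\tfrac{b\eta_1-2\pi}{2}$ is just $\eta_2/(2i)$ and is correct), and identify the image of $a\mapsto A(a)$ on each segment; the excluded point is then the critical-point condition handled by Theorem \ref{thm3-10}, matching the paper's use of Corollary \ref{coro-s4-1}. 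This approach is more explicit and in one respect stronger: the observation that $A\in\sigma_1\cap\sigma_2^*$ forces $s\in\mathbb{R}$ and $r\in\mathbb{Z}+i(\mathbb{R}\setminus\{0\})$, hence $a\in i\mathbb{R}$ modulo $\Lambda_\tau$ and hence $A\in A(S_4)=(A_2,A_0)\subset\mathbb{R}$, applies to individual points and not only to putative arcs, so it actually yields $\sigma_1\cap\sigma_2^*\setminus\mathbb{R}=\emptyset$, stronger than the stated ``at most finite.'' Your subsequent compactness step is therefore redundant --- which is just as well, since as written it cites Lemma \ref{lemma2-8} (about $\sigma_2$) rather than its analogue for $\overline{\sigma}_2^*$ and accounts for only one of the countably many unbounded branches. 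One step to tighten: monotonicity of $A(a)$ on each segment is not ``a signed multiple of $\operatorname{Im}\wp$''; it follows either from $A'(a)=\wp'(p)\wp'(a)/\bigl(2(\wp(p)-\wp(a))^2\bigr)$ and the known sign of $\wp'$ on half-period segments, or more cheaply from the injectivity of $a\mapsto A(a)$ guaranteed by \eqref{eq-data1} combined with real-valuedness and the boundary limits $A_0$, $A_k$, $\pm\infty$.
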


The rest of this section is to prove Theorem \ref{thm4-1}.
For convenience, we also define
$$\overline{\sigma}_j^*:=\triangle_j^{-1}([1,+\infty)),\quad j=1,2.$$
Then $\overline{\sigma}_j^*\setminus\sigma_j^*=\triangle_j^{-1}(1)$ is a countable set.
Recall \eqref{eqfc-vjk1} that \begin{equation}\label{eqfc-aoo}\triangle_1(A_0)=\triangle_1(A_1)=1,\quad\triangle_2(A_0)=\triangle_2(A_2)=1.\end{equation} 

\begin{lemma}\label{lemma2-7-2} 
\begin{itemize}
\item[(1)]
 $\{A_0, A_1\}$ are precisely the set of endpoints of $\overline{\sigma}_1^*$, and  $\{A_0, A_2\}$ are precisely the set of endpoints of $\overline{\sigma}_2^*$.
 \item[(2)] For $j\in \{1,2\}$, any connected component of $\mathbb{C}\setminus\overline{\sigma}_j^*$ is unbounded.
 \end{itemize}
\end{lemma}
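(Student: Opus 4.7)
My plan is to adapt the endpoint characterization of Remark \ref{rmk2-6} to the one-sided set $\overline{\sigma}_j^* = \triangle_j^{-1}([1, +\infty))$ and then exploit the parity information about zeros of $\triangle_j^2 - 1$ already extracted in Lemma \ref{lemma2-7}. Since the boundary of $[1, +\infty)$ is the single point $\{1\}$, the same Taylor-expansion argument as in Remark \ref{rmk2-6} shows that $\tilde A$ is an endpoint of $\overline{\sigma}_j^*$ if and only if $\triangle_j(\tilde A) = 1$ and $\mathrm{ord}_{\tilde A}(\triangle_j - 1)$ is odd. Thus the entire statement (1) reduces to a parity bookkeeping that distinguishes zeros of $\triangle_j - 1$ from zeros of $\triangle_j + 1$.

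I would carry this out as follows, taking $j = 1$ (the case $j = 2$ being entirely analogous). The proof of Lemma \ref{lemma2-7} produced a factorization
$$\triangle_1(A)^2 - 1 = R(A)^2 \prod_{k=0}^3 (A - A_k),$$
from which $\mathrm{ord}_A(\triangle_1^2 - 1)$ is odd precisely at the four points $A_k$. By \eqref{eqfc-vjk1} one has $\triangle_1 = 1$ at $A_0, A_1$ and $\triangle_1 = -1$ at $A_2, A_3$. At $A_k$ for $k = 0, 1$ the factor $\triangle_1 + 1$ equals $2 \neq 0$, so $\mathrm{ord}_{A_k}(\triangle_1 - 1) = \mathrm{ord}_{A_k}(\triangle_1^2 - 1) = 1$, making $A_k$ an endpoint of $\overline{\sigma}_1^*$. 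Conversely, at any other point $\tilde A$ where $\triangle_1 = 1$, again $\triangle_1 + 1 = 2 \neq 0$, so $\mathrm{ord}_{\tilde A}(\triangle_1 - 1) = \mathrm{ord}_{\tilde A}(\triangle_1^2 - 1)$ is even, ruling out $\tilde A$ as an endpoint. Hence the endpoint set of $\overline{\sigma}_1^*$ is exactly $\{A_0, A_1\}$, and the parallel argument using $\triangle_2(A_0) = \triangle_2(A_2) = 1$ gives $\{A_0, A_2\}$ for $\overline{\sigma}_2^*$.

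For assertion (2) I will mimic the argument used in Lemma \ref{lemma2-7}(2). If some connected component $\Omega$ of $\mathbb{C} \setminus \overline{\sigma}_j^*$ were bounded, then $\partial \Omega \subset \overline{\sigma}_j^*$ and $\triangle_j$ would take values in $[1, +\infty) \subset \mathbb{R}$ along $\partial \Omega$. The harmonic function $\mathrm{Im}\, \triangle_j$ would vanish on $\partial \Omega$ and therefore, by the maximum principle, vanish throughout $\Omega$; the Cauchy--Riemann equations then force the entire function $\triangle_j$ to be a real constant on $\Omega$ and hence on $\mathbb{C}$. This is impossible because $\triangle_j$ is non-constant (for instance, $\sigma_j = \triangle_j^{-1}([-1, 1])$ is a proper analytic subset of $\mathbb{C}$, as already visible from Lemma \ref{lemma2-8}).

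I do not anticipate a significant obstacle: the essential analytic content (the odd-order dichotomy for $\triangle_j^2 - 1$ and the asymptotics of $\triangle_j$ at infinity) is already contained in Lemmas \ref{lemma2-7} and \ref{lemma2-8}, and what remains here is the parity bookkeeping dictated by the signs $\varepsilon_{j,k}$ recorded in \eqref{eqfc-vjk1} together with a standard maximum-principle argument. The only mild subtlety is being careful that the Taylor-expansion criterion of Remark \ref{rmk2-6}, which was originally stated for $\triangle_j^{-1}([-1,1])$, transfers verbatim to the one-sided preimage $\triangle_j^{-1}([1,+\infty))$, but this follows because $1$ is still a boundary point of the target interval and the local behavior of $\triangle_j$ near a point where $\triangle_j = 1$ is determined by the order of vanishing of $\triangle_j - 1$ in exactly the same way.
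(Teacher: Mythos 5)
Your proposal is correct and follows exactly the route the paper intends: the paper's proof of this lemma is literally the single line ``The proof is the same as that of Lemma \ref{lemma2-7},'' and your argument is a faithful elaboration of that, using the factorization $\triangle_j(\cdot)^2-1 = R(\cdot)^2\prod_k(\cdot-A_k)$ from Lemma \ref{lemma2-7}, the sign table \eqref{eqfc-vjk1} to sort out which $A_k$ satisfy $\triangle_j(A_k)=1$, and the observation that $\operatorname{ord}(\triangle_j-1)=\operatorname{ord}(\triangle_j^2-1)$ wherever $\triangle_j=1$, together with the same maximum-principle argument for part (2).
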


\begin{proof}
The proof is the same as that of Lemma \ref{lemma2-7}.
\end{proof}

\begin{lemma}\label{lemma2-8-2}
Denote $B_R:=\{A\in\mathbb{C} : |A|<R\}$. Then for $R>0$ large, the following statements hold.
\begin{itemize}
\item[(1)] $\overline{\sigma}_1^*\setminus{B_R}$ consists of countably many disjoint analytic arcs that can be parametrized by 
\begin{equation}\label{sigma1-2}
A=t-\frac{2p\eta_1-\zeta(2p)}{2}+(2m+\epsilon_1)\pi i+O(t^{-1})
,\; \end{equation}
with $t\in(-\infty, -t_{1,m}]\cup[t_{2,m},+\infty)$ for some $t_{j,m}>0$ large, where $m\in\mathbb{Z}$ can be arbitrary, and $\epsilon_1\in\{0,1\}$ is independent of $m$.

\item[(2)] $\overline{\sigma}_2^*\setminus{B_R}$ consists of countably many disjoint analytic arcs that can be parametrized by 
\begin{equation}\label{sigma2-2}
A=\frac{t}{\tau}-\frac{2p\eta_2-\tau\zeta(2p)}{2\tau}+\frac{(2m+\epsilon_2)\pi i}{\tau}+O(t^{-1})
,\end{equation}
with $t\in(-\infty, -t_{3,m}]\cup[t_{4,m},+\infty)$ for some $t_{j,m}>0$ large, where $m\in\mathbb{Z}$ can be arbitrary, and $\epsilon_2\in\{0,1\}$ is independent of $m$.
\end{itemize}

\end{lemma}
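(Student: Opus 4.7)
The strategy is to adapt the asymptotic calculation in the proof of Lemma~\ref{lemma2-8} to the more restrictive level set. The essential observation is that $\triangle_1(A)=\cos 2\pi s(A)\ge 1$ is equivalent to $e^{-2\pi is}$ being a positive real number, and a short case analysis of
\[
\cos 2\pi s=\cos 2\pi s_1\cosh 2\pi s_2-i\sin 2\pi s_1\sinh 2\pi s_2
\]
(writing $s=s_1+is_2$) shows this happens exactly when $s\in\mathbb Z+i\mathbb R$. Thus $\overline\sigma_1^*$ is cut out by a \emph{countable} family of real conditions $\operatorname{Im}(-2\pi is)\in 2\pi\mathbb Z$, one per integer $m$, whereas $\sigma_1$ was cut out by the single condition $\operatorname{Re}(-2\pi is)=0$. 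This is precisely why Lemma~\ref{lemma2-8} produced only two unbounded arcs while the present lemma produces countably many.

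Concretely, the proof of Lemma~\ref{lemma2-8} shows that for $|A|$ large,
\[
e^{-2\pi is}=\varepsilon_4\exp\!\Bigl(\varepsilon_3A\Bigl(1+\tfrac{2p\eta_1-\zeta(2p)}{2}A^{-1}+\varphi(A^{-1})\Bigr)\Bigr),
\]
with $\varepsilon_3,\varepsilon_4\in\{\pm1\}$ and $\varphi$ holomorphic near $0$ satisfying $\varphi(z)=O(z^2)$. Requiring the right-hand side to be positive real, i.e., taking imaginary parts of the logarithm, gives
\[
\operatorname{Im}\!\Bigl(\varepsilon_3A+\varepsilon_3\tfrac{2p\eta_1-\zeta(2p)}{2}+\varepsilon_3A\varphi(A^{-1})\Bigr)=(2m+\epsilon_1)\pi,\qquad m\in\mathbb Z,
\]
with $\epsilon_1\in\{0,1\}$ encoding $\arg\varepsilon_4$ modulo $2\pi$. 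Since $\varepsilon_3A\varphi(A^{-1})=O(A^{-1})$, the implicit function theorem applied to
\[
\varepsilon_3A+\varepsilon_3\tfrac{2p\eta_1-\zeta(2p)}{2}+\varepsilon_3A\varphi(A^{-1})=t+i(2m+\epsilon_1)\pi
\]
produces, for each fixed $m$ and each $|t|$ above a threshold $t_{*,m}$, a unique holomorphic solution $A=A_m(t)$; this is precisely the parametrization \eqref{sigma1-2} (after absorbing the sign $\varepsilon_3$ into $t$ and $m$). Part (2) is handled identically starting from
\[
e^{2\pi ir}=\varepsilon_5\exp\!\Bigl(\varepsilon_3A\Bigl(\tau+\tfrac{2p\eta_2-\tau\zeta(2p)}{2}A^{-1}+O(A^{-2})\Bigr)\Bigr),
\]
then dividing by $\tau$ to obtain \eqref{sigma2-2}.

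The main technical point is verifying that these arcs are pairwise \emph{disjoint} and \emph{exhaust} $\overline\sigma_1^*\setminus B_R$ for $R$ large. Disjointness is clear because distinct values of $m$ place the asymptotic arcs on horizontal lines (after the $-\tfrac{2p\eta_1-\zeta(2p)}{2}$ shift) separated by exactly $2\pi$, while the $O(t^{-1})$ correction can be forced to be much smaller than $\pi$ by taking $R$ large. Exhaustion holds because the imaginary-part constraint pins down $m$ uniquely from $\operatorname{Im}A$, and each arc is unbounded in both directions of $t$ since $\operatorname{Im}A$ stays bounded along it, so $|A|\to\infty$ along the arc forces $|\operatorname{Re}A|\to\infty$. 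I do not anticipate any serious obstacle beyond these routine verifications; the asymptotic input from Lemma~\ref{lemma2-8} does all the heavy lifting, and only the combinatorial bookkeeping of the integer $m$ is new.
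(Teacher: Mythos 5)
Your proposal is correct and follows essentially the same route as the paper: both reduce $\triangle_j(A)\ge 1$ to the positivity of $e^{-2\pi is}$ (resp.\ $e^{-2\pi ir}$), feed this into the asymptotic expansion already established in the proof of Lemma \ref{lemma2-8}, and read off the countable family of arcs from the resulting condition $\operatorname{Im}(\cdot)\in(2\mathbb{Z}+\epsilon_j)\pi$. Your explicit treatment of the disjointness and exhaustion of the arcs is slightly more detailed than the paper's, which simply refers back to the argument of Lemma \ref{lemma2-8}.
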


\begin{proof}
Recall the proof of Lemma \ref{lemma2-8} that
\begin{align*}e^{-2\pi i s}=\exp\left(\varepsilon_3\left(A\left(1+\frac{2p\eta_1-\zeta(2p)}{2}A^{-1}+O(A^{-2})\right)+\frac{1-\varepsilon_4}{2}\pi i\right)\right),
\end{align*}
\begin{align*}e^{2\pi i r}
=\exp\left(\varepsilon_3\left(A\left(\tau+\frac{2p\eta_2-\tau\zeta(2p)}{2}A^{-1}+O(A^{-2})\right)+\frac{1-\varepsilon_5}{2}\pi i\right)\right).
\end{align*}
where $\varepsilon_j\in\{\pm 1\}$. Hence, $\triangle_1(A)=\frac12(e^{-2\pi i s}+e^{2\pi i s})\geq 1$ if and only if $e^{-2\pi i s}>0$, if and only if
$$A\left(1+\frac{2p\eta_1-\zeta(2p)}{2}A^{-1}+O(A^{-2})\right)+\frac{1-\varepsilon_4}{2}\pi i=t+2m\pi i\quad\text{with}\;t\in\mathbb{R},$$
which is equivalent to \eqref{sigma1-2}.

Similarly, $\triangle_2(A)=\frac12(e^{2\pi i r}+e^{-2\pi i r})\geq 1$ if and only if $e^{-2\pi i r}>0$, if and only if
$$A\left(\tau+\frac{2p\eta_2-\tau\zeta(2p)}{2}A^{-1}+O(A^{-2})\right)+\frac{1-\varepsilon_5}{2}\pi i=t+2m\pi i\quad\text{with}\;t\in\mathbb{R},$$
which is equivalent to \eqref{sigma2-2}. The rest proof is similar to that of Lemma \ref{lemma2-8}.
\end{proof}

\begin{lemma}\label{lemma3-4-2}
Let $\tau=ib$ with $b>0$.
\begin{itemize}
\item[(1)] If $p\in (0, \frac12)\cup (\frac{\tau}{2}, \frac{1+\tau}2)$, then $\overline{\sigma}^*_j$ is symmetric with respect to the real axis for each $j=1,2$.
\item[(2)] If $p\in  (0, \frac{\tau}{2})\cup(\frac12, \frac{1+\tau}{2})$, then $\overline{\sigma}^*_j$ is symmetric with respect to the imaginary axis for each $j=1,2$.
\end{itemize}

\end{lemma}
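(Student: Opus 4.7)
The plan is to proceed in direct parallel with the proof of Lemma~\ref{lemma3-4}. The key observation is that the symmetry identities \eqref{eqfc-8} and \eqref{eqfc-9} established there for the trace functions $\triangle_j$ are already exactly what we need, because the target set $[1,+\infty)$ defining $\overline{\sigma}_j^*$ is a subset of the real line and is therefore invariant under complex conjugation.

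More precisely, first I would recall from the proof of Lemma~\ref{lemma3-4} that for $p\in(0,\frac12)\cup(\frac\tau2,\frac{1+\tau}2)$ we have $\bar{p}\equiv p$ in $E_\tau$, hence $I(z;\bar{p},\overline{A},\tau)=I(z;p,\overline{A},\tau)$, which together with \eqref{eqfc-5}--\eqref{eqfc-6} yields
\[\triangle_j(\overline{A})=\overline{\triangle_j(A)},\quad j=1,2.\]
Then $A\in\overline{\sigma}_j^*$ means $\triangle_j(A)\in[1,+\infty)\subset\mathbb{R}$, so $\triangle_j(\overline{A})=\overline{\triangle_j(A)}=\triangle_j(A)\in[1,+\infty)$, i.e.\ $\overline{A}\in\overline{\sigma}_j^*$. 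This gives the real-axis symmetry in (1).

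For (2), analogously, when $p\in(0,\frac\tau2)\cup(\frac12,\frac{1+\tau}2)$ we have $\bar{p}\equiv -p$ in $E_\tau$, so $I(z;\bar{p},\overline{A},\tau)=I(z;p,-\overline{A},\tau)$, and \eqref{eqfc-5}--\eqref{eqfc-6} give
\[\triangle_j(-\overline{A})=\overline{\triangle_j(A)},\quad j=1,2.\]
The same reasoning then shows $A\in\overline{\sigma}_j^*$ if and only if $-\overline{A}\in\overline{\sigma}_j^*$, which is precisely symmetry with respect to the imaginary axis.

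There is essentially no obstacle here: the entire content has been carried out in Lemma~\ref{lemma3-4}, and the passage from $\sigma_j=\triangle_j^{-1}([-1,1])$ to $\overline{\sigma}_j^*=\triangle_j^{-1}([1,+\infty))$ only uses that both target sets are real and invariant under conjugation. The proof can therefore be written very briefly, citing the identities \eqref{eqfc-8} and \eqref{eqfc-9} directly.
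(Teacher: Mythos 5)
Your proposal is correct and is exactly the argument the paper intends: the paper's proof of this lemma simply says it is the same as that of Lemma \ref{lemma3-4}, and your write-up spells out why the identities \eqref{eqfc-8} and \eqref{eqfc-9} transfer verbatim, since $[1,+\infty)$, like $[-1,1]$, is a conjugation-invariant subset of $\mathbb{R}$. Nothing further is needed.
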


\begin{proof}
The proof is the same as that of Lemma \ref{lemma3-4}.
\end{proof}

Now we are ready to prove Theorem \ref{thm4-1}.

\begin{proof}[Proof of Theorem \ref{thm4-1}] 
(a). Let $p\in (0, \frac12)$.
Recall
\eqref{eqfc-8} that $$\triangle_j(A)\in \mathbb{R},\quad\forall A\in \mathbb{R}.$$
Then it follows from \eqref{eqfc-aoo} and Lemma \ref{Lemma3-5} that
\begin{equation}\label{eqfc-31}(-\infty, A_1]\cup [A_0, +\infty)=\triangle_1^{-1}([1,+\infty))\cap\mathbb{R}=\overline{\sigma}_1^*\cap\mathbb{R},\end{equation}
\begin{equation}\label{eqfc-32}[A_2, A_0]=\triangle_2^{-1}([1,+\infty))\cap\mathbb{R}=\overline{\sigma}_2^*\cap\mathbb{R},\end{equation}
$$[A_3, A_2]=\triangle_1^{-1}((-\infty,-1])\cap\mathbb{R},\quad [A_1, A_3]=\triangle_2^{-1}((-\infty,-1])\cap\mathbb{R}.$$
Remark that \eqref{eqfc-31} implies that $\epsilon_1=0$ in Lemma \ref{lemma2-8-2}. 

By $\overline{\sigma}_j^*\setminus\sigma_j^*=\triangle_j^{-1}(1)$, \eqref{eqfc-31} and $\sigma_2=(-\infty, A_1]\cup [A_3, A_2]\cup [A_0, +\infty)$, we obtain
$$\sigma_1^*\cap\sigma_2=(-\infty, A_1)\cup (A_0, +\infty)\setminus\triangle_1^{-1}(1).$$
Since $(-\infty, A_1)\cup (A_0, +\infty)\cap\triangle_1^{-1}(1)\subset \sigma_1\cap\sigma_2\setminus\{A_k\}_k$ contains at most one point, we see that \eqref{12-sigma1} holds. A similar argument implies
$$\sigma_1\cap\sigma_2^*\cap\mathbb{R}=(A_2, A_0)\setminus\{\text{at most one point}\}.$$
Note from Lemma \ref{Lemma3-5}-(1) that
$$\sigma_1\cap\sigma_2^*\setminus\mathbb{R}\subset \sigma_{1,\infty}\cap \overline{\sigma}_2^*\setminus\mathbb{R},$$
where $\sigma_{1,\infty}$ is an unbounded simple curve tending to $\pm i\infty-\frac{2p\eta_1-\zeta(2p)}{2}$.
By Lemma \ref{lemma2-7-2}-(2) and Lemma \ref{lemma2-8-2}-(2), we know that $\overline{\sigma}_2^*\setminus\mathbb{R}$ contains countably many unbounded arcs tending to $\pm i\infty-\frac{2p\eta_2-\tau\zeta(2p)}{2\tau}+\frac{(2m+\epsilon_2)\pi i}{\tau}$, and any two of them can not have two intersection points. Furthermore, both $\sigma_{1,\infty}$ and $\overline{\sigma}_2^*\setminus\mathbb{R}$ are symmetric with respect to the real axis. Since $p\in (0, \frac12)$ implies
$$-\frac{2p\eta_1-\zeta(2p)}{2}\neq -\frac{2p\eta_2-\tau\zeta(2p)}{2\tau}+\frac{(2m+\epsilon_2)\pi i}{\tau},\quad\forall m\in\mathbb Z,$$
it is easy to see that $\sigma_{1,\infty}\cap \overline{\sigma}_2^*\setminus\mathbb R$ contains at most finite points, so $\sigma_1\cap\sigma_2^*\setminus\mathbb{R}$ is at most a finite set. This proves \eqref{12-sigma2}.

(b). The proof is similar to (a) and is omitted here.

(c). Recall
\eqref{eqfc-9} that $$\triangle_j(A)\in \mathbb{R},\quad\forall A\in i\mathbb{R}.$$
Then it follows from \eqref{eqfc-aoo} and Lemma \ref{Lemma3-7} that
\begin{equation}\label{eqfc-33}[A_0, A_1]=\triangle_1^{-1}([1,+\infty))\cap i\mathbb{R}=\overline{\sigma}_1^*\cap i\mathbb{R},\end{equation}
\begin{equation}\label{eqfc-34}(-i\infty, A_0]\cup[A_2,+i\infty)=\triangle_2^{-1}([1,+\infty))\cap i\mathbb{R}=\overline{\sigma}_2^*\cap i\mathbb{R},\end{equation}
$$[A_3, A_2]=\triangle_1^{-1}((-\infty,-1])\cap i\mathbb{R},\quad [A_1, A_3]=\triangle_2^{-1}((-\infty,-1])\cap i\mathbb{R}.$$
Remark that \eqref{eqfc-34} implies that $\epsilon_2=0$ in Lemma \ref{lemma2-8-2}. 
Then a similar argument as (a) implies \eqref{123-sigma1}-\eqref{123-sigma2}.

(d). The proof is similar to (c) and is omitted here. The proof is complete.
\end{proof}

\subsection*{Acknowledgements} Z. Chen was supported by National Key R\&D Program of China (No. 2023YFA1010002) and  NSFC (No. 12222109). 
E. Fu was supported by NSFC (No. 12401188) and BIMSA Start-up Research Fund.

\end{document}